\documentclass[12pt]{amsart}
\usepackage[margin=1in]{geometry}
\usepackage{amsmath,amssymb,setspace}
\usepackage{indentfirst}
\usepackage{comment}
\usepackage{booktabs}
\usepackage{wrapfig}
\usepackage{longtable}
\usepackage{lipsum}
\setstretch{1}

%ADDITIONAL PACKAGES ADDED:
\usepackage{bbm}

\newcommand{\ints}{\mathbb{Z}}
\newcommand{\real}{\mathbb{R}}
\newcommand{\rats}{\mathbb{Q}}
\newcommand{\cplx}{\mathbb{C}}

\newcommand{\ccal}{\mathcal{C}}

\newcommand{\supp}{\text{Supp}}

\newcommand{\sgn}{\text{sgn}}

\newcommand{\ch}{\text{ch}}
\newcommand{\sch}{\text{sch}}
\newcommand{\sign}{\text{sign}}

\numberwithin{equation}{section}

\newcommand{\Opw}{\text{Op}_h^\mathrm{w}}

\renewcommand{\phi}{\varphi}

\renewcommand{\bar}{\overline}

\renewcommand{\hom}{\text{Hom}}
\newcommand{\aut}{\text{Aut}}

\newcommand{\End}{\text{End}}

\newcommand{\ext}{\text{Ext}}

\newcommand{\tr}{\text{Tr}}

\newcommand{\id}{\text{Id}}

\newcommand{\ad}{\text{ad}}

\newcommand{\stab}{\text{Stab}}

\newcommand{\res}{\text{Res}}

\newcommand{\irr}{\text{Irr}}

\newcommand{\triv}{\text{triv}}

\newcommand{\slfr}{\mathfrak{sl}}

\newcommand{\oscr}{\mathcal{O}}

\newcommand{\spn}{\text{span}}

\newcommand{\gfr}{\mathfrak{g}}
\newcommand{\hfr}{\mathfrak{h}}
\newcommand{\lfr}{\mathfrak{l}}

\newcommand{\nfr}{\mathfrak{n}}
\newcommand{\pfr}{\mathfrak{p}}

\newcommand{\vol}{\text{vol}}

\newcommand{\spec}{\text{Spec}}
\newcommand{\rad}{\text{rad}}

\DeclareFontFamily{OT1}{rsfs}{}
\DeclareFontShape{OT1}{rsfs}{n}{it}{<-> rsfs10}{}
\DeclareMathAlphabet{\mathscr}{OT1}{rsfs}{n}{it}

\DeclareMathOperator*{\rank}{rank}

\raggedbottom

\mathchardef\mhyphen="2D % Define a "math hyphen"

\newtheorem{theorem}{Theorem}
\newtheorem{conjecture}[theorem]{Conjecture}
\newtheorem{corollary}[theorem]{Corollary}
\newtheorem{lemma}[theorem]{Lemma}
\newtheorem{proposition}[theorem]{Proposition}
\newtheorem{definition}[theorem]{Definition}
\newtheorem{remark}[theorem]{Remark}

\numberwithin{theorem}{subsection}
\newtheorem*{theorem*}{Theorem}

\begin{document}

\title{The Dunkl Weight Function for Rational Cherednik Algebras} \author{Seth Shelley-Abrahamson}\date{\today}

\begin{abstract} In this paper we prove the existence of the Dunkl weight function $K_{c, \lambda}$ for any irreducible representation $\lambda$ of any finite Coxeter group $W$, generalizing previous results of Dunkl. In particular, $K_{c, \lambda}$ is a family of tempered distributions on the real reflection representation of $W$ taking values in $\End_\cplx(\lambda)$, with holomorphic dependence on the complex multi-parameter $c$.  When the parameter $c$ is real, the distribution $K_{c, \lambda}$ provides an integral formula for Cherednik's Gaussian inner product $\gamma_{c, \lambda}$ on the Verma module $\Delta_c(\lambda)$ for the rational Cherednik algebra $H_c(W, \hfr)$.   In this case, the restriction of $K_{c, \lambda}$ to the hyperplane arrangement complement $\hfr_{\real, reg}$ is given by integration against an analytic function whose values can be interpreted as braid group invariant Hermitian forms on $KZ(\Delta_c(\lambda))$, where $KZ$ denotes the Knizhnik-Zamolodchikov functor introduced by Ginzburg-Guay-Opdam-Rouquier.  This provides a concrete connection between invariant Hermitian forms on representations of rational Cherednik algebras and invariant Hermitian forms on representations of Iwahori-Hecke algebras, and we exploit this connection to show that the $KZ$ functor preserves signatures, and in particular unitarizability, in an appropriate sense.\end{abstract}

\maketitle

\tableofcontents

\section{Introduction}  Let $W$ be a finite complex reflection group with reflection representation $\hfr$, and let $c : S \rightarrow \cplx$ be a $W$-invariant complex-valued function on the set of complex reflections $S \subset W$.  Associated to this data one has the rational Cherednik algebra $H_c(W, \hfr)$, introduced by Etingof and Ginzburg \cite{EG}.  The algebras $H_c(W, \hfr)$, parameterized by such functions $c : S \rightarrow \cplx$, form a family of infinite-dimensional noncommutative associative algebras providing a flat deformation of the algebra $H_0(W, \hfr) = \cplx W \ltimes D(\hfr)$, the semidirect product of $W$ with the algebra of polynomial differential operators on $\hfr$.

Since their introduction these algebras and their representation theory have been intensely studied.  Of particular interest is the category $\oscr_c(W, \hfr)$ of representations of $H_c(W, \hfr)$, introduced by Ginzburg, Guay, Opdam, and Rouquier \cite{GGOR}, deforming the category of finite-dimensional representations of $W$ and analogous to the classical Bernstein-Gelfand-Gelfand category $\oscr$ attached to a finite-dimensional complex semisimple Lie algebra $\gfr$.  In addition to being of interest from the purely representation-theoretic point of view taken in this paper, the category $\oscr_c(W, \hfr)$ has proved to have connections with various other topics, including Hilbert schemes \cite{GS1, GS2}, torus knot invariants \cite{GORS, EGL}, quantum integrable systems \cite{EG, E1, F}, and categorification \cite{S, SV}.

The category $\oscr_c(W, \hfr)$ has many structures and properties in common with classical categories $\oscr$.  It is a highest weight category with standard objects labeled by the set $\irr(W)$ of irreducible representations of the group $W$; to each $\lambda \in \irr(W)$ there is an associated standard module $\Delta_c(\lambda)$ with lowest weight $\lambda$, analogous to the Verma modules appearing in the representation theory of semisimple Lie algebras.  Each standard module $\Delta_c(\lambda)$ has a unique simple quotient $L_c(\lambda)$, and the correspondence $\lambda \mapsto L_c(\lambda)$ provides a bijection between $\irr(W)$ and the isomorphism classes of irreducible representations in $\oscr_c(W, \hfr)$.  All of the finite-dimensional representations of $H_c(W, \hfr)$ belong to $\oscr_c(W, \hfr)$.  Furthermore, each representation $M$ in $\oscr_c(W, \hfr)$ is naturally graded and therefore has an associated character, defined to be the generating function recording the characters of the finite-dimensional representations of $W$ appearing in each graded component.  However, unlike for semisimple Lie algebras, explicit character formulas for irreducible representations and a classification of the finite-dimensional representations remain unknown in many cases.

In addition to characters, another interesting class of invariants attached to irreducible representations of rational Cherednik algebras are the \emph{signature characters}.  When the parameter $c$ is \emph{real}, i.e. $c(s) = \bar{c(s^{-1})}$ for all reflections $s \in S$, each standard module $\Delta_c(\lambda)$ admits a $W$-invariant, graded, \emph{contravariant} Hermitian form $\beta_{c, \lambda}$, unique up to scaling by a positive real number.  The kernel of the form $\beta_{c, \lambda}$ coincides with the unique maximal proper submodule of $\Delta_c(\lambda)$, and in particular $\beta_{c, \lambda}$ descends to the irreducible quotient $L_c(\lambda)$.  Similarly to the definition of the character of $L_c(\lambda)$, the \emph{signature character} of $L_c(\lambda)$ is the generating function recording the signature of the form $\beta_{c, \lambda}$ in each finite-dimensional graded component of $L_c(\lambda)$.  Determining explicit formulas for these signature characters and describing the set of parameters $c$ such that $\beta_{c, \lambda}$ is unitary has proved very difficult, with complete results available in only a limited collection of cases, e.g. \cite{ESG} and \cite{Ve} for the type $A$ case and \cite{G} for the classical and cyclotomic types.  The study of the forms $\beta_{c, \lambda}$ can be viewed as an analogue for rational Cherednik algebras of the study of unitarizability (or, more generally, of signatures of invariant Hermitian forms) of representations of real reductive groups considered in detail by Adams, van Leeuwen, Trapa, and Vogan \cite{AvLTV}.

When $W$ is a finite real reflection group, the rational Cherednik algebra contains a natural $\slfr_2$-triple $\mathbf{e}, \mathbf{f}, \mathbf{h} \in H_c(W, \hfr)$.  The element $\mathbf{f}$ acts by a degree -2 operator, and hence nilpotently, on any representation $M \in \oscr_c(W, \hfr)$.  In particular, its exponential $\exp(\mathbf{f})$ is well-defined and preserves the natural filtration on any $M \in \oscr_c(W, \hfr)$.  In particular, the determination of the signatures of the \emph{Gaussian inner product} $\gamma_{c, \lambda}(v, v') := \beta_{c, \lambda}(\exp(\mathbf{f})v, \exp(\mathbf{f})v')$, considered in \cite{C} and \cite[Definition 4.5]{ESG}, in the filtered pieces of $\Delta_c(\lambda)$ is equivalent to the determination of the signature character of $L_c(\lambda)$.  As it happens, the Gaussian inner product $\gamma_{c, \lambda}$ appears easier to study than the contravariant form $\beta_{c, \lambda}$ itself.

The main purpose of this paper, generalizing previous results of Dunkl \cite{Dunkl-dihedral, Dunkl-B2, Dunkl-B2-II}, is to introduce a fundamental object, the \emph{Dunkl weight function} $K_{c, \lambda}$, for the study of the Gaussian inner product $\gamma_{c, \lambda}$.  In fact, it is natural to consider a slightly more general context in which the parameter $c$ is not required to be real; for general $c$, $\gamma_{c, \lambda}$ is a sesquilinear pairing $$\gamma_{c, \lambda} : \Delta_c(\lambda) \times \Delta_{c^\dagger}(\lambda) \rightarrow \cplx,$$  where $c^\dagger$ is the parameter given by $c^\dagger(s) = \bar{c(s^{-1})}$.  By the PBW theorem for rational Cherednik algebras \cite[Theorem 1.3]{EG}, the standard module $\Delta_c(\lambda)$ can be identified with $\cplx[\hfr] \otimes \lambda$ as a $\cplx[\hfr] \rtimes \cplx W$-module, independently of $c$, where $\cplx[\hfr]$ denotes the algebra of polynomial functions on the reflection representation $\hfr$.  With respect to this identification, and having chosen a $W$-invariant positive-definite Hermitian form $(v_1, v_2) \mapsto v_2^\dagger v_1$ on $\lambda$, we have the following result, where $\hfr_\real$ denotes the real reflection representation of $W$:

\begin{theorem*} For any finite Coxeter group $W$ and irreducible representation $\lambda \in \irr(W)$, there is a unique family $K_{c, \lambda}$, holomorphic in $c \in \pfr$, of $\End_\cplx(\lambda)$-valued tempered distributions on $\hfr_\real$ such that the following integral representation of the Gaussian pairing $\gamma_{c, \lambda}$ holds for all $c \in \pfr$: $$\gamma_{c, \lambda}(P, Q) = \int_{\hfr_\real} Q(x)^\dagger K_{c, \lambda}(x)P(x)e^{-|x|^2/2}dx \ \ \ \ \text{ for all } P, Q \in \cplx[\hfr] \otimes \lambda.$$\end{theorem*}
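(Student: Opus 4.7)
The plan is to establish uniqueness by a standard density argument, then construct $K_{c,\lambda}$ first as an analytic function on the hyperplane complement $\hfr_{\real, reg}$, extend it across the reflection hyperplanes as a tempered distribution in a suitable region of parameters, and finally obtain the full family by analytic continuation in $c$.

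For uniqueness, suppose $K_1$ and $K_2$ are two tempered-distribution solutions. Their difference $D$ pairs to zero with every function of the form $Q(x)^\dagger P(x) e^{-|x|^2/2}$ for $P, Q \in \cplx[\hfr] \otimes \lambda$. Since Hermite functions lie in this space and form an orthonormal basis of $L^2(\hfr_\real)$, their span is dense in the Schwartz space $\scal(\hfr_\real)$, so $D = 0$.

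For existence, I would proceed in three steps. \emph{Step 1} (regular locus): on $\hfr_{\real, reg}$, gauging away the reflection part of the Dunkl operators turns the condition defining $K_{c,\lambda}$ into a flat-connection problem, essentially the KZ connection of \cite{GGOR} in a Hermitian guise. Solving it produces, for each $c$, an $\End_\cplx(\lambda)$-valued real-analytic function on $\hfr_{\real, reg}$ that depends holomorphically on $c$ and encodes a braid-invariant Hermitian pairing on $KZ(\Delta_c(\lambda))$. \emph{Step 2} (positive regime): when $\Re c$ is sufficiently large, this analytic kernel has local behavior comparable to $\prod_\alpha |\alpha(x)|^{2 \Re c(s_\alpha)}$ near each hyperplane, hence is locally $L^1$ on $\hfr_\real$ and, after multiplication by the Gaussian, defines a tempered distribution. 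In this range the integral identity reduces, via the adjoint relations for the Dunkl operators together with PBW, to the single base case $P = Q = 1$, which holds by construction of the kernel. \emph{Step 3} (analytic continuation): for each fixed $P, Q$ the algebraic pairing $\gamma_{c,\lambda}(P, Q)$ is holomorphic (in fact polynomial) in $c \in \pfr$. A Bernstein--Sato-type argument extends the weight distribution meromorphically to all of $\pfr$, and the holomorphy of the target forces any apparent poles to cancel, yielding the desired holomorphic family; uniqueness from the first paragraph then pins down the extension.

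The main obstacle I expect is the analytic control required for Steps 2 and 3: both the sharp local integrability estimates near intersections of several reflection hyperplanes, where the matrix-valued local system interacts nontrivially with the product of singular factors $\prod |\alpha|^{2 \Re c}$, and the verification that the Bernstein--Sato meromorphic extension of the distribution has no actual poles in $c$ after pairing against test functions. This analytic bookkeeping is where the difficulty of generalizing Dunkl's explicit low-rank constructions to arbitrary finite Coxeter groups will concentrate.
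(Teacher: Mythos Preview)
Your overall architecture matches the paper's: uniqueness by density, construction on $\hfr_{\real,reg}$ via a KZ-type system, extension across the hyperplanes, and analytic continuation in $c$. But Step~2 contains a concrete error that would derail the argument, and your Step~1 is missing the key boundary condition that the paper isolates.

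\textbf{The sign in Step 2 is wrong.} The local behavior of $K_{c,\lambda}$ near a reflection hyperplane $\ker(\alpha_s)$ is governed by factors of the form $|\alpha_s|^{\pm 2c_s}$, arising from the two eigenspaces of $s$ on $\lambda$; in particular the $|\alpha_s|^{-2c_s}$ piece is always present (on the $(+1,+1)$ eigenspace of left/right multiplication by $s$). Thus local integrability holds for \emph{small} $|c_s|$, not for large $\Re c$, and this is exactly what the paper proves (Lemma~\ref{homogeneous-L1-lemma} gives the estimate $\|K(x)\|\le C\prod_s\alpha_s(x)^{-2|c_s|}$ on a chamber). Your ``positive regime'' has the direction of integrability backwards; for large $\Re c$ the kernel is genuinely non-integrable across each wall and you cannot start the analytic continuation from there.

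\textbf{Step 1 needs an explicit boundary condition.} Solving the two-sided KZ system on $\hfr_{\real,reg}$ gives a space of solutions of dimension $(\dim\lambda)^2$, and a generic one does \emph{not} represent $\gamma_{c,\lambda}$ even after rescaling. The paper's Theorem~\ref{existence-theorem} shows that the correct solutions are exactly those whose initial value $K(x_0)$ defines a $B_W$-invariant sesquilinear pairing on $KZ_{x_0}(\Delta_c(\lambda))\times KZ_{x_0}(\Delta_{c^\dagger}(\lambda))$, and that this is equivalent to an ``asymptotic $W$-invariance'' condition controlling the leading singular term at each wall. This equivalence is what makes the boundary integral $\lim_{\epsilon\to 0}\int_{\Omega_\epsilon}\partial_y(K\phi)\,dx$ vanish and hence makes $K\circ D_y=0$ hold as a distribution, not merely on $\hfr_{\real,reg}$. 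You mention braid invariance in passing, but you do not explain that it is precisely this condition that singles out the one-dimensional family of correct kernels and makes the integration-by-parts work across the walls.

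\textbf{On Step 3.} Your Bernstein--Sato instinct is right in spirit. The paper implements it concretely by pulling $K_c$ back to the De Concini--Procesi wonderful model $Y\to\hfr$, where the hyperplane arrangement becomes a normal crossings divisor. Near each boundary point one then has local coordinates in which $\pi^*K_c$ is a finite sum of terms $f_c(z)\prod_j|z_j|^{g_j(c)}$ with $f_c$ smooth and $g_j$ linear in $c$, so the standard one-variable meromorphic continuation of $|x|^\lambda$ applies coordinate by coordinate. This is exactly the mechanism that handles the ``intersections of several reflection hyperplanes'' you flag as the obstacle; you should expect to need a resolution of this kind rather than a direct estimate on $\hfr_\real$. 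The pole-cancellation step then works as you suggest: the Hermite coefficients of the meromorphic family are the values $\gamma_{c,\lambda}(P,Q)$, which are entire in $c$, so the family is in fact holomorphic.
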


This theorem, along with additional properties of $K_{c, \lambda}$, appears in Theorem \ref{main-existence-theorem} and is proved in Sections \ref{small-c-section}-\ref{extension-to-hyperplanes-section}.  The family of tempered distributions $K_{c, \lambda}$ appearing in the theorem above will be called the \emph{Dunkl weight function}.  The case in which $\lambda$ is the trivial representation, including the extension of the weight function to arbitrary $c$ as a tempered distribution, has been studied previously in detail by Etingof \cite{E2}, leading to the complete determination of the parameters $c$ for which $L_c(\triv)$ is finite-dimensional for Coxeter groups $W$.  For dihedral groups $W$, Dunkl has provided explicit formulas for the matrix entries of $K_{c, \lambda}$ for $c$ small and real.

Additionally, for real parameters $c = c^\dagger$, the Dunkl weight function $K_{c, \lambda}$ provides a bridge between the study of invariant Hermitian forms on representations of rational Cherednik algebras and of associated finite Hecke algebras via the Knizhnik-Zamolodchikov (KZ) functor.  The KZ functor, introduced by Ginzburg, Guay, Opdam, and Rouquier \cite{GGOR}, is an exact functor $KZ : \oscr_c(W, \hfr) \rightarrow H_q(W)\mhyphen\text{mod}_{f.d.}$ from the category $\oscr_c(W, \hfr)$ to the category $H_q(W)\mhyphen\text{mod}_{f.d.}$ of finite-dimensional representations of the Hecke algebra $H_q(W)$ attached to the Coxeter group $W$ at a particular parameter $q$.  The construction of the KZ functor depends on a choice of point $x_0$ in the complement $\hfr_{reg}$ of the reflection hyperplanes in $\hfr$, with any two points giving rise to isomorphic functors.  Let $KZ_{x_0}$ denote the functor associated to the point $x_0 \in \hfr_{reg}$.  As a vector space, $KZ_{x_0}(\Delta_c(\lambda))$ is naturally identified with the vector space $\lambda$ itself.  In Theorem \ref{main-existence-theorem} we will see that the restriction of the distribution $K_{c, \lambda}$ to $\hfr_{\real, reg} := \hfr_\real \cap \hfr_{reg}$ is given by integration against an analytic function with values in $\End_\cplx(\lambda)$, and, with respect to the identification $KZ_{x_0}(\Delta_c(\lambda)) \cong_\cplx \lambda$, the value $K_{c, \lambda}(x_0)$ at any $x_0 \in \hfr_{\real, reg}$ determines a braid group invariant Hermitian form on $KZ_{x_0}(\Delta_c(\lambda))$.  In Section \ref{signature-comparision-section}, we exploit this relationship between invariant Hermitian forms on $\Delta_c(\lambda)$ and $KZ_{x_0}(\Delta_c(\lambda))$ to show that the KZ functor preserves signatures, and hence unitarizability, in an appropriate sense made precise in Theorem \ref{KZ-preserves-signatures-theorem}.  

This paper is organized as follows.  In Section \ref{background-section}, we recall necessary background on rational Cherednik algebras and the categories $\oscr_c(W, \hfr)$.  In Section \ref{sig-char-section}, we recall the definition of signature characters, introduce Janzten filtrations on standard modules $\Delta_c(\lambda)$, and use the Jantzen filtrations to prove that the shifted signature characters are rational functions, allowing for the definition of the \emph{asymptotic signature} $a_{c,\lambda} \in \rats \cap [-1, 1]$ recording the limiting behavior of the signatures of $\beta_{c, \lambda}$ in graded components of high degree.  In Section \ref{weight-function-section}, we state and prove the main theorem on the existence of the Dunkl weight function.  Finally, in Section \ref{signature-comparision-section}, we use the Dunkl weight function and its properties established in Section \ref{weight-function-section} to prove Theorem \ref{KZ-preserves-signatures-theorem} on the compatibility of the KZ functor with signatures.  In Section \ref{conj-section}, we discuss related conjectures and further directions.

\subsection{Acknowledgements}  This paper arose from a project which was developed by Pavel Etingof and benefited from his conversations with Larry Guth, Richard Melrose, Leonid Polterovich, and Vivek Shende.  I would like to express my deep gratitude to Pavel Etingof for suggesting this project, for many useful conversations, and for his countless ideas and insights that permeate this paper and led to the conjectures appearing in Section \ref{conj-section}.  I would also like to thank Semyon Dyatlov for providing essentially all of the content of Sections \ref{semiclassical-reminder-section} and \ref{key-analysis-lemma-section}, notably including the crucial Lemma \ref{l:matrix-case-lemma} and its proof, and for explaining to me the techniques from semiclassical analysis used in those sections.

\section{Background and Definitions}\label{background-section}  In this section we will recall the definition of rational Cherednik algebras $H_c(W, \hfr)$ and some related objects and constructions that will be important later, including the category of representations $\oscr_c(W, \hfr)$, standard modules $\Delta_c(\lambda)$, and the KZ functor.  The material recalled in this section can be found, for example, in \cite{EM} and \cite{GGOR}.

\subsection{Rational Cherednik Algebras}  A \emph{real reflection} is an invertible linear transformation $s \in GL(\hfr_\real)$ of a finite-dimensional real vector space $\hfr_\real$ such that $s^2 = \id$ and $\dim \ker(s - 1) = \dim \hfr_\real - 1$.  A \emph{finite real reflection group} is a finite group $W$ along with a real faithful representation $\hfr_\real$ such that the set of elements $S \subset W$ acting in $\hfr_\real$ as real reflections generates $W$.  We will refer to the representation $\hfr_\real$ as the real reflection representation of $W$.  A finite group $W$ may be a finite real reflection group in more than one way, and we will always consider the pair $(W, \hfr_\real)$ of $W$ along with a given real reflection representation $\hfr_\real$.  The finite real reflection groups coincide with the finite Coxeter groups.  The standard classification of finite real reflection groups and their basic properties can be found, for example, in \cite{H}.

Similarly, a \emph{complex reflection} is an invertible linear transformation $s \in GL(\hfr)$ of a finite-dimensional complex vector space $\hfr$ that has finite order and such that $\dim \ker(s - 1) = \dim \hfr - 1$.  A \emph{finite complex reflection group} is a finite group $W$ along with a complex faithful representation $\hfr$ such that the set of elements $S \subset W$ acting in $\hfr$ as complex reflections generates $W$.  For example, if $W$ is a finite real reflection group with real reflection representation $\hfr_\real$, then $W$ is also a finite complex reflection group with complex reflection representation $\hfr := \cplx \otimes_\real \hfr_\real$.  The standard classification of complex reflection groups can be found in \cite{ST}.

Fix a finite complex reflection group $W$ with complex reflection representation $\hfr$, and let $S \subset \cplx$ denote the set of complex reflections in $W$ with respect to the representation $\hfr$.  Let $(\cdot, \cdot)$ denote the natural pairing $\hfr^* \times \hfr \rightarrow \cplx$.  Mimicking the roots and coroots appearing in Lie theory, for each complex reflection $s \in S$ choose eigenvectors $\alpha_s \in \hfr^*$ and $\alpha_s^\vee \in \hfr$ for $s$ with eigenvalues $\lambda_s \neq 1$ and $\lambda_s^{-1} \neq 1$, respectively, partially normalized so that $(\alpha_s, \alpha_s^\vee) = 2$.  Let $c : S \rightarrow \cplx$ be a $W$-invariant function with respect to the action of $W$ on $S$ by conjugation.  We will refer to such $c$ as a \emph{parameter}, and $\pfr$ will denote the $\cplx$-vector space of such parameters.  Given a parameter $c \in \pfr$ one may define a \emph{rational Cherednik algebra}:

\begin{definition}
The \emph{rational Cherednik algebra} $H_c(W, \hfr)$ is the associative unital algebra with presentation $$H_c(W, \hfr) := \frac{\cplx W \ltimes T(\hfr \oplus \hfr^*)}{\langle[x, x'], [y, y'], [y, x] - (y, x) + \sum_{s \in S} c_s(\alpha_s, y)(x, \alpha_s^\vee)s : x, x' \in \hfr^*, y, y' \in \hfr\rangle}.$$
\end{definition}

\noindent In the definition above, $T(\hfr \oplus \hfr^*)$ denotes the tensor algebra of $\hfr \oplus \hfr^*$, and $\cplx W \ltimes T(\hfr \oplus \hfr^*)$ denotes the semidirect product algebra of $T(\hfr \oplus \hfr^*)$ with $\cplx W$ with respect to the natural action of $W$ on $T(\hfr \oplus \hfr^*)$: as a vector space we have $\cplx W \ltimes T(\hfr \oplus \hfr^*) = \cplx W \otimes T(\hfr \oplus \hfr^*)$, and the multiplication is given by $(w_1 \otimes t_1)(w_2 \otimes t_2) = w_1w_2 \otimes w_2^{-1}(t_1)t_2$.  For example, when $c = 0$ the algebra $H_0(W, \hfr)$ is naturally identified with the algebra $\cplx W \ltimes D(\hfr)$, where $D(\hfr)$ denotes the algebra of polynomial differential operators on $\hfr$.

The rational Cherednik algebra $H_c(W, \hfr)$ may alternatively be defined via its faithful \emph{polynomial} representation in the space $\cplx[\hfr]$ in which an element $w \in W$ acts via the representation of $W$ on $\hfr$, an element $x \in \hfr^*$ acts by multiplication, and an element $y \in \hfr$ acts by the \emph{Dunkl operator} $$D_y := \partial_y - \sum_{s \in S} \frac{2c_s\alpha_s(y)}{(1 - \lambda_s)\alpha_s}(1 - s),$$ where $\partial_y$ denotes the derivative with respect to $y$.

\subsection{The PBW Theorem, Category $\oscr_c(W, \hfr)$, and Standard Modules}  Let $\cplx[\hfr]$ denote the algebra of polynomial functions on $\hfr$, and similarly let $\cplx[\hfr^*]$ denote the algebra of polynomial functions on $\hfr^*$.  From \cite[Theorem 1.3]{EG}, we have the following PBW theorem describing $H_c(W, \hfr)$ as a vector space independent of $c$:

\begin{theorem}[PBW Theorem for Rational Cherednik Algebras] For any parameter $c$, the natural map $$\cplx[\hfr] \otimes \cplx W \otimes \cplx[\hfr^*] \rightarrow H_c(W, \hfr)$$ given by multiplication is an isomorphism of vector spaces.\end{theorem}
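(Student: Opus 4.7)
The plan is to establish surjectivity and injectivity of the multiplication map separately. Surjectivity follows directly from the defining relations via a normal-ordering argument, while injectivity is obtained by realizing $H_c(W, \hfr)$ faithfully on $\cplx[\hfr]$ via the Dunkl representation and then comparing leading terms with the well-understood case $c = 0$.

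For surjectivity, given any word in the generators $\hfr^*$, $W$, $\hfr$, the semidirect-product relations move each $w \in W$ past any generator in $\hfr \oplus \hfr^*$ at the cost of a $W$-twist, and the relation $[y, x] = (y, x) - \sum_{s \in S} c_s(\alpha_s, y)(x, \alpha_s^\vee)s$ moves each $y \in \hfr$ to the right of each $x \in \hfr^*$ at the cost of an element of $\cplx W$ involving strictly fewer $\hfr \oplus \hfr^*$-generators. An induction on the total number of such generators appearing in a word places any monomial into the ordered form $f \cdot w \cdot g$, giving surjectivity.

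For injectivity I would use the polynomial (Dunkl) representation recalled immediately after the theorem statement, promoted to a homomorphism $\rho : H_c(W, \hfr) \to \End_\cplx(\cplx[\hfr])$. Constructing $\rho$ requires verifying that the proposed actions of $x \in \hfr^*$, $w \in W$, and $D_y$ satisfy all three defining relations. Commutativity of multiplication and $W$-equivariance are automatic, and the mixed relation $[D_y, x] = (y, x) - \sum_s c_s(\alpha_s, y)(x, \alpha_s^\vee)s$ reduces to a direct computation via the Leibniz rule. The main obstacle is showing $[D_y, D_{y'}] = 0$ on $\cplx[\hfr]$. Expanding the commutator yields three types of contributions: the partial-derivative parts, which commute trivially; cross-terms mixing a $\partial$ on one side with a reflection-sum on the other, which pair off upon symmetrization in $y \leftrightarrow y'$; and a double sum over ordered pairs $(s, t) \in S \times S$. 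Organizing this last sum by conjugacy orbits of pairs, pairing terms related by $s \leftrightarrow t$, and invoking the $W$-invariance of $c$ together with the normalization $(\alpha_s, \alpha_s^\vee) = 2$ forces the sum to vanish. This rational-function identity is the technical heart of the proof.

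Once $\rho$ is constructed, injectivity follows by a leading-term argument. Because the correction $D_y - \partial_y$ does not raise polynomial degree beyond $\partial_y$, for $g \in \cplx[\hfr^*]$ of degree $d$ the operator $\rho(g)$ acts on a homogeneous component $\cplx[\hfr]_k$ with top part $g(\partial)$ landing in $\cplx[\hfr]_{k-d}$. Consequently, for any $\xi = \sum_i f_i \otimes w_i \otimes g_i \in \cplx[\hfr] \otimes \cplx W \otimes \cplx[\hfr^*]$, the action of $\rho(m(\xi))$ has leading part $\sum_i f_i w_i g_i(\partial) \in \cplx W \ltimes D(\hfr) = H_0(W, \hfr)$. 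If $m(\xi) = 0$ in $H_c(W, \hfr)$, then this leading part must vanish in $H_0(W, \hfr)$, and the elementary case $c = 0$ of the PBW theorem—essentially the faithfulness of the standard polynomial representation of $\cplx W \ltimes D(\hfr)$—forces the top-degree components of $\xi$ to be zero. Iterating on total polynomial degree then gives $\xi = 0$, completing the proof.
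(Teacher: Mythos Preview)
The paper does not prove this theorem; it is quoted from \cite[Theorem 1.3]{EG} with no argument given. So there is no ``paper's proof'' to compare against, and I will assess your outline on its own merits.

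Your surjectivity argument is fine, and reducing injectivity to the faithfulness of the Dunkl (polynomial) representation is the standard strategy. The verification that $\rho$ is a well-defined algebra homomorphism is sketched reasonably; the commutativity $[D_y,D_{y'}]=0$ is indeed the nontrivial step (this is Dunkl's theorem), and your indication of how the double sum over reflections cancels is in the right direction, though it would need to be written out carefully.

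The genuine gap is in your leading-term argument for injectivity. You claim that the ``top part'' of $\rho(g)$ acting on $\cplx[\hfr]_k$ is $g(\partial)$, landing in $\cplx[\hfr]_{k-d}$. But in the polynomial-degree grading on $\cplx[\hfr]$, the correction term $D_y-\partial_y=-\sum_s c_s\,\alpha_s(y)\,\alpha_s^{-1}(1-s)$ is \emph{also} homogeneous of degree $-1$: for $p\in\cplx[\hfr]_k$, the polynomial $(1-s)p$ is divisible by $\alpha_s$ and the quotient lies in $\cplx[\hfr]_{k-1}$. Hence $D_y$ and $\partial_y$ have exactly the same homogeneity, $g(D)$ maps $\cplx[\hfr]_k$ into $\cplx[\hfr]_{k-d}$ on the nose, and there is no filtration in which $g(\partial)$ appears as a genuine ``leading part'' of $g(D)$. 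As written, your induction on degree never gets started.

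The standard repair is to pass to the localization $\cplx[\hfr_{reg}]=\cplx[\hfr][\delta^{-1}]$. There the reflection term $\alpha_s^{-1}(1-s)$ is an honest element of $\cplx W\ltimes\cplx[\hfr_{reg}]$, i.e.\ a zeroth-order $W$-twisted differential operator, so $D_y\in\cplx W\ltimes D(\hfr_{reg})$ has order $1$ with principal symbol $\partial_y$. Now the filtration by differential-operator order does what you want: the symbol of $\sum_i f_i\,w_i\,g_i(D)$ in $\cplx W\ltimes\cplx[T^*\hfr_{reg}]$ is $\sum_i f_i\,w_i\,g_i(\xi)$, and vanishing of this symbol forces the top-order summands of $\xi$ to vanish (this is where the ``$c=0$'' PBW, namely the obvious PBW for $\cplx W\ltimes D(\hfr_{reg})$, actually enters). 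Since $\cplx[\hfr]\hookrightarrow\cplx[\hfr_{reg}]$, injectivity on the localized module suffices. With this correction your argument goes through and matches the approach in the literature.
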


The PBW theorem above should be viewed as a direct analogue for rational Cherednik algebras of the triangular decomposition $U(\gfr) \cong U(\nfr^-) \otimes U(\hfr) \otimes U(\nfr^+)$ of the universal enveloping algebra $U(\gfr)$ of a complex semisimple Lie algebra $\gfr$ with Cartan subalgebra $\hfr$ and associated positive and negative nilpotent subalgebras $\nfr^+$ and $\nfr^-$.  The category $\oscr_c(W, \hfr)$ of representations of $H_c(W, \hfr)$ introduced in \cite{GGOR} is then defined in parallel with the classical Bernstein-Gelfand-Gelfand category $\oscr$:

\begin{definition}[Category $\oscr$ for Rational Cherednik Algebras, \cite{GGOR}]  The category $\oscr_c(W, \hfr)$ is the full subcategory of the category of representations of $H_c(W, \hfr)$ consisting of those representations $M$ that are finitely generated over $\cplx[\hfr]$ and on which $\hfr \subset \cplx[\hfr^*]$ acts locally nilpotently.\end{definition}

An important class of representations in $\oscr_c(W, \hfr)$ are the \emph{standard modules}, which are direct analogues of the classical Verma modules of semisimple Lie algebras:

\begin{definition}[Standard Modules and Lowest Weight Representations]  For any irreducible representation $\lambda \in \irr(W)$, the \emph{standard module $\Delta_c(\lambda) \in \oscr_c(W, \hfr)$ with lowest weight} $\lambda$ is $$\Delta_c(\lambda) := H_c(W,\hfr) \otimes_{\cplx W \ltimes \cplx[\hfr^*]} \lambda,$$ where $\hfr \subset \cplx[\hfr^*]$ acts on $\lambda$ by $0$.  A module $M \in \oscr_c(W, \hfr)$ will be called \emph{lowest weight with lowest weight $\lambda$} if $M$ is isomorphic to a nonzero quotient of $\Delta_c(\lambda)$.\end{definition}

Note that by the PBW theorem any standard module $\Delta_c(\lambda)$ is naturally isomorphic to $\cplx[\hfr] \otimes \lambda$ as a module over $\cplx W \ltimes \cplx[\hfr] \subset H_c(W, \hfr).$  We will use this identification frequently.

Modules in the category $\oscr_c(W, \hfr)$ are naturally graded by their decomposition into generalized eigenvectors for the \emph{grading element} $\mathbf{h} \in H_c(W, \hfr)$, a deformation of the Euler vector field:

\begin{definition}[Grading Element] The \emph{grading element} $\mathbf{h} \in H_c(W, \hfr)$ is the element $$\mathbf{h} := \sum_{i = 1}^{\dim \hfr} x_iy_i + \frac{\dim \hfr}{2} - \sum_{s \in S} \frac{2c_s}{1 - \lambda_s}s,$$ where $x_1, ..., x_{\dim \hfr}$ is any basis of $\hfr^*$ and $y_1, ..., y_{\dim \hfr}$ is the associated dual basis of $\hfr$.\end{definition}

Clearly, the element $\mathbf{h}$ does not depend on the choice of basis $x_1, ..., x_{\dim \hfr}$.  When $W$ is a finite real reflection group, as will be the case relevant for most of this paper, we have $\lambda_s = -1$ for all $s \in S$, so $\mathbf{h}$ takes the form $\mathbf{h} = \sum_{i = 1}^{\dim \hfr} x_iy_i + \frac{\dim \hfr}{2} - \sum_{s \in S} c_ss.$  From \cite[Proposition 3.18, Theorem 3.28]{EM} we have:

\begin{proposition} The element $\mathbf{h} \in H_c(W, \hfr)$ satisfies the commutation relations $$[\mathbf{h}, x] = x, x \in \hfr^* \ \ \ \ [\mathbf{h}, y] = -y, y \in \hfr.$$  Furthermore, $\mathbf{h}$ acts locally finitely on any $M \in \oscr_c(W, \hfr)$.\end{proposition}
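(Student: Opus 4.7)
The plan is to prove the two assertions separately, using the commutation relations to control the behavior of $\mathbf{h}$ on the relevant subspaces.

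First, for the commutation relations, I would argue by a direct expansion of $\mathbf{h}$. Since $\sum_{i}x_i y_i$ is independent of the chosen dual bases, I compute $[\sum_i x_i y_i, x] = \sum_i x_i [y_i, x]$ (using $[x_i, x]=0$), and substitute the defining relation $[y_i, x] = (y_i, x) - \sum_s c_s(\alpha_s, y_i)(x, \alpha_s^\vee)s$. Using the duality identities $\sum_i (y_i, x)x_i = x$ and $\sum_i (\alpha_s, y_i)x_i = \alpha_s$, the first summand contributes $x - \sum_s c_s(x, \alpha_s^\vee)\alpha_s s$. The constant term commutes trivially. For $[s,x] = (s(x)-x)s$, I would decompose $x = x_{\text{fix}} + \tfrac{1}{2}(x, \alpha_s^\vee)\alpha_s$, where $x_{\text{fix}}$ is $s$-fixed (this decomposition is available because the fixed subspace of $s$ on $\hfr^*$ is the annihilator of $\alpha_s^\vee$ and $(\alpha_s, \alpha_s^\vee) = 2$), giving $s(x)-x = \tfrac{1}{2}(\lambda_s-1)(x, \alpha_s^\vee)\alpha_s$. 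Substituting into $-[\sum_s \tfrac{2c_s}{1-\lambda_s}s, x]$ then yields exactly $\sum_s c_s(x, \alpha_s^\vee)\alpha_s s$, which cancels the correction from the first summand, proving $[\mathbf{h}, x] = x$. The relation $[\mathbf{h},y] = -y$ is completely symmetric.

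For the local finiteness, I would reduce to the case of standard modules. Let $M \in \oscr_c(W, \hfr)$ with finite generating set $v_1, \ldots, v_k$ over $\cplx[\hfr]$. The $\cplx W \ltimes \cplx[\hfr^*]$-submodule $V \subset M$ generated by the $v_i$ is finite-dimensional, because $W$ is finite and $\hfr$ acts locally nilpotently. The PBW theorem implies that $H_c(W, \hfr)$ is free as a right $\cplx W \ltimes \cplx[\hfr^*]$-module, so the induction functor $H_c \otimes_{\cplx W \ltimes \cplx[\hfr^*]}(-)$ is exact, and we have a surjection $H_c \otimes_{\cplx W \ltimes \cplx[\hfr^*]} V \twoheadrightarrow M$. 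Filtering $V$ by $V_j := \ker(\hfr^{j+1}|_V)$ produces short exact sequences $0 \to V_{j-1} \to V_j \to V_j/V_{j-1} \to 0$ whose successive quotients are annihilated by $\hfr$. Applying the exact induction functor, $M$ is built by extensions and a quotient from the modules $H_c \otimes_{\cplx W \ltimes \cplx[\hfr^*]}(V_j/V_{j-1}) \cong \cplx[\hfr] \otimes (V_j/V_{j-1})$, which decompose into direct sums of standard modules $\Delta_c(\lambda)$ after passing to $W$-isotypic components.

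It therefore suffices to show $\mathbf{h}$ acts locally finitely on each standard module $\Delta_c(\lambda) = \cplx[\hfr]\otimes \lambda$ and that local finiteness is preserved under extensions and quotients. On $1 \otimes \lambda$, the grading element acts as $\mathbf{h}(1\otimes v) = (\tfrac{\dim\hfr}{2} - z_\lambda)(1\otimes v)$, because $\hfr$ annihilates $1\otimes \lambda$ and the element $Z := \sum_s \tfrac{2c_s}{1-\lambda_s}s$ is $W$-conjugation invariant, hence central in $\cplx W$, and so acts on the irreducible $\lambda$ by a scalar $z_\lambda$ by Schur's lemma. Iterating the relation from Part 1 gives $[\mathbf{h}, p] = np$ for $p \in \cplx[\hfr]_n$, so $\mathbf{h}$ acts on each graded piece $\cplx[\hfr]_n \otimes \lambda$ by the scalar $n + \tfrac{\dim\hfr}{2} - z_\lambda$, with finite-dimensional eigenspaces. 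Local finiteness passes through extensions (given $v \in M$ lifting $\bar v \in M''$ with $\cplx[\mathbf{h}]\bar v$ finite-dimensional, a polynomial relation $q(\mathbf{h})v \in M'$ plus finite generation of $\cplx[\mathbf{h}]q(\mathbf{h})v$ in $M'$ bounds $\cplx[\mathbf{h}]v$) and through quotients trivially, completing the argument.

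The main technical obstacle is the setup in Part 2: one must invoke PBW-freeness to ensure exactness of the induction, and then handle the filtration and extension arguments carefully. The computation in Part 1, while direct, also requires the somewhat delicate bookkeeping of the $\alpha_s$-decomposition of $x$ so that the correction terms from the two parts of $\mathbf{h}$ cancel.
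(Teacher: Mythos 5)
Your proof is correct, and it is essentially the standard argument: the paper itself does not prove this proposition but cites it from the Etingof--Ma lecture notes \cite{EM}, where both halves are established along the same lines you use (direct computation with the defining relations for the commutators; reduction of local finiteness to standard modules via the PBW theorem and the finite-dimensional $\cplx W \ltimes \cplx[\hfr^*]$-submodule generated by a generating set). All the key steps check out: the duality identities, the decomposition $x = x_{\mathrm{fix}} + \tfrac12(x,\alpha_s^\vee)\alpha_s$ with the resulting cancellation of the reflection terms, the exactness of induction from PBW-freeness, and the extension/quotient bookkeeping for local finiteness.

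One small caveat: the relation $[\mathbf h, y] = -y$ is not \emph{literally} symmetric to the $x$-case for a general complex reflection group. Writing $[\sum_i x_i y_i, y] = \sum_i [x_i,y]\,y_i$, the reflections $s$ appearing in $[x_i,y]$ must still be commuted past $y_i$, which produces $s(y_i)$ and hence an extra factor $\lambda_s^{-1}$ in the correction term (since $\sum_i x_i(\alpha_s^\vee)\,s(y_i) = \lambda_s^{-1}\alpha_s^\vee$); this is matched by the factor $\tfrac{\lambda_s^{-1}-1}{1-\lambda_s} = \lambda_s^{-1}$ coming from $[s,y]$, so the cancellation still occurs, but the bookkeeping deserves to be written out rather than dismissed as identical to the first computation. (For real reflection groups $\lambda_s = -1$ and the two computations really are mirror images.)
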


It follows that any $M \in \oscr_c(W, \hfr)$ has a direct sum decomposition $M = \bigoplus_{z \in \cplx} M_z$ into generalized eigenspaces for the action of $\mathbf{h}$ and that with respect to this decomposition $M$ is a graded module with elements $x \in \hfr^*$ acting by degree 1 operators, elements $y \in \hfr$ acting by degree -1 operators, and elements $w \in W$ acting by degree 0 operators.

This grading on $\Delta_c(\lambda)$ coincides with the usual grading on $\cplx[\hfr] \otimes \lambda$ shifted by $h_c(\lambda)$, where $h_c(\lambda) = (\dim \hfr)/2 - \chi_\lambda(\sum_{s \in S}\frac{2c_s}{1 - \lambda_s})$ is the scalar by which the element $(\dim \hfr)/2 - \sum_{s \in S} \frac{2c_s}{1 - \lambda_s}s \in Z(\cplx W)$ acts in the irreducible representation $\lambda$ of $W$ and $\chi_\lambda$ denotes the character of the representation $\lambda$.  In particular, the $\mathbf{h}$-weights appearing nontrivially in the standard module are those in the set $\{h_c(\lambda) + n : n \in \ints^{\geq 0}\}$.  Any proper submodule of $\Delta_c(\lambda)$ is graded and has all nontrivial $\mathbf{h}$-weights appearing in the set $\{h_c(\lambda) + n : n \in \ints^{> 0}\}$, and it follows that there is a maximal proper submodule $N_c(\lambda)$ of $\Delta_c(\lambda)$.  We denote the irreducible quotient $\Delta_c(\lambda)/N_c(\lambda)$ by $L_c(\lambda)$.  By \cite[Proposition 3.30]{EM} we have

\begin{proposition} Any irreducible representation $L \in \oscr_c(W, \hfr)$ is isomorphic to some $L_c(\lambda)$ for a unique $\lambda \in \irr(W)$.\end{proposition}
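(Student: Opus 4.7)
The plan is to split into existence and uniqueness. For existence, given an irreducible $L$ I will produce a ``lowest weight'' subspace of $L$ carrying a $W$-representation $\lambda$ which is annihilated by $\mathfrak{h}$, apply the universal property of the standard module, and use irreducibility of $L$ together with the fact that $L_c(\lambda)$ is the unique simple quotient of $\Delta_c(\lambda)$ to conclude. For uniqueness, I will recover $\lambda$ from $L_c(\lambda)$ as the lowest $\mathbf{h}$-weight part of $L_c(\lambda)$.

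For existence, let $L \in \oscr_c(W, \hfr)$ be irreducible. First I would argue that the subspace $L^{\hfr} := \{v \in L : y \cdot v = 0 \text{ for all } y \in \hfr\}$ is nonzero: since $\hfr \subset \cplx[\hfr^*]$ consists of commuting operators that act locally nilpotently on $L$, for any nonzero $v_0 \in L$ the subspace $\cplx[\hfr^*] \cdot v_0$ is finite-dimensional, and the commuting nilpotent operators $y_1, \dots, y_{\dim \hfr}$ must have a common kernel in this subspace. Next, $L^{\hfr}$ is $W$-stable because $W$ preserves $\hfr$, and it is $\mathbf{h}$-stable because $[\mathbf{h}, y] = -y$ gives $y \cdot (\mathbf{h}v) = (\mathbf{h}y + y)v = 0$ whenever $yv = 0$. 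Now I would choose an irreducible $W$-subrepresentation $\lambda \subset L^{\hfr}$. Since $\mathfrak{h}$ annihilates $\lambda$, by the universal property of the induced module $\Delta_c(\lambda) = H_c(W, \hfr) \otimes_{\cplx W \ltimes \cplx[\hfr^*]} \lambda$ (where $\hfr$ acts by $0$), the inclusion $\lambda \hookrightarrow L$ extends uniquely to a nonzero $H_c(W, \hfr)$-module map $\phi : \Delta_c(\lambda) \rightarrow L$. Irreducibility of $L$ forces $\phi$ to be surjective, so $L$ is a nonzero quotient of $\Delta_c(\lambda)$. Since $N_c(\lambda)$ is the unique maximal proper submodule of $\Delta_c(\lambda)$ and $L$ is simple, we must have $\ker \phi = N_c(\lambda)$, giving $L \cong L_c(\lambda)$.

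For uniqueness, suppose $L_c(\lambda) \cong L_c(\mu)$. In the standard module $\Delta_c(\lambda)$, the $\mathbf{h}$-weights occurring are exactly $h_c(\lambda) + \ints^{\geq 0}$, with the lowest weight space equal to $\lambda$ (under the PBW identification $\Delta_c(\lambda) \cong \cplx[\hfr] \otimes \lambda$). The maximal submodule $N_c(\lambda)$ contains only strictly higher $\mathbf{h}$-weights, so the lowest $\mathbf{h}$-weight eigenspace of $L_c(\lambda)$ is again $\lambda$, with $W$-action identified with the given one and $\mathbf{h}$-eigenvalue $h_c(\lambda)$. This lowest weight space is intrinsically recoverable from the isomorphism class of $L_c(\lambda)$ in $\oscr_c(W, \hfr)$, so $L_c(\lambda) \cong L_c(\mu)$ as $H_c(W, \hfr)$-modules forces $\lambda \cong \mu$ as $W$-representations.

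The only nontrivial input is the nonvanishing of $L^{\hfr}$, which rests on the observation that local nilpotence of $\hfr$ plus the commutativity of the generators $y_1, \dots, y_{\dim \hfr}$ produces a finite-dimensional $\cplx[\hfr^*]$-stable subspace on which all $y_i$ act as commuting nilpotents. Every other step is formal: $W$- and $\mathbf{h}$-stability are immediate from the defining relations, the universal property of $\Delta_c(\lambda)$ is tautological from its definition as an induced module, and the identification of the simple quotient is exactly the content already recorded before the proposition. I do not anticipate any serious obstacle.
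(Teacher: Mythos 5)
Your argument is correct and is precisely the standard proof of this fact; the paper itself gives no proof but cites \cite[Proposition 3.30]{EM}, where the argument is exactly this one (nonvanishing of the space of singular vectors via local nilpotence of the commuting Dunkl operators, Frobenius reciprocity for the induced module $\Delta_c(\lambda)$, and recovery of $\lambda$ as the lowest $\mathbf{h}$-weight space). No gaps.
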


If $N_c(\lambda) \neq 0$, then there is a nonzero homomorphism $\Delta_c(\mu) \rightarrow N_c(\lambda)$ for some $\mu \in \irr(W)$ such that $h_c(\mu) = h_c(\lambda) + k$ for some integer $k > 0$.  The quantity $h_c(\mu) - h_c(\lambda)$ is visibly a linear function of $c$, and $\{c \in \pfr : h_c(\mu) = h_c(\lambda) + k\}$ is an affine hyperplane in $\pfr$.  There are countably many such hyperplanes for various $\lambda, \mu \in \irr(W)$ and $k \in \ints^{> 0}$, and when $c$ lies on none of these hyperplanes it follows that $\Delta_c(\lambda) = L_c(\lambda)$ for all $\lambda \in \irr(W)$.  Furthermore, for such $c$ we have by a similar argument that $\ext^1_{\oscr_c(W, \hfr)}(\Delta_c(\lambda), \Delta_c(\mu)) = 0$ for all $\lambda, \mu \in \irr(W)$.  In summary, we have \cite[Proposition 3.35]{EM}:

\begin{proposition} For Weil generic $c \in \pfr$, the category $\oscr_c(W, \hfr)$ is semisimple with simple objects $L_c(\lambda)$ for $\lambda \in \irr(W)$.\end{proposition}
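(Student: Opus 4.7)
My plan is to build directly on the hyperplane analysis in the paragraph preceding the proposition and supply the $\ext^1$-vanishing step that the paper passes over as "by a similar argument".

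For each triple $(\lambda, \mu, k)$ with $\lambda, \mu \in \irr(W)$ and $k \in \ints^{>0}$, the locus $\{c \in \pfr : h_c(\mu) - h_c(\lambda) = k\}$ is an affine hyperplane in $\pfr$, since $c \mapsto h_c(\mu) - h_c(\lambda)$ is affine linear. Let $B \subset \pfr$ denote the union of these countably many hyperplanes; I interpret "Weil generic $c$" to mean $c \in \pfr \setminus B$. For such $c$, the preceding paragraph already establishes $N_c(\lambda) = 0$ and hence $\Delta_c(\lambda) = L_c(\lambda)$ for every $\lambda \in \irr(W)$, so by the proposition identifying the simples of $\oscr_c(W, \hfr)$, the simple objects are precisely the $\Delta_c(\lambda) = L_c(\lambda)$, as claimed.

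The substantive step is to verify $\ext^1_{\oscr_c(W, \hfr)}(\Delta_c(\lambda), \Delta_c(\mu)) = 0$ for all $\lambda, \mu$ and $c \notin B$. Given a short exact sequence $0 \to \Delta_c(\mu) \to M \to \Delta_c(\lambda) \to 0$, I would decompose $M$ into generalized $\mathbf{h}$-eigenspaces. The weight-$h_c(\lambda)$ piece fits into a short exact sequence of $W$-representations whose kernel $(\Delta_c(\mu))_{h_c(\lambda)}$ is either zero (generic case) or isomorphic to $\mu$ (in the borderline case $h_c(\mu) = h_c(\lambda)$, which is permitted outside $B$); in either case, semisimplicity of $\cplx W$ produces a $W$-stable lift $\lambda' \subset M_{h_c(\lambda)}$ of the lowest weight space of $\Delta_c(\lambda)$. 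For $v \in \lambda'$ and $y \in \hfr$, the element $yv$ lies in weight $h_c(\lambda) - 1$, which cannot appear in $\Delta_c(\mu)$: otherwise $h_c(\lambda) - 1 - h_c(\mu) \in \ints^{\geq 0}$, forcing $h_c(\lambda) - h_c(\mu) \in \ints^{>0}$ and hence $c \in B$. Thus $yv = 0$, and the universal property of $\Delta_c(\lambda) = H_c(W, \hfr) \otimes_{\cplx W \ltimes \cplx[\hfr^*]} \lambda$ yields a splitting $\Delta_c(\lambda) \to M$ of the short exact sequence.

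To pass from $\ext^1$-vanishing between standards to semisimplicity of the whole category, I would invoke the highest weight structure of $\oscr_c(W, \hfr)$ (already noted in the introduction): every object has finite length with simple subquotients among the $L_c(\nu) = \Delta_c(\nu)$, so vanishing of $\ext^1$ between every pair of simples forces each object to decompose as a finite direct sum of simples. The only real obstacle in this argument is the $\ext^1$ calculation above, which is essentially symmetric to the submodule analysis already given for $N_c(\lambda) = 0$ and reduces to careful $\mathbf{h}$-weight bookkeeping combined with the semisimplicity of $\cplx W$.
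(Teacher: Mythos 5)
Your proof is correct and follows essentially the same route the paper takes: the paper gives the hyperplane argument for $\Delta_c(\lambda) = L_c(\lambda)$ and defers the $\ext^1$-vanishing to \cite[Proposition 3.35]{EM}, and your weight-space computation (a $W$-stable lift of the lowest weight space of $\Delta_c(\lambda)$ inside $M$ is killed by $\hfr$ because $\Delta_c(\mu)$ has no weight $h_c(\lambda)-1$ off the excluded hyperplanes, so the universal property of the standard module splits the extension) is exactly the standard argument being invoked. The only step worth making explicit is that the resulting map $\Delta_c(\lambda)\to M$ composed with the projection is an endomorphism of $\Delta_c(\lambda)$ equal to the identity on the generating subspace $\lambda$, hence is the identity, so it really is a splitting.
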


Finally, we will need the notion of the \emph{support} of a module $M \in \oscr_c(W, \hfr)$:

\begin{definition} Any $M \in \oscr_c(W, \hfr)$ is by definition a finitely generated module over $\cplx[\hfr]$ with additional structure.  Let $\supp(M) \subset \hfr$ denote the \emph{support} of $M$.  When $\supp(M) = \hfr$ we will say that $M$ has \emph{full support}.\end{definition}

For example, the standard module $\Delta_c(\lambda)$ is a finitely generated free module over $\cplx[\hfr]$, and in particular $\supp(\Delta_c(\lambda)) = \hfr$.  

\subsection{Hecke Algebras and the KZ Functor}  In this section we will recall the Knizhnik-Zamolodchikov functor (KZ functor) \cite{GGOR}.  For simplicity, and because it is the only case used in this paper, we will restrict attention to the case that $W$ is a finite real reflection group.

Fix a point $x_0 \in \hfr_{reg}$.  The (generalized) \emph{braid group} $B_W$ associated to $W$ is the fundamental group $B_W := \pi_1(\hfr_{reg}/W, x_0)$.  As $\hfr_{reg}/W$ is connected, $B_W$ does not depend on the choice of $x_0$ up to isomorphism.  For each reflection $s \in S$, let $T_s \in B_W$ be a representative of the conjugacy class in $B_W$ determined by a small positively-oriented loop in $\hfr_{reg}/W$ about the hyperplane $\ker(\alpha_s)$ as in \cite[Theorem 2.17]{BMR}.

\begin{definition} Let $q : S \rightarrow \cplx^\times$ be a $W$-invariant function.  The associated \emph{Hecke algebra} $H_q(W)$ is the quotient of the group algebra $\cplx B_W$ by the quadratic relations $(T_s - 1)(T_s + q) = 0$ for all $s \in S$: $$H_q(W) := \frac{\cplx B_W}{\langle (T_s - 1)(T_s + q) : s \in S\rangle}.$$\end{definition}

The \emph{KZ functor}, defined for each $c \in \pfr$, is an exact functor $KZ : \oscr_c \rightarrow H_q(W)\mhyphen\text{mod}_{f.d.}$, where $H_q(W)\mhyphen\text{mod}_{f.d.}$ denotes the category of finite-dimensional representations of $H_q(W)$, defined as follows.  As the element $\delta^{2}$ is $W$-invariant and the operator $\ad(\delta^{2}) := [\delta^{2}, \cdot]$ acts locally nilpotently on $H_c(W, \hfr)$, the noncommutative localization $H_c(W, \hfr)[\delta^{-1}]$ coincides with the usual localization of $H_c(W, \hfr)$ as a module over $\cplx[\hfr]$.  Furthermore, it follows from the formula for the Dunkl operators appearing in the polynomial representation of $H_c(W, \hfr)$ that the localized algebra $H_c(W, \hfr)[\delta^{-1}]$ is naturally identified with the algebra $\cplx W \ltimes D(\hfr_{reg})$, where $D(\hfr_{reg})$ denotes the algebra of algebraic differential operators on the regular locus $\hfr_{reg} := \hfr \backslash \cup_{s \in S} \ker(\alpha_s)$ where $\delta$ is non-vanishing.  In particular, localization $M \mapsto M[\delta^{-1}]$ defines a functor $\oscr_c(W, \hfr) \rightarrow (\cplx W \ltimes D(\hfr_{reg}))\mhyphen\text{mod}$.  The $W$-equivariant $D$-modules on $\hfr_{reg}$ appearing in the image of this functor are $\oscr$-coherent with regular singularities \cite[Proposition 5.7]{GGOR}.  Taking the space of local flat sections at $x_0 \in \hfr_{reg}$ of the vector bundle with flat connection associated to $M[\hfr_{reg}]$, one obtains a finite-dimensional representation of $B_W = \pi_1(\hfr_{reg}/W, x_0)$.  it is shown in \cite{GGOR} that the quadratic relations $(T_s - 1)(T_s + q) = 0$ hold for the Hecke algebra parameter $q : S \rightarrow \cplx^\times$, $q(s) = e^{-2\pi i c_s}$ (the parameter $c$ appearing in this paper follows the conventions of \cite{EG} and differs by a sign from the parameter $c$ appearing in \cite{GGOR}).  The resulting representation of $H_q(W)$ is functorial in $M$, and the resulting functor $\oscr_c(W, \hfr) \rightarrow H_q(W)\mhyphen\text{mod}_{f.d.}$ is the KZ functor.  To emphasize the dependence on the base point $x_0$, we denote this functor by $KZ_{x_0}$.

Clearly, the functor $KZ_{x_0}$ does not depend on $x_0 \in \hfr_{reg}$ up to isomorphism.  At the level of vector spaces, $KZ_{x_0}(M)$ is the fiber at $x_0$ of the $\cplx[\hfr]$-module $M \in \oscr_c(W, \hfr)$.  As a consequence, $KZ_{x_0}(M) \neq 0$ if and only if $M$ has full support.

We will be particularly concerned with the image of the standard modules $\Delta_c(\lambda)$ under $KZ_{x_0}$.  Recall that by the PBW theorem for rational Cherednik algebras we have $\Delta_c(\lambda) = \cplx[\hfr] \otimes \lambda$ as a $\cplx W \ltimes \cplx[\hfr]$-module, giving rise to an identification $\Delta_c(\lambda)[\delta^{-1}] = \cplx[\hfr_{reg}] \otimes \lambda$ of $\cplx W \ltimes \cplx[\hfr_{reg}]$-modules.  For any $y \in \hfr \subset H_c(W, \hfr)$ we have $y\lambda = 0$, by the definition of $\Delta_c(\lambda)$.  At the same time, with respect to the natural identification of $H_c(W, \hfr)[\delta^{-1}]$ with $\cplx W \ltimes D(\hfr_{reg})$ the element $y \in \hfr$ corresponds to the Dunkl operator $D_y = \partial_y - \sum_{s \in S} c_s\frac{\alpha_s(y)}{\alpha_s}(1 - s).$  In particular, it follows that the vector field $\partial_y \in \cplx W \ltimes D(\hfr_{reg})$ acts on $\lambda \subset \Delta_c(\lambda)[\delta^{-1}]$ by $$\partial_yv = \sum_{s \in S} c_s\frac{\alpha_s(y)}{\alpha_s}(1 - s)$$ for all $v \in \lambda \subset \Delta_c(\lambda)[\delta^{-1}]$.  It folows that the $W$-equivariant $D(\hfr_{reg})$-module structure on $\Delta_c(\lambda)[\delta^{-1}]$ arises from the flat \emph{KZ connection} $$\nabla_{KZ} := d + \sum_{s \in S} c_s\frac{d\alpha_s}{\alpha_s}(1 - s)$$ on the trivial vector bundle on $\hfr_{reg}$ with fiber $\lambda$.

\section{Signature Characters and the Jantzen Filtration}\label{sig-char-section}

\subsection{Jantzen Filtrations on Standard Modules} \label{jantzen-section}

Let $W$ be a finite complex reflection group with reflection representation $\hfr$.  Let $S \subset W$ be the set of complex reflections in $W$, and let $\pfr$ be the $\cplx$-vector space of $W$-invariant functions $c : S \rightarrow \cplx$.  For any $c \in \pfr$, let $c^\dagger \in \pfr$ be defined by $c^\dagger(s) = \bar{c(s^{-1})}$.  Refer to any $c \in \pfr$ satisfying $c = c^\dagger$ as \emph{real}, and let $\pfr_\real$ be the $\real$-vector space $\pfr_\real := \{c \in \pfr : c = c^\dagger\}$.

Recall that $\pfr$ is the parameter space for the family of rational Cherednik algebras attached to $(W, \hfr)$; we denote the rational Cherednik algebra attached to $(W, \hfr)$ and parameter $c \in \pfr$ by $H_c(W, \hfr)$.  For each irreducible representation $\lambda \in \irr(W)$, we have the associated standard module $$\Delta_c(\lambda) := H_c(W, \hfr) \otimes_{\cplx W \ltimes S\hfr} \lambda$$ over $H_c(W, \hfr)$.  As a $\ints^{\geq 0}$-graded $\cplx$-vector space, $\Delta_c(\lambda)$ is naturally identified with $\cplx[\hfr] \otimes \lambda$ independently of $c$.

Fix a $W$-invariant positive definite Hermitian form $(\cdot, \cdot)_\lambda$ on $\lambda$ - we will take the convention that Hermitian forms are conjugate-linear in the second factor.  Such a form is uniquely determined up to $\real^{> 0}$-scaling.  Similarly, fix a $W$-invariant positive-definite Hermitian form $(\cdot, \cdot)_\hfr$ on $\hfr$.  This determines the conjugate-linear isomorphism $T : \hfr \rightarrow \hfr^*$ given by $$T(y)(x) := (x, y)_\hfr.$$  When the parameter $c \in \pfr$ is \emph{real}, i.e. $c = c^\dagger$, the standard module $\Delta_c(\lambda)$ admits a unique $W$-invariant Hermitian form $\beta_{c, \lambda}$ \cite[Proposition 2.2]{ESG} such that the contravariance condition $$\beta_{c, \lambda}(yv, v') = \beta_{c, \lambda}(v, T(y)v)$$ holds for all $v, v' \in \Delta_c(\lambda)$ and $y \in \hfr$ and that coincides with $(\cdot, \cdot)_\lambda$ in degree 0.

As above, regarding the standard modules $\Delta_c(\lambda)$ for various $c$ as the same $\ints^{\geq 0}$-graded vector space $\Delta(\lambda)$, we may view the forms $\beta_{c, \lambda}$ on $\Delta(\lambda)$ as an algebraic family of Hermitian forms $\beta_{c, \lambda}[d]$, parameterized by $c \in \pfr_\real$ in a polynomial manner, on each finite-dimensional graded component $\Delta(\lambda)[d]$.  In particular, we are naturally led to consider Jantzen filtrations, as follows.

Let $c_0, c_1 \in \pfr_\real$ be real parameters such that there exists $\delta > 0$ such that $\beta_{c(t), \lambda}$ is nondegenerate for all $c(t) := c_0 + tc_1$ with $t \in (-\delta, \delta)\backslash \{0\}$.  The finite-dimensional $\cplx$-vector spaces $\Delta(\lambda)[d]$ for $n \geq 0$ along with the polynomial families of Hermitian forms $\beta_{c(t), \lambda}[d]$ satisfy the conditions of \cite[Definition 3.1]{Vo}.  In particular, for each $d \geq 0$ define the finite descending filtration $$\Delta(\lambda)[d] = \Delta(\lambda)[d]^{\geq 0} \supset \Delta(\lambda)[d]^{\geq 1} \supset \cdots \supset \Delta(\lambda)[d]^{\geq N} = 0$$ on $\Delta(\lambda)[d]$ as follows (to simplify the notation, we do not include the choice of $c_0, c_1$ in the notation for the filtration, but of course this filtration and its properties may be dependent on this choice).  Let $\Delta(\lambda)[d]^{\geq k}$ consist of those vectors $v \in \Delta(\lambda)[d]$ such that there is some $\epsilon > 0$ and an analytic function $f_v : (-\epsilon, \epsilon) \rightarrow \Delta(\lambda)[d]$ satisfying $f_v(0) = v$ and such that the analytic function $$t \mapsto \beta_{c(t), \lambda}[d](f_v(t), v')$$ vanishes at least to order $k$ for all $v' \in \Delta_c(\lambda)[d]$ (clearly, one may equivalently consider only polynomial functions $f_v$).  For each $k \geq 0$, define the Hermitian form $\beta_{c_0, \lambda}[d]^{\geq k}$ on $\Delta(\lambda)[d]^{\geq k}$ by $$\beta_{c_0, \lambda}[d]^{\geq k}(v, v') = \lim_{t \rightarrow 0} \frac{1}{t^k} \beta_{c(t), \lambda}[d](f_v(t), f_{v'}(t)),$$ where $v, v' \in \Delta(\lambda)[d]^{\geq k}$ and $f_v, f_{v'}$ are any analytic functions as above (this limit does not depend on the choice of such $f_v, f_{v'}$).  We then have the following theorem:

\begin{theorem} (Jantzen \cite[5.1]{J}, Vogan \cite[Theorem 3.2]{Vo}) The radical of the Hermitian form $\beta_{c_0, \lambda}[d]^{\geq k}$ is precisely $\Delta(\lambda)[d]^{\geq k + 1}$.\end{theorem}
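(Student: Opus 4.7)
The plan is to reduce the theorem to pure linear algebra over $\cplx[[t]]$ and to prove a Smith-type simultaneous diagonalization for the polynomial family of Hermitian forms; the statement will then follow by direct inspection. Fix a basis of $V := \Delta(\lambda)[d]$ and let $B(t) \in M_n(\cplx[t])$ be the matrix of $\beta_{c(t), \lambda}[d]$. Since $c(t) \in \pfr_\real$ for real $t$, each coefficient in $B(t) = \sum_j t^j B_j$ is a Hermitian matrix, and by hypothesis $\det B(t) \not\equiv 0$. The condition $v \in V^{\geq k}$ asks for some analytic lift $f_v(t)$ of $v$ with $B(t) f_v(t) \in t^k V[[t]]$, a purely $t$-adic requirement that depends only on the Taylor expansion of $f_v$; we may therefore work with formal power series throughout.

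The key lemma is the following: there exist a $\cplx[[t]]$-basis $v_1(t), \ldots, v_n(t)$ of $V[[t]]$, integers $0 \leq a_1 \leq \cdots \leq a_n < \infty$, and units $u_i(t) \in \cplx[[t]]^\times$ with real constant term such that
$$\beta_{c(t), \lambda}[d](v_i(t), v_j(t)) = \delta_{ij}\, t^{a_i}\, u_i(t).$$
I would prove this by induction on $n$. Among primitive vectors $v \in V[[t]]$ (those with $v(0) \neq 0$), pick $v_1$ minimizing $a_1 := \ord_t(v^* B(t) v)$, which is finite because $B(t) \not\equiv 0$. A polarization argument then forces $\ord_t(v_1^* B(t) w) \geq a_1$ for every primitive $w$, by applying the Hermitian polarization identity to combinations $v_1 + \epsilon w$ chosen (via $\epsilon \in \cplx$) to themselves be primitive. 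Hence the functional $w \mapsto v_1^* B(t) w$ has image exactly $t^{a_1}\cplx[[t]]$, its kernel is a free $\cplx[[t]]$-module of rank $n - 1$ complementary to $\cplx[[t]] v_1$, and $V[[t]] = \cplx[[t]] v_1 \oplus v_1^{\perp}$ is a $B$-orthogonal decomposition. The restriction of $B$ to $v_1^{\perp}$ is Hermitian with generically nonvanishing determinant, so the induction applies.

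Granting the lemma, the theorem is immediate. Expanding a lift of $v$ as $f_v(t) = \sum_i c_i(t) v_i(t)$, the condition $B(t) f_v(t) \in t^k V[[t]]$ becomes $c_i(0) = 0$ whenever $a_i < k$, so $V^{\geq k} = \spn\{v_i(0) : a_i \geq k\}$. Using the canonical lifts $f_{v_i(0)}(t) := v_i(t)$, $B$-orthogonality gives
$$\beta_{c_0, \lambda}[d]^{\geq k}(v_i(0), v_j(0)) = \delta_{ij} \lim_{t \to 0} t^{a_i - k} u_i(t),$$
which equals $u_i(0) \neq 0$ if $i = j$ and $a_i = k$, and vanishes otherwise. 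Thus $\beta_{c_0, \lambda}[d]^{\geq k}$ is diagonal in the basis $\{v_i(0) : a_i \geq k\}$ of $V^{\geq k}$ with nonzero entries exactly where $a_i = k$, and its radical is $\spn\{v_i(0) : a_i > k\} = V^{\geq k+1}$, as claimed.

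The main difficulty is the diagonalization lemma, and specifically the polarization step ensuring that the minimum diagonal valuation is also the minimum off-diagonal valuation; this is what allows $\cplx[[t]] v_1$ to split off as a $B$-nondegenerate rank-one summand with a complement on which $B$ restricts to a well-behaved Hermitian form. Well-definedness of $\beta_{c_0, \lambda}[d]^{\geq k}$ (independence of the lifts $f_v, f_{v'}$) and its Hermitian symmetry in the limit (which requires $u_i(0) \in \real$, forced by Hermiticity of each $B_j$) both emerge as transparent corollaries of the diagonalization.
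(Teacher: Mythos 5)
Your proof is correct, and it is essentially the standard argument behind the cited result of Jantzen and Vogan (the paper itself gives no proof, only the citation): one diagonalizes the Hermitian family over the discrete valuation ring $\cplx[[t]]$, with the polarization step guaranteeing that the minimal diagonal valuation bounds the off-diagonal valuations so that a rank-one nondegenerate summand splits off. All the steps check out: the reduction to formal power series is legitimate because the order-$k$ vanishing condition depends only on finitely many Taylor coefficients of $f_v$ (the paper's definition already allows polynomial lifts), the choice of $\epsilon$ avoiding both the line $\mathrm{Re}(\epsilon c)=0$ and the single value making $v_1(0)+\epsilon w(0)=0$ is available, and the kernel of $w\mapsto v_1^*B(t)w$ splits off as a free complement because it is the kernel of a surjection onto the free module $t^{a_1}\cplx[[t]]$. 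One small imprecision: the condition $B(t)f_v(t)\in t^kV[[t]]$ is actually $\ord_t(c_i)\geq k-a_i$ for every $i$, which is stronger than $c_i(0)=0$ when $a_i\leq k-2$; this does not affect your conclusion, since only the constant terms $c_i(0)$ matter for identifying $V^{\geq k}=\mathrm{span}\{v_i(0):a_i\geq k\}$, but the sentence as written mischaracterizes the set of admissible lifts.
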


In particular, the Hermitian form $\beta_{c_0, \lambda}[d]^{\geq k}$ descends to a nondegenerate Hermitian form on each filtration subquotient $\Delta(\lambda)[d]^{(k)} := \Delta(\lambda)[d]^{\geq k}/\Delta(\lambda)[d]^{\geq k + 1}.$  Denote this induced nondegenerate Hermitian form by $\beta_{c_0, \lambda}[d]^{(k)}$.  For all $k, d \geq 0$, let $p_d^{(k)}$ (resp. $q_d^{(k)}$) be the dimension of a maximal positive definite (resp., negative definite) subspace of $\Delta(\lambda)[d]^{(k)}$ with respect to the form $\beta_{c_0, \lambda}[d]^{(k)}$.  Note that for any fixed $d \geq 0$ we have $p_d^{(k)} = q_d^{(k)} = 0$ for all sufficiently large $k$.

We will take the convention that the \emph{signature} of a Hermitian form $\beta$ on a finite dimensional $\cplx$-vector space $V$ is the integer $p - q$, where $p$ (respectively, $q$) is the dimension of any maximal positive-definite (respectively, negative-definite) subspace of $V$ with respect to $\beta$.  For example, in the context of the previous paragraph, $p_d^{(k)} - q_d^{(k)}$ is the signature of the form $\beta_{c_0, \lambda}[d]^{(k)}$.  If the form $\beta$ is non-degenerate then the dimension of $V$ and the quantity $p - q$ determines the tuple $(p, q)$ considered to be the signature of $\beta$ in some references.

\begin{proposition} \label{sig-flip-prop} (Vogan \cite[Proposition 3.3]{Vo}) \ 

(a)  For all small positive $t$ (i.e. $t \in (0, \delta)$) and any $d \geq 0$, the signature of the nondegenerate Hermitian form $\beta_{c(t), \lambda}[d]$ on $\Delta(\lambda)[d]$ is $$\sum_{k \geq 0} p_d^{(k)} - \sum_{k \geq 0} q_d^{(k)}.$$

(b)  Similarly, for all small negative $t$ (i.e. $t \in (-\delta, 0)$) and any $d \geq 0$, the signature of $\beta_{c(t) ,\lambda}[d]$ is $$\left(\sum_{k \text{ even}} p_d^{(k)} + \sum_{k \text{ odd}}q_d^{(k)}\right) -  \left(\sum_{k \text{ odd}} p_d^{(k)} + \sum_{k \text{ even}}q_d^{(k)}\right).$$\end{proposition}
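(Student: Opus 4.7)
The plan is to apply analytic perturbation theory to the family of Hermitian Gram matrices $\beta_{c(t),\lambda}[d]$, then match the resulting eigenvalue data to the Jantzen subquotient data using the theorem cited just above.

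Fix $d \geq 0$ and set $V := \Delta(\lambda)[d]$. Choose any positive-definite Hermitian inner product $(\cdot,\cdot)$ on the finite-dimensional space $V$ (for instance, the restriction of a $W$-invariant one on $\cplx[\hfr]\otimes\lambda$). Since $\beta_{c(t),\lambda}[d]$ depends polynomially on $t$ and is Hermitian for each real $t$, the self-adjoint Gram operator $G(t)\in\End(V)$ determined by $\beta_{c(t),\lambda}[d](u,v) = (G(t)u,v)$ is a polynomial, hence real-analytic, family of Hermitian operators. By Rellich's theorem on analytic perturbation of Hermitian matrices, there exist real-analytic eigenvalues $\mu_1(t),\ldots,\mu_N(t)$ and an analytic $(\cdot,\cdot)$-orthonormal basis of eigenvectors $v_1(t),\ldots,v_N(t)$, i.e.\ $G(t)v_i(t)=\mu_i(t)v_i(t)$. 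Nondegeneracy of $\beta_{c(t),\lambda}$ for $0<|t|<\delta$ forces each $\mu_i$ to be nonzero as a germ at $0$, so we may write $\mu_i(t)=t^{k_i}a_i(t)$ with $k_i\in\ints^{\geq 0}$ and $a_i(0)\neq 0$.

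Next I would identify this eigenvalue data with the Jantzen data. Taking the witness $f_{v_i(0)}(t):=v_i(t)$ shows $v_i(0)\in V^{\geq k_i}$, since for every $v'\in V$ the pairing $\beta_{c(t),\lambda}[d](v_i(t),v')=\mu_i(t)(v_i(t),v')$ vanishes to order at least $k_i$ at $t=0$. Moreover, when $k_i=k_j=k$,
\[
\beta_{c_0,\lambda}[d]^{(k)}(\bar v_i(0),\bar v_j(0))=\lim_{t\to 0}t^{-k}\mu_i(t)\delta_{ij}=a_i(0)\delta_{ij},
\]
so the images $\bar v_i(0)\in V^{(k)}$ for $k_i=k$ are linearly independent and orthogonal with respect to the nondegenerate form $\beta_{c_0,\lambda}[d]^{(k)}$ of the cited Jantzen--Vogan theorem. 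Hence $\#\{i:k_i=k\}\leq\dim V^{(k)}$; since both sides sum to $N$ over all $k$, equality holds at each $k$, and these $\bar v_i(0)$ form a $\beta_{c_0,\lambda}[d]^{(k)}$-orthogonal basis of $V^{(k)}$ with diagonal entries $a_i(0)$. Therefore
\[
p_d^{(k)}=\#\{i:k_i=k,\ a_i(0)>0\},\qquad q_d^{(k)}=\#\{i:k_i=k,\ a_i(0)<0\}.
\]

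Finally, since $v_1(t),\ldots,v_N(t)$ diagonalize $\beta_{c(t),\lambda}[d]$ with eigenvalues $\mu_i(t)$, the signature for small nonzero $t$ is $\sum_i\sign(\mu_i(t))$. For $t\in(0,\delta)$ we have $\sign(\mu_i(t))=\sign(a_i(0))$, and summing over $i$ (grouped by $k_i$) yields (a). For $t\in(-\delta,0)$ we have $\sign(\mu_i(t))=(-1)^{k_i}\sign(a_i(0))$; splitting the sum by the parity of $k_i$ produces (b). The only nontrivial inputs are the invocation of Rellich's simultaneous analytic diagonalization and the dimension count identifying $\{\bar v_i(0):k_i=k\}$ with a basis of $V^{(k)}$; both are standard, and the substantive observation is simply the sign flip of $t^{k_i}$ under $t\mapsto -t$ for odd $k_i$, which is the source of the asymmetry between parts (a) and (b).
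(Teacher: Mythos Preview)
The paper does not give its own proof of this proposition; it simply cites Vogan \cite[Proposition 3.3]{Vo}. Your argument is correct and is essentially the standard proof (and the one Vogan gives): analytically diagonalize the one-parameter Hermitian family via Rellich's theorem, factor each eigenvalue as $\mu_i(t)=t^{k_i}a_i(t)$ with $a_i(0)\neq 0$, identify the orders of vanishing $k_i$ with the Jantzen levels via the dimension count, and then read off the signature for small $t$ of either sign from the signs of $t^{k_i}a_i(0)$. The only place worth a remark is the identification step: you correctly use nondegeneracy of $\beta_{c_0,\lambda}[d]^{(k)}$ on $V^{(k)}$ (the cited Jantzen--Vogan theorem) to conclude that the images $\bar v_i(0)$ with $k_i=k$ are linearly independent in $V^{(k)}$, and then the global dimension count forces them to be a basis. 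Everything checks out.
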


Define the descending filtration $$\Delta(\lambda) = \Delta(\lambda)^{\geq 0} \supset \Delta(\lambda)^{\geq 1} \supset \cdots$$ by $$\Delta(\lambda)^{\geq k} := \bigoplus_{d \geq 0} \Delta(\lambda)[d]^{\geq k} \subset \bigoplus_{d \geq 0} \Delta(\lambda)[d] = \Delta(\lambda).$$

\begin{lemma} \label{Jantzen-filtration-for-standards} The filtration of $\Delta_{c_0}(\lambda)$ by the subspaces $\Delta(\lambda)^{\geq k}$ is a filtration by $H_{c_0}(W, \hfr)$-submodules.  We have $\Delta(\lambda)^{\geq k} = 0$ for sufficiently large $k$.\end{lemma}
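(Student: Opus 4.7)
The plan is to prove the submodule property by a direct deformation argument exploiting the contravariance of $\beta_{c(t), \lambda}$, and then to combine the finite length of $\Delta_{c_0}(\lambda)$ in $\oscr_{c_0}(W, \hfr)$ with a degreewise Jantzen sum formula to force the filtration to vanish uniformly.

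For the submodule property, it suffices to check that each of the generators of $H_{c_0}(W, \hfr)$, namely elements of $\cplx W$, of $\hfr^* \subset \cplx[\hfr]$ acting by multiplication, and of $\hfr$ acting by Dunkl operators, preserves $\Delta(\lambda)^{\geq k}$.  For any such generator $a$, the operator $\rho_{c(t)}(a)$ on the fixed vector space $\Delta(\lambda) = \cplx[\hfr] \otimes \lambda$ is an affine function of $t$ (the Dunkl operator depends linearly on $c(t)$, while the other generators are independent of $c$).  Let $v \in \Delta(\lambda)[d]^{\geq k}$ and let $f_v(t)$ be an analytic deformation witnessing this.  Define
\[ g(t) := \rho_{c(t)}(a)\, f_v(t) \in \Delta(\lambda)[d + \deg(a)], \]
which is analytic in $t$ with $g(0) = \rho_{c_0}(a)\, v$.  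For $y \in \hfr$ the contravariance identity $\beta_{c(t), \lambda}(\rho_{c(t)}(y) u, u') = \beta_{c(t), \lambda}(u, T(y) u')$ holds, and analogous identities hold for $w \in W$ (with adjoint $w^{-1}$) and for $x \in \hfr^*$ (with adjoint $T^{-1}(x) \in \hfr$); in each case the adjoint $a^*$ again lies in $W$, $\hfr^*$, or $\hfr$, so $\rho_{c(t)}(a^*)$ is polynomial in $t$.  Thus for any fixed $v' \in \Delta(\lambda)[d + \deg(a)]$,
\[ \beta_{c(t), \lambda}(g(t), v') = \beta_{c(t), \lambda}(f_v(t), \rho_{c(t)}(a^*)\, v'), \]
and expanding $\rho_{c(t)}(a^*)\, v' = \sum_{n \geq 0} t^n w_n$ with $w_n \in \Delta(\lambda)[d]$ and using sesquilinearity together with $\overline{t^n} = t^n$ for real $t$,
\[ \beta_{c(t), \lambda}(g(t), v') = \sum_{n \geq 0} t^n \beta_{c(t), \lambda}(f_v(t), w_n) = O(t^k) \]
by the defining property of $f_v$.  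Hence $g$ witnesses $\rho_{c_0}(a)\, v \in \Delta(\lambda)[d + \deg(a)]^{\geq k}$.

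Granting the submodule property, the chain $\Delta(\lambda) \supset \Delta(\lambda)^{\geq 1} \supset \Delta(\lambda)^{\geq 2} \supset \cdots$ is a decreasing chain of $H_{c_0}(W, \hfr)$-submodules of $\Delta_{c_0}(\lambda)$.  Because $\oscr_{c_0}(W, \hfr)$ is a highest weight category in which every object has finite length, this chain must stabilize: there exists $k_0$ with $\Delta(\lambda)^{\geq k_0} = \Delta(\lambda)^{\geq k}$ for all $k \geq k_0$.  Separately, for each fixed $d$, the Gram matrix $B_d(t)$ of $\beta_{c(t), \lambda}[d]$ in any $c$-independent basis of $\Delta(\lambda)[d]$ has polynomial entries in $t$, and by the non-degeneracy hypothesis on $(-\delta, \delta) \setminus \{0\}$ its determinant is a nonzero polynomial with some finite order of vanishing $m_d < \infty$ at $t = 0$.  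An analytic orthogonal diagonalization of $B_d(t)$, produced via Kato's perturbation theory for self-adjoint analytic families, gives analytic eigenvalues of vanishing orders $a_1, \ldots, a_{\dim \Delta(\lambda)[d]}$ summing to $m_d$; tracing through the definition of the filtration in this basis yields the Jantzen sum formula
\[ \sum_{k \geq 1} \dim \Delta(\lambda)[d]^{\geq k} = m_d, \]
so in particular $\Delta(\lambda)[d]^{\geq k} = 0$ for $k$ sufficiently large depending on $d$.  Combined with stabilization, $\Delta(\lambda)[d]^{\geq k_0} = 0$ for every $d$, hence $\Delta(\lambda)^{\geq k_0} = 0$.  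The main obstacle is the mismatch between the two controls on $k$: stabilization gives a uniform $k_0$ but not zero, whereas pointwise Jantzen vanishing gives zero only for a $d$-dependent threshold; it is the combination of the module-theoretic and analytic inputs that produces the desired uniform vanishing.
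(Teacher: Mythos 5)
Your proof is correct and follows essentially the same route as the paper: the paper also deforms the action of a homogeneous element $h$ along $c(t)$ via the PBW basis so that $h f_v$ witnesses $hv \in \Delta(\lambda)[d+d']^{\geq k}$ (your contravariance computation just makes explicit why that path works), and it likewise combines finite length (stabilization of the chain of submodules at some $k_0$) with the degreewise termination of the Jantzen filtration to conclude $\Delta(\lambda)^{\geq k_0} = 0$. The only difference is that you re-derive the degreewise vanishing via the Jantzen sum formula, whereas the paper takes it as part of the cited Jantzen--Vogan setup.
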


\begin{proof} Let $v \in \Delta(\lambda)[d]^{\geq k}$, and let $f_v : (-\epsilon, \epsilon) \rightarrow \Delta(\lambda)[d]$ be as in the definition of the filtration, exhibiting that $v \in \Delta(\lambda)[d]^{\geq k}$.  Then for any homogeneous $h \in H_{c_0}(W, \hfr)$ of degree $d'$, viewing $h \in H_{c(t)}(W, \hfr)$ for all $t \in \real$ via the PBW basis, the path $hf_v$ exhibits $hv$ as an element of $\Delta(\lambda)[d + d']^{\geq k}$.  In particular, $\Delta(\lambda)^{\geq k}$ is a $H_{c_0}(W, \hfr)$-submodule of $\Delta(\lambda)$.  By the finite-length property of $H_{c_0}(W, \hfr)$-modules in category $\oscr_{c_0}(W, \hfr)$, it follows that the filtration $\Delta(\lambda)^{\geq k}$ stabilizes in $k$.  For any fixed $d$ we have $\Delta(\lambda)[d]^{\geq k} = 0$ for any $k$ sufficiently large, and it follows that $\Delta(\lambda)^{\geq k} = 0$ for all sufficiently large $k$.\end{proof}

We refer to the filtration of $\Delta_{c_0}(\lambda)$ appearing in Lemma \ref{Jantzen-filtration-for-standards} as the \emph{Jantzen filtration} of $\Delta_{c_0}(\lambda)$.  Note that this Jantzen filtration depends on the choice of the additional parameter $c_1 \in \pfr_\real$ determining the direction for the deformation.

\subsection{Hermitian Duals}\label{hermitian-duals-section}  In this section we will introduce Hermitian duals in the setting of rational Cherednik algebras, analogous to the Hermitian duals considered by Vogan \cite{Vo} in the Lie-theoretic setting.  First we will briefly recall contragredient duals.  Let $c \in \pfr$ be any parameter for the rational Cherednik algebra attached to $(W, \hfr)$.  Let $\bar{c} \in \pfr$ be the parameter defined by $\bar{c}(s) = c(s^{-1})$.  As explained in \cite[Section 3.11]{EM}, there is a natural isomorphism $$\gamma : H_c(W, \hfr)^{opp} \rightarrow H_{\bar{c}}(W, \hfr^*)$$ acting trivially on $\hfr$ and $\hfr^*$ and sending $w \mapsto w^{-1}$ for all $w \in W$.  For any $M \in \oscr_c(W, \hfr)$, the restricted dual $M^\dagger := \bigoplus_{z \in \cplx} M_z^*$ is naturally a $H_c(W, \hfr)^{opp}$-module; by transfer of structure along $\gamma$, we regard $M^\dagger$ as a $H_{\bar{c}}(W, \hfr^*)$-module.  We have:

\begin{proposition} (\cite[Proposition 3.32]{EM}) The assignment $M \mapsto M^\dagger$ determines a $\cplx$-linear equivalence of categories $\ ^\dagger : \oscr_c(W, \hfr) \rightarrow \oscr_{\bar{c}}(W, \hfr^*)^{opp}.$\end{proposition}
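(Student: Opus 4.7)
The plan is to proceed in three steps: (1) verify that $M^\dagger$ actually lies in $\oscr_{\bar c}(W, \hfr^*)$; (2) promote the assignment to a contravariant $\cplx$-linear functor, interpreted as a covariant functor into the opposite category; and (3) exhibit a natural isomorphism $M \cong M^{\dagger\dagger}$ showing that $^\dagger$ is its own quasi-inverse.

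Step (1) is the main obstacle. The input is the standard structure of any $M \in \oscr_c(W, \hfr)$: each generalized $\mathbf{h}$-weight space $M_z$ is finite-dimensional, and the set of $\mathbf{h}$-weights of $M$ is bounded below in each coset of $\cplx/\ints$. The latter follows from finite generation over $\cplx[\hfr]$ combined with local nilpotency of $\hfr$, since one can take finitely many generators that are generalized $\mathbf{h}$-eigenvectors. Under the dualization defining $M^\dagger$, the grading is reflected, and the roles of the polynomial subalgebra $\cplx[\hfr] \subset H_c(W, \hfr)$ and the Dunkl subalgebra $\cplx[\hfr^*] \subset H_c(W, \hfr)$ get exchanged. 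Using that $\gamma$ fixes $\hfr$ and $\hfr^*$ pointwise, a direct bookkeeping of $\mathbf{h}_B$-weights on $M^\dagger$ (where $\mathbf{h}_B$ is the grading element of $H_{\bar c}(W, \hfr^*)$) then shows both that the new lowering operators $\hfr^* \subset H_{\bar c}(W, \hfr^*)$ act locally nilpotently on $M^\dagger$ and that $M^\dagger$ is finitely generated over the new polynomial subalgebra $\cplx[\hfr^*] \subset H_{\bar c}(W, \hfr^*)$, giving membership in $\oscr_{\bar c}(W, \hfr^*)$.

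Step (2) is routine: a morphism $\phi : M \to N$ in $\oscr_c(W, \hfr)$ transposes to a morphism $\phi^\dagger : N^\dagger \to M^\dagger$ of $H_{\bar c}(W, \hfr^*)$-modules, which is the corresponding morphism $M^\dagger \to N^\dagger$ in the opposite category, and $\cplx$-linearity and compatibility with composition are immediate. For step (3), the canonical evaluation map $M \to M^{\dagger\dagger}$ is $\cplx$-linear and, since each $M_z$ is finite-dimensional, it is a $\cplx$-linear isomorphism in each weight degree. Compatibility with the $H_c(W, \hfr)$-action reduces to the identities $\bar{\bar c} = c$ and the canonical identification $(\hfr^*)^* = \hfr$, together with the fact that $\gamma$ composed with its analogue for $\bar c$ recovers the identity on generators, all of which are immediate from the definition of $\gamma$. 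Combined with steps (1) and (2), this natural isomorphism $M \cong M^{\dagger\dagger}$ gives the claimed equivalence of categories.
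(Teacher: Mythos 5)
Your three-step outline is the standard one (the paper itself offers no proof, deferring to \cite{EM}, whose argument has the same shape), and steps (2) and (3) are fine as written. The gap is in step (1), at the assertion that ``a direct bookkeeping of weights on $M^\dagger$ shows \ldots that $M^\dagger$ is finitely generated over the new polynomial subalgebra.'' Local nilpotency of $\hfr^* \subset \cplx[\hfr] \subset H_{\bar c}(W, \hfr^*)$ on $M^\dagger$ really is bookkeeping: the spectrum of the grading element on $M^\dagger$ lies in finitely many cones $z_i + \ints^{\geq 0}$ and $\hfr^*$ lowers the degree. But finite generation of $M^\dagger$ over $\cplx[\hfr^*] = S\hfr$ is the crux of the proposition, and it does not follow formally from the weight data you have extracted (finite-dimensional generalized eigenspaces, spectrum bounded below): for a general graded module over a graded ring these conditions say nothing about finite generation. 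What makes the implication true here is a genuine theorem, namely the intrinsic characterization of category $\oscr$ as the category of modules with locally finite $\mathbf{h}$-action, finite-dimensional generalized eigenspaces, and spectrum contained in finitely many such cones (\cite{GGOR}, Theorem 2.19; see also the corresponding results in \cite{EM}). Its proof is not mere bookkeeping: one must show that a singular vector generating a copy of $\mu \in \irr(W)$ can only occur in degree $h_{\bar c}(\mu)$, so that the module is generated over the whole algebra by the finite-dimensional sum of its weight spaces of real part at most $\max_\mu \operatorname{Re} h_{\bar c}(\mu)$, and then use the PBW decomposition together with local nilpotency of $\hfr^*$ to convert generation over $H_{\bar c}(W, \hfr^*)$ into finite generation over $\cplx[\hfr^*]$. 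You should either invoke that characterization explicitly (it is manifestly self-dual, which is exactly why the proposition is true) or supply this argument.

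If you prefer a route that avoids the characterization, here is a short fix. Choose a finite-dimensional $W$- and $\hfr$-stable generating subspace $V \subset M$ (take $\cplx W \cdot S\hfr \cdot \{m_1, \dots, m_k\}$ for $\cplx[\hfr]$-generators $m_i$; this is finite-dimensional by local nilpotency of $\hfr$), so that $M$ is a quotient of $P := H_c(W,\hfr) \otimes_{\cplx W \ltimes \cplx[\hfr^*]} V \cong \cplx[\hfr] \otimes V$. Exactness of the restricted dual on modules with finite-dimensional weight spaces gives an embedding $M^\dagger \hookrightarrow P^\dagger$, and the graded dual of $\cplx[\hfr] \otimes V$ is $\cplx[\hfr^*] \otimes V^*$; passing to the associated graded of the $S\hfr$-action (which is the obvious multiplication action) shows $P^\dagger$ is free of finite rank over $\cplx[\hfr^*]$. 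Since $\cplx[\hfr^*]$ is Noetherian, the submodule $M^\dagger$ is finitely generated over it. With either repair, the rest of your argument goes through.
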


Given a $\cplx$-algebra $A$, let $\bar{A}$ denote the $\cplx$-algebra that is equal to $A$ as a ring and as a ring is equal to the complex conjugate vector space of $A$.  In other words, the identity map $\id : A \rightarrow \bar{A}$ is an isomorphism of rings and satisfies $z\id(a) = \id(\bar{z}a)$.  Clearly $\bar{A}$ is a $\cplx$-algebra, with unit $\bar{\eta}$ satisfying $\bar{\eta}(z) = \eta(\bar{z})$ for all $z \in \cplx$, where $\eta : \cplx \rightarrow A$ is the unit map for the $\cplx$-algebra $A$.  Similarly, for any $A$-module $M$ the complex conjugate vector space $\bar{M}$ is naturally an $\bar{A}$-module, and this clearly defines an conjugate-linear equivalence of categories $A\mhyphen\text{mod} \rightarrow \bar{A}\mhyphen\text{mod}$.

The complex conjugate of a rational Cherednik algebra is again a rational Cherednik algebra.  More precisely:

\begin{lemma} Fix a nondegenerate $W$-invariant Hermitian form $(\cdot, \cdot)_\hfr$ on $\hfr$, and let $T : \hfr \rightarrow \hfr^*$ be the conjugate-linear isomorphism introduced in Section \ref{jantzen-section}.  Then the mappings $y \mapsto Ty$ for $y \in \hfr$, $x \mapsto T^{-1}x$ for $x \in \hfr^*$, and $w \mapsto w$ for $w \in W$ extend uniquely to an isomorphism of $\cplx$-algebras $$\omega : H_{\bar{c}}(W, \hfr^*) \rightarrow \bar{H_{c^\dagger}(W, \hfr)}.$$\end{lemma}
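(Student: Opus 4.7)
The plan is to define $\omega$ on the generators $\hfr \cup \hfr^* \cup W$ of $H_{\bar c}(W, \hfr^*)$ as stated and verify it preserves the defining relations, so that it extends to a $\cplx$-algebra homomorphism; uniqueness is immediate from the PBW theorem because these elements generate the algebra. The semidirect-product relations are preserved because $(\cdot,\cdot)_\hfr$ is $W$-invariant, which a short computation shows translates to $T(wy) = wT(y)$ for all $y \in \hfr$, $w \in W$. The two trivial commutation relations $[\hfr^*, \hfr^*] = 0$ and $[\hfr, \hfr] = 0$ in the source map, via $T^{-1}(\hfr^*) = \hfr$ and $T(\hfr) = \hfr^*$, to the analogous trivial relations in the target.

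The content lies in the mixed commutation relation. For $y \in \hfr^*$, $x \in \hfr$, applying the conjugate-linear map $\omega$ to $[y, x] = y(x) - \sum_s \bar c_s\, y(\alpha_s^\vee)\, \alpha_s(x)\, s$ and matching with the direct expansion of $[T^{-1}y, Tx]$ in $H_{c^\dagger}(W, \hfr)$ reduces, using $(Tv)(w) = (w, v)_\hfr$ and $c^\dagger_s = \overline{c_{s^{-1}}}$, to the termwise identities
\[
\overline{y(\alpha_s^\vee)}\,\overline{\alpha_s(x)} = \alpha_s(T^{-1}y)\,(Tx)(\alpha_s^\vee)
\]
indexed by reflections $s$. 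The key input is that $T^{-1}\alpha_s$ is a \emph{positive real} scalar multiple of $\alpha_s^\vee$. This follows from the $W$-invariance of the form: $W$-invariance together with $s\alpha_s^\vee = \lambda_s^{-1}\alpha_s^\vee$ gives $(w, \alpha_s^\vee)_\hfr = \lambda_s^{-1}(w, \alpha_s^\vee)_\hfr$ for $w \in \ker\alpha_s$, forcing $\alpha_s^\vee \perp \ker\alpha_s$; similarly $T^{-1}\alpha_s \perp \ker\alpha_s$ because $\alpha_s$ vanishes on $\ker\alpha_s$. Hence both vectors span the one-dimensional subspace $(\ker\alpha_s)^\perp \subset \hfr$, and the normalization $\alpha_s(\alpha_s^\vee) = 2$ pins down $\mu_s = 2/\|\alpha_s^\vee\|_\hfr^2 \in \real_{>0}$ in the equation $T^{-1}\alpha_s = \mu_s\,\alpha_s^\vee$. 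Reality of $\mu_s$ is what makes the substitutions on both sides collapse to the same expression.

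Finally, to see $\omega$ is bijective, I would construct the analogous map $\omega' : \overline{H_{c^\dagger}(W, \hfr)} \to H_{\bar c}(W, \hfr^*)$ with $T$ and $T^{-1}$ interchanged; by the symmetry of the construction it is again a $\cplx$-algebra homomorphism, and $\omega\omega'$, $\omega'\omega$ act trivially on generators. The main obstacle is purely bookkeeping: correctly identifying the roots and coroots for the dual reflection representation $\hfr^*$ of the source algebra (namely $\tilde\alpha_s = \alpha_s^\vee \in (\hfr^*)^* = \hfr$ and $\tilde\alpha_s^\vee = \alpha_s \in \hfr^*$, compatible with the normalization $(\tilde\alpha_s, \tilde\alpha_s^\vee) = 2$) and keeping straight the three distinct conjugations at play: the conjugate-linearity of $T$, the conjugate-linearity of $\omega$ (viewed as a ring map into $H_{c^\dagger}(W, \hfr)$), and the distinction between the parameter conventions $\bar c(s) = c(s^{-1})$ and $c^\dagger(s) = \overline{c(s^{-1})}$.
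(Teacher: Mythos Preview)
Your proof is correct and follows essentially the same strategy as the paper's: extend from generators and verify that the defining relations map to zero, with the work concentrated in the mixed commutator. The paper handles that step slightly more cleanly by first proving the single identity $\langle Ty, T^{-1}x\rangle = \overline{\langle x, y\rangle}$ and then observing that $T\alpha_s^\vee \in \hfr^*$ and $T^{-1}\alpha_s \in \hfr$ are themselves an admissible choice of root and coroot in $H_{c^\dagger}(W,\hfr)$ (eigenvectors for $s$ with pairing equal to $2$), so the image of the relation is literally a defining relation of the target and no termwise matching is needed; in particular the paper never needs that $T^{-1}\alpha_s$ is a \emph{real} multiple of $\alpha_s^\vee$. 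Your route, establishing $T^{-1}\alpha_s = \mu_s\alpha_s^\vee$ with $\mu_s \in \real_{>0}$ and cancelling the $\mu_s$'s, proves a slightly stronger intermediate fact than required but is perfectly valid, and your explicit construction of an inverse is a fine alternative to the paper's ``clearly an isomorphism.''
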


\begin{proof} Regarded as a map $\hfr \rightarrow \bar{\hfr^*}$, $T$ is an isomorphism of complex representations of $W$, and similarly for $T^{-1} : \hfr^* \rightarrow \bar{\hfr}$.  It follows that the assignments in the lemma extend uniquely to a $\cplx$-linear isomorphism $\cplx W \ltimes T(\hfr^* \oplus \hfr) \rightarrow \bar{\cplx W \ltimes T(\hfr \oplus \hfr^*)}$ which determines a map $\cplx W \ltimes T(\hfr^* \oplus \hfr) \rightarrow \bar{H_{c^\dagger}(W, \hfr)}.$  As $\hfr$ is sent to $\hfr^*$ and $\hfr^*$ to $\hfr$ under this map, the commutators $[x, x']$ and $[y, y']$ for $x, x' \in \hfr^*$ and $y, y' \in \hfr$ are sent to 0 by this map.  Let $\langle \cdot, \cdot \rangle$ denote the natural pairing of $\hfr$ with $\hfr^*$.  By definition of $T$, we have, for any $y \in \hfr$ and $x \in \hfr^*$, $$\langle Ty, T^{-1}x\rangle = (T^{-1}x, y)_\hfr = \bar{(y, T^{-1}x)_\hfr} = \bar{\langle x, y\rangle}.$$   In particular, under the map $\cplx W \ltimes T(\hfr^* \oplus \hfr) \rightarrow \bar{H_{c^\dagger}(W, \hfr)}$, for any $x \in \hfr^*$ and $y \in \hfr$ the image of the element $$[x, y] - \langle x, y \rangle + \sum_{s \in S} \bar{c}_s\langle x, \alpha_s^\vee \rangle \langle y, \alpha_s\rangle s$$ in $\bar{H_{c^\dagger}(W, \hfr)}$ is $$[T^{-1}x, Ty] - \bar{\langle x, y \rangle} + \sum_{s \in S} c^\dagger_s \bar{\langle x, \alpha_s^\vee \rangle}\bar{\langle y, \alpha_s\rangle}s$$ $$= [T^{-1}x, Ty] - \langle T^{-1}x, Ty \rangle + \sum_{s \in S} c^\dagger_s \langle T^{-1}x, T\alpha_s^\vee \rangle \langle Ty, T^{-1}\alpha_s\rangle s.$$  As $T\alpha_s^\vee \in \hfr^*$ and $T\alpha_s \in \hfr$ are eigenvectors for $s$ with nontrivial eigenvalues and $$\langle T\alpha_s^\vee, T^{-1}\alpha_s \rangle = \bar{\langle \alpha_s, \alpha_s^\vee\rangle} = \bar{2} = 2,$$ the expression above is 0 in $H_{c^\dagger}(W, \hfr).$  It follows that there is an induced map of $\cplx$-algebras $H_{\bar{c}}(W, \hfr^*) \rightarrow \bar{H_{c^\dagger}(W, \hfr)}$, and clearly this map is an isomorphism.\end{proof}

Let $\sigma : \bar{H_c(W, \hfr)^{opp}} \rightarrow H_{c^\dagger}(W, \hfr)$ be the isomorphism of $\cplx$-algebras obtained by compositing $\gamma$ and $\omega$.  Its action on generators is given by $\sigma(x) = T^{-1}x$ for $x \in \hfr^*$, $\sigma(y) = Ty$ for $y \in \hfr$, and $\sigma(w) = w^{-1}$ for $w \in W$.  As $T$ depends on the choice of the form $(\cdot, \cdot)_\hfr$, so does $\sigma$.

For any $M \in \oscr_c(W, \hfr)$, the \emph{Hermitian dual} $M^h := \bar{M^\dagger}$ is naturally an $\bar{H_c(W, \hfr)^{opp}}$-module, and by transfer of structure along $\sigma$ we regard $M^h$ as a $H_{c^\dagger}(W, \hfr)$-module.  Clearly $M^h \in \oscr_{c^\dagger}(W, \hfr)$, so we have:

\begin{lemma} The assignment $M \mapsto M^h$ defines a conjugate-linear equivalence of categories $$\ ^h: \oscr_c(W, \hfr) \rightarrow \oscr_{c^\dagger}(W, \hfr)^{opp}.$$\end{lemma}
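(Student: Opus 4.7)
The plan is to exhibit $^h$ as a composition of three readily understood equivalences of module categories and then verify compatibility with the defining conditions of category $\oscr$. First, the contragredient duality $M \mapsto M^\dagger$ is a $\cplx$-linear equivalence $\oscr_c(W, \hfr) \to \oscr_{\bar{c}}(W, \hfr^*)^{opp}$ by the proposition just recalled. Second, for any $\cplx$-algebra $A$, the complex conjugation $N \mapsto \bar{N}$ is a conjugate-linear equivalence between the category of $A$-modules and the category of $\bar{A}$-modules; applied to $A = H_c(W, \hfr)^{opp}$, this turns $M^\dagger$ into the $\bar{H_c(W, \hfr)^{opp}}$-module $\bar{M^\dagger}$. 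Third, transfer of structure along the $\cplx$-algebra isomorphism $\sigma : \bar{H_c(W, \hfr)^{opp}} \to H_{c^\dagger}(W, \hfr)$ is a $\cplx$-linear equivalence, producing the $H_{c^\dagger}(W, \hfr)$-module $M^h$. Composing, $^h$ is a conjugate-linear equivalence from $H_c(W, \hfr)$-modules to the opposite category of $H_{c^\dagger}(W, \hfr)$-modules, so only the restriction to the full subcategories $\oscr_c(W, \hfr)$ and $\oscr_{c^\dagger}(W, \hfr)^{opp}$ remains to be checked.

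For that restriction, I would read off from the formulas $\sigma(x) = T^{-1}x$ for $x \in \hfr^*$ and $\sigma(y) = Ty$ for $y \in \hfr$ that $\sigma$ sends the subspace $\hfr \subset H_{c^\dagger}(W, \hfr)$ bijectively onto $\hfr^* \subset \bar{H_c(W, \hfr)^{opp}}$, and likewise the subalgebra $\cplx[\hfr] \subset H_{c^\dagger}(W, \hfr)$ bijectively onto $\cplx[\hfr^*] \subset \bar{H_c(W, \hfr)^{opp}}$. Since complex conjugation and transfer along an algebra isomorphism leave the underlying abelian group of a module, as well as the orbit of each element under the action of any particular operator, unchanged, both finite generation over a subalgebra and local nilpotence of a subspace of operators are preserved. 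Tracing back through $\sigma$, then $\bar{\,\cdot\,}$, and finally $\gamma$, the requirement that $M^h$ be finitely generated over $\cplx[\hfr] \subset H_{c^\dagger}(W, \hfr)$ translates to $M^\dagger$ being finitely generated over $\cplx[\hfr^*] \subset H_{\bar{c}}(W, \hfr^*)$, and local nilpotence of $\hfr$ on $M^h$ translates to local nilpotence of $\hfr^*$ on $M^\dagger$; both hold since $M^\dagger \in \oscr_{\bar{c}}(W, \hfr^*)$. Thus $M^h \in \oscr_{c^\dagger}(W, \hfr)$, and because $c \mapsto c^\dagger$ is an involution on $\pfr$, iterating the construction gives a natural isomorphism $M \cong (M^h)^h$ (inherited from the analogous isomorphism for the contragredient dual combined with $TT^{-1} = \id$), yielding the quasi-inverse.

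The main obstacle I anticipate is purely bookkeeping: tracking the four layers (contragredient dual, identification via $\gamma$, complex conjugation, and transfer along $\sigma$) to confirm that the action of each of $W$, $\hfr$, and $\hfr^*$ on $M^h$ is the one claimed, and to check that the conjugate-linear twists $T$ and $T^{-1}$ cancel so that the functor and its quasi-inverse are genuinely natural. No deeper argument seems required beyond the functorial observation that complex conjugation of modules is a conjugate-linear equivalence $A\text{-mod} \to \bar{A}\text{-mod}$, together with the already-cited contragredient duality and the isomorphism $\sigma$ constructed above.
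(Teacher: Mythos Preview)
Your proposal is correct and follows exactly the reasoning the paper leaves implicit: the paper gives no proof at all, simply remarking ``Clearly $M^h \in \oscr_{c^\dagger}(W, \hfr)$'' before stating the lemma, and your argument is a faithful unpacking of that word ``clearly'' via the composition $^h = (\text{transfer along }\sigma) \circ \bar{\,\cdot\,} \circ {}^\dagger$. One small notational slip: you write that $\sigma$ sends $\hfr \subset H_{c^\dagger}(W,\hfr)$ to $\hfr^* \subset \bar{H_c(W,\hfr)^{opp}}$, but $\sigma$ goes the other way; what you need (and what your subsequent tracing correctly uses) is that $\sigma^{-1}$ does this, since transfer of structure along $\sigma$ makes $h \in H_{c^\dagger}(W,\hfr)$ act via $\sigma^{-1}(h)$.
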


\begin{definition} Given modules $M \in \oscr_c(W, \hfr)$ and $N \in \oscr_{c^\dagger}(W, \hfr)$, a sesquilinear pairing $$\beta : M \times N \rightarrow \cplx$$ will be called \emph{contravariant} if $$\beta(hm, n) = \beta(m, \sigma(h)n) \ \ \ \ \text{ for all } h \in H_c(W, \hfr), m \in M, n \in N.$$ \end{definition}

As $\sigma(w) = w^{-1}$ for all $w \in W$ and as $\sigma$ sends the grading element of $H_c(W, \hfr)$ to the grading element of $H_{c^\dagger}(W, \hfr)$, it follows that any contravariant pairing $\beta$ is automatically graded and $W$-invariant.

We will be particularly concerned with contravariant Hermitian forms on modules $M \in \oscr_c(W, \hfr)$ for real parameters $c \in \pfr_\real$.  We will use the following lemma later:

\begin{lemma} \label{herm-dual-iso-lemma} Suppose the parameter $c \in \pfr$ is real, i.e. $c = c^\dagger$.  Let $M \in \oscr_c(W, \hfr)$ be equipped with a nondegenerate $W$-invariant contravariant Hermitian form $\beta$.  Then the assignment $$m \mapsto \beta(\cdot, m)$$ defines an isomorphism $M \cong M^h$ of $H_c(W, \hfr)$-modules.\end{lemma}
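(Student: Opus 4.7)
The plan is to show directly that the map $\phi : M \to M^h$ given by $\phi(m) = \beta(\cdot, m)$ is well-defined, $\cplx$-linear, $H_c$-equivariant, and bijective.  The fact that $\phi$ lands in the restricted dual $M^\dagger \subset M^*$ (and not just in $M^*$) comes from applying the contravariance identity with $h = \mathbf{h}$: since $\sigma(\mathbf{h}) = \mathbf{h}$, one obtains $z \beta(m, m') = \bar{z'}\beta(m, m')$ whenever $m \in M_z$ and $m' \in M_{z'}$, so $\beta$ vanishes off of the pairing $M_z \times M_{\bar z}$; for real $c$ the $\mathbf{h}$-weights are real (a direct calculation from the formula for $h_c(\mu)$), so $\beta$ pairs $M_z$ with $M_z$ and $\phi(m)$ is a finite sum of homogeneous functionals, hence in $M^\dagger$.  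The key observation for $\cplx$-linearity of $\phi$ is that $M^h = \bar{M^\dagger}$ carries the opposite complex structure, so the conjugate-linearity of $\beta$ in its second argument translates to genuine $\cplx$-linearity of $\phi$ as a map into $M^h$.

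Next I verify $H_c$-equivariance on the generators $y \in \hfr$, $x \in \hfr^*$, and $w \in W$.  Consider $h = y \in \hfr$.  Unwinding the $H_{c^\dagger}(W,\hfr) = H_c(W,\hfr)$-module structure on $M^h$ through $\sigma$: since $\sigma^{-1}(y) = Ty \in \hfr^* \subset \bar{H_c^{opp}}$, and the $\bar{H_c^{opp}}$-action on $\bar{M^\dagger}$ is induced from the right $H_c$-action on $M^\dagger$, one computes
\[
(y \cdot \phi(m))(m') = \phi(m)(Ty \cdot m') = \beta(Tym', m).
\]
On the other hand $\phi(ym)(m') = \beta(m', ym)$, and the contravariance identity applied with $h = Ty$ gives $\beta(Tym', m) = \beta(m', \sigma(Ty)m) = \beta(m', ym)$ because $\sigma(Ty) = T^{-1}(Ty) = y$.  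Entirely analogous calculations using $\sigma(T^{-1}x) = x$ and $\sigma(w^{-1}) = w$ handle the generators $x \in \hfr^*$ and $w \in W$; since these together generate $H_c(W, \hfr)$, the map $\phi$ is an $H_c$-module homomorphism.

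For bijectivity, injectivity of $\phi$ is immediate from the nondegeneracy of $\beta$: $\phi(m) = 0$ forces $\beta(\cdot, m) \equiv 0$ and hence $m = 0$.  Since $\phi$ preserves the $\mathbf{h}$-grading (by the weight analysis above) and each graded component $M_z$ is finite-dimensional, it suffices to check that the restriction $\phi_z : M_z \to M^h_z$ is a bijection for each real weight $z$.  The restriction of $\beta$ to $M_z \times M_z$ is a nondegenerate sesquilinear form on a finite-dimensional space, so $\phi_z$ is injective with image of dimension $\dim M_z = \dim M^h_z$, hence an isomorphism.  The main subtlety throughout is bookkeeping of the complex structures —  keeping straight what ``$h$ acts on $M^h$'' means once one transfers the original right $H_c$-action on $M^\dagger$ through complex conjugation and then through $\sigma$ — but once one commits to writing the action of $h$ on $f \in M^h$ as $(h \cdot f)(m) = f(\sigma^{-1}(h) \cdot m)$ for $\sigma^{-1}(h)$ interpreted in the underlying set of $H_c$, the required identities collapse to applications of contravariance.
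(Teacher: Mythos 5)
Your proposal is correct and follows the same route as the paper: the paper's proof simply observes that $W$-invariance plus contravariance is \emph{precisely} the statement $\beta(hv,v') = \beta(v,\sigma(h)v')$, which is the $H_c(W,\hfr)$-equivariance of $m \mapsto \beta(\cdot,m)$, and that nondegeneracy gives bijectivity; you have unwound exactly this, generator by generator and graded piece by graded piece. The extra bookkeeping (the weight analysis showing $\phi$ lands in $M^\dagger$, and the finite-dimensional graded components giving surjectivity) is all sound and just makes explicit what the paper leaves implicit.
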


\begin{proof} That the map in question is a map of $H_c(W, \hfr)$-modules follows from the observation that the statement that $\beta$ is $W$-invariant and contravariant means precisely that $\beta(hv, v') = \beta(v, \sigma(h)v')$ for all $v, v' \in M$ and $h \in H_c(W, \hfr) = H_{c^\dagger}(W, \hfr)$.  That the map is an isomorphism follows from the nondegeneracy of $\beta$.\end{proof}

\subsection{Characters and Signature Characters}

Let us now briefly recall the definition of characters and signature characters.

\begin{definition}  Let $M \in \oscr_c(W, \hfr)$.  Then the \emph{character} of $M$ \cite[Section 3.9]{EM}, denoted $\ch(M)$, is the formal series $$\ch(M)(w, t) := \sum_{z \in \cplx} t^z\tr_{M_z}(w), \ \ \ w \in W.$$
\end{definition}

In particular, taking $w = 1$, one obtains the graded dimension of $M$:  $$\ch(M)(1, t) = \sum_{z \in \cplx} t^z\dim M_z.$$

\begin{definition} When $M$ is a lowest weight module with lowest weight $\lambda$, we define the \emph{shifted character} $\ch_0(M)$ by $$\ch_0(M)(w, t) := t^{-h_c(\lambda)}\ch(M)(w, t).$$\end{definition}

The character of a standard module $\Delta_c(\lambda)$ is given by \cite[Proposition 3.27]{EM} $$\ch(\Delta_c(\lambda))(w, t) = \frac{\chi_\lambda(w)t^{h_c(\lambda)}}{\det_{\hfr^*} (1 - tw)}.$$  Furthermore, it is a standard result (e.g. \cite[Section 3.13]{EM}) that the set of classes $\{[\Delta_c(\lambda)] : \lambda \in \irr(W)\}$ of the standard modules form a basis of the integral Grothendieck group $K_0(\oscr_c(W, \hfr))$ and that whenever $$[M] = \sum_{\lambda \in \irr(W)} n_\lambda [\Delta_c(\lambda)]$$ in $K_0(\oscr_c(W, \hfr))$ we have $$\ch(M) = \sum_{\lambda \in \irr(W)} n_\lambda \ch(\Delta_c(\lambda)).$$  In particular, it follows from standard facts about Hilbert series that for any lowest weight module $M$ we have $\ch_0(M)(1, t) = (1 - t)^{-r}p_M(t)$, where $r$ is the dimension of support of $M$ and where $p_M(t)$ is a polynomial in $t$ with integer coefficients such that $p_M(t)(1) \neq 0$.

When $M \in \oscr_c(W, \hfr)$ is equipped with a graded Hermitian form, we may similarly define the signature character of the tuple $(M, \beta)$:

\begin{definition} Let $M \in \oscr_c(W, \hfr)$ be equipped with a graded Hermitian form $\beta$.  Then the \emph{signature character} $\sch(M, \beta)$ is the formal series $$\sch(M, \beta)(t) := \sum_{z \in \cplx} t^z\sign(\beta_z)$$ where, for each $z \in \cplx$, $\sign(\beta_z)$ denotes the signature of the restriction $\beta_z$ of the form $\beta$ to the weight space $M_z$.\end{definition}

When $M \in \oscr_c(W, \hfr)$ is a lowest weight module and the parameter $c$ is real, we will often write $\sch(M)$ rather than $\sch(M, \beta)$, where it is implicit that the Hermitian form $\beta$ is $W$-invariant, contravariant, and positive definite in the lowest weight space.  In this setting, we also define the shifted signature character:

\begin{definition} When $M \in \oscr_c(W, \hfr)$ is a lowest weight module with lowest weight $\lambda$, we define the \emph{shifted signature character} $\sch_0(M)$ by $$\sch_0(M)(t) := t^{-h_c(\lambda)}\sch(M)(t).$$\end{definition}

\subsection{Rationality of Signature Characters}

The following rationality result for the shifted signature characters $\sch_0(L_c(\lambda))$ generalizes to arbitrary finite complex reflection groups $W$ a corresponding result for signature characters in type $A$ due to Venkateswaran \cite[Corollary 1.3]{Ve}:

\begin{proposition} \label{rationality-prop} For any irreducible complex representation $\lambda \in \irr(W)$ and real parameter $c \in \pfr_\real$, the shifted signature character $\sch_0(L_c(\lambda))$ is of the form $$\sch_0(L_c(\lambda))(t) = (1 - t)^{-r}p_{L_c(\lambda)}(t)$$ for some polynomial $p_{L_c(\lambda)}(t)$ with integer coefficients, where $r = \dim \supp(L_c(\lambda)).$\end{proposition}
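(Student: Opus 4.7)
My plan combines the Jantzen filtration of Section \ref{jantzen-section} with a deformation argument along a real path and a strong induction on composition length. The base case is $c = 0$: here $\Delta_0(\lambda) = L_0(\lambda) \cong \cplx[\hfr] \otimes \lambda$, the contravariant Hermitian form is the positive-definite tensor product of $(\cdot,\cdot)_\lambda$ with the Fischer form on $\cplx[\hfr]$, and so $\sch_0(L_0(\lambda))(t) = \ch_0(L_0(\lambda))(1, t) = \dim(\lambda)/(1 - t)^{\dim \hfr}$, which is of the required form with $r = \dim \hfr$.

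For general $c_0 \in \pfr_\real$, I deform along the ray $c(t) = t c_0$, $t \in [0, 1]$, after a Zariski-generic perturbation of $c_0$ within its chamber (which does not alter $\sch_0(L_{c_0}(\lambda))$, since signatures are locally constant off reducibility hyperplanes). This ray meets the countable locally finite arrangement of reducibility hyperplanes in a finite discrete set $0 < t_1 < \cdots < t_N \leq 1$. On each open subinterval of $[0, 1]$ containing no $t_i$, the form $\beta_{c(t), \lambda}$ is nondegenerate on every graded component, so $\Delta_{c(t)}(\lambda) = L_{c(t)}(\lambda)$ and $\sch_0(L_{c(t)}(\lambda))$ is constant. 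At each crossing $t_i$, applying Proposition \ref{sig-flip-prop} to the Jantzen filtration of $\Delta_{c(t_i)}(\lambda)$ in the direction $c_0$ yields, writing $\Delta(\lambda)^{(k, i)}$ for the $k$-th Jantzen subquotient at $c(t_i)$ and setting $\sch_0(\Delta(\lambda)^{(k, i)}, \beta^{(k, i)})(t) := \sum_d t^d \sign(\beta^{(k, i)}|_{\Delta(\lambda)^{(k, i)}[d]})$, the jump formula
$$\sch_0(L_{c(t_i + s)}(\lambda)) - \sch_0(L_{c(t_i - s)}(\lambda)) = 2 \sum_{k \text{ odd}} \sch_0(\Delta(\lambda)^{(k, i)}, \beta^{(k, i)})$$
for all sufficiently small $s > 0$. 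Telescoping over the crossings expresses $\sch_0(L_{c_0}(\lambda))$ as $\dim(\lambda)/(1 - t)^{\dim \hfr}$ plus a finite integer combination of such signature characters of Jantzen subquotients at various $c(t_i)$.

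It then remains to show, by strong induction on the composition length of $\Delta_c(\lambda)$ (uniformly in $c \in \pfr_\real$ and $\lambda \in \irr(W)$), that each $\sch_0(\Delta(\lambda)^{(k, i)}, \beta^{(k, i)})$ with $k \geq 1$ is of the required rational form. Each such subquotient sits inside the maximal proper submodule $N_{c(t_i)}(\lambda) \subsetneq \Delta_{c(t_i)}(\lambda)$, hence has strictly shorter composition length. By iteratively applying Jantzen-type filtrations to these subquotients in further deformation directions, I would decompose $\sch_0(\Delta(\lambda)^{(k, i)}, \beta^{(k, i)})$ as an integer linear combination of $\sch_0(L_{c(t_i)}(\mu))$ for composition factors $L_{c(t_i)}(\mu)$, each of the required form by the inductive hypothesis applied to the strictly shorter $\Delta_{c(t_i)}(\mu)$. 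The main obstacle is precisely this decomposition step: the signature character of a non-lowest-weight Hermitian subquotient does not decompose naturally along composition factors, and systematically reducing it to signature characters of simple lowest weight modules requires careful bookkeeping of iterated Jantzen-type filtrations with appropriately chosen deformation directions. Once this is in place, $\sch_0(L_{c_0}(\lambda))$ is a finite integer combination of rational functions whose only possible pole is at $t = 1$; combining with the coefficient bound $|\sign(\beta|_{L_{c_0}(\lambda)[d]})| \leq \dim L_{c_0}(\lambda)[d] = O(d^{r - 1})$ with $r = \dim \supp(L_{c_0}(\lambda))$ forces the pole order at $1$ to be at most $r$, giving the desired form $p(t)/(1-t)^r$ with $p \in \ints[t]$.
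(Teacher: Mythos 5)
Your overall strategy---deform along the ray $c(t) = tc_0$, record the jumps at the finitely many crossing points via Proposition \ref{sig-flip-prop}, and reduce to signature characters of Jantzen subquotients---is exactly the paper's, including the base case at $c=0$ and the final reduction of the pole order from $\dim\hfr$ to $r$ via the coefficient bound $|\sign| \leq \dim$. But the step you flag as the ``main obstacle'' is a genuine gap, and your proposed fix (iterated Jantzen-type filtrations of the subquotients in further deformation directions) is not the right tool and would not obviously work: the Jantzen subquotients $\Delta(\lambda)^{(k)}$ are not standard modules, so they do not come equipped with a polynomial family of contravariant forms in any other direction, and there is nothing to build a second Jantzen filtration from. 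The paper closes this gap purely algebraically with Lemma \ref{K0-lemma} (a reformulation of Vogan's \cite[Lemma 3.9]{Vo}): any finite-length module $M$ carrying a \emph{nondegenerate} contravariant Hermitian form $\beta$ satisfies $\sch(M,\beta) = \sum_i \epsilon_i\,\sch(L_i)$ with $\epsilon_i \in \{\pm 1\}$ over its composition factors. The two inputs are (a) the Jantzen/Vogan theorem that the induced form $\beta_{c_0,\lambda}^{(k)}$ on each subquotient is nondegenerate, and (b) the Hermitian duality formalism of Section \ref{hermitian-duals-section}: if $\beta|_L$ is nondegenerate for a simple submodule $L$ one splits $M = L \oplus L^\perp$, and if $\beta|_L = 0$ one passes to $L^\perp/L$ using $M \cong M^h$ and \cite[Sublemma 3.18]{Vo}. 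No further deformation is needed.

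A secondary problem is your induction scheme. You induct on the composition length of $\Delta_c(\lambda)$ and claim the composition factors $L_{c(t_i)}(\mu)$ of the Jantzen subquotients come from ``strictly shorter'' standard modules $\Delta_{c(t_i)}(\mu)$. That is false in general: in a highest weight category, $[\Delta(\lambda):L(\mu)]\neq 0$ forces $\mu$ to be strictly lower in the highest weight order, but gives no comparison between the lengths of $\Delta(\mu)$ and $\Delta(\lambda)$. The correct induction, as in the paper, is a double one: an outer induction on the finitely many non-semisimple points of the ray $\{sc_0\}$, and an inner induction on the (well-founded, since $\irr(W)$ is finite) highest weight order $\leq_{c_0}$, using that all $k\geq 1$ subquotients lie in $N_{c_0}(\lambda)$ and hence involve only lowest weights strictly below $\lambda$. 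Finally, your ``Zariski-generic perturbation of $c_0$ within its chamber'' is not harmless when $c_0$ itself lies on a reducibility hyperplane (perturbing changes $L_{c_0}(\lambda)$); the endpoint must instead be handled by the second formula of Lemma \ref{wall-crossing-lemma}, which expresses $\sch_0(L_{c_0}(\lambda))$ in terms of $\sch_0(\Delta_{c(1+s)}(\lambda))$ and the Jantzen subquotients at $c_0$.
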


The proof of Proposition \ref{rationality-prop} is an inductive argument relying on the following lemmas:

\begin{lemma} \label{wall-crossing-lemma} Let $\lambda \in \irr(W)$, and let $c_0, c_1 \in \pfr_\real$ be real parameters so that the Jantzen filtration of $\Delta_{c_0}(\lambda)$ is defined.  Let $c(s) = c_0 + sc_1$.  For sufficiently small $s > 0$, we have $$\sch_0(\Delta_{c(s)}(\lambda))(t) = \sch_0(\Delta_{c(-s)}(\lambda))(t) + 2t^{-h_{c_0}(\lambda)}\sum_{k \text{ odd}} \sch(\Delta_{c_0}(\lambda)^{(k)}, \beta_{c_0, \lambda}^{(k)})(t)$$ and $$\sch_0(L_{c_0}(\lambda))(t) = \sch_0(\Delta_{c(s)}(\lambda))(t) - t^{-h_{c_0}(\lambda)}\sum_{k \geq 1} \sch(\Delta_{c_0}(\lambda)^{(k)}, \beta_{c_0, \lambda}^{(k)})(t).$$\end{lemma}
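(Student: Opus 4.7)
The plan is to derive both identities directly from Proposition \ref{sig-flip-prop} via a careful translation of its pointwise statement into the language of graded signature characters. First I would unwind the definitions. For any real parameter $c$,
$$\sch_0(\Delta_c(\lambda))(t) = \sum_{d \geq 0} t^d \sign(\beta_{c, \lambda}[d]),$$
and, since the subquotient $\Delta_{c_0}(\lambda)^{(k)} = \Delta(\lambda)^{\geq k}/\Delta(\lambda)^{\geq k + 1}$ has its degree-$d$ piece in $\mathbf{h}$-weight $h_{c_0}(\lambda) + d$,
$$\sch(\Delta_{c_0}(\lambda)^{(k)}, \beta_{c_0, \lambda}^{(k)})(t) = t^{h_{c_0}(\lambda)} \sum_{d \geq 0} t^d \sign(\beta_{c_0, \lambda}[d]^{(k)}).$$

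For the first identity, I would substitute the expressions from Proposition \ref{sig-flip-prop}(a) and (b) for $\sign(\beta_{c(s), \lambda}[d])$ and $\sign(\beta_{c(-s), \lambda}[d])$ in terms of $p_d^{(k)} - q_d^{(k)} = \sign(\beta_{c_0, \lambda}[d]^{(k)})$ and subtract term by term. The even-$k$ contributions cancel and the odd-$k$ contributions double, giving
$$\sch_0(\Delta_{c(s)}(\lambda))(t) - \sch_0(\Delta_{c(-s)}(\lambda))(t) = 2 \sum_{k \text{ odd}} \sum_{d \geq 0} t^d \sign(\beta_{c_0, \lambda}[d]^{(k)}),$$
and re-applying the identity above for the subquotient characters converts the right-hand side into the sum appearing in the first line of the lemma.

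For the second identity I would split the full sum in Proposition \ref{sig-flip-prop}(a) into its $k = 0$ and $k \geq 1$ parts. The crucial step is to identify $\Delta_{c_0}(\lambda)^{(0)} = \Delta(\lambda)/\Delta(\lambda)^{\geq 1}$, equipped with $\beta_{c_0, \lambda}^{(0)}$, with $L_{c_0}(\lambda)$ equipped with the form induced by $\beta_{c_0, \lambda}$. This rests on two facts: by the $k = 0$ case of the Jantzen theorem quoted before Proposition \ref{sig-flip-prop}, $\Delta(\lambda)^{\geq 1}$ is exactly the radical of $\beta_{c_0, \lambda}$; and this radical coincides with the maximal proper submodule $N_{c_0}(\lambda)$, as recalled in the introduction. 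Granted this identification, the $k = 0$ summand in $\sum_{k \geq 0} t^{-h_{c_0}(\lambda)} \sch(\Delta_{c_0}(\lambda)^{(k)}, \beta_{c_0, \lambda}^{(k)})(t) = \sch_0(\Delta_{c(s)}(\lambda))(t)$ is precisely $\sch_0(L_{c_0}(\lambda))(t)$, and isolating it gives the second equation.

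The argument has essentially no genuine obstacle: the analytic content is contained in Proposition \ref{sig-flip-prop}, and the only nontrivial module-theoretic ingredient is the identification of $\Delta(\lambda)^{\geq 1}$ with $N_{c_0}(\lambda)$. The remainder is a careful but formal reassembly of finite-dimensional signatures into graded generating series. The step most likely to require care in writing up is the bookkeeping that ensures the shift by $t^{-h_{c_0}(\lambda)}$, rather than by $t^{-h_{c(\pm s)}(\lambda)}$, is the correct one on the left-hand sides — this is immediate since the signature character $\sch_0$ strips out the lowest-weight shift by definition, but it is the one place where one must avoid a sign/shift slip.
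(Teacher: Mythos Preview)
Your proposal is correct and is exactly the argument the paper has in mind: the paper's own proof consists of the single sentence ``This is an immediate consequence of Proposition \ref{sig-flip-prop} and the definition of signature characters,'' and you have spelled out precisely that derivation, including the identification of $\Delta(\lambda)^{\geq 1}$ with $N_{c_0}(\lambda)$ needed for the second formula.
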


\begin{proof} This is an immediate consequence of Proposition \ref{sig-flip-prop} and the definition of signature characters. \end{proof}

The following lemma (and its proof) is a reformulation for rational Cherednik algebras of Vogan's \cite[Lemma 3.9]{Vo}.

\begin{lemma} \label{K0-lemma} Suppose $c = c^\dagger$ and $M \in \oscr_c(W, \hfr)$ admits a $W$-invariant contravariant nondegenerate Hermitian form $\beta$.  Suppose $[M] = \sum_{i = 1}^n [L_i]$ in $K_0(\oscr_c(W, \hfr))$ for some simple modules $L_1, ..., L_n$.  Then there are $\epsilon_1, ..., \epsilon_n \in \{\pm 1\}$ such that $$\sch(M, \beta) = \sum_{i = 1}^n \epsilon_i \sch(L_i).$$ \end{lemma}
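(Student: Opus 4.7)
The plan is to proceed by induction on the composition length of $M$ in $\oscr_c(W, \hfr)$. In the base case $M$ is simple, say $M \cong L_c(\mu)$ for some $\mu \in \irr(W)$; descending the canonical contravariant form $\beta_{c,\mu}$ from $\Delta_c(\mu)$ produces a nondegenerate contravariant Hermitian form on $L_c(\mu)$, and the space of such forms on a simple module is one-dimensional over $\real$ by a standard Schur-type argument. Hence $\beta$ is a real scalar multiple of this canonical form, giving $\sch(M, \beta) = \epsilon \sch(L_c(\mu))$ for $\epsilon \in \{\pm 1\}$ the sign of the scalar.

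For the inductive step, choose a simple submodule $N \subset M$ and set $N^\perp := \{v \in M : \beta(n, v) = 0 \text{ for all } n \in N\}$. Since $\sigma$ is a $\cplx$-algebra automorphism of $H_c(W, \hfr)$ when $c = c^\dagger$, the contravariance of $\beta$ implies $N^\perp$ is an $H_c(W, \hfr)$-submodule; and since $\beta$ is graded and nondegenerate (so its restriction to each finite-dimensional graded piece $M_z$ is nondegenerate), a dimension count in each degree gives $(N^\perp)^\perp = N$. Split into two cases. If $N \cap N^\perp = 0$, then $\beta|_N$ is nondegenerate, $M = N \oplus N^\perp$ as an orthogonal direct sum of modules, and $\sch(M, \beta) = \sch(N, \beta|_N) + \sch(N^\perp, \beta|_{N^\perp})$; the base case applied to $N$ and the inductive hypothesis applied to $N^\perp$ finish.

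Otherwise $N \subset N^\perp$, and $\beta$ descends to a nondegenerate contravariant Hermitian form $\beta''$ on $N^\perp/N$ while simultaneously inducing a nondegenerate contravariant sesquilinear pairing $N \times (M/N^\perp) \to \cplx$. The latter identifies $M/N^\perp \cong N^h$ as $H_c(W, \hfr)$-modules, so $M/N^\perp$ is simple, and Lemma \ref{herm-dual-iso-lemma} applied to $N$ itself (which carries the canonical nondegenerate contravariant form descended from its standard cover) yields $N \cong N^h$. I would then construct a graded Witt-type decomposition $M = V \oplus (N \oplus W)$ of graded vector spaces, with $V$ a graded lift of $N^\perp/N$ inside $N^\perp$ carrying $\beta''$, and $W$ a graded lift of $M/N^\perp$ modified by corrections in $V$ and $N$ so that $W$ is totally $\beta$-isotropic and $\beta$-orthogonal to $V$; the resulting form on $N \oplus W$ pairs $N_z$ nondegenerately with $W_z$ while vanishing on each factor, hence has signature $0$ in every graded component. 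Therefore $\sch(M, \beta) = \sch(V, \beta'')$, to which the inductive hypothesis applies and expresses the signature character as a signed sum over the composition factors of $N^\perp/N$. Since $[M] = [N] + [N^h] + [N^\perp/N]$ in $K_0$ by construction, the two remaining outer composition factors are handled by assigning signs $+1$ to $N$ and $-1$ to $N^h$, producing the cancellation $\sch(N) - \sch(N^h) = 0$ that matches the signature-zero contribution of the hyperbolic piece.

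The main obstacle is the graded Witt-style orthogonalization in the hard case: executing the modification of an arbitrary graded lift of $M/N^\perp$ by elements of $V$ and $N$, in each graded degree separately, so as to simultaneously kill $\beta(V, W)$ and $\beta(W, W)$ while preserving the property that $W$ is a graded complement to $N^\perp$. This relies on the graded nondegeneracy of $\beta$ on $M_z$, of $\beta''$ on $V_z$, and of the induced pairing between $N_z$ and $(M/N^\perp)_z$, in each degree. A secondary but crucial conceptual point is the identification $N \cong N^h$ for the simple submodule $N$, without which the sign cancellation between the two outer composition factors would fail; this rests on the existence of a nondegenerate contravariant form on every simple object of $\oscr_c(W, \hfr)$ when $c = c^\dagger$, combined with Lemma \ref{herm-dual-iso-lemma}.
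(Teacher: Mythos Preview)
Your proof is correct and follows essentially the same route as the paper's: induction on length, a simple submodule $N$, the dichotomy between $\beta|_N$ nondegenerate versus $\beta|_N = 0$, and in the isotropic case the identifications $M/N^\perp \cong N^h$ and $N \cong N^h$ together with the reduction $\sch(M,\beta) = \sch(N^\perp/N,\beta'')$. The only substantive difference is that the paper obtains this last equality by citing \cite[Sublemma 3.18]{Vo}, whereas you sketch the underlying graded Witt decomposition directly; your concern about the ``main obstacle'' is unwarranted, since in each finite-dimensional graded piece $M_z$ this is just the classical fact that a totally isotropic subspace of a nondegenerate Hermitian space can be completed to a hyperbolic summand, so the signature of $\beta$ on $M_z$ equals that of the induced form on $(N_z)^\perp/N_z$.
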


\begin{proof} The proof is by induction on the length of $M$.  The case $M = 0$ (or $M = 1$) is trivial, so suppose $M > 0$ and let $L \subset M$ be a simple submodule.  If the restriction of $\beta$ to $L$ is nondegenerate, then $M = L \oplus L^\perp$ and $L^\perp$ is an $H_c(W, \hfr)$-submodule of $M$ on which $\beta$ is nondegenerate.  As all $W$-invariant contravariant Hermitian forms on $L$ are proportional, we have $\sch(L, \beta|_L) = \pm\sch(L)$, and the claim follows by induction.

If the restriction of $\beta$ to $L$ is degenerate, it is 0.  The inclusion $L \subset M$ gives rise to a surjection $M^h \rightarrow L^h$.  By Lemma \ref{herm-dual-iso-lemma}, we have an isomorphism $M \rightarrow M^h$, $m \mapsto \beta(\cdot, m)$.  The resulting composition $\phi : M \rightarrow L^h$ satisfies $$\phi(m)(l) = \beta(l, m)$$ for all $l \in L$ and $m \in M$. As $\beta|_L = 0$, we have $L^\perp = \ker \phi \supset L$, and in particular the form $\beta$ descends to a nondegenerate $W$-invariant contravariant Hermitian form on the subquotient $N := \ker \phi/L.$  Now we have that $L \cong L^h$ so $$M = N + L + L^h = N + 2L$$ in $K_0(\oscr_c(W, \hfr))$.  Furthermore it follows from \cite[Sublemma 3.18]{Vo} that $\sch(M) = \sch(N)$ and hence $\sch(M) = \sch(N) + \sch(L) - \sch(L)$, and the claim follows by induction.\end{proof}

\begin{proof}[Proof of Proposition \ref{rationality-prop}] Let $c \in \pfr_\real$.  It follows from \cite[Proposition 3.35]{EM} that there are only finitely many $s \in [0, 2]$, say $\{s_1, ..., s_N\}$ for some $N \ge 0$, such that $\oscr_{sc}(W, \hfr)$ is not semisimple.  Furthermore, at $s = 0$ we have both that $\oscr_{sc}(W, \hfr)$ is semisimple and that every simple module $L_0(\lambda) = \Delta_0(\lambda)$ is unitary.  In particular, for all $\lambda$ we have $\sch_0(L_0(\lambda)) = (1 - t)^{-l}\dim \lambda $ where $l = \dim \hfr$, so the proposition holds for $c = 0$.  Furthermore, note that the signature character $\sch_0(\Delta_{c(s)}(\lambda))$ does not depend on $s$ for those $s$ in a fixed interval $(s_i, s_{i + 1})$.  So, by induction, we may assume that the proposition holds for all $sc$ with $s \in [0, 1)$ and we need then only prove that it holds for all $sc$ with $s \in [1, 1 + \delta)$ for some $\delta > 0$.

For any $\lambda \in \irr(W)$ such that $L_{c_0}(\lambda) = \Delta_{c_0}(\lambda)$, the signature character $\sch_0(\Delta_{(1 + s)c_0}(\lambda))$ does not depend on $s$ for $|s|$ sufficiently small.  In particular, the proposition holds for those $\lambda \in \irr(W)$ minimal in any highest weight ordering $\leq_{c_0}$ for $\oscr_{c_0}(W, \hfr)$.  By induction, we may then assume that the proposition holds for those lowest weights $\mu \in \irr(W)$ strictly lower than $\lambda$ with respect to $\leq_{c_0}$.  Taking $c_1 := c_0$ in Lemma \ref{wall-crossing-lemma}, it then follows from Lemmas \ref{wall-crossing-lemma} and \ref{K0-lemma} that $\sch_0(L_{c_0}(\lambda))$ and all $\sch_0(\Delta_{(1 + s)c_0}(\lambda))$ for sufficiently small $s > 0$ are of the form $(1 - t)^{-l}p(t)$ for some polynomial $p(t)$ with integer coefficients.  As the absolute value of the coefficients of the series $\sch_0(L_{c_0}(\lambda))$ is bounded by the coefficients of $\ch_0(L_{c_0}(\lambda))$, it follows that $\sch_0(L_{c_0}(\lambda))$ is of the form $(1 - t)^{-r}p(t)$ for some polynomial $p$ with integer coefficients, as needed.\end{proof}

\subsection{Asymptotic Signatures}

In this section we will introduce the \emph{asymptotic signature} $a_{c, \lambda}$ of the irreducible representation $L_c(\lambda)$, generalizing to arbitrary finite complex reflection groups the corresponding notion in type $A$ studied by Venkateswaran \cite{Ve}, and prove that it is a rational number in the interval $[-1,1]$.  Later, in the case that $\supp L_c(\lambda) = \hfr$, we will see in Theorem \ref{KZ-preserves-signatures-theorem} that, in the Coxeter case, $a_{c, \lambda}$ is in fact a normalization of the signature of an invariant Hermitian form on $KZ(L_c(\lambda))$, generalizing to arbitrary finite Coxeter groups a corresponding result in type $A$ due to Venkateswaran \cite[Theorem 1.4]{Ve}.

\begin{lemma}\label{coef-ratio-lemma} Let $g(t) = \sum_{n = 0}^\infty g_nt^n$ be a rational function regular for $|t| < 1$ and with a pole of maximal order at $t = 1$.  Let $p(t)$ and $q(t)$ be polynomials with $q(1) \neq 0$, and let $s(t) = p(t)g(t) = \sum_{n = 0}^\infty s_nt^n$ and $d(t) = q(t)g(t) = \sum_{n = 0}^\infty d_nt^n$.  Then the limits $$\lim_{t \rightarrow 1^-} \frac{s(t)}{d(t)}, \ \ \ \lim_{N \rightarrow \infty} \frac{\sum_{n \leq N} s_n}{\sum_{n \leq N} d_n}$$ both exist and equal $p(1)/q(1)$.\end{lemma}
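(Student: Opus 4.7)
\emph{Plan.} The first limit is essentially formal. Since $g$ has a pole at $t = 1$, $g$ is nonvanishing on some interval $(1 - \epsilon, 1)$, so on that interval $s(t)/d(t) = p(t)g(t)/(q(t)g(t)) = p(t)/q(t)$. Since $q(1) \ne 0$, the rational function $p/q$ is continuous at $t = 1$ with value $p(1)/q(1)$, giving the first claim immediately.

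For the second limit, my plan is to encode the partial sums as coefficients of a rational generating function and read off their leading asymptotics from partial fractions. Setting $S_N := \sum_{n \le N} s_n$ and $D_N := \sum_{n \le N} d_n$, one has $\sum_{N \ge 0} S_N t^N = s(t)/(1-t)$ and $\sum_{N \ge 0} D_N t^N = d(t)/(1-t)$. Let $r \ge 1$ be the order of the pole of $g$ at $t = 1$. By partial fractions,
$$g(t) = \frac{c_r}{(1-t)^r} + \tilde g(t), \qquad c_r \ne 0,$$
where $\tilde g$ collects all lower-order pole terms at $t = 1$ together with contributions from the remaining poles of $g$, each of which satisfies $|\omega| \ge 1$ (as $g$ is regular in the open unit disk) and has order $\le r$ (by the maximal-order hypothesis). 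The rational functions $s(t)/(1-t)$ and $d(t)/(1-t)$ then have a \emph{strictly} maximal pole at $t = 1$, of order $r + 1$, with leading coefficients $p(1)c_r$ and $q(1)c_r$ respectively: multiplication by $1/(1-t)$ does not affect poles at $\omega \ne 1$, and the maximal-order hypothesis bounds those orders by $r < r + 1$.

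Now one extracts coefficients. From $[t^N](1-t)^{-(r+1)} = \binom{N+r}{r} = N^r/r! + O(N^{r-1})$ one obtains leading contributions $p(1)c_r N^r/r!$ to $S_N$ and $q(1)c_r N^r/r!$ to $D_N$. Contributions to $g_n$ from poles $\omega \ne 1$ on $|t|=1$ take the form $\omega^{-n}P_\omega(n)$ with $\deg P_\omega \le r-1$; since $\sum_{n \le N} \omega^{-n}$ is bounded for $\omega \ne 1$, Abel summation yields $\sum_{n \le N} \omega^{-n} P_\omega(n) = O(N^{r-1})$. Poles of $g$ strictly outside the closed unit disk give exponentially decaying contributions, summing to $O(1)$. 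Altogether
$$S_N = \frac{p(1)c_r}{r!}N^r + O(N^{r-1}), \qquad D_N = \frac{q(1)c_r}{r!}N^r + O(N^{r-1}),$$
and since $q(1)c_r \ne 0$ the ratio tends to $p(1)/q(1)$; if $p(1) = 0$ the pole order of $s(t)/(1-t)$ at $1$ drops to $< r+1$, so $S_N = O(N^{r-1})$ and the ratio still tends to $0 = p(1)/q(1)$. The principal obstacle is handling possible further poles of $g$ on the unit circle of the same full order $r$, which contribute oscillating terms of size $O(n^{r-1})$ to $g_n$, comparable to the main term; the resolution is exactly the boundedness of the partial sums $\sum_{n \le N} \omega^{-n}$ for $\omega \ne 1$, which absorbs one power of $N$ via Abel summation and renders those contributions subdominant in $S_N$ and $D_N$.
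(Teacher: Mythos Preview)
Your proof is correct and follows the same route as the paper: encode the partial sums via $s(t)/(1-t)$ and $d(t)/(1-t)$, then extract the leading $N^r$ asymptotics by partial fractions, using that the pole at $t=1$ becomes strictly dominant (order $r+1$) after the extra factor of $1/(1-t)$. The paper's only organizational difference is the split $p(t)g(t)/(1-t) = (p(t)-p(1))\,g(t)/(1-t) + p(1)\,g(t)/(1-t)$, which isolates the top-order pole in one stroke and thereby avoids both your separate case $p(1)=0$ and your Abel-summation bound for poles $\omega\ne 1$ on $|t|=1$ (the latter is correct but unnecessary, since those poles already have order $\le r$ in $s(t)/(1-t)$ and contribute $O(N^{r-1})$ directly from partial fractions).
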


\begin{proof} That the first limit exists and equals $p(1)/q(1)$ is clear.  Let $r$ be the order of the pole of $g$ at $t = 1$.  We have $$\sum_{N = 0}^\infty \left(\sum_{n \leq N} s_n\right)t^N = p(t)g(t)/(1 - t) = (p(t) - p(1))g(t)/(1 - t) + p(1)g(t)/(1 - t).$$  As $g(t)/(1 - t)$ has a pole of order $r + 1$ at $t = 1$, greater than the order of any other pole of $g(t)/(1 - t)$ or of any pole of $(p(t) - p(1))g(t)/(1 - t)$, it follows from a consideration of partial fractions that $\lim_{N \rightarrow \infty} (\sum_{n \leq N} s_n)/N^r = p(1)C$, where $C \neq 0$ is a nonzero constant depending only on the rational function $g(t)$.  Similarly, we have $\lim_{N \rightarrow \infty} (\sum_{n \leq N} d_n)/N^r = q(1)C$, and the claim follows.\end{proof}

We will use Lemma \ref{coef-ratio-lemma} in the special cases that $g(t) = (1 - t)^{-l}$ when working signature characters and $g(t) = \prod_{i = 1}^l (1 - t^{d_i})^{-1}$, where $d_1, ..., d_l$ are the fundamental degrees of the reflection group $W$, when working with the \emph{isotypic signature characters} introduced in Section \ref{isotypic-sig-char-section}.

\begin{lemma} \label{asymptotic-sig-lemma} Let $c \in \pfr_\real$ be a real parameter and let $\lambda \in \irr(W)$, so that the irreducible lowest weight module $L_c(\lambda)$ admits a $W$-invariant contravariant nondegenerate Hermitian form $\beta_{c, \lambda}$, normalized to be positive definite on $\lambda$.  For any $n \geq 0$, let $\beta_{c, \lambda}^{\leq n}$ denote the restriction of $\beta$ to the space $L_c(\lambda)^{\leq n} := \bigoplus_{k \leq n} L_c(\lambda)[k].$  Then the limits $$\lim_{n \rightarrow \infty} \frac{\sign(\beta_{c, \lambda}^{\leq n})}{\dim L_c(\lambda)^{\leq n}}, \ \ \  \lim_{t \rightarrow 1^-} \frac{\sch_0(L_c(\lambda))(t)}{\ch_0(L_c(\lambda))(1, t)}$$ exist and are equal to the same rational number $a_{c, \lambda} \in [-1, 1]$.  If $r := \dim \supp(L_c(\lambda)) > 0$, then the limit $$\lim_{n \rightarrow \infty} \frac{\sign(\beta_{c, \lambda}[n])}{\dim L_c(\lambda)[n]}$$ also exists and equals $a_{c, \lambda}$.\end{lemma}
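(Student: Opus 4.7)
The plan is to derive all three limits from the shared rationality of the shifted signature and shifted character of $L_c(\lambda)$. By Proposition \ref{rationality-prop} one may write $\sch_0(L_c(\lambda))(t) = (1-t)^{-r} p(t)$ with $p(t) \in \ints[t]$ and $r = \dim \supp(L_c(\lambda))$, and from the Hilbert-series discussion preceding Proposition \ref{rationality-prop} one also has $\ch_0(L_c(\lambda))(1, t) = (1-t)^{-r} q(t)$ with $q(t) \in \ints[t]$ and $q(1) \neq 0$. The coefficients $\dim L_c(\lambda)[n]$ of $\ch_0$ are nonnegative integers which either stabilize (when $r = 0$, with total $\dim L_c(\lambda) > 0$) or grow like a positive multiple of $n^{r-1}$ (when $r > 0$); either way $q(1) > 0$.

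With this setup I would apply Lemma \ref{coef-ratio-lemma} to $g(t) = (1-t)^{-r}$, whose only pole in $\cplx$ is at $t = 1$ so the maximal-pole hypothesis is automatic. Writing $s_n := \sign(\beta_{c, \lambda}[n])$ and $d_n := \dim L_c(\lambda)[n]$ for the coefficients of $\sch_0(L_c(\lambda))(t)$ and $\ch_0(L_c(\lambda))(1, t)$ respectively, the lemma immediately yields both $\lim_{t \to 1^-} \sch_0(L_c(\lambda))(t)/\ch_0(L_c(\lambda))(1, t) = p(1)/q(1)$ and the partial-sum limit $\lim_N \left(\sum_{n \leq N} s_n\right)/\left(\sum_{n \leq N} d_n\right) = p(1)/q(1)$. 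Since contravariant forms are automatically graded and the $\mathbf{h}$-weights on $L_c(\lambda)$ are real (as $c$ is real), the $\mathbf{h}$-weight spaces of $L_c(\lambda)$ are mutually orthogonal under $\beta_{c, \lambda}$, so $\sum_{n \leq N} s_n = \sign(\beta_{c, \lambda}^{\leq N})$ and $\sum_{n \leq N} d_n = \dim L_c(\lambda)^{\leq N}$. This identifies both of the first two limits in the statement with the rational number $a_{c, \lambda} := p(1)/q(1) \in \rats$. The universal bound $|\sign(\beta_{c, \lambda}^{\leq N})| \leq \dim L_c(\lambda)^{\leq N}$ forces every finite approximant, and hence the limit, into $[-1, 1]$.

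For the third limit under the hypothesis $r > 0$ I would refine this to an asymptotic analysis of the individual coefficients by partial fractions. Writing $p(t)/(1-t)^r = p(1)/(1-t)^r + \tilde p(t)/(1-t)^{r-1}$ for a polynomial $\tilde p$ and using that the $n$-th coefficient of $(1-t)^{-r}$ equals $\binom{n+r-1}{r-1} = n^{r-1}/(r-1)! + O(n^{r-2})$, one obtains $s_n = p(1) n^{r-1}/(r-1)! + O(n^{r-2})$; the identical argument applied to $q(t)/(1-t)^r$ gives $d_n = q(1) n^{r-1}/(r-1)! + O(n^{r-2})$. Since $q(1) > 0$ and $r > 0$, the denominator $d_n$ is eventually positive, the ratio $s_n/d_n$ is well-defined for large $n$, and $s_n/d_n \to p(1)/q(1) = a_{c, \lambda}$. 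No serious obstacle is anticipated: the only points requiring care are the verification that the hypotheses of Lemma \ref{coef-ratio-lemma} are met (automatic for $g(t) = (1-t)^{-r}$), the passage from partial sums of $s_n$ to signatures of the filtered forms via orthogonality of weight spaces, and the inequality $q(1) > 0$ needed to make sense of the individual ratio.
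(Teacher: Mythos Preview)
Your proof is correct and follows essentially the same approach as the paper: both invoke Proposition \ref{rationality-prop} to write $\sch_0$ and $\ch_0$ as $(1-t)^{-r}$ times polynomials, apply Lemma \ref{coef-ratio-lemma} to identify the first two limits with $p(1)/q(1)$, use the trivial bound $|\sign| \leq \dim$ for the $[-1,1]$ containment, and extract the third limit from the leading $n^{r-1}$ coefficients via partial fractions. The only cosmetic difference is that the paper disposes of the $r=0$ case in one sentence before invoking Lemma \ref{coef-ratio-lemma}, whereas you fold it into the discussion of $q(1)>0$; since $g(t)=(1-t)^{-r}$ has no pole when $r=0$, it would be slightly cleaner to separate that trivial case explicitly before applying the lemma.
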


\begin{proof} In the case $r = 0$ the claim is clear, so we may assume $r > 0$.  Using Proposition \ref{rationality-prop}, write $\ch_0(L_c(\lambda))(1, t) = \sum_{n = 0}^\infty d_nt^n = (1 - t)^{-r}q(t)$ and $\sch_0(L_c(\lambda)) = \sum_{n = 0}^\infty s_nt^n = (1 - t)^{-r}p(t)$, where $p(t)$ and $q(t)$ are polynomials with integer coefficients satisfying $q(1) \neq 0$, where we use the series expansion about 0 for $(1 - t)^{-r}$.  In particular, Lemma \ref{coef-ratio-lemma} applies, and the first two limits in the lemma statement exist and equal $p(1)/q(1) \in \rats$.  As we clearly have $|\sign(\beta^{\leq n}_{c, \lambda})| \leq \dim L_c(\lambda)^{\leq n}$, we also have $p(1)/q(1) \in [-1, 1]$, giving the first claim.  The final claim follows similarly, noting that for $n >> 0$ we have that $\sign(\beta_{c, \lambda}[n])$ and $\dim L_c(\lambda)[n]$ are polynomials in $n$ of degree at most $r$ and with coefficients of $n^r$ given by $q(1)/(r - 1)!$ and $p(1)/(r - 1)!$, respectively.\end{proof}

\begin{definition}\label{def:quasi-unitary} For $c \in \pfr_\real$, the rational number $a_{c, \lambda} \in [-1, 1]$ appearing in Lemma \ref{asymptotic-sig-lemma} is the \emph{asymptotic signature} of $L_c(\lambda)$.  If $a_{c, \lambda} = \pm 1$, we say $L_c(\lambda)$ is \emph{quasi-unitary}.\end{definition}

\subsection{Isotypic Signature Characters}\label{isotypic-sig-char-section}  As contravariant forms are not only graded but also $W$-invariant, it is natural to consider the \emph{isotypic signature characters} recording the signatures of contravariant forms both within graded components and also within $W$-isotypic components:

\begin{definition} Let $c \in \pfr_\real$ be a real parameter, let $\beta$ be a contravariant Hermitian form on the module $M \in \oscr_c(W, \hfr)$, and for all $z \in \cplx$ and $\pi \in \irr(W)$ let $M_z^\pi \subset M$ denote the $\pi$-isotypic component of the homogeneous degree $z$ part of $M$ and let $\beta^\pi_z = \beta|_{M^\pi_z}$.  The \emph{$\pi$-isotypic signature character} $\sch^\pi(M, \beta)$ of $M$ with respect to $\beta$ is the formal series $$\sch^\pi(M, \beta)(t) := \sum_{z \in \cplx} \sign(\beta^\pi_z)t^z.$$  \end{definition}

Clearly, we have $\sch(M, \beta) = \sum_{\pi \in \irr(W)} \sch^\pi(M, \beta)$.  When $M$ is lowest weight, we define $\sch_0^\pi(M)(t) \in \ints[[t]]$ completely analogously to $\sch_0(M)$.  Similarly, let $\ch^\pi(M)$ denote the graded dimension of $M^\pi$, and when $M$ is lowest weight with lowest weight $\lambda$ let $\ch_0^\pi(M)(t) = t^{-h_c(\lambda)}\ch^\pi(M)(t) \in \ints^{\geq 0}[[t]]$.

The identification of any standard module $\Delta_c(\lambda)$ with $\cplx[\hfr] \otimes \lambda$ used in Section \ref{jantzen-section} respects the $W$-action.  In particular, in the setting considered in that section, each isotypic subspace $\Delta_c(\lambda)^\pi$ is equipped with a Jantzen filtration $\{\Delta_c(\lambda)^{\pi, \geq k}\}_{k \geq 0}$, and we have $\Delta_c(\lambda)^{\geq k} = \bigoplus_{\pi \in \irr(W)} \Delta_c(\lambda)^{\pi, \geq k}.$  Furthermore, the wall-crossing formulas in Lemma \ref{wall-crossing-lemma} and the decomposition in Lemma \ref{K0-lemma} of an arbitrary signature character $\sch(M, \beta)$ in terms of signature characters $\sch(L_i)$ of irreducible representations, and their proofs, have direct analogues for the isotypic signatures characters $\sch^\pi$.

Let $l = \dim \hfr$, let $\{d_i\}_{i = 1}^l$ denote the fundamental degrees of $W$, and let $\cplx[\hfr]^{coW} := \cplx[\hfr]/\cplx[\hfr](\cplx[\hfr]^W)^+$ be the coinvariant algebra.  Recall that $\cplx[\hfr]^{coW}$ is graded and is isomorphic to $\cplx W$ as a $\cplx W$-module and that the graded dimension of $\cplx[\hfr]^W$ is given by $\prod_{i = 1}^l (1 - t^{d_i})^{-1}$.  For any $\lambda, \pi \in \irr(W)$, let $\theta_\lambda^\pi \in \ints^{\geq 0}[t]$ denote the graded dimension of the $\pi$-isotypic subspace of $\lambda \otimes \cplx[\hfr]^{coW}$.

\begin{lemma} \label{unitary-isotypic-lemma} For any $\lambda, \pi \in \irr(W)$, we have:

(1) $\theta_\lambda^\pi(1) = (\dim \lambda)(\dim \pi)^2$

(2) $\ch_0^\pi(\Delta_0(\lambda)) = \sch_0^\pi(\Delta_0(\lambda)) = \theta^\pi_\lambda(t)/\prod_{i = 1}^l (1 - t^{d_i}).$

(3) $$\lim_{n \rightarrow \infty} \frac{\dim \Delta_0(\lambda)^{\pi, \leq n}}{\dim \Delta_0(\lambda)^{\leq n}} = \lim_{t \rightarrow 1^-} \frac{\ch_0^\pi(\Delta_0(\lambda))(t)}{\ch_0(\Delta_0(\lambda))(1, t)} = \frac{(\dim \pi)^2}{|W|}.$$\end{lemma}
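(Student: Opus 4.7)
The plan is to address the three claims in order using classical reflection-group results (Chevalley--Shephard--Todd and the structure of the coinvariant algebra) together with the fact that $\Delta_0(\lambda)$ is unitary. No deep analytic or representation-theoretic obstacle is expected, since at $c = 0$ everything reduces to a computation inside the graded $W$-module $\cplx[\hfr] \otimes \lambda$; the only nontrivial input is the unitarity statement, which has already been cited in the proof of Proposition \ref{rationality-prop}.

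For (1): By Chevalley--Shephard--Todd, $\cplx[\hfr]^{coW} \cong \cplx W$ as $\cplx W$-modules, so $\theta_\lambda^\pi(1)$ equals the dimension of the $\pi$-isotypic component of $\lambda \otimes \cplx W$. Since the character of $\cplx W$ vanishes off the identity and takes the value $|W|$ at the identity, the standard formula $\dim V^\pi = (\dim \pi)\langle \chi_V, \chi_\pi\rangle_W$ yields $\langle \chi_{\lambda \otimes \cplx W}, \chi_\pi\rangle_W = (\dim \lambda)(\dim \pi)$, and multiplying by $\dim \pi$ gives the asserted value $(\dim \lambda)(\dim \pi)^2$.

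For (2): The graded $\cplx W$-module isomorphism $\cplx[\hfr] \cong \cplx[\hfr]^W \otimes \cplx[\hfr]^{coW}$ (with $W$ acting trivially on the invariants) gives, after tensoring with $\lambda$ and extracting the $\pi$-isotypic part, that the graded dimension of the $\pi$-isotypic component of $\Delta_0(\lambda) \cong \cplx[\hfr] \otimes \lambda$ is $\theta_\lambda^\pi(t)/\prod_i (1-t^{d_i})$, since $\cplx[\hfr]^W$ carries the trivial $W$-action with graded dimension $\prod_i(1-t^{d_i})^{-1}$ and $\theta_\lambda^\pi(t)$ is by definition the graded dimension of the $\pi$-isotypic part of $\cplx[\hfr]^{coW} \otimes \lambda$. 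This gives the formula for $\ch_0^\pi(\Delta_0(\lambda))$. For the equality $\sch_0^\pi(\Delta_0(\lambda)) = \ch_0^\pi(\Delta_0(\lambda))$, recall from the proof of Proposition \ref{rationality-prop} that $\Delta_0(\lambda) = L_0(\lambda)$ is unitary, hence $\beta_{0,\lambda}$ is positive definite on every graded and $W$-isotypic component, so the signature coincides with the dimension.

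For (3): Set $g(t) := \prod_i (1-t^{d_i})^{-1}$, which is regular for $|t|<1$ and has a pole of maximal order $l$ at $t = 1$. By part (2), $\ch_0^\pi(\Delta_0(\lambda))(t) = \theta_\lambda^\pi(t)\,g(t)$, and using $(1-t)^{-l} = \prod_i (1+t+\cdots+t^{d_i-1})\cdot g(t)$ we write $\ch_0(\Delta_0(\lambda))(1,t) = q(t)\,g(t)$ where $q(t) := (\dim \lambda)\prod_i(1+t+\cdots+t^{d_i-1})$, so $q(1) = (\dim \lambda)\prod_i d_i = (\dim \lambda)|W| \neq 0$. Lemma \ref{coef-ratio-lemma} then applies with $p := \theta_\lambda^\pi$ and this $q$, giving that both the $t \to 1^-$ limit and the asymptotic coefficient-sum ratio equal $p(1)/q(1) = \theta_\lambda^\pi(1)/((\dim \lambda)|W|) = (\dim \pi)^2/|W|$ by part (1). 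The main effort in the entire lemma is simply the bookkeeping in part (3); there is no true obstacle.
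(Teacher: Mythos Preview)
Your proof is correct and follows essentially the same approach as the paper: the paper also invokes $\cplx[\hfr]^{coW}\cong\cplx W$ and $\lambda\otimes\cplx W\cong\cplx W^{\oplus\dim\lambda}$ for (1), the graded $\cplx W$-module splitting $\cplx[\hfr]\cong\cplx[\hfr]^{coW}\otimes\cplx[\hfr]^W$ for (2), and Lemma~\ref{coef-ratio-lemma} together with $\ch_0(\Delta_0(\lambda))(1,t)=(\dim\lambda)P_W(t)/\prod_i(1-t^{d_i})$ and $P_W(1)=|W|$ for (3), which is exactly your $q(t)$. Your explicit justification of $\sch_0^\pi=\ch_0^\pi$ via unitarity at $c=0$ is a welcome clarification the paper leaves implicit.
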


\begin{proof} $\theta_\lambda^\pi(1)$ is the dimension of the $\pi$-isotypic subspace of $\lambda \otimes \cplx[\hfr]^{coW};$ as $\cplx[\hfr]^{coW} \cong \cplx W$ as a $\cplx W$-module and as $\lambda \otimes \cplx W \cong \cplx W^{\oplus \dim \lambda}$, the first statement follows.  The second statement follows from the fact that $\cplx[\hfr] \cong \cplx[\hfr]^{coW} \otimes \cplx[\hfr]^W$ as a graded $\cplx W$-module.  The third statement follows from the first two statements and from Lemma \ref{coef-ratio-lemma}, using the facts that $\ch_0(\Delta_0(\lambda))(1, t) = (\dim \lambda)P_W(t)/\prod_{i = 1}^l (1 - t^{d_i})$, where $P_W(t)$ is the Poincar\'{e} polynomial of $W$, and $P_W(1) = |W|$.\end{proof}

Note that Lemma \ref{wall-crossing-lemma} holds for the isotypic signature characters $\sch^\pi$ as well, by the same proof as for the usual signature characters $\sch$.  As the signs $\epsilon_i$ appearing in Lemma \ref{K0-lemma} have no dependence on the irreducible representation $\pi$, it follows from Lemma \ref{unitary-isotypic-lemma} and the proof of Proposition \ref{rationality-prop} that we have:

\begin{lemma} \label{isotypic-rationality-lemma} For any $c \in \pfr_\real$, there exists a collection of polynomials $\{n_c^{\lambda, \mu}(t) \in \ints[t] : \lambda, \mu \in \irr(W)\}$ such that for every $\lambda, \pi \in \irr(W)$ we have $$\sch_0^\pi(L_c(\lambda))(t) = \sum_{\mu \in \irr(W)} n_c^{\lambda, \mu}(t) \sch_0^\pi(\Delta_0(\mu))(t)$$ $$= \prod_{i = 1}^l (1 - t^{d_i})^{-1}\sum_{\mu \in \irr(W)} n_c^{\lambda, \mu}(t) \theta_\mu^\pi(t).$$  In particular, $\sch_0^\pi(L_c(\lambda))(t)$ is a rational function of $t$.\end{lemma}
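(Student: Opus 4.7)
The plan is to run the inductive interpolation argument used to prove Proposition \ref{rationality-prop} in parallel for all isotypic components $\pi \in \irr(W)$ simultaneously, and to show that the combinatorial data (multiplicities and signs) appearing in the decomposition can be chosen independently of $\pi$. The goal is to produce, for each $c \in \pfr_\real$ and each $\lambda \in \irr(W)$, integers $n_c^{\lambda,\mu}(t) \in \ints[t]$ so that
$$
\sch_0^\pi(L_c(\lambda))(t) \;=\; \sum_{\mu \in \irr(W)} n_c^{\lambda,\mu}(t)\,\sch_0^\pi(\Delta_0(\mu))(t)
$$
for \emph{every} $\pi$, and then to invoke Lemma \ref{unitary-isotypic-lemma}(2) to pass from $\sch_0^\pi(\Delta_0(\mu))$ to $\theta_\mu^\pi(t)/\prod_i(1-t^{d_i})$, from which the rationality in $t$ is immediate.

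First I would observe that the two inputs used in the proof of Proposition \ref{rationality-prop} have direct isotypic analogues with \emph{exactly the same} coefficients. The wall-crossing formulas of Lemma \ref{wall-crossing-lemma} refine, $W$-isotypic component by $W$-isotypic component, because the Jantzen filtration of $\Delta_c(\lambda)$ is a filtration of $\cplx W \ltimes \cplx[\hfr]$-modules (each $\Delta(\lambda)[d]^{\geq k}$ is $W$-stable, since the forms $\beta_{c(t),\lambda}$ are $W$-invariant), so the subquotients $\Delta_{c_0}(\lambda)^{(k)}$ carry intrinsic $\pi$-isotypic components whose signatures are well-defined. Likewise, the decomposition in Lemma \ref{K0-lemma} of $\sch(M,\beta)$ as $\sum_i \epsilon_i \sch(L_i)$ refines to the statement
$$
\sch^\pi(M,\beta) \;=\; \sum_i \epsilon_i\, \sch^\pi(L_i)
$$
\emph{with the same} $\epsilon_i \in \{\pm 1\}$: the proof of Lemma \ref{K0-lemma} proceeds by peeling off simple submodules using contravariant forms, and the resulting orthogonal decomposition respects $W$-isotypic subspaces since the contravariant forms are $W$-invariant. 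This $\pi$-independence of the signs is the key point already flagged in the text before the lemma.

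Next I would run the induction in two nested layers exactly as in Proposition \ref{rationality-prop}. The outer induction is on the interpolation parameter $s$ along the ray $sc$, $s \in [0,1]$, crossing the finitely many non-semisimple walls $\{s_1, \ldots, s_N\} \subset [0,2]$. At $s = 0$, every $L_0(\lambda) = \Delta_0(\lambda)$ is unitary, so $\sch_0^\pi(L_0(\lambda)) = \sch_0^\pi(\Delta_0(\lambda))$ and we may take $n_0^{\lambda,\mu}(t) = \delta_{\lambda\mu}$. Between walls the modules $\Delta_{sc}(\lambda) = L_{sc}(\lambda)$ are irreducible and their signature characters are locally constant in $s$, so the polynomials do not change. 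At a wall $s_i$ we perform an inner induction on the highest weight ordering $\leq_{s_ic}$: for $\lambda$ minimal, $L_{s_ic}(\lambda) = \Delta_{s_ic}(\lambda)$ and nothing changes across the wall; for general $\lambda$, the Jantzen subquotients $\Delta_{s_ic}(\lambda)^{(k)}$ for $k \geq 1$ lie strictly below $\lambda$ in $K_0$, so by the inner induction their isotypic signature characters admit the required presentation. Applying the isotypic wall-crossing formula (with $c_1 = c_0$) and the isotypic version of Lemma \ref{K0-lemma} then expresses $\sch_0^\pi(\Delta_{(s_i+\eps)c}(\lambda))$ and hence $\sch_0^\pi(L_{s_ic}(\lambda))$ in the desired form, with the same polynomial coefficients $n^{\lambda,\mu}(t)$ for every $\pi$.

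The main obstacle — indeed the only subtle point — is ensuring that the integer polynomials $n_c^{\lambda,\mu}(t)$ can be chosen uniformly in $\pi$. This is handled precisely by the remarks above: both the Jantzen filtration and the sign data of Lemma \ref{K0-lemma} are intrinsic module-theoretic objects, so the combinatorial coefficients produced by the induction do not see the choice of $\pi$. Once the first displayed equality of the lemma is established, the second follows immediately from Lemma \ref{unitary-isotypic-lemma}(2), and the rationality of $\sch_0^\pi(L_c(\lambda))(t)$ follows because the right-hand side is a fixed rational function $\prod_i(1-t^{d_i})^{-1}$ times a polynomial (a finite $\ints$-linear combination of the polynomials $\theta_\mu^\pi(t)$ with polynomial coefficients $n_c^{\lambda,\mu}(t)$).
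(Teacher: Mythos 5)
Your proposal is correct and follows essentially the same route as the paper, which derives the lemma from the isotypic refinements of Lemmas \ref{wall-crossing-lemma} and \ref{K0-lemma} (with the key observation that the $\epsilon_i$ do not depend on $\pi$) combined with the induction of Proposition \ref{rationality-prop} and Lemma \ref{unitary-isotypic-lemma}(2). Your write-up merely makes explicit the details the paper leaves as a sketch in the paragraph preceding the lemma.
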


\begin{corollary} In the setting of Lemma \ref{isotypic-rationality-lemma}, the rational function $\sch_0^\pi(L_c(\lambda))(t)$ has a pole of order at most $r := \dim \supp(L_c(\lambda))$ at $t = 1$.\end{corollary}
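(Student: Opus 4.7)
The plan is to deduce the pole bound for the signature character from the analogous bound on the graded dimension series $\ch_0^\pi(L_c(\lambda))(t)$, using only the elementary fact that the signature of a Hermitian form on a finite-dimensional space is bounded in absolute value by its dimension.

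First I would verify the pole bound on the character side: $\ch_0^\pi(L_c(\lambda))(t)$ has a pole of order at most $r$ at $t = 1$. The $\pi$-isotypic component $L_c(\lambda)^\pi$ is stable under $\cplx[\hfr]^W$, since $\cplx[\hfr]^W$ commutes with the $W$-action. As $\cplx[\hfr]$ is a finite extension of the polynomial ring $\cplx[\hfr]^W$, $L_c(\lambda)^\pi$ is a finitely generated graded $\cplx[\hfr]^W$-module, and its support in $\spec \cplx[\hfr]^W$ is the image of $\supp L_c(\lambda)$ under the finite quotient $\hfr \rightarrow \hfr/W$, hence of dimension at most $r$. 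The Hilbert--Serre theorem for the polynomial ring $\cplx[\hfr]^W \cong \cplx[f_1, \ldots, f_l]$ with $\deg f_i = d_i$ then writes the graded dimension of $L_c(\lambda)^\pi$ as $h(t)/\prod_{i = 1}^l (1 - t^{d_i})$ for some polynomial $h$, with the pole order at $t = 1$ equal to the Krull dimension of $L_c(\lambda)^\pi$, and thus at most $r$. The overall factor $t^{-h_c(\lambda)}$ is analytic and nonvanishing at $t = 1$, so this pole bound passes to $\ch_0^\pi(L_c(\lambda))(t)$.

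Next, comparing the two generating functions coefficient-by-coefficient bounds the absolute value of each coefficient of $\sch_0^\pi(L_c(\lambda))(t)$ by the corresponding coefficient of $\ch_0^\pi(L_c(\lambda))(t)$. For $t \in (0, 1)$, this yields
$$\left| (1 - t)^r \sch_0^\pi(L_c(\lambda))(t) \right| \leq (1 - t)^r \ch_0^\pi(L_c(\lambda))(t),$$
and the right-hand side remains bounded as $t \to 1^-$ by the previous step. Hence $(1 - t)^r \sch_0^\pi(L_c(\lambda))(t)$, which is a rational function (up to the analytic factor $t^{-h_c(\lambda)}$) by Lemma \ref{isotypic-rationality-lemma}, is bounded on a one-sided real neighborhood of $t = 1$; a meromorphic function with this property has a removable singularity at $t = 1$. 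Therefore $\sch_0^\pi(L_c(\lambda))(t)$ itself has pole of order at most $r$ at $t = 1$.

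The main subtlety this approach avoids is that $\sch_0^\pi(L_c(\lambda))(t)$, as presented in Lemma \ref{isotypic-rationality-lemma}, has denominator $\prod_i (1 - t^{d_i})$ and could a priori carry poles of order up to $l$ at roots of unity other than $1$. A direct coefficient-asymptotic argument along the lines of the proof of Proposition \ref{rationality-prop} would need to disentangle the contribution of the pole at $t = 1$ from oscillatory contributions at these other roots of unity. Restricting to the positive real axis circumvents this entirely, since the coefficient inequality translates directly to the same bound on absolute values of the generating functions there, and a pole of a rational function on the real axis is detected by its boundedness in a half-neighborhood.
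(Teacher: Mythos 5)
Your proof is correct and follows essentially the same strategy as the paper: bound the coefficients of $\sch_0^\pi(L_c(\lambda))(t)$ by those of a character series and then use positivity of that series on $[0,1)$ to transfer the pole bound at $t=1$ to the signature character. The only difference is that the paper majorizes by the full character $\ch_0(L_c(\lambda))(1,t)$, whose pole order $r$ is already known from the standard Hilbert-series facts recalled earlier, whereas you majorize by the isotypic character $\ch_0^\pi(L_c(\lambda))(t)$ and supply an extra Hilbert--Serre argument over $\cplx[\hfr]^W$ for its pole bound --- a valid but slightly longer route to the same comparison.
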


\begin{proof} By Lemma \ref{isotypic-rationality-lemma}, we see that $\sch_0^\pi(L_c(\lambda))(t)$ is absolutely convergent for $|t| < 1$, and by a comparison of coefficients we see that the rational function $$\sch_0^\pi(L_c(\lambda))(t)/\ch_0(L_c(\lambda))(1, t)$$ takes values in $[-1, 1]$ on the interval $[0, 1)$.  As $\ch_0(L_c(\lambda))(1, t)$ has a pole of order $r$ at $t = 1$, the claim follows.\end{proof} 

The following proposition allows the asymptotic signature $a_{c, \lambda}$ of $L_c(\lambda)$ to be computed in any isotypic component when $L_c(\lambda)$ has full support.  Considering the case when $L_c(\lambda)$ is finite-dimensional, note that this need not be true when $L_c(\lambda)$ has proper support.

\begin{proposition} Let $c \in \pfr_\real$, $\lambda \in \irr(W)$, and suppose $L_c(\lambda)$ has full support.  Then for all $\pi \in \irr(W)$ the limit $$\lim_{n \rightarrow \infty} \frac{\sign(\beta_{c, \lambda}^{\pi, \leq n})}{\dim L_c(\lambda)^{\pi, \leq n}}$$ exists and equals $a_{c, \lambda}$.  In particular, this limit is independent of $\pi$.\end{proposition}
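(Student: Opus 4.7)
The plan is to run the argument of Lemma \ref{asymptotic-sig-lemma} one isotypic component at a time, using the identity $\theta_\mu^\pi(1) = (\dim \mu)(\dim \pi)^2$ from Lemma \ref{unitary-isotypic-lemma}(1) to force the $\pi$-dependence to cancel between numerator and denominator.

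Concretely, by Lemma \ref{isotypic-rationality-lemma} I have $\sch_0^\pi(L_c(\lambda))(t) = g(t) p_\pi(t)$ with $g(t) := \prod_{i=1}^l (1 - t^{d_i})^{-1}$ and $p_\pi(t) = \sum_\mu n_c^{\lambda, \mu}(t) \theta_\mu^\pi(t) \in \ints[t]$. Running the analogous reasoning with characters in place of signature characters, using the genuine Grothendieck-group decomposition $[L_c(\lambda)] = \sum_\mu m_\mu [\Delta_c(\mu)]$ for some $m_\mu \in \ints$ (with $m_\lambda = 1$) together with Lemma \ref{unitary-isotypic-lemma}(2), I would write $\ch_0^\pi(L_c(\lambda))(1,t) = g(t) q_\pi(t)$ with $q_\pi(t) = \sum_\mu m_\mu \theta_\mu^\pi(t) \in \ints[t]$. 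Lemma \ref{coef-ratio-lemma}, applied in the form flagged in the remark just after that lemma, then yields
\[
\lim_{n \to \infty} \frac{\sign(\beta_{c,\lambda}^{\pi, \leq n})}{\dim L_c(\lambda)^{\pi, \leq n}} = \frac{p_\pi(1)}{q_\pi(1)},
\]
where the shift of indices by $h_c(\lambda)$ only affects the upper summation bound and thus does not affect the limit.

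Substituting Lemma \ref{unitary-isotypic-lemma}(1) into $p_\pi(1)$ and $q_\pi(1)$ pulls out a common $(\dim \pi)^2$ factor, giving
\[
\frac{p_\pi(1)}{q_\pi(1)} = \frac{\sum_\mu n_c^{\lambda, \mu}(1) \dim \mu}{\sum_\mu m_\mu \dim \mu},
\]
which is manifestly independent of $\pi$. Summing the same calculation over all $\pi \in \irr(W)$ and using $\sum_\pi \theta_\mu^\pi(1) = |W| \dim \mu$ (which follows from $\cplx[\hfr]^{coW} \cong_{\cplx W} \cplx W$) identifies the common value of the ratio with the non-isotypic ratio, which by Lemma \ref{asymptotic-sig-lemma} equals $a_{c, \lambda}$.

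The main subtlety I expect is verifying the hypothesis $q_\pi(1) \neq 0$ needed to apply Lemma \ref{coef-ratio-lemma}. This is exactly where the full-support hypothesis is essential: it guarantees that $\ch_0(L_c(\lambda))(1, t)$ has a pole of order $l$ at $t = 1$, which via the relation $q(1) = |W| \sum_\mu m_\mu \dim \mu$ (obtained by summing $q_\pi(1)$ over $\pi$) forces $\sum_\mu m_\mu \dim \mu > 0$, whence $q_\pi(1) = (\dim \pi)^2 \sum_\mu m_\mu \dim \mu > 0$ for every $\pi$. Without the full-support hypothesis, the pole order at $t = 1$ could drop in a $\pi$-dependent way and the argument would collapse, as the remark preceding the proposition already flags in the finite-dimensional case.
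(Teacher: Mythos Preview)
Your proof is correct and follows essentially the same route as the paper: both arguments pass to the $t \to 1^-$ limit via Lemma \ref{coef-ratio-lemma} with $g(t) = \prod_i (1 - t^{d_i})^{-1}$, use Lemma \ref{isotypic-rationality-lemma} for the numerator, and exploit $\theta_\mu^\pi(1) = (\dim \mu)(\dim \pi)^2$ to kill the $\pi$-dependence. The only organizational difference is that the paper keeps the \emph{full} character $\ch_0(L_c(\lambda))(1,t)$ in the denominator throughout and first proves the auxiliary density statement $\lim_n \dim L_c(\lambda)^{\pi, \leq n}/\dim L_c(\lambda)^{\leq n} = (\dim \pi)^2/|W|$, whereas you work directly with the isotypic character in the denominator via the Grothendieck-group expansion; both arrive at the same $\pi$-independent ratio. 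One small imprecision to fix: when you write $q_\pi(t) = \sum_\mu m_\mu \theta_\mu^\pi(t)$, the correct expression carries shifts $t^{h_c(\mu) - h_c(\lambda)}$ (these exponents are nonnegative integers whenever $m_\mu \neq 0$, so $q_\pi$ is still a polynomial), but since only $q_\pi(1)$ matters this does not affect your argument.
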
 

\begin{proof}As we have $$a_{c, \lambda} = \lim_{n \rightarrow \infty} \frac{\sign(\beta_{c, \lambda}^{\leq n})}{\dim L_c(\lambda)^{\leq n}} = \lim_{n \rightarrow \infty} \frac{\sum_{\pi \in \irr(W)} \sign(\beta_{c, \lambda}^{\pi, \leq n})}{\sum_{\pi \in \irr(W)} \dim L_c(\lambda)^{\pi, \leq n}}$$ it suffices to show that the limit in the proposition statement exists and is independent of $\pi$.  As $L_c(\lambda)$ has full support, considering the decomposition of $[L_c(\lambda)]$ in $K_0(\oscr_c(W, \hfr))$ in terms of the classes of standard modules $[\Delta_c(\mu)]$ for various $\mu \in \irr(W)$, we have
\begin{equation}\label{eq:isotypic-to-whole-ratio}
\lim_{n \rightarrow \infty} \frac{\dim L_c(\lambda)^{\pi, \leq n}}{\dim L_c(\lambda)^{\leq n}} = \lim_{t \rightarrow 1^-} \frac{\ch_0^\pi(L_c(\lambda))(t)}{\ch_0(L_c(\lambda))(1, t)} = \frac{(\dim \pi)^2}{|W|}
\end{equation}
where the first equality follows from Lemma \ref{coef-ratio-lemma} and the second equality follows from Lemma \ref{unitary-isotypic-lemma}(3) and the fact that $L_c(\lambda)$ has full support in $\hfr$.  By equation (\ref{eq:isotypic-to-whole-ratio}) and Lemmas \ref{coef-ratio-lemma} and \ref{unitary-isotypic-lemma} we have $$\lim_{n \rightarrow \infty} \frac{\sign(\beta_{c, \lambda}^{\pi, \leq n})}{\dim L_c(\lambda)^{\pi, \leq n}}$$ $$=\frac{|W|}{(\dim \pi)^2} \lim_{n \rightarrow \infty} \frac{\sign(\beta_{c, \lambda}^{\pi, \leq n})}{\dim L_c(\lambda)^{\leq n}}$$ $$= \frac{|W|}{(\dim \pi)^2} \lim_{t \rightarrow 1^-} \frac{\sch_0^\pi(L_c(\lambda))(t)}{\ch_0(L_c(\lambda))(1, t)}$$ $$= \frac{|W|}{(\dim \pi)^2} \lim_{t \rightarrow 1^-} \frac{\sum_{\mu \in \irr(W)} n_c^{\lambda, \mu}(t)\theta_{\mu}^\pi(t)/\prod_{i = 1}^l (1 - t^{d_i})}{\ch_0(L_c(\lambda))(1, t)}$$ $$= |W| \lim_{t \rightarrow 1^-} \frac{\sum_{\mu \in \irr(W)} n_c^{\lambda, \mu}(1)\dim \mu}{\ch_0(L_c(\lambda))(1, t)\prod_{i = 1}^l (1 - t^{d_i})},$$ which is visibly independent of $\pi$, as needed.  Note that this final limit exists because $L_c(\lambda)$ has full support in $\hfr$, so $\ch_0(L_c(\lambda))(1, t)$ as a pole of order $l$ at $t = 1$.\end{proof}

\section{The Dunkl Weight Function}\label{weight-function-section}

\subsection{The Contravariant Pairing and Contravariant Form}

As in the previous section, let $W$ be a finite complex reflection group with reflection representation $\hfr$.  Fix a positive-definite $W$-invariant inner product on $\hfr_\real$, and let $(\cdot, \cdot)_\hfr$ be its unique extension to a $W$-invariant positive-definite Hermitian form on $\hfr$.  Let $T : \hfr \rightarrow \hfr^*$, $T(y)(x) = (x, y)_\hfr$, be the conjugate-linear $W$-invariant isomorphism introduced in Section \ref{jantzen-section}, giving rise for any parameter $c \in \pfr$ to the isomorphism $\sigma : \bar{H_c(W, \hfr)^{opp}} \rightarrow H_{c^\dagger}(W, \hfr)$ of $\cplx$-algebras and conjugate-linear equivalence of categories $$^h : \oscr_c(W, \hfr) \rightarrow \oscr_{c^\dagger}(W, \hfr)^{opp}$$ introduced in Section \ref{hermitian-duals-section}.

Let $\lambda \in \irr(W)$ be an irreducible representation of $W$, and fix a positive-definite $W$-invariant Hermitian form $(\cdot, \cdot)_\lambda$ on $\lambda$.  For any $c \in \pfr$, the $W$-invariant isomorphism $\phi : \lambda \rightarrow \bar{\lambda^*}$ given by $\phi(v)(v') = (v', v)$ between the lowest weight spaces of $\Delta_{c^\dagger}(\lambda)$ and $\Delta_c(\lambda)^h$ extends uniquely to an $H_{c^\dagger}(W, \hfr)$-homomorphism $\phi : \Delta_{c^\dagger}(\lambda) \rightarrow \Delta_c(\lambda)^h$.  When it is convenient, we will use the notation $v_2^\dagger v_1$ to denote the inner product $(v_1, v_2)_\lambda$ for $v_1, v_2 \in \lambda$, borrowing notation from the standard inner product on $\cplx^N$.  Similarly, we will use the notation $A^\dagger$ to denote the adjoint of an operator $A \in \End_\cplx(\lambda)$ with respect to the form $(\cdot, \cdot)_\lambda$.

\begin{definition}\label{beta-def} For $c \in \pfr$ and $\lambda \in \irr(W)$, let $\beta_{c, \lambda}$ be the sesquilinear pairing $$\beta_{c, \lambda}: \Delta_c(\lambda) \times \Delta_{c^\dagger}(\lambda) \rightarrow \cplx$$ given by $$\beta_{c, \lambda}(v, v') = \phi(v')(v).$$\end{definition}

\begin{remark} $\beta_{c, \lambda}$ depends on the choice of forms $(\cdot, \cdot)_\hfr$ and $(\cdot, \cdot)_\lambda$ and is therefore determined by $c \in \pfr$ and $\lambda \in \irr(W)$ only up to a positive real multiple.\end{remark}

\begin{remark} When the parameter $c \in \pfr$ is real, i.e. $c \in \pfr_\real$ and $c = c^\dagger$, the pairing $\beta_{c, \lambda}$ is precisely the contravariant Hermitian form on $\Delta_c(\lambda)$ considered in \cite{C} and \cite[Definition 4.5]{ESG}.  While this case is the primary case of interest, we will consider the more general setting of complex $c$ to show that the associated Dunkl weight function, introduced later, is defined and holomorphic for all $c \in \pfr$.\end{remark}

The following proposition is standard (for $c \in \pfr_\real$ it is a reformulation of \cite[Proposition 2.2]{ESG}):

\begin{proposition}\label{beta-props}\ 

(i)  $\beta_{c, \lambda}$ is the unique sesquilinear form $\beta_{c, \lambda} : \Delta_c(\lambda) \times \Delta_{c^\dagger}(\lambda) \rightarrow \cplx$ that is:

\indent \indent (a) normalized: $\beta_{c, \lambda}|_\lambda = (\cdot, \cdot)_\lambda$

\indent \indent (b) contravariant: $\beta_{c, \lambda}(hv, v') = \beta_{c, \lambda}(v, \sigma(h)v')$ for all $h \in H_c(W, \hfr)$, $v \in \Delta_c(\lambda)$, and \indent \indent \indent \indent $v' \in \Delta_{c^\dagger}(\lambda),$

(ii) Any contravariant form $\beta : \Delta_c(\lambda) \times \Delta_{c^\dagger}(\lambda) \rightarrow \cplx$ is proportional to $\beta_{c, \lambda}.$

(iii)  $\beta_{c, \lambda}$ is graded, $W$-invariant, and satisfies $\beta_{c, \lambda} = \beta_{c^\dagger, \lambda}^\dagger$.

(iv)  $\beta_{c, \lambda}$ descends to a nondegenerate pairing $\beta_{c, \lambda} : L_c(\lambda) \times L_{c^\dagger}(\lambda) \rightarrow \cplx$.

\end{proposition}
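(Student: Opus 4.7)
The plan is to derive all four parts from the construction of $\beta_{c, \lambda}$ via the $H_{c^\dagger}(W, \hfr)$-module homomorphism $\phi : \Delta_{c^\dagger}(\lambda) \rightarrow \Delta_c(\lambda)^h$, combined with the universal property of standard modules and Schur's lemma on the irreducible $W$-representation $\lambda$. For existence in (i), I would unwind Definition \ref{beta-def}: the $H_{c^\dagger}$-linearity of $\phi$, together with the transfer-of-structure definition of the Hermitian dual along $\sigma : \overline{H_c(W, \hfr)^{opp}} \rightarrow H_{c^\dagger}(W, \hfr)$, translates directly into contravariance $\beta_{c, \lambda}(hv, v') = \beta_{c, \lambda}(v, \sigma(h)v')$ for $h \in H_c(W, \hfr)$; the normalization $\beta_{c, \lambda}|_\lambda = (\cdot, \cdot)_\lambda$ is built into the defining formula $\phi(v)(v') = (v', v)_\lambda$ on lowest weight spaces.

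For the uniqueness assertion in (i) and the proportionality statement (ii), I would observe that any sesquilinear form $\beta : \Delta_c(\lambda) \times \Delta_{c^\dagger}(\lambda) \rightarrow \cplx$ satisfying (b) induces a map $\Psi : \Delta_{c^\dagger}(\lambda) \rightarrow \Delta_c(\lambda)^h$ by $\Psi(v') := \beta(\cdot, v')$, which is $H_{c^\dagger}$-linear precisely because of contravariance (reversing the argument from the first paragraph). The restriction $\Psi|_\lambda$ is a $W$-equivariant sesquilinear form on the irreducible representation $\lambda$, hence a complex scalar multiple of $(\cdot, \cdot)_\lambda$ by Schur's lemma. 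By the universal property $\Delta_{c^\dagger}(\lambda) = H_{c^\dagger}(W, \hfr) \otimes_{\cplx W \ltimes S\hfr} \lambda$, $\Psi$ is uniquely determined by $\Psi|_\lambda$; this gives (ii), and specializing to the normalized restriction gives uniqueness in (i).

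For (iii), gradedness follows from contravariance applied to the grading element $\mathbf{h}$, using the fact noted in Section \ref{hermitian-duals-section} that $\sigma$ sends the grading element of $H_c(W, \hfr)$ to that of $H_{c^\dagger}(W, \hfr)$; the $W$-invariance is the special case $h = w$ with $\sigma(w) = w^{-1}$. For the identity $\beta_{c, \lambda} = \beta_{c^\dagger, \lambda}^\dagger$, I would define $\beta'(v, v') := \overline{\beta_{c^\dagger, \lambda}(v', v)}$, verify that it is a sesquilinear form $\Delta_c(\lambda) \times \Delta_{c^\dagger}(\lambda) \rightarrow \cplx$ satisfying (a) (since $(\cdot, \cdot)_\lambda$ is Hermitian) and (b) (using that $\sigma$ is determined only by the auxiliary form $(\cdot, \cdot)_\hfr$ and is therefore symmetric under $c \leftrightarrow c^\dagger$), and then apply the uniqueness just established to conclude $\beta' = \beta_{c, \lambda}$.

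For (iv), the left kernel $K_L := \{v \in \Delta_c(\lambda) : \beta_{c, \lambda}(v, \cdot) \equiv 0\}$ is an $H_c(W, \hfr)$-submodule by contravariance and is proper by normalization, so $K_L \subseteq N_c(\lambda)$, and symmetrically $K_R \subseteq N_{c^\dagger}(\lambda)$. For the reverse inclusions, the induced non-degenerate pairing on the quotients gives an isomorphism $\Delta_c(\lambda)/K_L \cong (\Delta_{c^\dagger}(\lambda)/K_R)^h$ of $H_c(W, \hfr)$-modules. Since $\Delta_{c^\dagger}(\lambda)/K_R$ is a lowest weight module with unique simple quotient $L_{c^\dagger}(\lambda)$ and the functor ${}^h$ exchanges simple quotients with simple submodules, the right-hand side has a unique simple submodule isomorphic to $L_{c^\dagger}(\lambda)^h \cong L_c(\lambda)$; its lowest weight space must lie in the lowest weight space $\lambda$ of $\Delta_c(\lambda)/K_L$ and hence coincide with it, so the simple submodule fills out the entire quotient. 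This yields $\Delta_c(\lambda)/K_L = L_c(\lambda)$, i.e.\ $K_L = N_c(\lambda)$, and the same argument handles $K_R$. The main obstacle is the last step: identifying $L_{c^\dagger}(\lambda)^h$ with $L_c(\lambda)$ and pinning down that the embedded simple submodule sits at the lowest graded degree $h_c(\lambda)$, which rests on the matching of grading elements under $\sigma$ together with the standard fact that $L_c(\lambda)$ occurs with multiplicity one as a composition factor of $\Delta_c(\lambda)$.
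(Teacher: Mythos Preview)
The paper does not actually prove this proposition: it is introduced as ``standard'' with a pointer to \cite[Proposition 2.2]{ESG} for the case $c\in\pfr_\real$, and no argument is given. Your proposal is therefore not comparable to a paper proof, but it is a correct and appropriately detailed verification of a result the paper takes for granted.

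A couple of minor comments on the argument itself. In part (ii), when you invoke the universal property of $\Delta_{c^\dagger}(\lambda)$ to say that $\Psi$ is determined by $\Psi|_\lambda$, it is worth making explicit the computation that the $\hfr$-annihilated part of $\Delta_c(\lambda)^h$ is exactly $\bar{\lambda^*}$: under the transferred action, $y\in\hfr\subset H_{c^\dagger}$ corresponds via $\sigma^{-1}$ to $Ty\in\hfr^*\subset H_c$, so a functional killed by $\hfr$ is precisely one vanishing on $\hfr^*\cdot\Delta_c(\lambda)$, i.e.\ on all positive-degree components. This pins down $\hom_{\cplx W\ltimes S\hfr}(\lambda,\Delta_c(\lambda)^h)=\hom_W(\lambda,\bar{\lambda^*})\cong\cplx$ and closes the loop cleanly. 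In part (iv), your concluding step can be phrased more directly using the multiplicity-one fact you already cite: $\Delta_c(\lambda)/K_L$ has head $L_c(\lambda)$ (being a nonzero quotient of $\Delta_c(\lambda)$) and, by your duality argument, socle $L_c(\lambda)$; since $[L_c(\lambda)]$ appears only once in $[\Delta_c(\lambda)]$, the head and socle coincide and $\Delta_c(\lambda)/K_L=L_c(\lambda)$. This avoids the slightly delicate discussion of where the lowest weight space of the embedded socle sits, though your version of that step is also correct.
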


\subsection{The Gaussian Pairing and Gaussian Inner Product}

For the remainder of this paper, let $W$ be a finite real reflection group (equivalently, a finite Coxeter group).  Let $\hfr_\real$ denote its real reflection representation, let $\hfr = \cplx \otimes_\real \hfr_\real$ denote the complexified reflection representation, let $\{\alpha_s\}_{s \in S} \subset \hfr_\real^*$ be a system of positive roots for $W$, let $\{\alpha_s^\vee\}_{s \in S} \subset \hfr_\real$ be the associated system of positive coroots, and fix an orthonormal basis $y_1, ..., y_l$ of $\hfr_\real$ with associated dual basis $x_1, ..., x_l \in \hfr_\real^*$.  In this setting, $H_c(W, \hfr)$ contains an internal $W$-invariant $\slfr_2$-triple $\mathbf{e}, \mathbf{f}, \mathbf{h}$ given by $$\mathbf{h} = \sum_i x_iy_i + \frac{1}{2}\dim \hfr - \sum_{s \in S} c_ss = \sum_i (x_iy_i + y_ix_i)/2, \ \ \ \mathbf{e} := -\frac{1}{2}\sum_i x_i^2 \ \ \ \mathbf{f} := \frac{1}{2}\sum_i y_i^2.$$  The elements $\mathbf{e}, \mathbf{f}, \mathbf{h}$ do not depend on the choice of orthonormal basis $y_1, ..., y_l \in \hfr_\real$.

Note that the operator $\mathbf{f}$ acts locally nilpotently on any lowest weight module.  In particular, following \cite{C} and \cite[Definition 4.5]{ESG}, we make the following definition:

\begin{definition} \label{gamma-def} The \emph{Gaussian pairing} $\gamma_{c, \lambda}$ is the sesquilinear pairing $$\gamma_{c, \lambda} : \Delta_c(\lambda) \times \Delta_{c^\dagger}(\lambda) \rightarrow \cplx$$ defined by $$\gamma_{c, \lambda}(P, Q) = \beta_{c, \lambda}(\exp(\mathbf{f})P, \exp(\mathbf{f})Q).$$  When $c \in \pfr_\real$, $\gamma_{c, \lambda}$ is a Hermitian form on $\Delta_c(\lambda)$ and will be called the \emph{Gaussian inner product}.\end{definition}

\begin{remark} For $c \in \pfr_\real$, the study of the signatures of the Hermitian forms $\beta_{c, \lambda}$ and $\gamma_{c, \lambda}$ is equivalent; we have $\sign(\beta_{c, \lambda}^{\leq n}) = \sign(\gamma_{c, \lambda}^{\leq n})$ for all $n \geq 0$, where $\beta_{c, \lambda}^{\leq n}$ and $\gamma_{c, \lambda}^{\leq n}$ denote the restrictions of $\beta_{c, \lambda}$ and $\gamma_{c, \lambda}$, respectively, to $\Delta_c(\lambda)^{\leq n}$.  In particular, the asymptotic signature $a_{c, \lambda}$ can be equally well computed using the form $\gamma_{c, \lambda}$.  As we will see, the advantage of the form $\gamma_{c, \lambda}$ is that $\hfr_\real^*$ acts on $\Delta_c(\lambda)$ by self-adjoint operators, making an integral representation of this form possible and allowing for analytic techniques to be used to study the signatures.\end{remark}

The following proposition generalizes \cite[Proposition 4.6]{ESG} to complex $c \in \pfr$ and provides a useful characterization of $\gamma_{c, \lambda}$:

\begin{proposition}\label{gamma-props}\ 

(i) The pairing $\gamma_{c, \lambda}$ satisfies: $$\gamma_{c, \lambda}(xP, Q) = \gamma_{c, \lambda}(P, xQ) \ \ \ \ \text{for all } x \in \hfr_\real^*, P \in \Delta_c(\lambda), Q \in \Delta_{c^\dagger}(\lambda).$$

(ii) Up to scaling, $\gamma_{c, \lambda}$ is the unique $W$-invariant sesquilinear pairing $\Delta_c(\lambda) \times \Delta_{c^\dagger}(\lambda) \rightarrow \cplx$ satisfying $$\gamma_{c, \lambda}((-y + Ty)P, Q) = \gamma_{c, \lambda}(P, yQ) \ \ \ \ \text{for all } y \in \hfr_\real, P \in \Delta_c(\lambda), Q \in \Delta_{c^\dagger}(\lambda)$$ and $$\gamma_{c, \lambda}(yP, Q) = \gamma_{c, \lambda}(P, (-y + Ty)Q) \ \ \ \ \text{for all } y \in \hfr_\real, P \in \Delta_c(\lambda), Q \in \Delta_{c^\dagger}(\lambda).$$ $\gamma_{c, \lambda}$ is determined among such forms by the normalization condition $\gamma_{c, \lambda}|_\lambda = (\cdot, \cdot)_\lambda$.

(iii) Up to scaling, $\gamma_{c, \lambda}$ is the unique $W$-invariant sesquilinear pairing $\Delta_c(\lambda) \times \Delta_{c^\dagger}(\lambda) \rightarrow \cplx$ satisfying $$\gamma_{c, \lambda}(xP, Q) = \gamma_{c, \lambda}(P, xQ) \ \ \ \ \text{for all } x \in \hfr_\real^*, P \in \Delta_c(\lambda), Q \in \Delta_{c^\dagger}(\lambda)$$ and $$\gamma_{c, \lambda}((-y + Ty)P, v) = 0 \ \ \ \ \text{for all } y \in \hfr_\real, P \in \Delta_c(\lambda), v \in \lambda.$$ $\gamma_{c, \lambda}$ is determined among such forms by the normalization condition $\gamma_{c, \lambda}|_\lambda = (\cdot, \cdot)_\lambda$.

(iv) $\gamma_{c, \lambda} = \gamma_{c^\dagger, \lambda}^\dagger$.\end{proposition}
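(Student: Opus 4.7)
The plan is to reduce the entire proposition to one key commutator identity in $H_c(W, \hfr)$:
\[
[\mathbf{f}, Ty] = y \quad \text{for all } y \in \hfr_\real.
\]
In an orthonormal basis $y_1, \dots, y_l$ of $\hfr_\real$ with dual basis $x_1, \dots, x_l$ (so $Ty_j = x_j$), this reads $[\mathbf{f}, x_k] = y_k$. I verify it by expanding $[\tfrac{1}{2}\sum_j y_j^2, x_k]$ using the defining relation $[y_j, x_k] = \delta_{jk} - \sum_s c_s(\alpha_s, y_j)(x_k, \alpha_s^\vee)s$ and the commutation $sy_j = y_js - (\alpha_s, y_j)\alpha_s^\vee s$, then summing over $j$. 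The reflection contributions gather into two terms proportional to $\alpha_s^\vee s$ which cancel exactly after using $\sum_j(\alpha_s, y_j)y_j = T^{-1}\alpha_s = \mu_s\alpha_s^\vee$ (for $\mu_s = \tfrac{1}{2}\sum_j(\alpha_s, y_j)^2$) and the normalization $(\alpha_s, \alpha_s^\vee) = 2$, leaving the lone diagonal term $y_k$. I expect this cancellation to be the main technical obstacle; it is the algebraic manifestation of $(x_k, y_k)$ forming an $\slfr_2$-doublet under $\ad$.

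Combined with the obvious $[\mathbf{f}, y] = 0$ for $y \in \hfr$, both $\ad(\mathbf{f})$-series terminate after two terms, yielding
\[
\exp(\mathbf{f})(Ty)\exp(-\mathbf{f}) = Ty + y, \qquad \exp(-\mathbf{f})(Ty)\exp(\mathbf{f}) = Ty - y,
\]
while $\exp(\pm\mathbf{f})$ commutes with $\hfr$. Parts (i), (ii), and (iv) then follow mechanically from the definition of $\gamma_{c, \lambda}$ and the contravariance of $\beta_{c, \lambda}$ in Proposition \ref{beta-props}, using $\sigma(y) = Ty$ and $\sigma(x) = T^{-1}x$. For example, for the first identity of (ii) I would rewrite
\[
\gamma_{c, \lambda}((-y + Ty)P, Q) = \beta_{c, \lambda}(\exp(\mathbf{f})(-y + Ty)P, \exp(\mathbf{f})Q) = \beta_{c, \lambda}(Ty\exp(\mathbf{f})P, \exp(\mathbf{f})Q),
\]
and then apply contravariance of $\beta_{c, \lambda}$ with $\sigma(Ty) = y$ together with $[\mathbf{f}, y] = 0$ to arrive at $\gamma_{c, \lambda}(P, yQ)$; the second identity of (ii) is analogous. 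Part (i) follows by the same pattern using $\sigma(x + T^{-1}x) = x + T^{-1}x$. Part (iv) is immediate from Proposition \ref{beta-props}(iii) since $\mathbf{f}$ does not depend on $c$.

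For the uniqueness claims in (ii) and (iii), I would introduce the bijection $\gamma' \leftrightarrow \beta'$ given by $\beta'(v, v') := \gamma'(\exp(-\mathbf{f})v, \exp(-\mathbf{f})v')$. Unwinding the conjugation formulas, the two identities in (ii) translate exactly into $\beta'$-contravariance on the generators $\hfr$ and $\hfr_\real^*$; sesquilinearity extends contravariance to all of $\hfr^*$, and together with $W$-invariance of $\gamma'$ and normalization (using $\exp(\mathbf{f})v = v$ for $v \in \lambda$, since $\hfr$ annihilates $\lambda$), Proposition \ref{beta-props}(i, ii) forces $\beta' = \beta_{c, \lambda}$ and hence $\gamma' = \gamma_{c, \lambda}$. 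For (iii), I would derive the first identity of (ii) by induction on $\deg Q$: the base case $Q \in \lambda$ is exactly the boundary hypothesis of (iii); for $Q = xQ'$ with $x \in \hfr_\real^*$, condition (i) of (iii) moves $x$ to the left, the commutation $x(-y + Ty) = (-y + Ty)x - [x, y]$ reduces to the inductive hypothesis applied to $(xP, Q')$, and $W$-invariance matches the reflection contributions against those arising from the action of $yx$ on $\Delta_{c^\dagger}(\lambda)$ (using $c^\dagger_s = \bar{c}_s$ in the Coxeter case). The second identity of (ii) then follows by subtracting (i) from the first, so (iii) reduces to the (ii) case already settled.
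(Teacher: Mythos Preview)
Your proposal is correct and follows essentially the same approach as the paper: parts (i), (ii), and (iv) are handled by the conjugation identities $\exp(\mathbf{f})\,x\,\exp(-\mathbf{f}) = x + T^{-1}x$ and $\exp(\mathbf{f})\,y\,\exp(-\mathbf{f}) = y$ together with the contravariance of $\beta_{c,\lambda}$ (the paper defers this to \cite[Proposition~4.6]{ESG}, whose proof is exactly the calculation you outline), and part (iii) is reduced to (ii) by the same induction on $\deg Q$, using self-adjointness of $\hfr_\real^*$, $W$-invariance, and the commutation relation for $[y,x]$ in $H_c$ and $H_{c^\dagger}$ with $\overline{c_s^\dagger} = c_s$.
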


\begin{proof} Parts (i) and (ii) are proved using Proposition \ref{beta-props} and exactly the same calculations and arguments appearing in the proof of \cite[Proposition 4.6]{ESG}.  To prove (iii), it suffices to show that the properties in (iii) imply the properties in (ii).  Let $\gamma$ be a pairing as in (iii).  As $\hfr_\real^*$ acts by operators self-adjoint with respect to $\gamma$, the two equations appearing in (ii) are equivalent because $Ty \in \hfr_\real^*$ is self-adjoint, so it suffices to prove that
\begin{equation}\label{eq:y-adjointness}
\gamma((-y + Ty)P, Q) = \gamma(P, yQ)
\end{equation} for all $y \in \hfr_\real, P \in \Delta_c(\lambda),$ and $Q \in \Delta_{c^\dagger}(\lambda)$.  We will prove equation (\ref{eq:y-adjointness}) by induction on $\deg Q$.  As $yv = 0$ for all $y \in \hfr_\real$ and $v \in \lambda$, equation (\ref{eq:y-adjointness}) holds for all $y \in \hfr_\real, P \in \Delta_c(\lambda),$ and $Q \in \lambda$ by the second property in (iii), establishing the base case $\deg Q = 0$.  Now suppose, for some $N \geq 0$, that equation (\ref{eq:y-adjointness}) holds for all $y \in \hfr_\real, P \in \Delta_c(\lambda),$ and $Q \in \Delta_{c^\dagger}(\lambda)$ with $\deg Q \leq N$.  For any $x \in \hfr_\real^*, y \in \hfr_\real, P \in \Delta_c(\lambda)$, and $Q \in \Delta_{c^\dagger}(\lambda)$ we have $$\gamma((-y + Ty)P, xQ) - \gamma(P, yxQ)$$ $$= \gamma(x(-y + Ty)P, Q) - \gamma\left(P, \left(xy +x(y) - \sum_{s \in S} \bar{c_s}\alpha_s(y)\alpha_s^\vee(x)s\right)Q\right)$$ $$= \gamma(x(-y + Ty)P, Q) - \gamma\left(\left((-y + Ty)x + x(y) - \sum_{s \in S} c_s\alpha_s(y)\alpha_s^\vee(x)s\right)P, Q\right)$$ $$= 0$$ where we use the commutation relation for $[y, x]$ in $H_{c^\dagger}(W, \hfr)$ and $H_c(W, \hfr)$ in the first and last equality, respectively, and the fact that $[x, Ty] = 0$, establishing the inductive step.

Statement (iv) follows immediately from $\beta_{c, \lambda} = \beta_{c^\dagger, \lambda}^\dagger$.\end{proof}

\subsection{Dunkl Weight Function: Characterization and Properties}\label{weight-function-char-and-props-section}  The remainder of Section \ref{weight-function-section} is dedicated to proving the following main theorem:

\begin{theorem} \label{main-existence-theorem} For any finite Coxeter group $W$ and irreducible representation $\lambda \in \irr(W)$, there is a unique family $K_{c, \lambda}$, holomorphic in $c \in \pfr$, of $\End_\cplx(\lambda)$-valued tempered distributions on $\hfr_\real$ such that the following integral representation of the Gaussian pairing $\gamma_{c, \lambda}$ holds for all $c \in \pfr$: $$\gamma_{c, \lambda}(P, Q) = \int_{\hfr_\real} Q(x)^\dagger K_{c, \lambda}(x)P(x)e^{-|x|^2/2}dx \ \ \ \ \text{ for all } P, Q \in \cplx[\hfr] \otimes \lambda,$$ where we make the standard identification of $\Delta_c(\lambda)$ and $\Delta_{c^\dagger}(\lambda)$ with $\cplx[\hfr] \otimes \lambda$.  Furthermore, $K_{c, \lambda}$ satisfies the additional properties:

(i)  For all $M > 0$ there exists an integer $N \geq 0$, which may be taken to be $0$ for $M$ sufficiently small, such that for $c \in \pfr$ with $|c_s| < M$ for all $s \in S$ the distribution $\delta^{N}K_{c, \lambda}$, where $\delta := \prod_{s \in S} \alpha_s \in \cplx[\hfr]$ is the discriminant element, is given by integration against an analytic function on $\hfr_{\real, reg}$ that is locally integrable over $\hfr_\real$.

(ii) For any $x \in \hfr_{\real, reg}$ the operator $K_{c, \lambda}(x) \in \End_\cplx(\lambda)$ determines a $B_W$-invariant sesquilinear pairing $$KZ_x(\Delta_c(\lambda)) \times KZ_x(\Delta_{c^\dagger}(\lambda)) \rightarrow \cplx$$ by the formula $(v_1, v_2) = v_2^\dagger K_{c, \lambda}(x)v_1.$  When $c \in \pfr_\real$, i.e. $c = c^\dagger$, this pairing is a Hermitian form.
\end{theorem}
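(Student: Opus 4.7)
The plan splits into uniqueness, construction on the regular locus for small $c$, and then extension holomorphically in $c$ together with distributional extension across the reflection hyperplanes.  Uniqueness follows from a density argument: if two candidates $K^{(1)}_{c,\lambda}, K^{(2)}_{c,\lambda}$ both satisfy the integral identity, their difference $D$ is a tempered $\End_\cplx(\lambda)$-valued distribution with $\int_{\hfr_\real} Q(x)^\dagger D(x) P(x) e^{-|x|^2/2} dx = 0$ for all $P, Q \in \cplx[\hfr] \otimes \lambda$; since functions of the form $Q(x) e^{-|x|^2/2}$ with $Q$ polynomial are dense in $\mathcal{S}(\hfr_\real)$, this forces $D = 0$.

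For small real $c$, the characterization of $\gamma_{c,\lambda}$ in Proposition \ref{gamma-props}(iii) gets translated into a condition on $K_{c,\lambda}$ by substituting the desired integral formula, expanding the Dunkl operator $D_y$, integrating by parts in $\partial_y$, and using the substitution $x \mapsto sx$ to rewrite the reflection terms.  The result is a first-order PDE system on the $\End_\cplx(\lambda)$-valued function $K_{c,\lambda}$ on $\hfr_{\real, reg}$ which is precisely the flatness condition for $K_{c,\lambda}$ viewed as a section of the bundle of sesquilinear pairings on the flat bundle carrying the KZ connection $\nabla_{KZ}$.  The system has a unique analytic solution on the universal cover given initial data on the fiber $\lambda = KZ_{x_0}(\Delta_c(\lambda))$, and flatness under the Hermitian dual KZ connection is equivalent to $B_W$-invariance, yielding property (ii).  A local analysis near a generic wall shows $K_{c,\lambda}(x) \sim |\alpha_s(x)|^{2c_s}$ times a matrix factor, which is locally integrable when $\mathrm{Re}(c_s) > -1/2$; so $K_{c,\lambda}$ extends across the hyperplane arrangement as a tempered distribution given by an $L^1_{\text{loc}}$ function, giving property (i) with $N = 0$ for $c$ small.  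The overall scalar is fixed by the normalization $\gamma_{c,\lambda}|_\lambda = (\cdot, \cdot)_\lambda$, and the integral formula itself is then verified by checking, again via integration by parts, that its right-hand side satisfies all three conditions of Proposition \ref{gamma-props}(iii).

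The main obstacle is the extension of $K_{c,\lambda}$ to a holomorphic family of tempered distributions on all of $\pfr$.  For any fixed polynomial pair $(P, Q)$, $\gamma_{c,\lambda}(P, Q)$ is polynomial in $c$, so the integral identity together with uniqueness pins down the required continuation if it exists; the difficulty is to produce a genuinely holomorphic family of tempered distributions.  Multiplying by a sufficiently high power $\delta^N$ of the discriminant controls the hyperplane singularities: formally $\delta^N K_{c,\lambda}$ behaves like $\prod_s |\alpha_s|^{2 c_s + N}$ near the arrangement, which is locally integrable once $N$ dominates $-\mathrm{Re}(c_s) - 1/2$.  In the scalar case $\lambda = \triv$, treated by Etingof \cite{E2}, this is standard complex-power regularization of Bernstein--Sato type.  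In the matrix case the $W$-action in the KZ connection couples different hyperplane factors, obstructing a direct reduction to a single Bernstein--Sato equation; this is where the semiclassical analysis lemma of Dyatlov (Lemma \ref{l:matrix-case-lemma}, acknowledged above) enters, providing the operator-valued uniform estimates required to upgrade the meromorphic continuation of polynomial-Gaussian pairings to a genuinely holomorphic family of tempered distributions.  Combined with the earlier steps, this yields existence together with properties (i) and (ii) for all $c \in \pfr$.
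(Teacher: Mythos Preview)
Your outline for uniqueness and for the small-$c$ construction is broadly correct and tracks the paper's approach in Sections~\ref{small-c-section}--\ref{extension-to-reg-locus-section}, though you elide the nontrivial equivalence between $B_W$-invariance of $K_{c,\lambda}(x_0)$ as a pairing and the boundary condition (``asymptotic $W$-invariance'') needed to make the locally integrable function actually represent $\gamma_{c,\lambda}$ rather than just some pairing; this equivalence is the content of Theorem~\ref{existence-theorem} and requires a careful wall-by-wall analysis using the functions $P_{i,c}(z)$ of Lemma~\ref{P-fn-lemma}.

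The genuine gap is in your last paragraph.  Lemma~\ref{l:matrix-case-lemma} has nothing to do with the proof of Theorem~\ref{main-existence-theorem}; it is a spectral estimate used only in Section~\ref{signature-comparision-section} to compare signatures via Theorem~\ref{KZ-preserves-signatures-theorem}.  The actual extension of $K_{c,\lambda}$ to a holomorphic family of tempered distributions on all of $\pfr$ is carried out in Section~\ref{extension-to-hyperplanes-section} by pulling back to the De~Concini--Procesi wonderful model $\pi:Y\to\hfr$, where the hyperplane arrangement becomes a normal crossings divisor.  Near any point of $Y_\real$ one then writes $\pi^*K_c$ using Lemma~\ref{nD-DE-lemma} (an iterated regular-singular-point normal form for the pulled-back KZ connection) as a finite linear combination of terms $f_c(z)\,|z_1|^{g_1(c)}\cdots|z_l|^{g_l(c)}$ with $f_c$ holomorphic in $(c,z)$ and $g_j$ linear in $c$; each such term continues meromorphically in $c$ as a distribution by the classical one-variable fact that $|x|^\lambda$ does.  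Pushing forward along $\pi$ gives a meromorphic family of distributions on $\hfr_\real$, and holomorphy in $c$ is then deduced from finiteness of the Hermite coefficients, which equal $\gamma_{c,\lambda}(P,Q)$ and are polynomial in $c$.  Your proposed mechanism---uniform operator estimates from semiclassical analysis---does not produce a distributional extension across the hyperplanes and is not what the paper does.
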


\begin{remark}The uniqueness of $K_{c, \lambda}$ is immediate from the standard fact that the subspace $\cplx[\hfr]e^{-|x|^2/2}$ is dense in the space of complex-valued Schwartz functions $\mathscr{S}(\hfr_\real)$ on $\hfr_\real$.\end{remark}

\begin{remark} The existence of the distribution $K_{c, \lambda}$ was shown in the case that $W$ is a dihedral group and $c$ is real and small (for any $\lambda)$ by Dunkl \cite{Dunkl-dihedral}.  The case in which $\lambda$ is 1-dimensional is much simpler than the case of general $\lambda$ and was used by Etingof in \cite{E2} to study the support of the irreducible representation $L_c(\triv)$.  Related results in the trigonometric case in type $A$ appear in \cite{Dunkl-trig-1, Dunkl-trig-2, Dunkl-trig-3}.\end{remark}

In preparation for the proof of Theorem \ref{main-existence-theorem}, in the remainder of Section \ref{weight-function-char-and-props-section} we will now establish several necessary properties and an additional characterization of the distribution $K_{c, \lambda}$.  The proof of Theorem \ref{main-existence-theorem} will then proceed in three steps: in Section \ref{small-c-section} we will establish the existence of $K_{c, \lambda}$ for small $c$ and show that in this case it is given by integration against a locally $L^1$ function; in Section \ref{extension-to-reg-locus-section} we will produce an analytic continuation for all $c \in \pfr$ of this function over $\hfr_{\real, reg}$; and in Section \ref{extension-to-hyperplanes-section} we will show that these functions extend naturally to distributions on all of $\hfr_\real$, completing the proof of Theorem \ref{main-existence-theorem}.

An element $y \in \hfr \subset H_c(W, \hfr)$ acts in the representation $\Delta_c(\lambda) = \cplx[\hfr] \otimes \lambda$ by the operator $$D_y = \partial_y - \sum_{s \in S} c_s\frac{\alpha_s(y)}{\alpha_s}(1 - s) \otimes s.$$  This operator $D_y$ acts naturally as a continuous operator on the Schwartz space $\mathscr{S}(\hfr_\real)$, and we will use the notation $D_y$ for this continuous operator as well, with the representation $\lambda$ and parameter $c$ implicit and suppressed from the notation.  We may equivalently view any tempered distribution $K$ on $\hfr_\real$ with values in $\End_\cplx(\lambda)$ as a continuous operator $$K : \mathscr{S}(\hfr_\real) \otimes \lambda \rightarrow \lambda.$$  In particular, for any such distribution $K$, the composition $K \circ D_y$ is defined and is another such distribution.  With this in mind, it is now straightforward to translate the conditions in Proposition \ref{gamma-props}(iii) into a convenient characterization of the Dunkl weight function $K_{c, \lambda}$:

\begin{proposition}\label{dunkl-weight-characterization-proposition} Let $K$ be an $\End_\cplx(\lambda)$-valued tempered distribution on $\hfr_\real$, and let $\gamma$ be the sesquilinear form on $\cplx[\hfr] \otimes \lambda$ defined by $$\gamma(P, Q) = \int_{\hfr_\real} Q(x)^\dagger K(x)P(x)e^{-|x|^2/2}dx.$$  Then $\gamma = \gamma_{c, \lambda}$ if and only if the following hold:

(i) normalization: $$\int_{\hfr_\real} K(x)e^{-|x|^2/2}dx = \id_\lambda$$

(ii) $W$-invariance: $wK(w^{-1}x)w^{-1} = K(x)$ for all $w \in W$

(iii) annihilation by Dunkl operators: $K \circ D_y = 0$ for all $y \in \hfr_\real$.\end{proposition}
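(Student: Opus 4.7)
The plan is to translate, one at a time, each of the four defining properties of $\gamma_{c,\lambda}$ in the characterization of Proposition \ref{gamma-props}(iii) (namely $W$-invariance, self-adjointness of $\hfr_\real^*$, the vanishing condition on $(-y+Ty)P$ paired against $\lambda$, and the normalization $\gamma|_\lambda = (\cdot,\cdot)_\lambda$) into a corresponding condition on the distribution $K$ via elementary manipulations of the integral formula. Since $\gamma_{c,\lambda}$ is uniquely determined by these properties, this gives both directions of the ``iff'' simultaneously.

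First, the normalization translation: plugging in constant polynomials $P = v_1, Q = v_2 \in \lambda$ gives $\gamma(v_1, v_2) = v_2^\dagger \bigl(\int_{\hfr_\real} K(x) e^{-|x|^2/2}dx\bigr) v_1$, which equals $(v_1, v_2)_\lambda = v_2^\dagger v_1$ for all $v_1, v_2$ iff (i) holds. Second, $W$-invariance: substituting $x \mapsto wx$ and using $W$-invariance of the Gaussian, Lebesgue measure, and the form $(\cdot,\cdot)_\lambda$ (so that $w^\dagger = w^{-1}$) yields $\gamma(wP, wQ) = \int Q(x)^\dagger [w^{-1}K(wx)w] P(x) e^{-|x|^2/2}dx$, and this agrees with $\gamma(P,Q)$ for all polynomial $P,Q$ iff $w K(w^{-1}x) w^{-1} = K(x)$, i.e.\ condition (ii). Third, the self-adjointness condition $\gamma(xP,Q) = \gamma(P,xQ)$ for $x \in \hfr_\real^*$ is automatic from the integral formula, since $x$ takes real values on $\hfr_\real$ and so commutes past the conjugate in $Q(x)^\dagger$.

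The substantive translation is the vanishing condition $\gamma((-y+Ty)P, v) = 0$ for $y \in \hfr_\real$ and $v \in \lambda$. Since in $\Delta_c(\lambda) = \cplx[\hfr] \otimes \lambda$ the element $y$ acts as $D_y$ and $Ty \in \hfr_\real^*$ acts as multiplication by $\langle \cdot, y \rangle$, the key computation is the Gaussian intertwining identity
\begin{equation*}
D_y\bigl(P(x)\, e^{-|x|^2/2}\bigr) = -\bigl[(-y+Ty)P\bigr](x)\, e^{-|x|^2/2},
\end{equation*}
which follows from $\partial_y e^{-|x|^2/2} = -\langle x, y\rangle e^{-|x|^2/2}$ together with the fact that the reflection terms $(1-s)$ in $D_y$ commute with the $W$-invariant factor $e^{-|x|^2/2}$. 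Using this identity, $\gamma((-y+Ty)P, v) = -v^\dagger (K \circ D_y)(P e^{-|x|^2/2})$. By the density of $\cplx[\hfr] e^{-|x|^2/2}$ in $\mathscr{S}(\hfr_\real)$ (noted in the remark following Theorem \ref{main-existence-theorem}), the vanishing for all polynomial $P$ and all $v \in \lambda$ is equivalent to $K \circ D_y = 0$ as a continuous map $\mathscr{S}(\hfr_\real) \otimes \lambda \to \lambda$, which is condition (iii).

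The main obstacle is simply verifying the Gaussian intertwining identity and keeping track of the conjugate-linear structure and the $\sigma$-adjointness conventions; no genuine analytic difficulty arises, since $K$ is already assumed to be tempered and $P e^{-|x|^2/2}$ is Schwartz. Combining the four equivalences above with the uniqueness statement in Proposition \ref{gamma-props}(iii) completes the proof.
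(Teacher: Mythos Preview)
Your proposal is correct and follows essentially the same approach as the paper's proof: translate each defining property of $\gamma_{c,\lambda}$ from Proposition~\ref{gamma-props}(iii) into the corresponding condition (i)--(iii) on $K$, using the Gaussian intertwining identity $D_y(Pe^{-|x|^2/2}) = -[(-y+Ty)P]e^{-|x|^2/2}$ and density of $\cplx[\hfr]e^{-|x|^2/2}$ in $\mathscr{S}(\hfr_\real)$ for the nontrivial part. The paper's computation is identical up to an immaterial sign in the intertwining step.
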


\begin{proof} It is immediate that the normalization condition (i) above and the normalization condition $\gamma|_\lambda = (\cdot, \cdot)_\lambda$ are equivalent and that the $W$-invariance condition (ii) above is equivalent to the $W$-invariance of the form $\gamma$.  It is also immediate that $\gamma(xP, Q) = \gamma(P, xQ)$ for all $x \in \hfr_\real$ and $P, Q \in \cplx[\hfr] \otimes \lambda$, so it remains to show that condition (iii) is equivalent to the condition $\gamma((-y_i + x_i)P, v) = 0$ for all $P \in \cplx[\hfr] \otimes \lambda, v \in \lambda,$ and $i = 1, ..., l = \dim \hfr$.  As $e^{-|x|^2/2}$ is $W$-invariant we have $D_{y_i}(P(x)e^{-|x|^2/2}) = (D_{y_i}P(x) - x_i)e^{-|x|^2/2}$, and a direct calculation then gives $$\gamma((-y_i + x_i)P, v) = \int_{\hfr_\real} v^\dagger K(x)(-y_i + x_i)(P(x))e^{-|x|^2/2}dx$$ $$= \int_{\hfr_\real}v^\dagger(K \circ D_{y_i})(P(x)e^{-|x|^2/2})dx.$$  As $\cplx[\hfr]e^{-|x|^2/2}$ is dense in $\mathscr{S}(\hfr_\real)$, we see that $\gamma((-y_i + x_i)P, v) = 0$ for all $P \in \cplx[\hfr] \otimes \lambda$ and $v \in \lambda$ if and only if $K \circ D_{y_i} = 0$, as needed.\end{proof}

\begin{proposition}
Any family of tempered distributions $K_{c, \lambda}$ as in Theorem \ref{main-existence-theorem} satisfies the equation $K_{c, \lambda} = K_{c^\dagger, \lambda}^\dagger.$  In particular, $K_{c, \lambda}$ takes values in Hermitian forms on $\lambda$ when $c \in \pfr_\real$.
\end{proposition}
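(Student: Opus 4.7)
The plan is to extract the identity $K_{c,\lambda} = K_{c^\dagger,\lambda}^\dagger$ directly from the integral representation of the Gaussian pairing, using the pairing's own adjointness property established in Proposition \ref{gamma-props}(iv), together with the uniqueness of $K_{c,\lambda}$ noted in the remark following Theorem \ref{main-existence-theorem}.

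First, I would rewrite the integral representation of $\gamma_{c^\dagger,\lambda}$ for pairs $(Q,P) \in \Delta_{c^\dagger}(\lambda) \times \Delta_{(c^\dagger)^\dagger}(\lambda) = \Delta_{c^\dagger}(\lambda) \times \Delta_c(\lambda)$, obtaining
$$\gamma_{c^\dagger,\lambda}(Q,P) = \int_{\hfr_\real} P(x)^\dagger K_{c^\dagger,\lambda}(x) Q(x) e^{-|x|^2/2} dx.$$
Taking complex conjugates and using the elementary identity $\overline{u^\dagger A v} = v^\dagger A^\dagger u$ for $u,v \in \lambda$ and $A \in \End_\cplx(\lambda)$, I get
$$\gamma_{c^\dagger,\lambda}^\dagger(P,Q) := \overline{\gamma_{c^\dagger,\lambda}(Q,P)} = \int_{\hfr_\real} Q(x)^\dagger K_{c^\dagger,\lambda}(x)^\dagger P(x) e^{-|x|^2/2} dx.$$

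Next, I invoke Proposition \ref{gamma-props}(iv), which gives $\gamma_{c,\lambda} = \gamma_{c^\dagger,\lambda}^\dagger$. Combining this with the integral representation of $\gamma_{c,\lambda}$ from Theorem \ref{main-existence-theorem} yields
$$\int_{\hfr_\real} Q(x)^\dagger \bigl(K_{c,\lambda}(x) - K_{c^\dagger,\lambda}(x)^\dagger\bigr) P(x) e^{-|x|^2/2} dx = 0$$
for all $P, Q \in \cplx[\hfr]\otimes\lambda$. Since $\cplx[\hfr]e^{-|x|^2/2}$ is dense in $\mathscr{S}(\hfr_\real)$ (as used in the uniqueness remark following Theorem \ref{main-existence-theorem}), one gets the equality $K_{c,\lambda} = K_{c^\dagger,\lambda}^\dagger$ of $\End_\cplx(\lambda)$-valued tempered distributions. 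Here $K^\dagger$ for a tempered distribution $K$ with values in $\End_\cplx(\lambda)$ is defined pointwise on test functions via the adjoint in $\End_\cplx(\lambda)$; this operation is well-defined and continuous because $\dagger$ is a continuous $\real$-linear involution on $\End_\cplx(\lambda)$.

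For the final assertion, when $c \in \pfr_\real$ we have $c = c^\dagger$, so the identity just established reads $K_{c,\lambda} = K_{c,\lambda}^\dagger$. Thus for any test function $\varphi$ the operator $K_{c,\lambda}(\varphi) \in \End_\cplx(\lambda)$ is self-adjoint with respect to $(\cdot,\cdot)_\lambda$; equivalently, $K_{c,\lambda}$ is a distribution valued in the real subspace of Hermitian operators, which via $(v_1,v_2) \mapsto v_2^\dagger K_{c,\lambda}(x) v_1$ is precisely the space of Hermitian forms on $\lambda$. There is no real obstacle here: the argument is purely a symmetry/uniqueness manipulation, and the only subtlety is bookkeeping the conjugate-linearity conventions when transposing $\dagger$ across the integral sign.
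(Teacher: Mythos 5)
Your proof is correct and is essentially the paper's argument: the paper's own proof is the one-line remark that the claim is ``immediate from the equality $\gamma_{c,\lambda} = \gamma_{c^\dagger,\lambda}^\dagger$'' (Proposition \ref{gamma-props}(iv)), combined with the density of $\cplx[\hfr]e^{-|x|^2/2}$ in $\mathscr{S}(\hfr_\real)$ already invoked for uniqueness. You have simply written out the conjugation/adjoint bookkeeping that the paper leaves implicit; no further comment is needed.
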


\begin{proof} Immediate from the equality $\gamma_{c, \lambda} = \gamma_{c^\dagger, \lambda}^\dagger$.\end{proof}

\begin{proposition}\label{dunkl-weight-props} Any distribution $K$ on $\hfr_\real$ with values in $\End_\cplx(\lambda)$ satisfying the conditions (i), (ii), and (iii) appearing in Proposition \ref{dunkl-weight-characterization-proposition} satisfies the following properties:

(i) The restriction $K|_{\hfr_{\real, reg}}$ of $K$ to the real regular locus $$\hfr_{\real, reg} := \{x \in \hfr_\real : \alpha_s(x) \neq 0 \text{ for all } s \in S\}$$ satisfies the 2-sided KZ-type system of differential equations:
\begin{equation}\label{eq:2-sided-KZ}
\partial_y K + \sum_{s \in S} c_s\frac{\alpha_s(y)}{\alpha_s}(sK + Ks) = 0 \ \ \ \ \text{ for all } y \in \hfr_\real
\end{equation}

(ii) $K$ is a homogeneous distribution of degree $-2\chi_\lambda(\sum_{s \in S} c_ss)/\dim \lambda,$ where $\chi_\lambda$ is the character of $\lambda$.  In particular, $K$ is tempered.

\end{proposition}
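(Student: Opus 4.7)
For part (i), the plan is to test the global identity $K\circ D_y=0$ from condition (iii) against elements $\phi\otimes v$ with $\phi\in C_c^\infty(\hfr_{\real,reg})$ and $v\in\lambda$. Using the explicit form $D_y(\phi\otimes v)=\partial_y\phi\otimes v-\sum_{s\in S}c_s\tfrac{\alpha_s(y)}{\alpha_s}(\phi-s\phi)\otimes sv$ of the Dunkl operator on $\Delta_c(\lambda)=\cplx[\hfr]\otimes\lambda$, distributional integration by parts turns the derivative term into $-(\partial_y K)(\phi\otimes v)$. For each reflection term I split the difference $\phi-s\phi$ (legitimate on the regular locus, where $1/\alpha_s$ is smooth), change variables $x\mapsto sx$ in the $s\phi$ piece, and invoke $W$-invariance $K(sx)=sK(x)s$ from (ii) together with $\alpha_s(sx)=-\alpha_s(x)$ and $s^2=1$; the two halves combine to $\bigl((sK+Ks)\tfrac{\alpha_s(y)}{\alpha_s}\bigr)(\phi\otimes v)$. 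Collecting terms produces the 2-sided KZ system on $\hfr_{\real,reg}$.

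For part (ii), the plan is to establish the Euler identity $E\cdot K=-\tfrac{2\chi_\lambda(\sum_{s\in S}c_ss)}{\dim\lambda}K$, with $E:=\sum_i x_i\partial_{y_i}$, globally on $\hfr_\real$. The main obstacle is that the factors $1/\alpha_s$ are singular on hyperplanes, so the local approach of part (i) cannot be made global; the trick is the elementary cancellation $\sum_i x_i\alpha_s(y_i)=\alpha_s$, which makes the poles in $\sum_i x_iD_{y_i}$ disappear and yields the pole-free formula
\begin{equation*}
\sum_i x_iD_{y_i}(\phi\otimes v)\;=\;E\phi\otimes v\;-\;\sum_{s\in S}c_s(\phi-s\phi)\otimes sv
\end{equation*}
valid on all Schwartz inputs. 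I then compute $K\circ\bigl(\sum_i x_iD_{y_i}\bigr)$ in two ways. Algebraically, $\sum_i x_iD_{y_i}=\sum_i D_{y_i}x_i-\sum_i[D_{y_i},x_i]$; the first piece is annihilated by $K$ via (iii), and the defining relations of $H_c(W,\hfr)$ give $\sum_i[D_{y_i},x_i]=l-2C$ with $l:=\dim\hfr$ and $C:=\sum_{s\in S}c_ss\in\cplx W$, so $K\circ\bigl(\sum_i x_iD_{y_i}\bigr)=-lK+2CK$ as distributions. Analytically, applying $K$ to the explicit expansion and treating the $E\phi$ term by integration by parts and the reflection terms by the same $W$-change-of-variable as in part (i) produces $-(lK+EK)-(KC-CK)$. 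Equating the two collapses to $E\cdot K=-(CK+KC)$ as distributions on all of $\hfr_\real$. Since the $W$-invariance of $c$ places $C$ in $Z(\cplx W)$, Schur's lemma forces $C$ to act on the irreducible $\lambda$ by the scalar $\chi_\lambda(C)/\dim\lambda$, giving $CK=KC=\chi_\lambda(C)/\dim\lambda\cdot K$ and the claimed homogeneity degree.

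The final assertion of temperedness follows from homogeneity: a distribution on $\hfr_\real$ that is homogeneous of a fixed complex degree is automatically tempered, since scaling invariance lets one estimate any Schwartz pairing by an integral over a compact annulus. Once this is in place, the full statement of the proposition is proved.
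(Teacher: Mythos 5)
Your proposal is correct and follows essentially the same route as the paper: part (i) is the identical test-function/change-of-variables argument on $\hfr_{\real,reg}$, and part (ii) rests on the same pole-cancelling dual-basis identity $\sum_i x_i\alpha_s(y_i)=\alpha_s$, the relation $K\circ D_{y}=0$, $W$-invariance, and centrality of $\sum_s c_s s$ via Schur's lemma. Your "compute $K\circ\sum_i x_iD_{y_i}$ two ways" bookkeeping is just a rearrangement of the paper's direct substitution $\partial_{y_i}=D_{y_i}+\sum_s c_s\frac{\alpha_s(y_i)}{\alpha_s}(1-s)\otimes s$ into the Euler operator, so no substantive difference.
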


\begin{proof}  To prove (i), take an arbitrary compactly supported test function $\phi \in C^\infty_c(\hfr_{\real, reg}) \otimes \lambda$.  For any $y \in \hfr_\real$, we have $(K \circ D_y)(\phi) = 0$.  From the definition of $D_y$ and the fact that $(1 \otimes s)(\phi)/\alpha_s$ and $(s \otimes s)(\phi)/\alpha_s$ are also well-defined test functions in $C^\infty_c(\hfr_\real) \otimes \lambda$ for any $s \in S$, we have: $$0 = \int_{\hfr_\real} (K \circ D_y)(\phi)dx$$ $$= \int_{\hfr_\real} K_c(x)\left(\partial_y \phi(x) - \sum_{s \in S} c_s\frac{\alpha_s(y)}{\alpha_s(x)}(s\phi(x) - s\phi(sx))\right)dx$$ $$= \int_{\hfr_\real} \left(-\partial_y K(x) - \sum_{s \in S} c_s\frac{\alpha_s(y)}{\alpha_s(x)}K(x)s\right)\phi(x)dx$$ $$+ \sum_{s \in S} c_s\int_{\hfr_\real} \frac{\alpha_s(y)}{\alpha_s(x)}K(x)s\phi(sx)dx$$ $$= \int_{\hfr_\real} \left(-\partial_y K(x) - \sum_{s \in S} c_s\frac{\alpha_s(y)}{\alpha_s(x)}K(x)s\right)\phi(x)dx$$ $$+ \sum_{s \in S} c_s\int_{\hfr_\real}\frac{\alpha_s(y)}{\alpha_s(sx)}K(sx)s\phi(x)dx$$ $$= \int_{\hfr_\real} \left(-\partial_y K(x) - \sum_{s \in S} c_s\frac{\alpha_s(y)}{\alpha_s(x)}K(x)s\right)\phi(x)dx$$ $$- \sum_{s \in S} c_s\int_{\hfr_\real}\frac{\alpha_s(y)}{\alpha_s(x)}sK(x)\phi(x)dx$$ $$= -\int_{\hfr_\real} \left(\partial_y K + \sum_{s \in S} c_s\frac{\alpha_s(y)}{\alpha_s(x)}(sK + Ks)\right)\phi(x)dx,$$ where in the fourth equality we change variables $x \mapsto sx$ and in the fifth equality we use the $W$-invariance of $K$ and the equality $\alpha_s(sx) = -\alpha_s(x)$.  As $\phi \in C^\infty_c(\hfr_{\real, reg}) \otimes \lambda$ was arbitrary, this proves (i).

Next, note that as $x_1, ..., x_l \in \hfr_\real^*$ and $y_1, ..., y_l \in \hfr_\real$ are dual bases we have $$\sum_{i = 1}^l f(y_i)w(x_i) = w(f)$$ for all $f \in \hfr^*$ and $s \in S$.  It follows that we have, as operators on $\cplx[\hfr] \otimes \lambda$,
\begin{equation}\label{eq:dual-basis-calculation}
\sum_{i = 1}^l \frac{\alpha_s(y_i)}{\alpha_s}(1 - s)(x_i) = \frac{1}{\alpha_s}\left(\sum_{i = 1}^l \alpha_s(y_i)(x_i - s(x_i)s)\right) = \frac{1}{\alpha_s}(\alpha_s - s(\alpha_s)s) = 1 + s
\end{equation}  Using the fact that $K \circ D_y = 0$ for all $y \in \hfr_\real$, the action of the Euler operator $\sum_{i = 1}^l x_i\partial_{y_i}$ on the distribution $K(x)$ is therefore given by $$\left(\sum_{i = 1}^l x_i\partial_{y_i}\right)K(x) = -K(x) \sum_{i = 1}^l \partial_{y_i}x_i$$ $$= -K(x)\sum_{i = 1}^l \left(\sum_{s \in S}c_s\frac{\alpha_s(y_i)}{\alpha_s}(1 - s) \otimes s\right)x_i$$ $$= -K(x) \left(\sum_{s \in S}c_s\left(\sum_{i = 1}^l \frac{\alpha_s(y_i)}{\alpha_s}(1 - s)x_i\right) \otimes s\right)$$ $$= -K(x)\sum_{s \in S} c_s(1 + s) \otimes s$$ $$= -\sum_{s \in S} c_s(sK(x) + K(x)s)$$ $$= \frac{-2\chi_\lambda(\sum_{s \in S} c_ss)}{\dim \lambda}K(x),$$ where the second equality follows from the equation $K \circ D_{y_i} = 0$, the fourth equality uses equation (\ref{eq:dual-basis-calculation}), the fifth equality uses the $W$-invariance of $K(x)$, and the final equality uses the fact that $\sum_{s \in S} c_ss$ is central in $\cplx W$ and therefore acts on $\lambda$ by the scalar $\chi_\lambda(\sum_{s \in S} c_ss)/\dim \lambda$, giving (ii).\end{proof}

\begin{proposition} For any $c \in \pfr$, the support of the distribution $K_{c, \lambda}$ coincides with the set of real points of the support of $L_c(\lambda)$: $$\supp(K_{c, \lambda}) = \supp(L_c(\lambda))_\real.$$
\end{proposition}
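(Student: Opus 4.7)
The plan is to prove the two inclusions separately. For $\supp(K_{c,\lambda}) \subseteq \supp(L_c(\lambda))_\real$, I argue algebraically. Since $\gamma_{c,\lambda}(P,Q) = \beta_{c,\lambda}(\exp(\mathbf{f})P, \exp(\mathbf{f})Q)$ and $\exp(\mathbf{f})$ is invertible, while $\beta_{c,\lambda}$ descends to a nondegenerate pairing on $L_c(\lambda) \times L_{c^\dagger}(\lambda)$ by Proposition \ref{beta-props}(iv), the same is true of $\gamma_{c,\lambda}$. In particular, $\gamma_{c,\lambda}(fP, Q) = 0$ for all $P, Q \in \cplx[\hfr] \otimes \lambda$ if and only if $f \in \ann_{\cplx[\hfr]} L_c(\lambda)$. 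Expanding this vanishing via the integral formula gives $\int Q^\dagger (fK_{c,\lambda}) P e^{-|x|^2/2}\,dx = 0$, and by density of $\cplx[\hfr]e^{-|x|^2/2}$ in $\mathscr{S}(\hfr_\real)$ this forces $fK_{c,\lambda} = 0$ as a tempered distribution. Since multiplication by the smooth nonvanishing function $f$ is invertible on distributions on $\{f \neq 0\}$, $K_{c,\lambda}$ vanishes there; intersecting over all $f \in \ann L_c(\lambda)$ yields $\supp K_{c,\lambda} \subseteq V(\ann L_c(\lambda))_\real = \supp L_c(\lambda)_\real$. Moreover, the equivalence above shows that the polynomial annihilator of $K_{c,\lambda}$ in $\cplx[\hfr]$ is exactly $\ann L_c(\lambda)$, a fact I use again below.

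For the reverse inclusion, first suppose $L_c(\lambda)$ has full support, so $\supp L_c(\lambda)_\real = \hfr_\real$. I argue by contradiction: if some nonempty open $V \subset \hfr_\real$ satisfies $K_{c,\lambda}|_V = 0$, then $V$ meets some Weyl chamber in a nonempty open subset (as $\hfr_{\real, reg}$ is dense). By Theorem \ref{main-existence-theorem}(i), $K_{c,\lambda}$ is given on $\hfr_{\real, reg}$ by an analytic $\End_\cplx(\lambda)$-valued function; analyticity forces this function to vanish on the full chamber, and the $W$-equivariance $wK_{c,\lambda}(w^{-1}x)w^{-1} = K_{c,\lambda}(x)$ together with the transitive $W$-action on chambers extends the vanishing to all of $\hfr_{\real, reg}$. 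Then $\delta^N K_{c,\lambda}$, given on $\hfr_\real$ by a locally integrable function that vanishes on the full-measure set $\hfr_{\real, reg}$, is zero as a distribution. By the equivalence from the first paragraph, $\delta^N \in \ann L_c(\lambda)$, contradicting $\supp L_c(\lambda) = \hfr$.

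When $L_c(\lambda)$ does not have full support, $\supp L_c(\lambda)$ is a proper $W$-invariant union of parabolic fixed-point subspaces $\hfr^{W'}$ by the Bezrukavnikov--Etingof structure theorem. I argue by induction on $|W|$, the base case $W = \{1\}$ being trivial since there $K_{c,\lambda}$ is a nonzero constant. For the inductive step, take $x_0$ in a top-dimensional open stratum of $\supp L_c(\lambda)_\real$ with parabolic stabilizer $W' \subsetneq W$, and use a formal tubular-neighborhood decomposition $\hfr \cong \hfr^{W'} \times \hfr_{W'}$ identifying $(H_c(W,\hfr), \Delta_c(\lambda))$ locally near $x_0$ with $(H_c(W', \hfr_{W'}), \Delta_c(\text{Res}_{W'}^W(\lambda)))$. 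The characterization of $K_{c,\lambda}$ in Proposition \ref{dunkl-weight-characterization-proposition} (normalization, $W$-invariance, annihilation by the Dunkl operators) forces $K_{c,\lambda}$ near $x_0$ to factor through $K_{c,\text{Res}_{W'}^W(\lambda)}$ on the transverse slice $\hfr_{W'}$; the inductive hypothesis applied to $W'$ then yields $x_0 \in \supp K_{c,\lambda}$. The main obstacle is establishing this compatibility of the Dunkl weight function with the Bezrukavnikov--Etingof parabolic restriction functor, which amounts to a local structure theorem for $K_{c,\lambda}$ along the strata of $\supp L_c(\lambda)$; once it is in place, the induction proceeds cleanly.
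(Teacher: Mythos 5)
Your first inclusion $\supp(K_{c,\lambda}) \subseteq \supp(L_c(\lambda))_\real$ is correct and is the standard argument: the identification of $\{f \in \cplx[\hfr] : fK_{c,\lambda} = 0\}$ with $\ann_{\cplx[\hfr]} L_c(\lambda)$ via nondegeneracy of $\gamma_{c,\lambda}$ on $L_c(\lambda) \times L_{c^\dagger}(\lambda)$ and density of $\cplx[\hfr]e^{-|x|^2/2}$ is exactly right. Your reverse inclusion in the full-support case is also correct: vanishing on an open set propagates by real-analyticity through a chamber and by $W$-equivariance to all of $\hfr_{\real, reg}$, forcing $\delta^N \in \ann L_c(\lambda)$, a contradiction. (For what it is worth, the paper itself gives no details here: it simply cites Etingof's proof of the $\lambda = \triv$ case and asserts it generalizes verbatim.)

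The genuine gap is the reverse inclusion when $L_c(\lambda)$ has proper support, and you have in fact named it yourself. The ``local structure theorem'' you invoke --- that near a point $b$ with stabilizer $W'$ the weight function factors, after conjugation by an analytic gauge, through the Dunkl weight functions of $(W', \hfr_{W'})$ on the transverse slice --- is precisely Conjecture \ref{local-form-conjecture} of this paper, which the author verifies only for $b \in \hfr_{\real, reg}$, $b = 0$, and $b$ generic on a single reflection hyperplane. A proof cannot rest on it. Note also that the softer route one might hope for does not close the gap on its own: from $\{f : fK_{c,\lambda} = 0\} = \ann L_c(\lambda)$ and the finite order of the tempered distribution $K_{c,\lambda}$ one gets that the Zariski closure of $\supp K_{c,\lambda}$ is all of $\supp L_c(\lambda)$, but a closed, conical, $W$-invariant, Zariski-dense subset of $\hfr_\real^{W'}$ need not be all of $\hfr_\real^{W'}$ (a half-space is a counterexample to the purely set-theoretic statement), so some regularity of $K_{c,\lambda}$ along the open stratum really is needed. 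Two further technical problems with your inductive setup: $\res^W_{W'}\lambda$ is reducible in general, so ``$K_{c,\res^W_{W'}\lambda}$'' is not defined by the constructions of this paper (the weight function is built per irreducible $\mu \in \irr(W')$, and you would need the decomposition $\sum_\mu K_{c',\mu} \otimes h_\mu$ with coefficient operators $h_\mu$); and even granting the factorization, the inductive hypothesis for $W'$ does not by itself show $x_0 \in \supp K_{c,\lambda}$ --- you must additionally prove that $h_\mu \neq 0$ for some $\mu$ contributing to $\res_b L_c(\lambda)$, which is essentially the statement being proved at the point $b$ rather than a consequence of the induction.
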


\begin{proof} This was shown by Etingof \cite[Proposition 3.10]{E2} in the case $\lambda = \triv$, and the proof generalizes to arbitrary $\lambda$ without modification.\end{proof}

\subsection{Existence of Dunkl Weight Function for Small $c$}\label{small-c-section}

Let $\alpha_1, ..., \alpha_l$ be the simple roots associated to the system of positive roots $\{\alpha_s\}_{s \in S}$, let $$\ccal := \{x \in \hfr_\real : \alpha_i(x) > 0 \text{ for } i = 1, ..., l\}$$ be the open associated fundamental Weyl chamber, and let $\bar{\ccal}$ denote its closure.  Recall that $$\delta := \prod_{s \in S} \alpha_s \in \cplx[\hfr]$$ is the discriminant element defining the reflection hyperplanes of $W$.

\begin{lemma} \label{homogeneous-L1-lemma} Let $c \in \pfr$ and let $K$ be a $\End(\lambda)$-valued analytic function on $\hfr_{\real, reg}$ satisfying the differential equation $$\partial_y K + \sum_{s \in S} \frac{c_s\alpha_s(y)}{\alpha_s}(sK + Ks) = 0$$ for each $y \in \hfr_\real$.  Then

(i) $K$ is homogeneous of degree $-2\chi_\lambda(\sum_{s \in S} c_ss)/\dim \lambda$;

(ii) there exists an integer $N \geq 0$ such that $\delta^NK$, extended to $\hfr_\real$ with value $0$ along the reflection hyperplanes $\ker(\alpha_s)$, is a continuous function on $\hfr_\real$;

(iii) if $|c_s|$ is sufficiently small for all $s \in S$ then $K(x)$ is a locally $L^1$ function on $\hfr_\real$. \end{lemma}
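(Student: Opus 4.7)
The argument splits into the three stated parts, with (i) following by a short calculation and (ii)--(iii) both resting on a Fuchsian analysis of $K$ along each reflection hyperplane.

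For (i), the plan is to apply the Euler vector field $\sum_i x_i \partial_{y_i}$ directly to the differential equation, paralleling the computation in the proof of Proposition \ref{dunkl-weight-props}(ii). Contracting
\[
\partial_y K \;=\; -\sum_{s \in S} \frac{c_s \alpha_s(y)}{\alpha_s}(sK + Ks)
\]
with $y = y_i$, multiplying by $x_i$, and summing over $i$, the dual-basis identity $\sum_i \alpha_s(y_i) x_i = \alpha_s$ collapses the right hand side to give $\sum_i x_i \partial_{y_i} K = -\sum_{s \in S} c_s(sK + Ks)$. Since $\sum_s c_s s \in Z(\cplx W)$ acts on $\lambda$ by the scalar $\chi_\lambda(\sum_s c_s s)/\dim \lambda$, both the left and right actions in $sK + Ks$ produce this scalar, yielding the claimed Euler eigenvalue $-2\chi_\lambda(\sum_s c_s s)/\dim\lambda$. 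No $W$-invariance of $K$ is needed, only centrality of $\sum_s c_s s$ in $\cplx W$.

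For (ii) and (iii), the plan is to analyze the local behaviour of $K$ at each reflection hyperplane $H_s = \ker \alpha_s$ using the theory of ODEs with regular singular points. At a generic point $x_0$ of $H_s$ (one where $\alpha_{s'}(x_0) \neq 0$ for all $s' \neq s$), I would restrict $K$ to the transverse line $x(t) = x_0 + ty$ for some $y \in \hfr_\real$ with $\alpha_s(y) = 1$; the equation then reduces to the one-variable Fuchsian system
\[
t\,\partial_t K(x(t)) \;=\; -c_s\bigl(sK(x(t)) + K(x(t))s\bigr) \;+\; t\,A(t)\,K(x(t)),
\]
where $A(t)$ is analytic at $t = 0$. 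The residue operator $A \mapsto -c_s(sA + As)$ on $\End_\cplx(\lambda)$ has eigenvalues in $\{-2c_s,\,0,\,2c_s\}$, since $s$ has eigenvalues $\pm 1$ on $\lambda$. Standard Frobenius theory therefore produces a local bound $|K(x)| \leq C|\alpha_s(x)|^{-2|\Re c_s|}$, modulo logarithmic corrections when two of the indicial roots differ by a nonzero integer.

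With this Fuchsian bound in hand, (ii) follows by choosing $N$ larger than $2|\Re c_s|$ for every $s$ (with a small extra margin to absorb logs), so that $\delta^N K$ tends to zero along the generic stratum of every hyperplane. For (iii), when every $|c_s|$ is less than $1/2$ the exponent $-2|\Re c_s|$ exceeds $-1$, and hence $|\alpha_s|^{-2|\Re c_s|}$ is locally integrable in the transverse direction; Fubini then gives local $L^1$-integrability near generic hyperplane points. The main obstacle will be handling intersections of several hyperplanes, where the one-variable Fuchsian picture does not apply directly. The plan is to proceed by induction on the codimension of the intersecting stratum: at a generic point of $H_{s_1} \cap \cdots \cap H_{s_k}$, freeze generic values of all but one of the vanishing $\alpha_{s_j}$ and iterate the single-hyperplane analysis, obtaining a product estimate of the form $C\prod_j |\alpha_{s_j}|^{-2|\Re c_{s_j}|}$. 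Iterated Fubini then promotes both the continuous extension in (ii) and the local $L^1$ bound in (iii) to all of $\hfr_\real$, with a single uniform choice of $N$ and with the same smallness threshold $|c_s| < 1/2$ respectively.
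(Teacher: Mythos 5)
Your part (i) is exactly the paper's argument (contract the equation with the Euler field, use $\sum_i \alpha_s(y_i)x_i = \alpha_s$ and centrality of $\sum_s c_s s$), and your observation that no $W$-invariance is needed is correct. The single-hyperplane Fuchsian analysis in (ii)--(iii) is also sound as far as it goes: the residue $A \mapsto -c_s(sA+As)$ does have spectrum $\{-2c_s,0,2c_s\}$ and Frobenius theory gives the stated transverse bound near a \emph{generic} point of one wall.

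The gap is in your treatment of higher-codimension strata, which is where all the content of (ii)--(iii) lies. Two problems. First, the arrangement is not normal crossings at deep strata: at a generic point of a codimension-$k$ stratum the number of vanishing roots $\alpha_{s_j}$ typically exceeds $k$ (already for a dihedral group $I_2(m)$, $m\geq 3$, all $m$ lines meet at the origin in a $2$-dimensional space), so ``freeze all but one of the vanishing $\alpha_{s_j}$ and iterate'' is not a well-defined coordinate-by-coordinate induction -- the $\alpha_{s_j}$ are not independent coordinates, and the claimed product estimate $C\prod_j|\alpha_{s_j}|^{-2|\Re c_{s_j}|}$ cannot be assembled one linear form at a time. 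Second, even on a normal-crossings stratum, the constant $C$ in your one-wall Frobenius bound is the ``holomorphic part'' of the solution evaluated at the frozen transverse position; it is not uniform, and blows up as that position approaches another wall. Making the iteration work requires showing that this holomorphic part itself solves a Fuchsian system in the remaining variables with controlled residues -- this is essentially Lemma \ref{nD-DE-lemma} of the paper, which is only applied \emph{after} pulling back to the wonderful model precisely to get normal crossings; plain Fubini does not substitute for it.

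The paper avoids all of this with a one-step Gronwall argument: for $v\in\ccal$ and $t>0$ one has
\begin{equation*}
\frac{d}{dt}\|K(v-t\omega_i^\vee)\| \;\leq\; \sum_{s\in S}\frac{2|c_s|\,\alpha_s(\omega_i^\vee)}{\alpha_s(v-t\omega_i^\vee)}\,\|K(v-t\omega_i^\vee)\|,
\end{equation*}
which integrates along the coweight directions to the global bound $\|K(x)\|\leq \|K(\rho^\vee)\|\prod_{s\in S}\bigl(\alpha_s(\rho^\vee)/\alpha_s(x)\bigr)^{2|c_s|}$ on the region $\{x:\alpha_i(x)\in(0,1)\}$, uniformly up to every stratum of $\bar\ccal$ at once; (ii) and (iii) then follow from this estimate together with homogeneity. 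If you want to salvage your route, you should either adopt this differential-inequality estimate or carry out the resolution to normal crossings before iterating the one-variable Frobenius theory.
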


\begin{proof} Statement (i) follows immediately from the differential equation and the fact that the central element $\sum_{s \in S} c_ss \in \cplx W$ acts on $\lambda$ by the scalar $\chi_\lambda(\sum_{s \in S} c_ss)/\dim \lambda$.

To establish (ii) and (iii) it suffices to show that $K$ is locally integrable on $\ccal$ when $|c_s|$ is sufficiently small for all $s \in S$ and that there exists $N \geq 0$ such that $\delta^NK|_\ccal$ extends to a continuous function on $\bar{\ccal}$ vanishing on the boundary.  Let $\omega_1^\vee, ..., \omega_l^\vee \in \bar{\mathcal{C}}$ be the fundamental dominant coweights, so that $\alpha_i(\omega_j^\vee) = \delta_{ij}$, and let $R$ be the region $$R := \{x \in \hfr_{\real, reg} : \alpha_i(x) \in (0, 1), i = 1, ..., l\} \subset \ccal.$$  Given a point $v \in \ccal$, we have, for those $t > 0$ such that $\alpha_i(v - t\omega_i^\vee) > 0$, $$\frac{d}{dt}||K(v - t\omega_i^\vee)|| \leq ||\frac{d}{dt}K(v - t\omega_i^\vee)|| $$ $$= ||\sum_{s \in S} \frac{c_s\alpha_s(\omega_i^\vee)}{\alpha_s(v - t\omega_i^\vee)}(sK(v - t\omega_i^\vee) + K(v - t\omega_i^\vee)s)||$$ $$\leq \sum_{s \in S} \frac{2|c_s|\alpha_s(\omega_i^\vee)}{\alpha_s(v - t\omega_i^\vee)}||K(v - t\omega_i^\vee)||,$$ where the norm $||\cdot||$ on $\End_\cplx(\lambda)$ is the norm arising from the natural inner product on $\End_\cplx(\lambda)$ associated to the inner product $(\cdot, \cdot)_\lambda$ on $\lambda$.  Note that this norm is differentiable away from $0 \in \End_\cplx(\lambda)$, the system of differential equations $K$ satisfies implies that if $K(x) = 0$ for any $x \in \ccal$ then $K$ is identically 0 on $\ccal$, and any $s \in S$ acts by an isometry with respect to this norm.  It follows that $$||K(v - t\omega_i^\vee)|| \leq ||K(v)||\prod_{s \in S}\left(\frac{\alpha_s(v)}{\alpha_s(v - t\omega_i^\vee)}\right)^{2|c_s|}.$$  With $\rho^\vee := \sum_i \omega_i^\vee \in \ccal$, we therefore have 
\begin{equation}
\label{eq:wall-growth}
||K(x)|| \leq ||K(\rho^\vee)||\prod_{s \in S} (\alpha_s(\rho^\vee)/\alpha_s(x))^{2|c_s|}
\end{equation} for all $x \in R$, and the claim follows immediately from this estimate and the homogeneity of $K$.\end{proof}

We need to impose further conditions, in addition to $W$-equivariance, on such a locally integrable function $K$ in order for it to represent the form $\gamma_{c, \lambda}$.  In \cite[Section 5]{Dunkl-B2} and \cite[Equation 9]{Dunkl-dihedral}, Dunkl derived certain conditions on such $K$ near the boundaries of the Weyl chambers.  We will now establish similar conditions in terms of the nature of the singularities of $K$ along the reflection hyperplanes, and we will relate these conditions to the invariance of sesquilinear pairings on certain representations of the Hecke algebra.

\begin{definition} For each simple reflection $s_i$, let $$\ccal_i := \{x \in \hfr_\real : \alpha_i(x) = 0, \alpha_j(x) > 0 \text{ for } j \neq i\}$$ be the open codimension-1 face of $\bar{\ccal}$ determined by $\alpha_i$.\end{definition}

Let $\lambda \in \irr(W)$ and let $c \in \pfr$ be such that $c_s \notin \frac{1}{2}\ints \backslash \{0\}$ for all $s \in S$.  Fix a simple reflection $s_i$, and choose coordinates $z_1, ..., z_l \in \hfr_\real^* \subset \hfr^*$ for $\hfr$ with $z_1 = \alpha_i$ and $z_j(\alpha_i^\vee) = 0$ for $j > 1$.   Consider the modified $KZ$ connection $$\nabla_{KZ}' := d - \sum_{s \in S} c_s\frac{d\alpha_s}{\alpha_s}s$$ on the trivial vector bundle on $\hfr_{reg}$ with fiber $\lambda$.  The connection $\nabla_{KZ}'$ is flat and therefore has a unique local extension of any initial value to a local homomorphic flat section.  Furthermore, considering the restriction of $\nabla_{KZ}'$ in the $z_1$ direction near a point $x_0 \in \ccal_i$ lying on the reflection hyperplane $\ker(\alpha_i)$, it follows from the standard theory of ordinary differential equations with regular singularities (see, e.g. \cite[Theorem 5.5]{W}) that there is a holomorphic function $P_i(z)$, defined in a complex analytic $z_1$-disc about $x_0$ and satisfying $P_i(x_0) = \id$, such that the function $$z \mapsto P_i(z)z_1^{c_is_i}$$ gives a fundamental $\End_\cplx(\lambda)$-valued (multivalued) solution to the restriction of the system $\nabla'_{KZ}$ to a small punctured $z_1$-disc about $x_0$.  We may extend $P_i(z)$ in the $z_2, ..., z_l$ directions by taking solutions of the restricted connection $$\nabla_{KZ}'' := d - \sum_{s \in S \backslash \{s_i\}} c_s\frac{d\alpha_s}{\alpha_s}s$$ along affine hyperplanes parallel to $\ker(\alpha_i)$ to produce a single valued function $P_i(z)$, holomorphic in $z_2, ..., z_l$, defined in a simply connected complex analytic $s_i$-stable neighborhood of $D$ of $\ccal \cup \ccal_i \cup s_i(\ccal)$ (note that the restricted connection above is regular along $\ccal_i$ itself).  The independence of the function $z_1^{c_is_i}$ on the variables $z_2, ..., z_l$ and the uniqueness of extensions of solutions of $\nabla_{KZ}'$ then implies that the such that the function $$N_i(z) := P_i(z)z_1^{c_is_i}$$ gives a fundamental $\End_\cplx(\lambda)$-valued (multivalued) solution to the system $\nabla_{KZ}'$ on the region $D_{reg} := D \cap \hfr_{reg}$.  In particular, $P_i(z)$ is holomorphic on $D_{reg}$.  Continuous dependence of solutions to ordinary differential equations on parameters and initial conditions implies that $P_i(z)$ is continuous on all of $D$.  In particular, viewing $P_i(z)$ as a function on $D_{reg}$ holomorphic in $z_1$, the singularities of $P_i(z)$ along $\ker(\alpha_i)$ are removable.  It follows that $P_i(z)$ is holomorphic in $z_1$ on all of $D$.  As $P_i(z)$ is holomorphic in each $z_j$ for $j > 1$, it follows by Hartogs' theorem that $P_i(z)$ is holomorphic on all of $D$.  

We next derive further properties of $P_i(z)$ arising from $W$-equivariance of the connection $\nabla_{KZ}'$ and the initial condition $P_i(x_0) = \id$.  In particular, we see that $s_iN_i(s_iz)s_i$ is another multivalued fundamental solution of $\nabla_{KZ}'$ on $D_{reg}$.  As $s_iN_i(s_iz)s_i = s_iP_i(s_iz)s_i(-z_1)^{c_is_i} = s_iP_i(s_iz)s_iz_1^{c_is_i}e^{\pi \sqrt{-1} c_is_i}$, it follows that $s_iP_i(s_iz)s_iz_1^{c_is_i}$ is such a solution as well (to reduce ambiguity, here we use the notation $\sqrt{-1}$ for the imaginary unit $i \in \cplx$ to avoid confusion with the index of the simple reflection $s_i$).  By the proof of \cite[Theorem 5.5]{W}, in particular its use of \cite[Theorem 4.1]{W}, and the assumption that $c_i \notin \frac{1}{2}\ints\backslash\{0\}$, any function $\widetilde{P}_i(z)$ holomorphic in a small complex analytic $z_1$-disc about $x_0$ such that $\widetilde{P}_i(x_0) = \id$ and the function $\widetilde{P}_i(z)z_1^{-c_is_i}$ is a solution to the system $\nabla_{KZ}'$ on the punctured $z_1$-disc must coincide with $P_i(z)$ along the entire disc.  It follows that $s_iP_i(s_iz)s_i = P_i(z)$ for $z$ in a small complex analytic $z_1$-disc about $x_0$ and therefore for all $z \in D$.

We summarize the conclusions of the above discussion in the following lemma:

\begin{lemma} \label{P-fn-lemma} Let $\lambda \in \irr(W)$, let $c \in \pfr$ be such that $c_i \notin \frac{1}{2}\ints\backslash\{0\}$ for a given simple reflection $s_i \in S$, and let $z_1, ..., z_l$ be coordinates for $\hfr$ as in the discussion above.  There is a $s_i$-stable simply connected complex analytic neighborhood $D$, independent of $c$, of $\ccal \cup \ccal_i \cup s_i(\ccal)$ in $\hfr$ and a holomorphic $GL(\lambda)$-valued function $P_i(z)$ on $D$ such that 

(1) $P_i(z)\alpha_i(z)^{c_is_i}$ is a $\End_\cplx(\lambda)$-valued multivalued holomorphic fundamental solution to the modified $KZ$ system $$\nabla_{KZ}' := d - \sum_{s \in S} c_s\frac{d\alpha_s}{\alpha_s}s$$ on the domain $D_{reg} := D \cap \hfr_{reg}$

(2) $s_iP_i(s_iz)s_i = P_i(z)$ for all $z \in D$.  In particular, $P_i$ takes values in $\End_{s_i}(\lambda)$ along $\ker(\alpha_i) \cap D$.

(3) $P_i(z)$ is a solution of the system $$\nabla_{KZ}'' := d - \sum_{s \in S \backslash\{s_i\}}c_s\frac{d\alpha_s}{\alpha_s}s$$ along $\ker(\alpha_i) \cap D$.

Furthermore, any such $P_i$ is determined by its value at any point $x_0 \in \ccal_i$, and for any constant invertible $A \in \aut_{s_i}(\lambda)$ the function $P_i(z)A$ is another such function.  In particular, any two such functions $P_i(z), \widetilde{P}_i(z)$ are related by an equality $\widetilde{P}_i(z) = P_i(z)A$ for a unique such $A$.\end{lemma}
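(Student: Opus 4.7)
The plan is to organize the construction sketched in the paragraphs preceding the lemma into a clean proof with four steps: local existence in the $z_1$ variable, extension in the transverse directions, verification of $s_i$-equivariance by a uniqueness argument, and the uniqueness-up-to-right-multiplication claim.

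First, I would fix $x_0 \in \ccal_i$ and view the modified $KZ$ system $\nabla_{KZ}'$ as a meromorphic linear ODE in the single variable $z_1$ with a regular singularity at $z_1=0$, holomorphically parameterized by $z_2,\dots,z_l$. The residue of the connection at $z_1=0$ is $-c_i s_i$, whose eigenvalues are $\pm c_i$; the hypothesis $c_i \notin \frac{1}{2}\ints\setminus\{0\}$ guarantees that the difference $2c_i$ is not a nonzero integer, which is exactly the non-resonance condition needed to apply the existence theorem for ODEs with regular singularities (Wasow, Theorems 4.1 and 5.5 of \cite{W}). This yields a unique $GL(\lambda)$-valued function $P_i(z)$, holomorphic in a polydisc $U$ around $x_0$, with $P_i(x_0)=\id$ and such that $P_i(z)z_1^{c_i s_i}$ is a fundamental solution of $\nabla_{KZ}'$ on $U \cap \hfr_{reg}$.

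Second, I would extend $P_i$ from $U$ to a simply connected $s_i$-stable neighborhood $D$ of $\ccal \cup \ccal_i \cup s_i(\ccal)$ by parallel transport along leaves of the regular flat connection $\nabla_{KZ}''$, which is holomorphic across $\ker(\alpha_i)\cap D$ (since the singular term $c_i d\alpha_i/\alpha_i \cdot s_i$ has been removed). The resulting function is single-valued and holomorphic in $z_2,\dots,z_l$ by construction, and holomorphic in $z_1$ because on the regular locus it is a solution of a regular-singular ODE while by continuity it is bounded near $\ker(\alpha_i)$, so the singularity is removable. Hartogs' theorem then gives joint holomorphy on $D$. Property (3) is built into this construction, and (1) then holds on all of $D_{reg}$ by analytic continuation.

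Third, I would deduce the $s_i$-equivariance (2) from uniqueness. Applying $s_i \cdot s_i$ to $P_i(s_iz)\alpha_i(s_iz)^{c_i s_i}$ gives another fundamental solution of $\nabla_{KZ}'$ on $D_{reg}$, and using $\alpha_i(s_iz) = -\alpha_i(z)$ together with $(-1)^{c_i s_i} = e^{\pi\sqrt{-1}c_i s_i}$ (a constant invertible matrix commuting with nothing in particular, but acting on the right) one sees that $s_i P_i(s_iz) s_i$ is a holomorphic function on $D$ which, when multiplied by $\alpha_i(z)^{c_i s_i}$, gives a fundamental solution and takes the value $\id$ at $x_0$. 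The uniqueness clause in the regular-singular ODE theorem (again using $2c_i \notin \ints$) forces $s_i P_i(s_iz)s_i = P_i(z)$; the restriction to $\ker(\alpha_i)\cap D$ gives the $\End_{s_i}(\lambda)$-valued statement.

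Finally, for the last paragraph of the lemma I would argue that if $\widetilde{P}_i$ is another such function, then on a small $z_1$-disc about a basepoint in $\ccal_i$ both $P_i$ and $\widetilde P_i$ are holomorphic solutions of the same regular-singular ODE whose leading behavior is $z_1^{c_is_i}$; by the same uniqueness theorem, $\widetilde P_i(z) = P_i(z)A(z_2,\dots,z_l)$ for a holomorphic matrix $A$ independent of $z_1$. Propagating via $\nabla_{KZ}''$ shows $A$ is $\nabla_{KZ}''$-flat; evaluating at $x_0$ shows $A$ is constant; and (2) forces $s_iAs_i = A$, i.e., $A \in \aut_{s_i}(\lambda)$. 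The main obstacle, and the point that requires genuine care, is the invocation of the non-resonance version of the regular-singular ODE theorem with holomorphic dependence on parameters $z_2,\dots,z_l$: this is what makes both local existence and the uniqueness needed for (2) work, and it is exactly where the hypothesis $c_i \notin \frac{1}{2}\ints\setminus\{0\}$ is used.
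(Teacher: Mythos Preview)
Your proposal is correct and follows essentially the same approach as the paper's own argument (which appears in the paragraphs immediately preceding the lemma): local existence in $z_1$ via Wasow's regular-singular ODE theorem, extension in the transverse directions using $\nabla_{KZ}''$ (which coincides with $\nabla_{KZ}'$ in those directions since $\alpha_i = z_1$), removable singularities plus Hartogs for joint holomorphy, and the uniqueness clause of the regular-singular theory for the $s_i$-equivariance. Your identification of the non-resonance condition $2c_i \notin \ints\setminus\{0\}$ as the precise point where the hypothesis $c_i \notin \frac{1}{2}\ints\setminus\{0\}$ enters is exactly right and matches the paper's use of \cite[Theorems 4.1, 5.5]{W}.
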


Now, let $K : \ccal \rightarrow \End_\cplx(\lambda)$ be a real analytic function satisfying differential equation (\ref{eq:2-sided-KZ}) appearing in Proposition \ref{dunkl-weight-props} $$\partial_yK + \sum_{s \in S} c_s\frac{\alpha_s(y)}{\alpha_s}(sK + Ks) = 0 \ \ \ \ \text{ for all } y \in \hfr_\real.$$  Choose a simple reflection $s_i$, and fix functions $P_{i, c}(z)$ and $P_{i, c^\dagger}$ as in Lemma \ref{P-fn-lemma} for parameters $c$ and $c^\dagger$, respectively.  By the uniqueness of solutions of differential equation (\ref{eq:2-sided-KZ}) given initial conditions, there is a matrix $K_i \in \End_\cplx(\lambda)$, uniquely determined given $P_i(z)$, such that $K$ is given by $$z \mapsto P_{i, c^\dagger}(z)^{\dagger, -1}\alpha_i(z)^{-c_is_i}K_i\alpha_i(z)^{-c_is_i}P_{i, c}(z)^{-1}$$ for $z \in \ccal$.  As $P_{i, c}(z)$ and $P_{i, c^\dagger}(z)$ are determined only up to right multiplication by some $A \in \aut_{s_i}(\lambda)$, here $K_i$ is determined by $K$ only up to the action of $\aut_{s_i}(\lambda)$ on $\End_\cplx(\lambda)$ by $A.M = AMA^\dagger$.  As $s_i^\dagger = s_i$, $s_i$-invariance of $A$ implies that of $A^\dagger$, and we see that $s_i$-invariance of $K_i$ is a property of $K$, not depending on the choice of $P_{i, c}(z)$ and $P_{i, c^\dagger}(z)$.  In particular, we make the following definition:

\begin{definition} \label{asym-inv-def} In the setting of the previous paragraph, we say that the function $K$ is \emph{asymptotically $W$-invariant} if $K_i \in \End_{s_i}(\lambda)$ for all simple reflections $s_i$.\end{definition}

We will need the following lemma:

\begin{lemma}\label{mero-form-lemma}
Fix $x_0 \in \hfr_{\real, reg}$.  There is a holomorphic function $B : \pfr \rightarrow \End_\cplx(\lambda)$ such that $B(0) = \id$ and, for all $c \in \pfr$, $B(c)$ defines a $B_W$-invariant sesquilinear pairing $$KZ_{x_0}(\Delta_c(\lambda)) \times KZ_{x_0}(\Delta_{c^\dagger}(\lambda)) \rightarrow \cplx$$ $$(v_1, v_2) \mapsto v_2^\dagger B(c)v_1,$$ where we make the standard identification of $KZ_{x_0}(\Delta_c(\lambda))$ and $KZ_{x_0}(\Delta_{c^\dagger}(\lambda))$ with $\lambda$ as vector spaces.  Any two such functions $B(c)$ and $\widetilde{B}(c)$ are related by $\widetilde{B}(c) = b(c)B(c)$ for a meromorphic function $b(c)$.  Replacing $B(c)$ by $(B(c) + B(c^\dagger)^\dagger)/2$, we may assume $B(c)$ takes values in Hermitian forms for all $c \in \pfr_\real$.
\end{lemma}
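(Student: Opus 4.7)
The plan is to recognize the $B_W$-invariance of the sesquilinear pairing determined by $B(c) \in \End_\cplx(\lambda)$ as a finite holomorphic linear system in $B(c)$, and then construct a global holomorphic solution by anchoring at $c=0$ where the system degenerates. Under the standard identification $KZ_{x_0}(\Delta_c(\lambda)) \cong \lambda$, the $B_W$-representation on $KZ_{x_0}(\Delta_c(\lambda))$ is the monodromy $M_c : B_W \to GL(\lambda)$ of the KZ connection $\nabla_{KZ}^c = d + \sum_{s \in S} c_s \frac{d\alpha_s}{\alpha_s}(1 - s)$ on the trivial bundle over $\hfr_{reg}$ based at $x_0$, and the pairing $(v_1, v_2) \mapsto v_2^\dagger B v_1$ is $B_W$-invariant if and only if
\[
M_{c^\dagger}(T_s)^\dagger B M_c(T_s) = B \qquad \text{for all } s \in S,
\]
since the $T_s$ attached to simple reflections generate $B_W$. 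Holomorphic dependence of solutions to a linear ODE on parameters ensures that $c \mapsto M_c(T_s)$ and $c \mapsto M_{c^\dagger}(T_s)^\dagger$ are entire $GL(\lambda)$-valued, and hence these equations cut out a coherent analytic subsheaf $\mathcal{V} \subset \End_\cplx(\lambda) \otimes \mathcal{O}_\pfr$.

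At $c = 0$ the connection reduces to the trivial de Rham operator, so every $M_0(\gamma) = \id$, the invariance condition is vacuous, and $B(0) := \id$ lies in the fiber $\mathcal{V}_0$. The goal is to extend this to a global entire section $B : \pfr \to \End_\cplx(\lambda)$ of $\mathcal{V}$. Since $\pfr$ is a complex affine space and hence Stein, Cartan's Theorems A and B apply to the coherent sheaf $\mathcal{V}$: every stalk is generated by global sections, so the element $\id \in \mathcal{V}_0/\mathfrak{m}_0 \mathcal{V}_0$ can be realized as the value at $0$ of an entire section $B(c)$ of $\mathcal{V}$. This $B(c)$ is the sought-for holomorphic family of $B_W$-invariant sesquilinear pairings with $B(0) = \id$.

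For the uniqueness clause, observe that for Weil-generic $c$ one has $\Delta_c(\lambda) = L_c(\lambda)$ of full support, so $KZ_{x_0}(\Delta_c(\lambda))$ is an irreducible $H_q(W)$-module; Schur's lemma in the sesquilinear setting then forces the generic fiber of $\mathcal{V}$ to be at most one-dimensional, and any two sections are generically proportional by a meromorphic scalar $b(c)$. For the Hermitian refinement, note that $c \mapsto B(c^\dagger)^\dagger$ is holomorphic (a composition of two antiholomorphic operations); taking adjoints of the invariance identity at parameter $c^\dagger$ shows $B(c^\dagger)^\dagger$ likewise satisfies the invariance relation at parameter $c$, so $\frac{1}{2}(B(c) + B(c^\dagger)^\dagger)$ remains a holomorphic section of $\mathcal{V}$, still equals $\id$ at $c = 0$, and is manifestly Hermitian-valued when restricted to $\pfr_\real$. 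The main obstacle is the global extension step: the fiber dimension of $\mathcal{V}$ jumps at reducibility walls in $\pfr$, and only the Stein/Cartan machinery (or, alternatively, a controlled analytic continuation that carefully avoids pole formation at those walls) guarantees that $B(c)$ can be chosen entire rather than merely meromorphic.
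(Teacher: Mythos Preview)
Your proof is correct and takes a genuinely different route from the paper's.  The paper proceeds more elementarily: it augments the $B_W$-invariance equations with the normalization $\tr(A_c) = \dim\lambda$, argues that for $|c_s|$ small the Hecke algebra is semisimple and $KZ_{x_0}(\Delta_{c^\dagger}(\lambda)) \cong KZ_{x_0}(\Delta_c(\lambda))^h$ (by continuity of $B_W$-characters from $c=0$), so the normalized system has a unique solution there, and hence a meromorphic solution globally by linear algebra over $\mathcal{O}_\pfr$; it then clears denominators by a suitable determinant $d(c)$ to obtain a holomorphic $B(c)$.  Your approach instead packages the invariance equations as a coherent subsheaf $\mathcal{V} \subset \End_\cplx(\lambda)\otimes\mathcal{O}_\pfr$ and invokes Cartan's Theorem~A on the Stein space $\pfr$ to lift the fiber element $\id \in \mathcal{V}_0/\mathfrak{m}_0\mathcal{V}_0$ to a global holomorphic section.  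This is cleaner conceptually---you never pass through a meromorphic intermediary---but it imports Oka--Cartan theory, whereas the paper's argument stays within elementary ODE and linear algebra.  Both approaches handle the uniqueness and Hermitian-symmetrization clauses in essentially the same way.
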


\begin{proof}  An operator $A_c \in \End_\cplx(\lambda)$ defines a $B_W$-invariant sesquilinear pairing $$KZ_{x_0}(\Delta_c(\lambda)) \times KZ_{x_0}(\Delta_{c^\dagger}(\lambda)) \rightarrow \cplx$$ as in the lemma statement if and only if
\begin{equation}\label{eq:Ti-inv}
T_{i, c^\dagger}^{\dagger, -1}A_cT_{i, c}^{-1} = A_c,
\end{equation}
where $T_{i, c}$ and $T_{i, c^\dagger}$ denote the action of the generator $T_i \in B_W$ in $KZ_{x_0}(\Delta_c(\lambda))$ and $KZ_{x_0}(\Delta_{c^\dagger}(\lambda))$, respectively, for all generators $T_i$ of $B_W$.  Similarly to Lemma \ref{herm-dual-iso-lemma}, such a sequilinear pairing is equivalent to a homomorphism of $B_W$-representations
\begin{equation}\label{eq:KZ-BW-iso}
KZ_{x_0}(\Delta_{c^\dagger}(\lambda)) \rightarrow KZ_{x_0}(\Delta_c(\lambda))^h
\end{equation}
where $KZ_{x_0}(\Delta_c(\lambda))^h$ denotes the Hermitian dual of $KZ_{x_0}(\Delta_c(\lambda))$ as a $B_W$-representation, i.e. the representation of $B_W$ which is $\bar{KZ_{x_0}(\Delta_c(\lambda))^*}$ as a $\cplx$-vector space and in which the action of $g \in B_W$ is by $g.f = f \circ g^{-1}$.  When $|c_s|$ is small for all $s \in S$, the Hecke algebra $H_q(W)$, $q(s) = e^{-2 \pi i  c(s)}$, is semisimple and isomorphic to $\cplx W$, and the KZ functor is an equivalence of categories $\oscr_c(W, \hfr) \cong H_q(W)\mhyphen\text{mod}_{f.d.}$.  For such $c$, the irreducible representations of $H_q(W)$ therefore are precisely the images of the standard modules under the KZ functor, and in particular they can be distinguished by their character as $B_W$-representations, as this is so for $c = 0$ and these characters are continuous (in fact, holomorphic) functions of $c$.  In particular, for such $c$, as $KZ_{x_0}(\Delta_{c^\dagger}(\lambda))$ and $KZ_{x_0}(\Delta_c(\lambda))^h$ are irreducible, isomorphic for $c = 0$, and have characters as $B_W$-representations that are continuous functions of $c$, it follows that they are isomorphic.  It follows that, when $|c_s|$ is small for all $s \in S$, there is a 1-dimensional space of $B_W$-isomorphisms as in (\ref{eq:KZ-BW-iso}), and hence a unique solution $A_c \in \End_\cplx(\lambda)$ to the system of equations
\begin{equation}\label{eq:B-W-inv-equation}
\begin{cases}
T_{i, c^\dagger}^{\dagger, -1}A_cT_{i, c}^{-1} = A_c, \text{ for all generators } T_i \in B_W \\
\tr(A_c) = \dim(\lambda).
\end{cases}
\end{equation}
The operators $T_{i, c}^{-1}$ and $T_{i, c^\dagger}^{\dagger, -1}$ are holomorphic in $c \in \pfr$, so we may view system (\ref{eq:B-W-inv-equation}) as a system of linear system of equations in the variable $A_c \in \End_\cplx(\lambda)$ with coefficients that are holomorphic in $c \in \pfr$.  It follows that there is a solution $A_c$ that is meromorphic in $c \in \pfr$ with singularities where the system is degenerate.  Clearing denominators by multiplying by an appropriate determinant $d(c)$, holomorphic in $c$, the resulting holomorphic function $d(c)A_c$, up to scaling by a global constant, satisfies the properties of $B(c)$ stated in the lemma.  The uniqueness statement follows as well.  Finally, if $B(c)$ satisfies the properties in the Lemma then $\widetilde{B}(c) := (B(c) + B(c^\dagger)^\dagger)/2$ is also holomorphic in $c$ and satisfies equation (\ref{eq:Ti-inv}) for all $c \in \pfr_\real$, and therefore for all $c \in \pfr$ as well by analyticity.
\end{proof}

\begin{remark}\label{complex-herm-form-remark}
Lemma \ref{mero-form-lemma} holds for complex reflection groups as well, with the same proof, although this level of generality will not be needed.
\end{remark}

\begin{theorem}[Existence of Dunkl Weight Function for Small $c$] \label{existence-theorem} Let $\lambda \in \irr(W)$ and let $c \in \pfr$ with $|c_s|$ sufficiently small for all $s \in S$.  Let $K : \hfr_{\real, reg} \rightarrow \End_\cplx(\lambda)$ be a nonzero $W$-equivariant function satisfying system (\ref{eq:2-sided-KZ}).  Then the following are equivalent:

(a) $K$ represents the Gaussian pairing $\gamma_{c, \lambda}$ up to rescaling by a nonzero complex number.

(b) For any point $x \in \hfr_{\real, reg}$, $K(x)$ determines a $B_W$-invariant sesquilinear pairing $$KZ_x(\Delta_c(\lambda)) \times KZ_x(\Delta_{c^\dagger}(\lambda)) \rightarrow \cplx.$$

(c) $K|_\ccal$ is asymptotically $W$-invariant in the sense of Definition \ref{asym-inv-def}.

Furthermore, the space of such $K$ satisfying (a)-(c) forms a one-dimensional complex vector space.\end{theorem}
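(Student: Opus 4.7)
The plan is to prove (b) $\Leftrightarrow$ (c) by local analysis at the reflection hyperplanes, then (c) $\Rightarrow$ (a) via distributional integration by parts, and finally to deduce (a) $\Rightarrow$ (b) together with the one-dimensionality statement from Lemma \ref{mero-form-lemma} and the uniqueness of solutions to the 2-sided KZ system. To start with (b) $\Leftrightarrow$ (c), apply Lemma \ref{P-fn-lemma} near each simple wall $\ccal_i$ to write
\[
K(z) = P_{i,c^\dagger}(z)^{\dagger,-1}\,\alpha_i(z)^{-c_i s_i}\,K_i\,\alpha_i(z)^{-c_i s_i}\,P_{i,c}(z)^{-1}
\]
for some $K_i \in \End_\cplx(\lambda)$ determined up to the $\aut_{s_i}(\lambda)$-action. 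The monodromy of $\nabla_{KZ}$ around $\ker(\alpha_i)$ is read off from the local fundamental solution of $\nabla'_{KZ}$, which differs from $\nabla_{KZ}$ only by a scalar gauge that cancels in any sesquilinear pairing. A direct substitution of the local form then shows that the $B_W$-invariance condition $T_{i,c^\dagger}^{\dagger,-1} K(x)\, T_{i,c}^{-1} = K(x)$ for the generator $T_i$ is equivalent to the relation that $K_i$ commutes with $s_i$, i.e.\ $K_i \in \End_{s_i}(\lambda)$. Since the $T_i$ generate $B_W$, this proves (b) $\Leftrightarrow$ (c).

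For (c) $\Rightarrow$ (a), Lemma \ref{homogeneous-L1-lemma}(iii) shows that $K$ is locally $L^1$ on $\hfr_\real$, and its homogeneity makes it tempered, so it defines a tempered distribution. We verify the three conditions of Proposition \ref{dunkl-weight-characterization-proposition}, after rescaling $K$ so the normalization holds (possible because $\int_{\hfr_\real} K(x) e^{-|x|^2/2}\,dx$ is $W$-invariant, hence a scalar by Schur, and this scalar is nonzero since $K$ is). $W$-invariance is assumed, and for $K \circ D_y = 0$ we pair $K$ against $D_y\phi$ for a Schwartz $\phi$ and integrate by parts on $\hfr_{\real,reg}$. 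The bulk contribution vanishes by the 2-sided KZ equation, reversing the calculation in the proof of Proposition \ref{dunkl-weight-props}(i); the remaining residue-type boundary contributions along each hyperplane $\ker(\alpha_s)$, computed from the local expansion in Lemma \ref{P-fn-lemma}, assemble into terms proportional to $K_i s_i - s_i K_i$ for each simple wall, which vanish by asymptotic $W$-invariance. The main obstacle is executing this boundary calculation rigorously: the factor $\alpha_i^{-2c_i s_i}$ in the local form of $K$ pairs with the simple-pole kernel $1/\alpha_i$ in $D_y$ to produce a nontrivial residue contribution that must be identified with $K_i s_i - s_i K_i$, cleanest to verify first in the range of $c$ where $\alpha_i^{-2c_i s_i}$ extends smoothly to $\ker(\alpha_i)$ and then extended by analyticity in $c$.

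Finally, for uniqueness and (a) $\Rightarrow$ (b): the space of solutions to the 2-sided KZ system with prescribed value $K(x_0)$ at a fixed base point $x_0 \in \hfr_{\real,reg}$ is one-dimensional over the prescribed value, and by Lemma \ref{mero-form-lemma} together with semisimplicity of $H_q(W)$ for small $c$ (where $KZ$ is an equivalence and the relevant $B_W$-representations $KZ_{x_0}(\Delta_c(\lambda))$ and $KZ_{x_0}(\Delta_{c^\dagger}(\lambda))$ are irreducible with matching characters as continuous deformations of the irreducible $\cplx W$-modules at $c=0$), the space of $B_W$-invariant pairings at $x_0$ is one-dimensional. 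Combined with (b) $\Leftrightarrow$ (c), this makes the space of $K$ satisfying (b) at most one-dimensional. Existence of such $K$ follows by taking the pairing $B(c)$ of Lemma \ref{mero-form-lemma} as initial data at $x_0 \in \ccal$, extending it across $\ccal$ by the 2-sided KZ system (which is exactly the flatness equation for such pairings with respect to $\nabla'_{KZ}$, so $B_W$-invariance propagates), and then across $\hfr_{\real,reg}$ by $W$-equivariance. The (a) $\Rightarrow$ (b) direction now follows from uniqueness: any $K$ representing $\gamma_{c,\lambda}$ up to scaling is a scalar multiple of this distinguished $K$, and therefore satisfies (b) and (c).
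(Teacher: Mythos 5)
Your outline follows the paper's proof essentially step for step: the equivalence of (b) and (c) via the local fundamental solutions $P_{i,c}(z)\alpha_i(z)^{c_is_i}$ of Lemma \ref{P-fn-lemma} and the eigenspace decomposition of $K_i$ under left and right multiplication by $s_i$; the passage between (a) and (c) via integration by parts on a truncation of $\hfr_{\real,reg}$ with boundary terms governed by the local expansion at the walls; and the one-dimensionality via Lemma \ref{mero-form-lemma}. The only structural difference is that you prove (a)$\Rightarrow$(b) indirectly, by first building a distinguished $K$ from $B(c)$ satisfying (b), hence (c), hence (a), and then invoking uniqueness of the representing distribution (density of $\cplx[\hfr]e^{-|x|^2/2}$ in $\mathscr{S}(\hfr_\real)$); the paper instead proves (a)$\Leftrightarrow$(c) directly in both directions. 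Your reorganization is logically sound.

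One concrete caveat on the step you flag as the main obstacle. The proposed shortcut --- ``verify first in the range of $c$ where $\alpha_i^{-2c_is_i}$ extends smoothly to $\ker(\alpha_i)$ and then extend by analyticity in $c$'' --- does not work, because that range is empty apart from $c_i=0$: the operator $\alpha_i^{-2c_is_i}$ acts by $\alpha_i^{-2c_i}$ and $\alpha_i^{+2c_i}$ on the two eigenspaces of $s_i$, and at least one of these fails to be smooth at $\alpha_i=0$ whenever $c_i\neq 0$, so there is no open set of parameters from which to continue analytically. The boundary computation has to be done directly: one shows that $(K-s_iKs_i)(z) = 2P_{i,c^\dagger}(z)^{\dagger,-1}(K_i^{1,-1}+K_i^{-1,1})P_{i,c}(z)^{-1} + \alpha_i(z)^{1-2|c_i|}R_i(z)$ with $R_i$ continuous up to the wall, so that for $|c_i|<1/2$ the limit of the wall integrals picks out exactly $K_i^{1,-1}+K_i^{-1,1}$ (which vanishes iff $[K_i,s_i]=0$, so your commutator formulation is equivalent), and the remainder is controlled by a dominated-convergence estimate of order $\epsilon^{1-2|c_i|}$. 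This is precisely where the smallness hypothesis on $c$ enters, and it is the content the paper supplies to close this step.
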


\begin{proof} We will first show that (a) and (c) are equivalent.  As the function $K$ is locally integrable and homogeneous by Lemma \ref{homogeneous-L1-lemma} when $|c_s|$ is small for all $s \in S$, it determines a tempered distribution with values in $\End_\cplx(\lambda)$.  Therefore, we may define a sesquilinear pairing $\gamma_K : \Delta_c(\lambda) \times \Delta_{c^\dagger}(\lambda) \rightarrow \cplx$ by the formula $$\gamma_K(P, Q) = \int_{\hfr_\real} Q(x)^\dagger K(x)P(x)e^{-|x|^2/2}dx$$ for all $P, Q \in \cplx[\hfr] \otimes \lambda.$  As $K$ is nonzero and $W$-equivariant, the same is true for $\gamma_K$ by the density of $\cplx[\hfr]e^{-|x|^2/2} \subset \mathscr{S}(\hfr_\real)$.  By Proposition \ref{dunkl-weight-characterization-proposition}, $\gamma_K$ is proportional to $\gamma_{c, \lambda}$ if and only if $$\int_{\hfr_\real} K(x)D_y\phi(x)dx = 0$$ for all $\phi \in \mathscr{S}(\hfr_\real)$ and $y \in \hfr_\real$.  Following Dunkl, for any $\epsilon > 0$ define the region $$\Omega_\epsilon := \{x \in \hfr_\real : |\alpha_s(x)| > \epsilon \text{ for all } s \in S\} \subset \hfr_{\real, reg}.$$  For any $\phi \in \mathscr{S}(\hfr_\real) \otimes \lambda$ the function $K(D_y\phi)$ is integrable, so we have $$\lim_{\epsilon \rightarrow 0} \int_{\Omega_e} K(x)D_y\phi(x)dx = \int_{\hfr_\real} K(x)D_y\phi(x)dx$$ by the dominated convergence theorem (see, e.g., \cite[Section 4.4]{RF}).  The region $\Omega_\epsilon$ avoids the singularities of $1/\alpha_s$ for all $s \in S$, and so, following the calculations appearing after Equation (14) in \cite[Section 5]{Dunkl-B2}, a direct calculation using the $W$-invariance of $K$, the $W$-invariance of $\Omega_\epsilon$, and the system of differential equations that $K$ satisfies shows that $$\int_{\Omega_\epsilon} K(x)D_y\phi(x)dx = \int_{\Omega_\epsilon} \partial_y(K(x)\phi(x))dx.$$  By Proposition \ref{dunkl-weight-characterization-proposition}, we see that $\gamma_K$ is proportional to $\gamma_{c, \lambda}$ if and only if 
\begin{equation}
\label{eq:necc-limit}
\lim_{\epsilon \rightarrow 0} \int_{\Omega_\epsilon} \partial_y(K(x)\phi(x))dx = 0
\end{equation} for a spanning set of $y \in \hfr_\real$ and dense set of $\phi \in \mathscr{S}(\hfr_\real)$.

Fix a simple reflection $s_i$.  Note that the boundary wall of the cone $\Omega_\epsilon \cap \ccal$ parallel to $\ccal_i$ is the translate of $\ccal_i$ by $\epsilon\rho^\vee$ and that the boundary wall of the cone $\Omega_\epsilon \cap s_i\ccal$ parallel to $\ccal_i$ is the translate of $\ccal_i$ by $\epsilon s_i\rho^\vee = \epsilon(\rho^\vee - \alpha_i^\vee)$.  Computing the integral above as an iterated integral with inner integrals chosen as in \cite[Section 5]{Dunkl-B2} and choosing test functions $\phi$ to be bump functions supported near a point $x_0 \in \ccal_i$ and $s_i$-invariant, we see that the vanishing of the limit $$\lim_{\epsilon \rightarrow 0} \int_{\Omega_\epsilon} \partial_{\alpha_i^\vee}(K\phi)dx$$ for all $\phi$ implies that $$\lim_{\epsilon \rightarrow 0} \int_{C_i} (K - s_iKs_i)(x + \epsilon \rho^\vee)\psi(x)dx = \lim_{\epsilon \rightarrow 0} \int_{C_i} (K(x + \epsilon \rho^\vee) - K(x + \epsilon s_i\rho^\vee))\psi(x) dx = 0$$ for all $\psi \in C^\infty_c(\ccal_i)$, where the first equality follows from the $W$-invariance of $K$.  Choosing coordinates $z_1, ..., z_l$ adapted to the wall $\ccal_i$ as in the discussion preceding Definition \ref{asym-inv-def}, we have that $K$ is given on $\ccal$ by the formula $$z \mapsto P_{i, c^\dagger}(z)^{\dagger, -1}\alpha_i(z)^{-c_is_i}K_i\alpha_i(z)^{-c_is_i}P_{i, c}(z)^{-1}$$ with $P_{i, c}(z)$, $P_{i, c^\dagger}$, and $K_i$ as above.  As $P_{i, c}$ and $P_{i, c^\dagger}$ take values in $\aut_{s_i}(\lambda)$ on $\ccal_i$, it follows that the commutators $[s_i, P_{i, c^\dagger}(z)^{\dagger, -1}]$ and $[P_{i, c}(z)^{-1}, s_i]$ are antiholomorphic and holomorphic, respectively, in $z$ and vanish along $\ccal_i$.  In particular, there is a holomorphic function $Q_{i, c}(z)$ and an antiholomorphic function $Q_{i, c^\dagger}(z)$ on $D$ with values in $\End_\cplx(\lambda)$ such that $[s_i, P_{i, c^\dagger}(z)^{\dagger, -1}] = \bar{z_1}Q_{i, c^\dagger}(z)$ and $[P_{i, c}(z)^{-1}, s_i] = z_1Q_{i, c}(z)$.  For $\sigma_1, \sigma_2 \in \{\pm 1\}$ let $K_i^{\sigma_1,\sigma_2} \in \End_\cplx(\lambda)$ be the projection of $K_i$ to the $(\sigma_1, \sigma_2)$ simultaneous eigenspace of the commuting operators $\lambda_{s_i}$ and $\rho_{s_i}$ on $\End_\cplx(\lambda)$ of left and right, respectively, multiplication by $s_i$.  We have $K_i = K_i^{1, 1} + K_i^{1, -1} + K_i^{-1, 1} + K_i^{-1, -1}$, and a straightforward computation shows
\begin{equation}
\label{eq:wall-diff} (K - s_iKs_i)(z) = 2P_{i, c^\dagger}(z)^{\dagger, -1}(K_i^{1, -1} + K_i^{-1, 1})P_{i, c}(z)^{-1} + z_1^{1 - 2|c_i|}R_i(z)
\end{equation}
for $z \in \ccal$, where $R_i(z) \in \End_\cplx(\lambda)$ is analytic on $\ccal$ and extends continuously to $\ccal \cup \ccal_i$.  It follows that we have $$\lim_{\epsilon \rightarrow 0} \int_{\ccal_i} (K - s_iKs_i)(x + \epsilon \rho^\vee)\psi(x)dx$$ $$=\int_{\ccal_i} 2P_{i, c^\dagger}(x + \epsilon\rho^\vee)^{\dagger, -1}(K_i^{1, -1} + K_i^{-1, 1})P_{i, c}(x + \epsilon \rho^\vee)^{-1}\psi(x)dx$$ for all $\psi \in C^\infty_c(\ccal_i)$.  For this integral to vanish for all such $\psi$, we must have that $K_i^{1, -1} + K_i^{-1, 1} = 0$, and hence that $K_i = K_i^{1, 1} + K_i^{-1, -1}$ commutes with $s_i$, so that $K|_\ccal$ is asymptotically $W$-invariant.  In particular, we see that (a) implies (c).

Conversely, if (c) holds, equation (\ref{eq:wall-diff}) implies that $K - s_iKs_i$ extends continuously to all of $\ccal \cup \ccal_i \cup s_i(\ccal)$ with value $0$ along $\ccal_i$.  By the $W$-equivariance of $K$ and the inner integrals used by Dunkl recalled in the previous paragraph, to show the vanishing of limit (\ref{eq:necc-limit}), for $y \in \hfr_{\real, reg}$ and $\phi \in C^\infty_c(\hfr_\real)$, it suffices to show that the limit
\begin{equation}
\label{eq:wall-integral}
\lim_{\epsilon \rightarrow 0} \int_{\ccal_i} (K\phi)(x + \epsilon\rho^\vee) - (K\phi)(s_i(x + \epsilon\rho^\vee))dx
\end{equation} vanishes.  It suffices to treat the cases in which $\phi \circ s_i = \pm \phi$.  In the case $\phi \circ s_i = \phi$, as $K$ is $W$-invariant the integrand of (\ref{eq:wall-integral}) is $(K - s_iKs_i)(x + \epsilon\rho^\vee)\phi(x + \epsilon\rho^\vee)$.  In the case $\phi \circ s_i = -\phi$, there exists a test function $\psi \in C^\infty_c(\hfr_\real)$ such that $\phi = \alpha_i\psi$, and the integrand of $(\ref{eq:wall-integral})$ takes the form $(\alpha_i\cdot(K + s_iKs_i))(x + \epsilon\rho^\vee)\psi(x + \epsilon\rho^\vee).$  In particular, in either case the integrand is of the form $f(x + \epsilon\rho^\vee)\psi(x + \epsilon\rho^\vee)$ where $\psi \in C^\infty_c(\hfr_\real)$ is a test function and $f$ is either $K - s_iKs_i$ or $\alpha_i\cdot(K + s_iKs_i)$.  In either case, from estimate (\ref{eq:wall-growth}) in Lemma \ref{homogeneous-L1-lemma} and equation (\ref{eq:wall-diff}), there is a constant $\mu > 0$, independent of $c$, such that $$\delta_i(x)^{\mu\sum_{s \in S} |c_s|}\alpha_i(x)^{2|c_i| - 1}f(x)$$ extends to a continuous function on $\bar{\ccal}$.  In particular, let $M \geq 0$ be $$M := \max_{x \in \supp(\psi) \cap \bar{\ccal}} ||\delta_i(x)^{\mu\sum_{s \in S} |c_s|}\alpha_i(x)^{2|c_i| - 1}f(x)||$$ and let $M' := \max_{x \in \hfr_\real} |\psi(x)|$.  We then have $$||f(x)|| \leq M \delta_i(x)^{-\mu\sum_{s \in S} |c_s|}\alpha_i(x)^{1 - 2|c_i|}$$ for $x \in \supp(\psi) \cap \bar{\ccal}$, and hence $$||f(x + \epsilon \rho^\vee)\psi(x + \epsilon\rho^\vee)||$$ $$\leq MM' \delta_i(x + \epsilon\rho^\vee)^{-\mu\sum_{s \in S} |c_s|}\alpha_i(x + \epsilon\rho^\vee)^{1 - 2|c_i|}$$ $$\leq MM' \delta_i(x)^{-\mu\sum_{s\in S} |c_s|}\epsilon^{1 - 2|c_i|}$$ for all $x \in \ccal_i \cap \supp(\psi)$.  Taking $|c_s|$ sufficiently small for all $s \in S$ so that $\delta_i(x)^{-\mu\sum_{s \in S} |c_s|}$ is locally integrable on $\ccal_i$ and $1 - 2|c_i| > 0$, the vanishing of limit (\ref{eq:wall-integral}) now follows from the dominated convergence theorem.  This completes the proof that (c) implies (a).

Now we will show the equivalence of (b) and (c).  Fix a point $x_0 \in \hfr_{\real, reg}$ and identify $K(x_0)$ with a sesquilinear pairing $KZ_{x_0}(\Delta_c(\lambda)) \times KZ_{x_0}(\Delta_{c^\dagger}(\lambda)) \rightarrow \cplx$.  Here we make the usual identification of $KZ_{x_0}(\Delta_c(\lambda))$ and $KZ_{x_0}(\Delta_{c^\dagger}(\lambda))$ with $\lambda$ as $\cplx$-vector spaces, and $K(x_0) \in \End_\cplx(\lambda)$ defines a sesquilinear form by $(v_1, v_2)_{K(x_0)} = v_2^\dagger K(x_0)v_1$.  Without loss of generality, we take $x \in \ccal$.  The braid group $B_W = \pi_1(\hfr_{reg}/W, x_0)$ is generated by the elements $T_i$ given by positively oriented half loops around the hyperplanes $\ker(\alpha_i)$ for simple roots $\alpha_i$.  For each simple root $\alpha_i$, let $T_{i, c}$ and $T_{i, c^\dagger}$ denote the action of $T_i$ on $KZ_{x_0}(\Delta_c(\lambda))$ and on $KZ_{x_0}(\Delta_{c^\dagger}(\lambda))$, respectively, so that the action of $T_i$ on the space of sesquilinear pairings $KZ_{x_0}(\Delta_c(\lambda)) \times KZ_{x_0}(\Delta_{c^\dagger}(\lambda)) \rightarrow \cplx$, identified with $\End_\cplx(\lambda)$, is given by
\begin{equation}
\label{eq:BW-form-action}
X \mapsto T_{i, c^\dagger}^{\dagger, -1}XT_{i, c}^{-1}.
\end{equation}

Consider the $W$-equivariant $\End_\cplx(\lambda)$-valued multivalued function $$\widetilde{K}(z) := (f_{c^\dagger}(z)M_{c^\dagger}(z))^{\dagger, -1}K(x_0)(f_c(z)M_c(z))^{-1}$$ on $\hfr_{reg}$, where  $M_c(z)$ is the monodromy of the $KZ$ connection $$\nabla_{KZ} := d + \sum_{s \in S} c_s\frac{d\alpha_s}{\alpha_s}(1 - s)$$ on the trivial bundle $\lambda \times \hfr_{reg} \rightarrow \hfr_{reg}$ from $x_0$ to $z$, $f_c(z)$ is the (scalar) monodromy of the scalar-valued $W$-equivariant flat connection $$d - \sum_{s \in S} c_s\frac{d\alpha_s}{\alpha_s}$$ on the trivial bundle $\cplx \times \hfr_{reg} \rightarrow \hfr_{reg}$ from $x_0$ to $z$, and $M_{c^\dagger}$ and $f_{c^\dagger}$ are defined similarly with the parameter $c^\dagger$ in place of the parameter $c$.  A straightforward computation shows that $\widetilde{K}(z)$ satisfies system (\ref{eq:2-sided-KZ}) of differential equations on $\ccal$, so it follows from the equality $\widetilde{K}(x_0) = K(x_0)$ and the uniqueness of solutions to (\ref{eq:2-sided-KZ}) given initial conditions that $\widetilde{K}(x) = K(x)$ for all $x \in \ccal$.

View $\widetilde{K}(z)$ as a multivalued section of the vector bundle $$(\End_\cplx(\lambda) \times \hfr_{reg})/W \rightarrow \hfr_{reg}/W.$$  It follows by considering residues that the monodromy of $f_c(z)$ is inverse to the monodromy of $\bar{f_{c^\dagger}(z)}$, and in particular the function $f_c(z)\bar{f_{c^\dagger}(z)}$ is single valued on $\hfr_{reg}/W$.  As the monodromy of $M_c(z)$ based at $x_0$ is given, by definition, by the $B_W$-action on $KZ_{x_0}(\Delta_c(\lambda))$ and similarly for $M_{c^\dagger}(z)$, it follows that the monodromy in $\hfr_{reg}/W$ of $\widetilde{K}(z)$ based at $x_0$ is precisely given by the $B_W$-action on sesquilinear pairings appearing in (\ref{eq:BW-form-action}) above.  In particular, $K(x_0)$ is $B_W$-invariant as a sesquilinear pairing $KZ_{x_0}(\Delta_c(\lambda)) \times KZ_{x_0}(\Delta_{c^\dagger}(\lambda)) \rightarrow \cplx$ if and only if $\widetilde{K}(z)$ is single-valued on $\hfr_{reg}/W$.

It remains to show that $\widetilde{K}(z)$ is single-valued if and only if $K(x)$ is asymptotically $W$-invariant in the sense of Definition \ref{asym-inv-def}.  Fix a simple reflection $\alpha_i$, and let $P_{i, c}(z)$ and $P_{i, c^\dagger}(z)$ be holomorphic functions on a domain $D$ as in Lemma \ref{P-fn-lemma}.  It follows from that same lemma and the remarks following it that there is a unique $K_i \in \End_\cplx(\lambda)$ such that $$\widetilde{K}(z) = P_{i, c^\dagger}(z)^{\dagger, -1}\bar{\alpha_i(z)}^{-c_is_i}K_i\alpha_i(z)^{-c_is_i}P_{i, c}(z)^{-1}$$ for all $z \in D \cap \hfr_{reg}$.  Decomposing $K_i$ into simultaneous left and right eigenspaces for $s_i$ as $K_i = K_i^{1,1} + K_i^{1, -1} + K_i^{-1, 1} + K_i^{-1, -1}$ as before, we have that $P_{i, c^\dagger}(z)^\dagger \widetilde{K}(z)P_{i, c}(z)$, for $z \in D \cap \hfr_{reg}$, is given by: $$|\alpha_i(z)|^{-2c_i}K_i^{1, 1} + \left(\frac{\bar{\alpha_i(z)}}{\alpha_i(z)}\right)^{-c_i}K_i^{1, -1} + \left(\frac{\bar{\alpha_i(z)}}{\alpha_i(z)}\right)^{c_i}K_i^{-1, 1} + |\alpha_i(z)|^{2c_i}K_i^{-1, -1}.$$  Clearly, this function, and hence $\widetilde{K}(z)$ itself, is single-valued in $\hfr_{reg}$ near the hyperplane $\ker(\alpha_i)$ if and only if $K_i^{1, -1} = K_i^{-1, 1} = 0$, i.e. if $K_i = K_i^{1, 1} + K_i^{-1, -1}$ is $s_i$-invariant.  In that case, by the $s_i$-equivariance of $P_{i, c}$ and $P_{i, c^\dagger}$, it then follows that $\widetilde{K}(z)$ is also single-valued in $\hfr_{reg}/W$ near the hyperplane $\ker(\alpha_i)$.  So, $\widetilde{K}(z)$ is invariant under the monodromy action of $T_i \in B_W$ if and only if $K_i \in \aut_{s_i}(\lambda)$.  In particular, $\widetilde{K}(z)$ is single-valued if and only if $K$ is asymptotically $W$-invariant, completing the proof of the equivalence of (b) and (c).

The final statement of the theorem follows from Lemma \ref{mero-form-lemma}.
\end{proof}

\subsection{Analytic Continuation on the Regular Locus}\label{extension-to-reg-locus-section}

\begin{theorem}[Existence of Dunkl Weight Function on $\hfr_{\real, reg}$] \label{integral-rep-theorem} There exists a unique family of analytic functions $K_c : \hfr_{\real, reg} \rightarrow \End_\cplx(\lambda)$ with holomorphic dependence on $c$ such that the following holds: for all $M > 0$ there exists an integer $N \geq 0$, which may be taken to be $0$ for $M$ sufficiently small, such that for $c \in \pfr$ with $|c_s| < M$ for all $s \in S$ the function $\delta^{2N}K_c$ is locally integrable on $\hfr_\real$ and represents the Gaussian pairing $\gamma_c$ on $\delta^N\Delta_c(\lambda) \times \delta^N\Delta_{c^\dagger}(\lambda)$ in the following sense: $$\gamma_c(\delta^NP, \delta^NQ) = \int_{\hfr_\real} Q(x)^\dagger\delta^{2N}(x)K_c(x)P(x)e^{-|x|^2/2}dx \ \ \ \ \text{ for all } P, Q \in \cplx[\hfr] \otimes \lambda.$$  For each $c \in \pfr$ and $x \in \hfr_{\real, reg}$, $K_c(x)$ defines a $B_W$-invariant sesquilinear pairing $$KZ_x(\Delta_c(\lambda)) \times KZ_x(\Delta_{c^\dagger}(\lambda)) \rightarrow \cplx.$$  For $c \in \pfr_\real$, $K_c(x)$ defines a $B_W$-invariant Hermitian form on $KZ_x(\Delta_c(\lambda))$.
\end{theorem}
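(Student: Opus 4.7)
The plan is to extend the Dunkl weight function from small $c$ to all of $\pfr$ by analytically continuing the monodromy construction implicit in the proof of Theorem \ref{existence-theorem}, and then to verify the integral identity by analytic continuation in $c$.

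\textbf{Construction.} Fix $x_0 \in \hfr_{\real, reg} \cap \ccal$, and let $B(c)$ be the holomorphic family of $B_W$-invariant sesquilinear pairings at $x_0$ supplied by Lemma \ref{mero-form-lemma}, normalized so that for $c \in \pfr_\real$ the pairing is Hermitian and so that at $c = 0$ it matches the value at $x_0$ of the small-$c$ Dunkl weight function of Theorem \ref{existence-theorem}. Following the computation in the last two paragraphs of the proof of that theorem, define the multivalued $\End_\cplx(\lambda)$-valued function
$$\widetilde{K}_c(z) := (f_{c^\dagger}(z)\,M_{c^\dagger}(z))^{\dagger,-1}\, B(c)\, (f_c(z)\,M_c(z))^{-1}$$
on $\hfr_{reg}$, where $M_c$ and $f_c$ are the KZ and scalar monodromies introduced there. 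The $B_W$-invariance of $B(c)$ forces $\widetilde{K}_c$ to descend to a single-valued analytic function on $\hfr_{reg}/W$, as shown in the equivalence (b)$\Leftrightarrow$(c) step of Theorem \ref{existence-theorem}; let $K_c$ be its restriction to $\hfr_{\real, reg}$. Holomorphy in $c$ is inherited from $B(c)$ and the monodromies, and the claimed $B_W$-invariant pairing property at each $x \in \hfr_{\real, reg}$ (Hermitian when $c = c^\dagger$) is automatic from the construction.

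\textbf{Integral representation via analytic continuation.} A direct computation shows $K_c$ solves \eqref{eq:2-sided-KZ}, so by Lemma \ref{homogeneous-L1-lemma} it is homogeneous and satisfies the pointwise estimate $\|K_c(x)\| \leq C(c) \prod_{s \in S} |\alpha_s(x)|^{-2|c_s|}$ on $\hfr_{\real, reg}$, with $C(c)$ locally bounded in $c$ since $K_c(\rho^\vee)$ is holomorphic in $c$. For $|c_s| < M$ any integer $N$ with $2N > 2M - 1$ makes $\delta^{2N} K_c$ locally integrable on $\hfr_\real$ with a $c$-uniform $L^1_{\mathrm{loc}}$ dominant, and $N = 0$ works for $M$ sufficiently small. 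Under the chosen normalization of $B(c)$, the uniqueness clause of Theorem \ref{existence-theorem} identifies our $K_c$ with the small-$c$ Dunkl weight function; substituting $\delta^N P$ and $\delta^N Q$ for $P$ and $Q$ in its integral formula yields the desired identity for small $c$ and any $N \geq 0$. Both sides of this identity are holomorphic in $c \in \{|c_s| < M\}$: the left because the Gaussian pairing depends polynomially, hence holomorphically, on $c$ via the Dunkl operators; the right by Morera's theorem applied to the integral, using the $c$-uniform $L^1_{\mathrm{loc}}$ bound above together with the Gaussian decay of $e^{-|x|^2/2}$ and the pointwise holomorphy of $K_c$ in $c$. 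The identity theorem on the connected ball then propagates the formula to all $c$ with $|c_s| < M$, and letting $M \to \infty$ covers all of $\pfr$.

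\textbf{Uniqueness and the main obstacle.} Uniqueness of the family follows because $\delta^N \cplx[\hfr] \cdot e^{-|x|^2/2}$ is dense in $\mathscr{S}(\hfr_\real)$, so the integral identity determines $\delta^{2N} K_c$ as an $L^1_{\mathrm{loc}}$ class, and real-analyticity on $\hfr_{\real, reg}$ then pins down $K_c$ pointwise. The main technical obstacle I anticipate is establishing the $c$-uniform $L^1_{\mathrm{loc}}$ control required for Morera's theorem: one must confirm both that the prefactor $C(c)$ in the growth bound is continuous in $c$, which reduces to the holomorphic dependence of $K_c(\rho^\vee)$ on $c$ and hence is immediate from the construction, and that the exponent condition $2N - 2|c_s| > -1$ holds uniformly on each closed ball $\{|c_s| \leq M\}$, handled by taking $N$ large enough depending on $M$. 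Once these uniform estimates are in place, the rest of the argument is a clean analytic continuation.
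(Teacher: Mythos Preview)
Your construction of a holomorphic family $K_c$ from $B(c)$ via the monodromy formula is sound, and the Morera argument for holomorphy of the integral is reasonable. The genuine gap is in the sentence ``the uniqueness clause of Theorem~\ref{existence-theorem} identifies our $K_c$ with the small-$c$ Dunkl weight function.'' That theorem only says that for each fixed small $c$ the space of admissible $K$ is one-dimensional; it does \emph{not} say that a holomorphic family normalized correctly at $c=0$ is the correct family for all nearby $c$. Your $K_c$ is therefore only known to be \emph{proportional} to the true weight function for each small $c$, with a proportionality factor $b(c)$ satisfying $b(0)=1$ but otherwise unconstrained. Lemma~\ref{mero-form-lemma} confirms this: any two holomorphic families of $B_W$-invariant pairings differ by a meromorphic scalar, and matching at a single point does not force that scalar to be identically~$1$. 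Consequently you have not established the integral identity on any open set, and the identity theorem cannot be invoked.

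The paper confronts exactly this issue and resolves it differently. It builds the same holomorphic family $K'_c$ you do, defines $\widetilde\gamma_c$ by the integral, and then \emph{computes} the ratio $f(c) := \gamma_c(\delta^N P,\delta^N Q)/\widetilde\gamma_c(\delta^N P,\delta^N Q)$, showing first that it is independent of $P,Q$ (since the two pairings are proportional for small $c$) and hence a single meromorphic function, and then setting $K_c := f(c)K'_c$. The nontrivial remaining step is to show that this a priori meromorphic family is actually holomorphic: the paper restricts to an arbitrary complex line in $\pfr$, uses Weierstrass factorization to write $K_c(x_0) = g(c)K''_c(x_0)$ with $K''_c$ nowhere vanishing, and observes that the finiteness of the left-hand side $\gamma_c(\delta^N P,\delta^N Q)$ together with the nonvanishing of the integral against $K''_c$ (for suitable $P,Q$, by density) forces $g$ to have no poles. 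This renormalization-then-regularity argument is precisely the missing ingredient in your approach.
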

 
\begin{proof}  Uniqueness of $K_c$ follows from the density of $\cplx[\hfr]e^{-|x|^2/2}$ in the Schwartz space $\mathscr{S}(\hfr_\real)$, so it suffices to establish existence of $K_c$.  Let $B : \pfr \rightarrow \End_\cplx(\lambda)$ be a holomorphic function as in Lemma \ref{mero-form-lemma} defined relative to the point $x_0 \in \ccal$.  It follows from the holomorphic dependence on the parameter $c$ and on initial conditions of solutions of the system of differential equations (\ref{eq:2-sided-KZ}) appearing in Proposition \ref{dunkl-weight-props} that there is a unique function $K' : \pfr \times \hfr_{\real, reg} \rightarrow \End_\cplx(\lambda)$, $K'(c, x) = K'_c(x)$, holomorphic in $c \in \pfr$ and analytic in $x \in \hfr_{\real, reg}$, such that

(1) $K'_c(x_0) = B(c)$ for all $c \in \pfr$

(2) $wK'_c(w^{-1}x)w^{-1} = K'_c(x)$ for all $c \in \pfr$, $x \in \hfr_{\real, reg}$, and $w \in W$

(3) For all fixed $c \in \pfr$, $K'_c(x)$, as a function of $x \in \ccal$ satisfies the system of differential equations (\ref{eq:2-sided-KZ}).

\noindent Furthermore, as $B(c)$ defines a $B_W$-invariant sesquilinear pairing $$KZ_{x_0}(\Delta_c(\lambda)) \times KZ_{x_0}(\Delta_{c^\dagger}(\lambda)) \rightarrow \cplx,$$ it follows from the proof of the equivalence of statements (b) and (c) in Theorem \ref{existence-theorem} that $K'_c(x)$ defines a $B_W$-invariant sesquilinear pairing $$KZ_x(\Delta_c(\lambda)) \times KZ_x(\Delta_{c^\dagger}(\lambda)) \rightarrow \cplx$$ for all $c \in \pfr$ and $x \in \hfr_{\real, reg}.$  Note that $K_0'(x) = \id$ for all $x \in \hfr_{\real, reg}$.

 Let $M > 0$.  By Lemma \ref{homogeneous-L1-lemma}, there exists an integer $N \geq 0$, which may be taken to be $0$ if $M$ is sufficiently small, such that for all $c$ with $|c_s| < M$ for all $s \in S$ the function $\delta^{2N}K_c'$ is locally integrable on $\hfr_\real$.  As $\delta^{2N}K_c'$ is homogeneous by the same lemma, it follows that integration against $\delta^{2N}K_c'$ determines a tempered distribution on $\hfr_\real$.  In particular, we may define a sesquilinear pairing $\widetilde{\gamma}_c$ on $\delta^N\Delta_c(\lambda) \times \delta^N\Delta_{c^\dagger}(\lambda)$ as follows: $$\widetilde{\gamma}_c(\delta^NP, \delta^NQ) := \int_{\hfr_\real} Q(x)^\dagger \delta^{2N}(x)K'_c(x)P(x)e^{-|x|^2/2}dx \ \ \ \ \text{ for all } P, Q \in \cplx[\hfr] \otimes \lambda.$$  Let $U_M \subset \pfr$ be the open subset $U_M := \{c \in \pfr : |c_s| < M \text{ for all } s \in S\}$.  For any $P, Q \in \cplx[\hfr] \otimes \lambda$, we see that $\widetilde{\gamma}_c(\delta^NP, \delta^NQ)$ is a holomorphic function of $c \in U_M$, and the same is true for $\gamma_c(\delta^NP, \delta^NQ)$.  In particular, for any such $P$ and $Q$ such that $\widetilde{\gamma}_c(\delta^NP, \delta^NQ)$ is not identically zero for $c \in U_M$, we may define the meromorphic function $f_{P, Q} : U_M \rightarrow \cplx$ by $$f_{P, Q}(c) := \gamma_c(\delta^NP, \delta^NQ)/\widetilde{\gamma}_c(\delta^NP, \delta^NQ).$$  By Theorem \ref{existence-theorem} and the properties of $K'_c$ listed in the previous paragraph, we see that, for those $c \in \pfr$ with $|c_s|$ sufficiently small for all $s \in S$, the pairings $\gamma_c$ and $\widetilde{\gamma}_c$ agree up to a nonzero complex scalar multiple.  It follows that all of the functions $f_{P, Q}$ for $P, Q \in \cplx[\hfr] \otimes \lambda$ coincide in a neighborhood of $0$ in $\pfr$ and hence coincide with a single meromorphic function $f : U_M \rightarrow \cplx$.  Let $K_c : \hfr_{\real, reg} \rightarrow \End_\cplx(\lambda)$ be defined by $K_c(x) = f(c)K'_c(x)$.  Then $K_c(x)$ is meromorphic in $c \in U_M$, holomorphic in $c$ in a neighborhood of $0$ in $\pfr$, satisfies the remaining properties of $K'_c(x)$ from the previous paragraph, and $$\gamma_c(\delta^NP, \delta^NQ) = \int_{\hfr_\real} Q(x)^\dagger \delta^{2N}(x)K_c(x)P(x)e^{-|x|^2/2}dx \ \ \ \ \text{ for all } P, Q \in \cplx[\hfr] \otimes \lambda$$ for all $c \in U_M$ for which $K_c(x)$ is holomorphic.
 
 It remains to show that $K_c(x)$ is in fact holomorphic for all $c \in U_M$.  To this end, it suffices to show that $K_c(x)$ has no singularities for $c$ in $U_M \cap \lfr$ for any complex line $\lfr$ in $\pfr$.  By the Weierstrass theorem on the existence of meromorphic functions with prescribed zeros and poles, there exists a meromorphic function $g$ on $U_M \cap \lfr$ such that $K_c(x_0) = g(c)K_c''(x_0)$, where $K_c''(x_0) \in \End_\cplx(\lambda)$ is nonzero for all $c \in U_M \cap \lfr$.  As above, we may extend $K_c''(x_0)$, via $W$-equivariance and the system of differential equations (\ref{eq:2-sided-KZ}), to a non-vanishing function $K'' : (U_M \cap \lfr) \times \hfr_{\real, reg} \rightarrow \End_\cplx(\lambda)$, holomorphic in $c \in U_M \times \lfr$ and analytic in $x \in \hfr_{\real, reg}$, so that $K_c(x) = g(c)K_c''(x)$ for all $c \in U_M \cap \lfr$ and $x \in \hfr_{\real, reg}$.  As above, we have
 \begin{equation}\label{eq:show-no-sings}
 \gamma_c(\delta^NP, \delta^NQ) = g(c)\int_{\hfr_\real} Q(x)^\dagger \delta^{2N}(x)K''_c(x)P(x)e^{-|x|^2/2}dx \ \ \ \ \text{ for all } P, Q \in \cplx[\hfr] \otimes \lambda
 \end{equation}
 for all $c \in U_M \cap \lfr$ for which $g(c)$ is holomorphic.  But by the density of $\cplx[\hfr]e^{-|x|^2/2}$ in the Schwartz space $\mathscr{S}(\hfr_\real)$ and the non-vanishing of $\delta^{2N}(x)K''_c(x)$ for $x \in \hfr_{\real, reg}$, it follows that for any $c \in U_M \cap \lfr$ there are $P, Q \in \cplx[\hfr] \otimes \lambda$ such that the integral on the righthand side of equation (\ref{eq:show-no-sings}) is nonzero.  It follows that $g(c)$ is holomorphic for all $c \in U_M \cap \lfr$, as needed.\end{proof}
 
 \begin{corollary}\label{unitarity-corollary} a)  Let $q : S \rightarrow \cplx^\times$ be a $W$-invariant function satisfying $|q_s| = 1$ for all $s \in S$.  Then every irreducible representation of the Hecke algebra $H_q(W)$ admits a nondegenerate $B_W$-invariant Hermitian form, unique up to $\real^\times$-scaling.
 
 b)  Let $c \in \pfr_\real$ and let $x_0 \in \hfr_{\real, reg}$.  If $\lambda \in \irr(W)$ is such that $KZ_{x_0}(L_c(\lambda))$ is nonzero and unitary, i.e. it admits a positive-definite $B_W$-invariant Hermitian form, then $L_c(\lambda)$ is quasi-unitary in the sense of Definition \ref{def:quasi-unitary}.\end{corollary}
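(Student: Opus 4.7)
For part (a), the plan is to write $q_s = e^{-2\pi i c_s}$ with $c \in \pfr_\real$ (so $c = c^\dagger$) and invoke the fact that the KZ functor is a quotient functor \cite{GGOR} to realize any irreducible $V \in H_q(W)\mhyphen\text{mod}_{f.d.}$ as $V \cong KZ_{x_0}(L_c(\lambda))$ for some $\lambda \in \irr(W)$ with $L_c(\lambda)$ of full support. Theorem \ref{integral-rep-theorem}, applied at this real parameter, furnishes a $B_W$-invariant Hermitian form on $KZ_{x_0}(\Delta_c(\lambda))$ via $K_c(x_0)$. The crucial step will be to show that the radical of this form equals $KZ_{x_0}(N_c(\lambda))$. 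The plan for this is to identify $K_c(x_0)$, via the uniqueness statement of Lemma \ref{mero-form-lemma}, with (a scalar multiple of) the $B_W$-invariant sesquilinear pairing obtained by applying the exact functor $KZ_{x_0}$ to the contravariant form $\beta_{c, \lambda}$ (whose radical in $\Delta_c(\lambda)$ is $N_c(\lambda)$); exactness of $KZ_{x_0}$ then yields the desired radical. Consequently the form descends to a nondegenerate $B_W$-invariant Hermitian form on $V$. Uniqueness up to $\real^\times$ follows from Schur's lemma, since two such forms on the irreducible $V$ give proportional $B_W$-equivariant isomorphisms $V \to V^h$, and Hermiticity forces the proportionality scalar to be real.

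For part (b), by the uniqueness clause of (a), the Hermitian form on $V = KZ_{x_0}(L_c(\lambda))$ induced by $K_c(x_0)$ agrees up to a nonzero real scalar with the given positive-definite form; after possibly replacing $K_c$ by its negative (the opposite-sign case gives $a_{c,\lambda} = -1$ by a symmetric argument), I may assume $K_c(x_0)$ is positive semidefinite on $\lambda \cong KZ_{x_0}(\Delta_c(\lambda))$ with radical $KZ_{x_0}(N_c(\lambda))$. Part (a), applied at each $x \in \hfr_{\real, reg}$, shows the form induced by $K_c(x)$ on $KZ_x(L_c(\lambda))$ is nondegenerate; by analyticity of $K_c$ on each Weyl chamber and continuity of signature under nondegeneracy, positive definiteness propagates from $x_0$ across its chamber, and by $W$-equivariance across all chambers. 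Thus $K_c(x)$ is positive semidefinite on $\lambda$ for every $x \in \hfr_{\real, reg}$.

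It remains to convert this pointwise positivity into an asymptotic statement. Using the $N \geq 0$ from Theorem \ref{integral-rep-theorem} for which $\delta^{2N} K_c$ is locally integrable on $\hfr_\real$ and represents $\gamma_{c, \lambda}$ on $\delta^N \Delta_c(\lambda) \times \delta^N \Delta_c(\lambda)$, the pointwise non-negativity of $\delta^{2N}(x) K_c(x)$ on the full-measure set $\hfr_{\real, reg}$ immediately gives $\gamma_{c, \lambda}(\delta^N P, \delta^N P) \geq 0$ for every $P \in \cplx[\hfr] \otimes \lambda$. If $p_n$ and $q_n$ denote the positive and negative dimensions of $\gamma_{c, \lambda}$ on $\Delta_c(\lambda)^{\leq n}$, then $q_n$ is bounded by the codimension of $\delta^N \Delta_c(\lambda)^{\leq n}$ in $\Delta_c(\lambda)^{\leq n}$; since the Hilbert series of $\cplx[\hfr] / \delta^N \cplx[\hfr]$ has a pole of order only $l - 1$ at $t = 1$ (with $l = \dim \hfr$), this codimension is $O(n^{l-1})$. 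Meanwhile $p_n + q_n = \dim L_c(\lambda)^{\leq n} = \Theta(n^l)$ because $L_c(\lambda)$ has full support. Combining with $\sign(\beta_{c, \lambda}^{\leq n}) = \sign(\gamma_{c, \lambda}^{\leq n}) = p_n - q_n$ (the remark after Definition \ref{gamma-def}), I obtain $a_{c, \lambda} = \lim_n (p_n - q_n)/(p_n + q_n) = 1$, so $L_c(\lambda)$ is quasi-unitary. The main obstacle will be the radical identification in part (a); once established, the remainder is a continuity argument, the integral formula, and a routine dimension count.
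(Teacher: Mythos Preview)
Your plan for part (a) has a genuine gap at the step ``apply the exact functor $KZ_{x_0}$ to the contravariant form $\beta_{c,\lambda}$.'' Concretely, $\beta_{c,\lambda}$ corresponds to an $H_c(W,\hfr)$-module map $\phi:\Delta_c(\lambda)\to\Delta_c(\lambda)^h$, and applying $KZ_{x_0}$ yields a $B_W$-equivariant map $KZ_{x_0}(\Delta_c(\lambda))\to KZ_{x_0}(\Delta_c(\lambda)^h)$. To read this as a Hermitian form on $KZ_{x_0}(\Delta_c(\lambda))$ you need a natural $B_W$-isomorphism $KZ_{x_0}(\Delta_c(\lambda)^h)\cong KZ_{x_0}(\Delta_c(\lambda))^h$, i.e.\ compatibility of $KZ_{x_0}$ with Hermitian duality. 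This is plausible but is not proved in the paper and is not a one-liner: the Hermitian dual in $\oscr_c$ is built via the conjugate-linear isomorphism $\sigma$ of Section~\ref{hermitian-duals-section}, and you must check that transporting along $\sigma$, localizing at $\delta$, and taking monodromy matches taking the Hermitian dual of the monodromy representation. Without this, your exactness argument for the radical has no content. A second, smaller issue: Lemma~\ref{mero-form-lemma} only asserts proportionality of two holomorphic families by a \emph{meromorphic} function $b(c)$, so at the particular $c$ in question you must still rule out $b(c)=0$, i.e.\ show $K_c(x_0)\neq 0$; you do not address this.

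The paper sidesteps both problems by a direct argument from the integral formula. Given $v\in KZ_{x_0}(N_c(\lambda))$, lift to $\widetilde v\in N_c(\lambda)\subset\cplx[\hfr]\otimes\lambda$; since $\delta^N\widetilde v\in\rad\gamma_c$, Theorem~\ref{integral-rep-theorem} gives $\int_{\hfr_\real} Q(x)^\dagger\delta^{2N}(x)K_c(x)\widetilde v(x)e^{-|x|^2/2}\,dx=0$ for all $Q$, hence by density $\delta^{2N}K_c\widetilde v=0$ pointwise on $\hfr_{\real,reg}$, so $K_c(x_0)v=0$. This yields $KZ_{x_0}(N_c(\lambda))\subset\rad K_c(x_0)$ with no functorial input; irreducibility of $KZ_{x_0}(L_c(\lambda))$ then forces the induced form on the quotient to be nondegenerate, and a posteriori gives equality $\rad K_c(x_0)=KZ_{x_0}(N_c(\lambda))$. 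Nonvanishing of $K_c(x_0)$ is handled by the same mechanism in contrapositive: if $K_c(x_0)=0$ then $K_c\equiv 0$ on $\hfr_{\real,reg}$ by the differential equation~(\ref{eq:2-sided-KZ}), whence $\gamma_c$ vanishes on $\delta^N\Delta_c(\lambda)$ and $\delta^{2N}L_c(\lambda)=0$, contradicting full support.

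Your part (b) is essentially the paper's argument, and the dimension count you outline is correct (with the minor caveat that ``$\delta^N\Delta_c(\lambda)^{\leq n}$'' should be read as $(\delta^N\Delta_c(\lambda))\cap\Delta_c(\lambda)^{\leq n}$). The only dependence is on the radical identification from (a), which you should obtain by the direct route above rather than the functorial one.
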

 
\begin{remark}
Chlouveraki-Gordon-Griffeth \cite[Section 4]{CGG} have considered certain invariant symmetric forms on representations of the Hecke algebra $H_q(W)$ for arbitrary $q$.
\end{remark}

\begin{proof}[Proof of Corollary \ref{unitarity-corollary}] Any $q$ as in (a) is of the form $q_s = e^{-2 \pi i c_s}$ for some $c \in \pfr_\real$.  Fix such $c \in \pfr_\real$ and choose a point $x_0 \in \hfr_{\real, reg}$.  Any irreducible representation of $H_q(W)$ is isomorphic to some $KZ_{x_0}(L_c(\lambda))$ for some $\lambda \in \irr(W)$ such that $\supp(L_c(\lambda)) = \hfr$.  Let $N_c(\lambda)$ be the maximal proper submodule of $\Delta_c(\lambda)$, so that $L_c(\lambda) = \Delta_c(\lambda)/N_c(\lambda)$.  Recall from Proposition \ref{beta-props}(iv) that $N_c(\lambda)$ is the radical of $\gamma_c$.
 
Let $K_c : \hfr_{\real, reg} \rightarrow \End_\cplx(\lambda)$ be as in Theorem \ref{integral-rep-theorem}, so that in particular $K_c(x_0)$ defines a $B_W$-invariant Hermitian form on $\lambda = _{v.s.} KZ_{x_0}(\Delta_c(\lambda))$.  It follows from the system of differential equations (\ref{eq:2-sided-KZ}) that if $K_c(x_0) = 0$ then $K_c(x) = 0$ for all $x \in \hfr_{\real, reg}$.  In this case, by Theorem \ref{integral-rep-theorem}, for sufficiently large $N > 0$, we have that the restriction of $\gamma_c$ to $\delta^N\Delta_c(\lambda)$ is zero.  As $\delta \in \cplx[\hfr]$ is self-adjoint with respect to the form $\gamma_c$, it follows that $\delta^{2N}\Delta_c(\lambda) \subset N_c(\lambda)$, so $\delta^{2N}L_c(\lambda) = 0$, contradicting $\supp(L_c(\lambda)) = \hfr$.  In particular, $K_c(x_0)$ defines a nonzero $B_W$-invariant Hermitian form on $KZ_{x_0}(\Delta_c(\lambda))$.
 
As $KZ_{x_0}$ is exact, we have $KZ_{x_0}(L_c(\lambda)) \cong KZ_{x_0}(\Delta_c(\lambda))/KZ_{x_0}(N_c(\lambda))$.  As $KZ_{x_0}(L_c(\lambda))$ is irreducible, to show that it admits a nondegenerate $B_W$-invariant Hermitian form, unique up to $\real^\times$ scaling, it suffices to show that $KZ_{x_0}(N_c(\lambda))$ lies in the radical of $K_c(x_0)$.  To this end, take a vector $v \in KZ_{x_0}(N_c(\lambda))$.  Viewing $KZ_{x_0}(N_c(\lambda))$ as a submodule of $KZ_{x_0}(\Delta_c(\lambda))$ and identifying the latter with $\lambda$ as a vector space, let $\widetilde{v} \in N_c(\lambda) \subset \Delta_c(\lambda) = \cplx[\hfr] \otimes \lambda$ be such that its value at $x_0$ is $v$.  In particular, $\delta^N\widetilde{v}$ is in the radical of $\gamma_c$, and it follows from Theorem \ref{integral-rep-theorem} that $$0 = \gamma_c(\delta^N\widetilde{v}, \delta^NQ) = \int_{\hfr_\real} Q(x)^\dagger \delta^{2N}(x)K_c(x)\widetilde{v}e^{-|x|^2/2}dx \ \ \ \ \text{ for all } Q \in \cplx[\hfr] \otimes \lambda.$$  As $\delta^{2N}K_c\widetilde{v}$ defines a tempered distribution on $\hfr_\real$ with values in $\lambda$, the density of $\cplx[\hfr]e^{-|x|^2/2}$ in the Schwartz space $\mathscr{S}(\hfr_\real)$ implies that $\delta^{2N}K_c\widetilde{v} = 0$ pointwise on $\hfr_{\real, reg}$.  In particular, $K_c(x_0)v = 0$.  As $v \in KZ_{x_0}(N_c(\lambda))$ was arbitrary, it follows that $KZ_{x_0}(N_c(\lambda)) \subset \rad(K_c(x_0))$, where $\rad(K_c(x_0))$ denotes the radical of the form $K_c(x_0)$.  In particular, $K_c(x_0)$ descends to a nondegenerate $B_W$-invariant Hermitian form on $KZ_{x_0}(\Delta_c(\lambda))$, proving (a).  Note that this also shows that in fact $KZ_{x_0}(N_c(\lambda)) = \rad(K_c(x_0)).$

For (b), if $KZ_{x_0}(L_c(\lambda))$ is unitary, it follows from the argument above that $K_c(x_0)$, appropriately scaled, is positive-semidefinite and descends to a positive-definite Hermitian form on $KZ_{x_0}(L_c(\lambda))$.  The same is then true for all points $x \in \hfr_{\real, reg}$, and the integral formula in Theorem \ref{integral-rep-theorem} then implies that the restriction of $\gamma_c$ to $\delta^NL_c(\lambda)$ is positive-definite.  As $L_c(\lambda)$ has full support in $\hfr$ and as $L_c(\lambda)/\delta^NL_c(\lambda)$ is supported on the reflection hyperplanes, it follows that the Hilbert polynomial of $L_c(\lambda)$ is of strictly higher degree than the Hilbert polynomial of $L_c(\lambda)/\delta^NL_c(\lambda)$.  In particular, the Hilbert polynomial of $\delta^NL_c(\lambda)$ has the same leading term as the Hilbert polynomial of $L_c(\lambda)$ itself, so we have $$\lim_{n \rightarrow \infty} \frac{\dim (\delta^NL_c(\lambda))^{\leq n}}{\dim L_c(\lambda)^{\leq n}} = 1.$$  As $\gamma_c$ is positive-definite on $\delta^NL_c(\lambda)$, we see that $L_c(\lambda)$ is quasi-unitary, as needed.\end{proof}

We gave the proof of Corollary \ref{unitarity-corollary}(a) as above to demonstrate a use of the Dunkl weight function and because the arguments appearing in that proof would be repeated later for the proof of Theorem \ref{KZ-preserves-signatures-theorem}.  However, there is an alternate proof of Corollary \ref{unitarity-corollary}(a) that is also valid for finite \emph{complex} reflection groups, due to Rapha\"el Rouquier and communicated to me by Pavel Etingof, as follows.

\begin{proposition} Corollary \ref{unitarity-corollary}(a) is valid for finite complex reflection groups $W$.\end{proposition}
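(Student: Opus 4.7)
The plan is to bypass the Dunkl weight function machinery, which is tied to real reflection groups, and argue directly at the level of the Hecke algebra $H_q(W)$ via deformation from the group algebra $\cplx W$. The first step is to observe that the hypothesis $|q_s|=1$ for all $s \in S$ equips $H_q(W)$ with the structure of a $\ast$-algebra: one defines a conjugate-linear anti-involution $\ast$ by sending each braid generator $T_s$ to $T_s^{-1}$ and conjugating scalars. This is well-defined because the higher-order Hecke relation for $T_s$ has eigenvalues all lying on the unit circle (being products of roots of unity and unit-modulus parameters), so the set of eigenvalues is preserved under $\lambda \mapsto \overline{\lambda}^{-1} = \lambda$, making the relation invariant under $\ast$. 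With this structure, for any finite-dimensional $H_q(W)$-module $V$ one defines the Hermitian dual $V^h$, and nondegenerate $B_W$-invariant Hermitian forms on $V$ are in bijection with isomorphisms $V \overset{\sim}{\to} V^h$ in $H_q(W)\mhyphen\text{mod}$. Uniqueness up to $\real^\times$-scaling then follows from Schur's lemma applied to the one-dimensional space of such isomorphisms, together with the standard observation that a sesquilinear form produced this way can be rescaled by a unit scalar to become Hermitian.

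The main step is to establish $V \cong V^h$ for every irreducible $V$ of $H_q(W)$. At $q = 1$, where $H_q(W) = \cplx W$ and $\ast$ reduces to $w \mapsto w^{-1}$, this is classical for the finite group $W$. For $q$ in the open locus where $H_q(W)$ is semisimple, Tits' deformation theorem puts $\irr(H_q(W))$ into canonical bijection with $\irr(\cplx W)$, and the characters $\chi_V$ of irreducibles vary holomorphically in $q$. One then compares the holomorphic family $\chi_V$ with the a priori antiholomorphic family $\chi_{V^h}(T_w) = \overline{\chi_V(T_{w^{-1}})}$. The key trick is to restrict attention to the compact unit torus $\{q : |q_s|=1\}$ where $\overline{q_s} = q_s^{-1}$; rewriting $\chi_{V^h}$ as a rational function in the inverted parameters places it into the same holomorphic framework. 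Since $\chi_V$ and $\chi_{V^h}$ agree at $q=1$, the identity principle forces $\chi_V = \chi_{V^h}$ on the entire semisimple locus, whence $V \cong V^h$ there.

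For non-semisimple specializations $q_0$ on the unit torus, a limiting argument extends the conclusion: given an irreducible $V_0$ of $H_{q_0}(W)$, lift to a flat family $V(q)$ for $q$ in a neighborhood of $q_0$ within the torus, carry the invariant Hermitian forms $B(q)$ along the family, and take a suitably rescaled limit $B_0$ on $V_0$. The form $B_0$ is nonzero by construction, its radical is an $H_{q_0}(W)$-submodule of the irreducible $V_0$, and so must be zero; this yields a nondegenerate invariant Hermitian form on $V_0$. The main obstacle in this argument is reconciling the antiholomorphic dependence of $V^h$ on $q$ with the holomorphic families supplied by Tits' theorem; the resolution exploits precisely the identity $\overline{q_s} = q_s^{-1}$ on the unit torus, and this is the step where the unit-modulus hypothesis enters in an essential way.
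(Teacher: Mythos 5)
Your semisimple-locus argument is essentially sound and parallels what the paper already does in Lemma \ref{mero-form-lemma}: the $*$-structure on $H_q(W)$ for unit-modulus eigenvalues, the identification of invariant Hermitian forms with isomorphisms $V \cong V^h$, and the comparison of the holomorphic family $\chi_V$ with the family $\chi_{V^h}$ (which one makes holomorphic by writing it as $c \mapsto \overline{\chi_{V,c^\dagger}(T_{w^{-1}})}$ and working on the connected complement of the bad hyperplanes in the \emph{complex} parameter space $\pfr$ rather than on the real torus, whose non-semisimple locus can disconnect it). Uniqueness via Schur is fine.

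The genuine gap is in the non-semisimple case. Your limiting argument presupposes that an irreducible $V_0$ of $H_{q_0}(W)$ at a non-semisimple point lifts to a flat family $V(q)$ of $H_q(W)$-modules with \emph{irreducible special fiber} $V_0$; only then is the radical of the limit form a submodule of an irreducible module and hence zero. Such a lift does not exist in general: the dimensions of irreducibles jump at non-semisimple points, and $V_0$ typically occurs as a composition factor of several generic irreducibles rather than being the full reduction of one of them (equivalently, the relevant column of the decomposition matrix is not a single $1$). The flat families that do exist — e.g.\ $KZ(\Delta_c(\lambda))$, which is the one the paper uses — have special fibers that are \emph{reducible} at such points, so the radical of the limit form is a priori a nonzero proper submodule, and one must identify the quotient by the radical with $V_0$. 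This is exactly the content of Rouquier's argument in the paper: one uses that $KZ(L_{c_0}(\lambda))$ is the head of $KZ(\Delta_{c_0}(\lambda))$ with multiplicity one (a fact imported from category $\oscr$ via the adjoint $\pi^*$ of $KZ$, not visible from the Hecke algebra alone), and then runs an induction on the highest-weight order: a simple submodule $S$ of $KZ(\Delta_{c_0}(\lambda))/\ker\beta$ would have $S^h$ in the head, forcing $S^h \cong KZ(L_{c_0}(\lambda))$, while the inductive hypothesis gives $S \cong S^h$, a contradiction with multiplicity one. Without this step (or some substitute, such as a cellular/decomposition-map structure, which is not available for general complex reflection groups), your proof does not establish nondegeneracy of the limiting form on $V_0$.
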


\begin{proof} Let $c \in \pfr_\real$ be a real parameter corresponding to $q$ via the KZ functor.  Let $L$ be an irreducible representation of $H_q(W)$, and let $\lambda$ be an irreducible representation of $W$ such that $L_c(\lambda)$ has full support and $L \cong KZ(L_c(\lambda))$.  To show that $L$ admits a nondegenerate $B_W$-invariant Hermitian form, necessarily uniquely determined up to $\real^\times$-multiple, it suffices to show that $L$ is isomorphic to its Hermitian dual $L^h$ as a $B_W$-representation.  By Remark \ref{complex-herm-form-remark}, there is a holomorphic function $B : \pfr \rightarrow \End_\cplx(\lambda)$ satisfying the conditions of Lemma \ref{mero-form-lemma}.  Restricting to the complex line in $\pfr$ spanned by $c$ and scaling $B$ by an appropriate meromorphic function, we may assume that $B(c)$ is nonzero and therefore determines a nonzero $B_W$-invariant Hermitian form on $KZ(\Delta_c(\lambda))$, which we will regard as a nonzero map $\beta : KZ(\Delta_c(\lambda)) \rightarrow KZ(\Delta_c(\lambda))^h$.  As $\beta$ defines a Hermitian form, it factors through an isomorphism $\bar{\beta} : KZ(\Delta_c(\lambda))/\ker \beta \cong (KZ(\Delta_c(\lambda))/\ker \beta)^h$.  The head of the module $KZ(\Delta_c(\lambda))$ is $KZ(L_c(\lambda))$; this follows from the fact that $L_c(\lambda)$ has full support and is the head of $\Delta_c(\lambda)$ and the fact that the KZ functor admits a right adjoint $\pi^* : H_q(W)\mhyphen\text{mod}_{f.d.} \rightarrow \oscr_c(W, \hfr)$ such that $KZ \circ \pi^* \cong \id_{H_q(W)\mhyphen\text{mod}_{f.d.}}$.  In particular, $KZ(L_c(\lambda))$ is also the head of $KZ(\Delta_c(\lambda))/\ker\beta$, and it appears with multiplicity 1 as a composition factor because the same is true for $L_c(\lambda)$ in $\Delta_c(\lambda)$.  If $KZ(\Delta_c(\lambda))/\ker\beta$ is irreducible, then it isomorphic to $KZ(L_c(\lambda))$, and the proof is complete.  Otherwise, let $S \subset KZ(\Delta_c(\lambda))/\ker \beta$ be a simple submodule, necessarily not isomorphic to $KZ(L_c(\lambda))$ because the latter appears with multiplicity 1.  Taking the Hermitian dual, we obtain a surjection $(KZ(\Delta_c(\lambda))/\ker\beta)^h \rightarrow S^h$, and composing with $\bar{\beta}$ we conclude that $S^h$ appears in the head of $KZ(\Delta_c(\lambda))/\ker\beta$, so $S^h \cong KZ(L_c(\lambda))$.  But $S$ must be isomorphic to $KZ(L_c(\mu))$ for some $\mu \in \irr(W)$, $\mu \neq \lambda$, such that $L_c(\mu)$ has full support and appears as a composition factor in $\Delta_c(\lambda)$.  By induction on the highest weight order $\leq_c$ of $\oscr_c(W, \hfr)$, we may assume that $S \cong KZ(L_c(\mu))$ admits a nondegenerate $B_W$-invariant Hermitian form, so that $S \cong S^h$.  But then $S \cong KZ(L_c(\lambda))$, a contradiction.  It follows that $KZ(\Delta_c(\lambda))/\ker \beta$ is irreducible and isomorphic to $KZ(L_c(\lambda))$, and the claim follows.\end{proof}

\begin{remark} In Section \ref{signature-comparision-section}, using techniques from semiclassical analysis, we will substantially generalize part (b) of Corollary \ref{unitarity-corollary}.  In particular, we will see that, whenever $L_c(\lambda)$ has full support, the asymptotic signature $a_{c, \lambda}$ of $L_c(\lambda)$ equals, up to sign, the signature of an invariant Hermitian form on $KZ_{x_0}(L_c(\lambda))$ normalized by its dimension.  In particular, unitarity of $KZ_{x_0}(L_c(\lambda))$ does not only imply quasi-unitarity of $L_c(\lambda)$, as in part (b) of Corollary \ref{unitarity-corollary}, but is equivalent to it.\end{remark}
 
\subsection{Extension to Tempered Distribution via the Wonderful Model}\label{extension-to-hyperplanes-section}

In this section we will complete the proof of the existence of the Dunkl weight function, i.e. of Theorem \ref{main-existence-theorem}.  As we will see, essentially what remains is to extend the function $K_c : \hfr_{\real, reg} \rightarrow \End_\cplx(\lambda)$ from Theorem \ref{integral-rep-theorem} to a tempered distribution on $\hfr_\real$ in a natural way.  To achieve this we will use a known approach (see, e.g., \cite{BBK}) in which the extension is carried out on the De Concini-Procesi wonderful model \cite{DP1}; the desired extension is then obtained by pushing forward to $\hfr_\real$.  The advantage of working in the wonderful model rather than in $\hfr_\real$ is that the hyperplane arrangement is replaced by a normal crossings divisor, allowing the application of standard fact that for any $\lambda \in \cplx$ the function $|x|^{z\lambda}$, locally integrable for $|z|$ small, has a natural distributional meromorphic continuation to $z \in \cplx$.

For $R > 0$, let $B_R(0)$ denote the open disk $B_R(0) := \{z \in \cplx : |z| < R\}$ of radius $R$ in the complex plane centered at 0.  For any integer $N > 0$ let $Mat_{N \times N}(\cplx)$ denote the space of $N \times N$ complex matrices, and for any $a \in Mat_{N \times N}(\cplx)$ let $\spec(A)$ denote the set of eigenvalues of $a$.

\begin{lemma}\label{1D-DE-lemma} Let $R > 0$, let $a : \pfr \rightarrow Mat_{N \times N}(\cplx)$ be a linear function, and let $A(z; c)$ be a holomorphic function of $(c, z) \in \pfr \times B_R(0)$, taking values in the space $Mat_{N \times N}(\cplx)$ of $N \times N$ complex matrices, such that $A(0; c) = a(c)$ for all $c \in \pfr$.  Then for any bounded domain $U \subset \pfr$ there is a $Mat_{N \times N}(\cplx)$-valued holomorphic function $Q(z; c)$ of $(c, z) \in U \times B_R(0)$ such that $$F(z; c) = Q(z; c)z^{a(c)}$$ is a solution of the differential equation 
\begin{equation}\label{eq:1D-DE}
z\frac{dF(z; c)}{dz} = A(z; c)F(z; c)
\end{equation}
for all $c \in \pfr$ and a fundamental solution whenever $a(c)$ has no two eigenvalues differing by a nonzero integer.\end{lemma}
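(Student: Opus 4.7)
The plan is the classical Frobenius construction. Write $A(z;c) = a(c) + \sum_{k\geq 1} A_k(c) z^k$ with each $A_k$ holomorphic in $c \in \pfr$, and seek $Q$ in the form $Q(z;c) = \sum_{k\geq 0} Q_k(c) z^k$ with $Q_0(c) = \id$. Using $z\partial_z (z^{a(c)}) = a(c) z^{a(c)}$, equation (\ref{eq:1D-DE}) is equivalent to the matrix ODE
$$z\partial_z Q(z;c) \;=\; A(z;c) Q(z;c) \;-\; Q(z;c)\, a(c),$$
and matching coefficients of $z^k$ for $k\geq 1$ yields the linear equation
$$\bigl(k\cdot\id - \ad(a(c))\bigr) Q_k(c) \;=\; \sum_{j=1}^{k} A_j(c)\, Q_{k-j}(c),$$
where $\ad(a)(X) := aX - Xa$ acts on $\mat_{N\times N}(\cplx)$ with spectrum equal to the set of differences of eigenvalues of $a$.

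Since $a$ is linear and $U$ is bounded, the eigenvalues of $a(c)$ are uniformly bounded on $\bar{U}$, so there is an integer $k_0 = k_0(U)$ such that $k\cdot\id - \ad(a(c))$ is invertible for all $k \geq k_0$ and all $c \in U$, with a uniform operator-norm bound $\|(k\cdot\id - \ad(a(c)))^{-1}\| \leq C/k$. Under the non-resonance hypothesis of the lemma (no two eigenvalues of $a(c)$ differ by a nonzero integer), this invertibility in fact holds for every $k \geq 1$, so the recursion determines $Q_k(c)$ uniquely as a holomorphic function on $U$, and the resulting $F = Q\, z^{a(c)}$ is a fundamental matrix because both factors are invertible for $z\neq 0$ near $0$ and $Q(0;c)=\id$.

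The main technical obstacle is the general (possibly resonant) case. For the finitely many $k \in \{1, \dots, k_0-1\}$ where $k\cdot\id - \ad(a(c))$ can fail to be invertible, this failure occurs on a proper analytic subvariety $Z_k \subset U$. To construct $Q_k(c)$ holomorphically across $Z_k$ I would combine two ingredients: first, the classical Sauvage--Levelt theory of regular singular systems (see e.g.\ \cite[Theorem 5.5]{W}) produces a local fundamental matrix of the form $\widetilde{P}(z;c)\, z^{J(c)}$ in which $J(c)$ differs from $a(c)$ only by nonnegative integer shifts supported on the resonant locus; comparing this with the ansatz $Q\, z^{a(c)}$ via right multiplication by $z^{J(c) - a(c)}$ shows that $\sum_j A_j(c) Q_{k-j}(c)$ does lie in the image of $k\cdot\id - \ad(a(c))$ at every point of $U$. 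Second, since $U$ is a Stein open subset of $\pfr$, a global holomorphic section $Q_k(c)$ of the sheaf of preimages can be chosen by Cartan's Theorem B (or more concretely by a partial-fractions argument on the irreducible components of $Z_k$). The fundamental-matrix property is lost exactly on $Z_k$, consistent with the statement of the lemma.

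Finally, convergence of $Q(z;c)=\sum_k Q_k(c) z^k$ on $U\times B_R(0)$ (after possibly shrinking $R$) follows from standard majorant estimates: Cauchy estimates on a neighborhood of $\bar{U}\times\overline{B_{R_1}(0)}$ give $\|A_k(c)\| \leq C_1 R_1^{-k}$, and combining these with $\|(k\cdot\id - \ad(a(c)))^{-1}\|\leq C/k$ for $k\geq k_0$ and an iteration of the recursion yields $\|Q_k(c)\|\leq C_2 R_2^{-k}$ uniformly in $c\in U$ for suitable constants. The genuinely nontrivial step is the coherent holomorphic construction of the $Q_k$ across the resonance loci $Z_k$; everything else is formal algebra or standard ODE estimates.
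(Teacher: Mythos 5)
Your setup (the Frobenius ansatz, the recursion $(k\cdot\id - \ad(a(c)))Q_k = \sum_{j=1}^k A_j Q_{k-j}$, and the uniform invertibility for $k\ge k_0$ on a bounded $U$) matches the paper's proof exactly. The non-resonant case and the majorant estimate for convergence are also handled the same way (the paper first gets convergence on a smaller disc $B_r(0)$ and then extends to $B_R(0)$ by holomorphic dependence on initial conditions, which disposes of your ``after possibly shrinking $R$'' caveat).

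The genuine gap is in your treatment of the resonant locus. You insist on the normalization $Q_0(c)=\id$ and claim that $\sum_j A_j(c)Q_{k-j}(c)$ always lies in the image of $k\cdot\id-\ad(a(c))$, so that a holomorphic preimage can be selected by Theorem B. That solvability claim is false: take $N=2$, $a=\mathrm{diag}(0,1)$, $A(z)=a+zA_1$ with $A_1=\left(\begin{smallmatrix}0&0\\1&0\end{smallmatrix}\right)$. Then $1\cdot\id-\ad(a)$ annihilates the $(2,1)$ matrix entry while $A_1Q_0=A_1$ is supported exactly there, so $Q_1$ does not exist with $Q_0=\id$; the true fundamental solution at this resonance involves $\log z$ (equivalently a Jordan block in the exponent), not a series of the form $Q(z)z^{a}$ with invertible leading term. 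The Sauvage--Levelt comparison cannot repair this, because $z^{J(c)-a(c)}$ is not single-valued holomorphic, and once the stalk of your ``sheaf of preimages'' is empty at a point of $Z_k$, no Stein-theoretic selection argument can produce a section. The paper's fix is to \emph{drop} the normalization $Q_0=\id$: it defines $p_{c,n}$ meromorphically by inverting $\rho_{n+a(c)}-\lambda_{a(c)}$, sets $k(c)=\prod_{n=1}^{m}\det(\rho_{n+a(c)}-\lambda_{a(c)})$ (a polynomial vanishing exactly on the resonant locus), and takes $q_{c,n}=k(c)p_{c,n}$, so that $Q(0;c)=k(c)\id$. This clears all poles, the series is holomorphic on $U\times B_R(0)$, and $F=Qz^{a(c)}$ is still a solution for every $c$ --- merely a degenerate (non-fundamental) one where $k(c)=0$, which is all the lemma asserts. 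Your proof needs this renormalization (or an equivalent device) to close.
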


\begin{proof} The lemma follows from a straightforward adaptation of the arguments and results appearing in \cite[Section 5]{W} as follows.  Expand the function $A(z; c)$ as $A(z; c) = \sum_{n = 0}^\infty a_{c, n}z^n$, where the $a_{c, n}$ are entire functions of $c \in \pfr$ and $a_{c, 0} = a(c)$.  First, consider a formal series $P(z; c) = \sum_{n = 0}^\infty p_{c, n}z^n$ with values in $Mat_{N \times N}(\cplx)$.  A direct calculation shows that the formal series $P(z; c)z^{a(c)}$ satisfies differential equation (\ref{eq:1D-DE}) if and only if
\begin{equation}\label{eq:recursion-relation}
p_{c, n}(n + a(c)) - a(c)p_{c, n} = \sum_{k = 1}^n a_{c, k}p_{c, n - k} \ \ \ \ \text{ for all } n \geq 1.
\end{equation} 
Let $\rho_{n + a(c)}$ denote the operator of right multiplication by $n + a(c)$ and let $\lambda_{a(c)}$ denote the operator of left multiplication by $a(c)$.  Note that $(\rho_{n + a(c)} - \lambda_{a(c)})^{-1}$ is a meromorphic $GL(Mat_{N \times N}(\cplx))$-valued function of $c \in \pfr$ that is holomorphic after multiplication by the polynomial $\det(\rho_{n + a(c)} - \lambda_{a(c)})$.  In particular, we may define meromorphic functions $p_{c, n}$, $n \geq 0$, of $c \in \pfr$ by setting $p_{c, 0} = \id$ and 
\begin{equation}
p_{c, n} := (\rho_{n + c_ia} - \lambda_{c_ia})^{-1}\sum_{k = 1}^n a_{c, k}p_{c, n - k} \ \ \ \ \text{ for all } n \geq 1.
\end{equation}
Let $q_{c, n} := k(c_i)p_{c, n}$ for all $n \geq 0$.

It is a standard fact that the operator $\rho_{n + a(c)} - \lambda_{a(c)}$ is singular if and only if $n + a(c)$ and $a(c)$ have an eigenvalue in common, i.e. if $n = \lambda - \lambda'$ for some eigenvalues $\lambda, \lambda' \in \spec(a(c))$.  As $a(c)$ depends linearly on $c \in \pfr$ and $U \subset \pfr$ is bounded, it follows that there exists $m > 0$ such that $\rho_{n + a(c)} - \lambda_{a(c)}$ is invertible for all $n > m$ and $c \in \bar{U}$, where $\bar{U}$ denotes the closure of $U$.  Let $k(c) = \prod_{n = 1}^m \det(\rho_{n + a(c)} - \lambda_{a(c)})$, and let $q_{c, n} := k(c)p_{c, n}$ for all $n \geq 0$.  Then $q_{c, n}$ is holomorphic function of $c$ in a neighborhood of the closure $\bar{U}$ for all $n \geq 0$ and the formal series $Q(z; c) := \sum_{n = 0}^\infty q_{c, n}z^n$ satisfies differential equation (\ref{eq:1D-DE}).

It now suffices to show that there exists $r > 0$ such that the series $Q(z; c) = \sum_{n = 0}^\infty q_{c, n}z^n$ converges absolutely and uniformly for $(c, z) \in U \times B_r(0)$.  In particular, it then follows that $Q(z; c)$ is holomorphic for $(c, z) \in U \times B_r(0)$, and therefore also for $(c, z) \in U \times B_R(0)$ by standard results on holomorphic dependence of solutions on parameters and initial conditions (see, e.g., \cite[Theorem 1.1]{IY}).  That $Q(z; c)$ is invertible, and hence that $Q(z; c)z^{a(c)}$ is a fundamental solution to (\ref{eq:1D-DE}), for those $c \in U$ such that $a(c)$ has no two eigenvalues differing by a nonzero integer follows from the equality $Q(0; c) = k(c)\id$.

The desired uniform convergence of $Q(z; c) = \sum_{n = 0}^\infty q_{c, n}z^n$ can be shown by a modification of the proof of \cite[Theorem 5.3]{W} as follows.  As $A(z; c) = \sum_{n = 0}^\infty a_{c, n}z^n$ is holomorphic on $\pfr \times B_R(0)$, it follows by considering the Taylor expansions of the $a_{c, n}$ and the boundedness of $U$ that there is a scalar-valued formal series $b(z) = \sum_{n = 1}^\infty b_nz^n$ defining a holomorphic function on $B_R(0)$ and such that $||a_{c, n}|| \leq b_n$ for all $n \geq 1$ and $c \in U$.  By the boundedness of $U$ again, it follows that there exist $N', C, D> 0$ such that $$||(\rho_{n + a(c)} - \lambda_{a(c)})^{-1}|| \leq C \ \ \ \ \text{ for all } n > N', c \in U$$ and $$||q_{c,n}|| \leq D \ \ \ \ \text{ for all } n \leq N', c \in U.$$  Now, consider the formal scalar-valued series $m(z) = \sum_{n = 0}^\infty m_nz^n$ defined by $$m_r = D \ \ \ \ \text{ for all } r \leq N'$$ and $$m_r = C\sum_{s = 1}^rb_sm_{r - s} \ \ \ \ \text{ for all } r > N'.$$  As $m(z)$ satisfies the equation $$m(z) = Cb(z)m(z) + D + \sum_{s = 1}^N (D - C\sum_{t = 1}^s b_tD)z^s$$ and $b(z)$ is holomorphic near $0$ and $b(0) = 0$, it follows that there exists $r > 0$ such that $m(z)$ is holomorphic in $B_r(0)$.  By construction we have $||q_{c, n}|| \leq m_n$ for all $n \geq 0$ and $c \in U$, so for all $c \in U$ and $z \in B_r(0)$ the series $Q(z; c) = \sum_{n = 0}^\infty q_{c, n}z^n$ is majorized by the series $m(z)$.  In particular, $Q(z; c) = \sum_{n = 0}^\infty q_{c, n}z^n$ is absolutely and uniformly convergent for all $c \in U$ and $z \in B_r(0)$, as needed.
\end{proof}

\begin{lemma}\label{nD-DE-lemma} Let $n, N > 0$ be positive integers, let $R > 0$ be a positive real number, let $a_1, ..., a_n : \pfr \rightarrow Mat_{N \times N}(\cplx)$ be linear functions, and let $$A_c = \sum_{j = 1}^n a(c)z_j^{-1}dz_j + \Omega_c$$ be a meromorphic 1-form on $B_R(0)^n \subset \cplx^n$ with values in $Mat_{N \times N}(\cplx)$ such that

(1) the form $\Omega_c$ is holomorphic on $B_R(0)^n$ with holomorphic dependence on $c \in \pfr$

(2) $d + A_c$ defines a flat connection on $(B_R(0) \backslash\{0\})^n$ for all $c \in \pfr$.

\noindent Then for any bounded domain $U \subset \pfr$ there exists, for all $1 \leq j \leq n$, a function $Q_j(z_j, ..., z_n; c)$ with values in $Mat_{N \times N}(\cplx)$, holomorphic in $(c, z_j, ..., z_n) \in U \times B_R(0)^{n - j + 1}$, such that $$F(z; c) := Q_1(z_1, ..., z_n; c)z_1^{a_1(c)}\cdots Q_{n - 1}(z_{n - 1}, z_n; c)z_{n - 1}^{a_{n - 1}(c)}Q_n(z_n; c)z_n^{a_n(c)}$$ is a solution of the differential equation $$(d + A_c)F(z; c) = 0$$ for all $c \in U$ and a fundamental solution whenever no two eigenvalues of any $a_i(c)$ differ by a nonzero integer.

\end{lemma}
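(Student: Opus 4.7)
The plan is to prove Lemma~\ref{nD-DE-lemma} by induction on $n$, iterating the one-variable result Lemma~\ref{1D-DE-lemma} together with the flatness of the connection to peel off one pole divisor at a time.

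The base case $n=1$ is Lemma~\ref{1D-DE-lemma}. For the inductive step, assume the result for $n-1$ variables and fix a bounded domain $U \subset \pfr$. Extract from $(d+A_c)F = 0$ the one-variable ODE in the $z_1$ direction,
$$z_1\,\partial_{z_1} F \;=\; -(a_1(c) + z_1\,\omega_{c,1}(z))\,F,$$
where $\Omega_c = \sum_j \omega_{c,j}\,dz_j$. Apply Lemma~\ref{1D-DE-lemma} with the enlarged bounded parameter domain $U \times B_R(0)^{n-1}$ to obtain a holomorphic function $Q_1(z_1, \ldots, z_n; c)$ on $U \times B_R(0)^n$, normalized so that $Q_1(0, z_2, \ldots, z_n; c) = I$, such that $F_1 := Q_1\,z_1^{a_1(c)}$ solves this $z_1$-ODE for all $c \in U$ and is a fundamental solution outside the discrete resonance set for $a_1(c)$.

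Next, perform the gauge substitution $F = F_1 G$, obtaining a new connection $\widetilde A_c := F_1^{-1}(dF_1 + A_c F_1)$ for $G$. By construction, the $dz_1$-component of $\widetilde A_c$ vanishes identically, and the flatness of $d+\widetilde A_c$ (inherited from $d+A_c$) forces $\partial_{z_1}\widetilde A_{c,j} = 0$ for each $j \geq 2$ via the $dz_1 \wedge dz_j$ coefficient of $d\widetilde A_c + \widetilde A_c \wedge \widetilde A_c = 0$. Thus $\widetilde A_c$ descends to a flat connection on $B_R(0)^{n-1}$ in the variables $(z_2, \ldots, z_n)$. A critical input is the commutativity relation $[a_i(c), a_j(c)] = 0$ for $i \neq j$, obtained by extracting the $(z_iz_j)^{-1}\,dz_i \wedge dz_j$ coefficient from the original flatness $dA_c + A_c \wedge A_c = 0$: only the wedge term can produce a double pole, so its vanishing forces $[a_i,a_j]=0$. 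Combining this commutativity with the normalization $Q_1|_{z_1=0} = I$, and using the analogous extraction of the $z_1^{-1}\,dz_1 \wedge dz_j$ coefficient of flatness (which yields $[a_1(c), \omega_{c,j}(0,z_2,\ldots,z_n)] = 0$), a direct computation of $\widetilde A_c$ establishes
$$\widetilde A_c \;=\; \sum_{j = 2}^n a_j(c)\,z_j^{-1}\,dz_j \;+\; \widetilde\Omega_c,$$
with $\widetilde\Omega_c$ holomorphic in $(c; z_2, \ldots, z_n)$ and the same residue matrices $a_j(c)$. The inductive hypothesis applied to $\widetilde A_c$ then produces $Q_2, \ldots, Q_n$, and $F = F_1\,G = Q_1\,z_1^{a_1}\,Q_2\,z_2^{a_2}\cdots Q_n\,z_n^{a_n}$ is the factorization claimed in the lemma.

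The main obstacle is the assertion that, after the gauge transformation, the reduced connection retains \emph{constant} residues equal to the original $a_j(c)$, rather than $(z_2,\ldots,z_n)$-dependent conjugates of them (which would defeat the induction). The commutativity relations $[a_1(c), a_j(c)] = 0$ and $[a_1(c), \omega_{c,j}(0,\cdot\,)] = 0$, both extracted from flatness, are precisely what collapse the would-be $z_1$-conjugation $z_1^{-a_1}(\cdot)z_1^{a_1}$ on these matrices to the identity, and the normalization $Q_1|_{z_1=0} = I$ then pins the residues of $\widetilde A_c$ down to the $a_j(c)$ themselves. The remaining bookkeeping — holomorphic dependence on $c \in U$, handling of resonances in $a_1(c)$ exactly as in Lemma~\ref{1D-DE-lemma}, and holomorphicity of $\widetilde\Omega_c$ — is routine given these structural inputs.
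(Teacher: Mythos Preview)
Your proof is correct and follows the same strategy the paper sketches---iterate Lemma~\ref{1D-DE-lemma} variable by variable, using flatness to reduce to one fewer variable at each step---while supplying the details (gauge transformation, residue commutativity from the curvature equation, preservation of residues under the reduction) that the paper's one-line proof omits. One minor correction: Lemma~\ref{1D-DE-lemma} yields $Q_1(0,z_2,\ldots,z_n;c)=k(c)\,\id$ for a scalar $k(c)$ rather than $\id$, but since scalar matrices commute with everything your residue computation is unaffected, and your passage to all $c\in U$ by analyticity handles the resonant locus as you indicate.
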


\begin{proof}  The claim follows from iterated applications of Lemma \ref{1D-DE-lemma} and standard results (e.g. \cite[Theorem 1.1]{IY}) on holomorphic dependence on parameters and initial conditions of solutions of differential equations.
\end{proof}

We can now complete the proof of the existence of the Dunkl weight function as a tempered distribution with values in $\End_\cplx(\lambda)$ with holomorphic dependence on $c \in \pfr$:

\begin{proof}[Proof of Theorem \ref{main-existence-theorem}] Let $K_c : \hfr_{\real, reg} \rightarrow \End_\cplx(\lambda)$ be the family of analytic functions with holomorphic dependence on $c \in \pfr$ in the statement of Theorem \ref{integral-rep-theorem}.  Note that for $|c_s|$ sufficiently small $K_c$ already defines a holomorphic family of tempered distributions on $\hfr_\real$ with the desired properties.  It therefore suffices to show that this family of distributions for $|c_s|$ small has a holomorphic continuation to all $c \in \pfr$.  To see this, note that by Proposition \ref{dunkl-weight-characterization-proposition} the distribution $K_c$ for $|c_s|$ small satisfies properties (i)-(iii) of that proposition, and therefore by Proposition \ref{dunkl-weight-props} is homogeneous of degree $-2\chi_\lambda(\sum_{s \in S}c_ss)/\dim\lambda$.  It follows that the holomorphic continuation is homogeneous of degree $-2\chi_\lambda(\sum_{s \in S}c_ss)/\dim\lambda$, and hence tempered, for all $c \in \pfr$.  The characterizing properties (i)-(iii) appearing in Proposition \ref{dunkl-weight-characterization-proposition} then must hold for all parameters $c \in \pfr$ for the holomorphic continuation as they hold for $|c_s|$ sufficiently small.  It follows from Proposition \ref{dunkl-weight-characterization-proposition} that such a holomorphic continuation is a family of tempered distributions as in Theorem \ref{main-existence-theorem}.  So, it remains to construct such a holomorphic continuation.

Let $\pi : Y \rightarrow \hfr$ be the De Concini-Procesi wonderful model \cite{DP1} for the hyperplane arrangement $\cup_{s \in S} \ker(\alpha_s) \subset \hfr$.  Recall that $Y$ is a smooth $\cplx$-variety with $W$-action, $\pi$ is proper and $W$-equivariant, $\pi$ restricts to an isomorphism $\pi^{-1}(\hfr_{reg}) \rightarrow \hfr_{reg}$, and $\pi^{-1}(\cup_{s \in S} \ker(\alpha_s))$ is a normal crossings divisor.  Furthermore, let $\pi_\real : Y_\real \rightarrow \hfr_\real$ denote the real locus of $\pi$, which shares the same properties as $\pi$ but for the corresponding real hyperplane arrangement in $\hfr_\real$.  Let $Y_{reg} = \pi^{-1}(\hfr_{reg})$ and let $Y_{\real, reg} = Y_{reg} \cap Y_\real = \pi_\real^{-1}(\hfr_{\real, reg})$.

Note that it follows from the $W$-equivariance of $\pi$ and the fact that $\pi|_{Y_{reg}} : Y_{reg} \rightarrow \hfr_{reg}$ is an isomorphism that $\pi$ preserves stabilizers in $W$, i.e. that for all $d \in Y$ we have $\stab_W(d) = \stab_W(\pi(d))$.  The inclusion $\stab_W(d) \subset \stab_W(\pi(d))$ is trivial, so consider $w \in \stab_W(\pi(d))$.  Let $U \subset Y$ be an open neighborhood of $d$ in $Y$ that is stable under $\stab_W(d)$ and such that the $W/\stab_W(d)$-orbits of $U$ are disjoint.  Let $U_{reg} = U \cap Y_{reg}$.  As $w(\pi(d)) = \pi(d)$ it follows that $w\pi(U) \cap \pi(U) \neq \emptyset$, and hence $w\pi(U_{reg}) \cap \pi(U_{reg}) \neq \emptyset$ as well.  It then follows that $wU_{reg} \cap U_{reg} = w\pi^{-1}\pi(U_{reg}) \cap \pi^{-1}\pi(U_{reg}) \neq \emptyset$ as well.  In particular, $w \in \stab_W(d)$, so $\stab_W(d) = \stab_W(\pi(d))$ for all $d \in Y$, as claimed.

Consider the modified KZ connection
\begin{equation}\label{eq:modified-KZ}
\nabla_{KZ}' := d - \sum_{s \in S} c_s\frac{d\alpha_s}{\alpha_s}s = d - \sum_{s \in S} c_sd(\log \alpha_s)s
\end{equation}
on $\hfr_{reg}$.  Fix a point $x_0 \in \ccal$, and let $F_c(z)$ be a $\End_\cplx(\lambda)$-valued fundamental solution of $\nabla_{KZ}'$ with $F_c(x_0) = \id$.  By standard results on holomorphic dependence of solutions on parameters, it follows that $F_c(z)$ is holomorphic in $c \in \pfr$.  Regard $F_c(z)$ as a multivalued function on $\hfr_{reg}$.  As the function $K_c : \ccal \rightarrow \End_\cplx(\lambda)$ satisfies differential equation (\ref{eq:2-sided-KZ}) it follows that $K_c(x) = F_{c^\dagger}(x)^{\dagger, -1}K_c(x_0)F_{c}(x)^{-1}$ for all $c \in \pfr$ and $x \in \ccal$.  In fact, by the proof of the equivalence of statements (b) and (c) in Theorem \ref{existence-theorem}, we have that the function $\widetilde{K}_c : \hfr_{reg} \rightarrow \End_\cplx(\lambda)$, $\widetilde{K}_c(z) := F_{c^\dagger}(z)^{\dagger, -1}K_c(x_0)F_{c}(z)^{-1}$, is single-valued and satisfies $\widetilde{K}_c(x) = K_c(x)$ for all $x \in \hfr_{\real, reg}$.  

The pullback $\pi^*\nabla_{KZ}'$ is a meromorphic 1-form on $Y$ with values in $\End_\cplx(\lambda)$ and singularities along the components of the divisor $Y\backslash Y_{reg} = \pi^{-1}(\cup_{s \in S}\ker(\alpha_s))$, with holomorphic dependence on $c \in \pfr$.  Following the discussion in the introduction to \cite{DP2}, consider the residue of $\pi^*\nabla_{KZ}'$ along the components of $Y \backslash Y_{reg}$ as follows.  As $Y \backslash Y_{reg}$ is a normal crossings divisor, for any point $d \in Y_\real$ there is a local complex coordinate system $z_1, ..., z_l$ such that $z_1(d) = \cdots z_l(d) = 0$ and the divisor $Y\backslash Y_{reg}$ is given near $d$ by the equation $z_1\cdots z_m = 0$ for some $m$, $0 \leq m  \leq l$.  It follows that for each reflection $s \in S$ the pullback $\pi^*\alpha_s$ is of the form $\pi^*\alpha_s = \prod_{j = 1}^m z_j^{k_{j, s}}p_s(z)$ for some integers $k_{j, s} \in \ints^{\geq 0}$ and regular functions $p_s(z)$ with $p_s(0) \neq 0$.  In these local coordinates, we have, by equation (\ref{eq:modified-KZ}) $$\pi^*\nabla_{KZ}' = d - \sum_{s \in S} c_sd(\log(\prod_{j = 1}^m z_j^{k_{j, s}}p_s))s = d - \sum_{j = 1}^m \left(\sum_{s \in S} c_sk_{j, s}s\right)d\log z_j + \Omega_c$$ where $\Omega_c$ is a 1-form with values in $\End_\cplx(\lambda)$, holomorphic in $z$ and $c$.

Note that this connection is of the form considered in Lemma \ref{nD-DE-lemma}.  For some $1 \leq j \leq m$, consider the residue $a_j(c) := \sum_{s \in S} c_sk_{j, s}s$ along the hyperplane $z_j = 0$.  Recall that the map $\pi$ is $W$-equivariant.  Let $d'$ be a generic point on the hyperplane $z_j = 0$, and consider the stabilizer $W' = \stab_W(\pi(d')) = \stab_W(d')$, a parabolic subgroup of $W$.  For $s \in S \backslash W'$, we have $s(\pi(d')) \neq \pi(d')$ and hence $\alpha_s(\pi(d')) \neq 0$.  In particular, in this case $\pi^*\alpha_s(d') \neq 0$, so $k_{j, s} = 0$.  Now consider instead $s \in S \cap W'$ and $w \in W'$.  The element $w \in W' = \stab_W(d')$ stabilizes the hyperplane $z_1 = 0$ and has finite order, from which it follows that $w^*z_1 = z_1p(z)$ for some holomorphic function $p(z)$ with $p(0) \neq 0$.  The $W$-equivariance of $\pi$ then implies that $k_{j, s} = k_{j, wsw^{-1}}$.  We see that the residue $a_j(c)$ is a $\pfr$-linear combination of sums of conjugacy classes of reflections in $W'$.  It follows that the residues $\{a_j(c)\}_{c \in \pfr}$ are simultaneously diagonalizable with eigenvalues that are linear functions of $c \in \pfr$.

By Lemma \ref{nD-DE-lemma}, for every bounded open set $U \subset \pfr$ there is a meromorphic function $G(c)$ of $c \in U$ such that, in the local coordinates $z_1, ..., z_l$ on $Y$ near $d$ as above, we have $$\pi^*F_c(z) = Q_1(z_1, ..., z_l; c)z_1^{a_1(c)}\cdots Q_{l - 1}(z_{l - 1}, z_l; c)z_{l - 1}^{a_{l - 1}(c)}Q_l(z_l; c)z_l^{a_l(c)}G(c)$$ where the functions $Q_j(z_j, ..., z_l; c)$ are holomorphic both in $z$ in a neighborhood of $d$ in $Y$ and also in $c \in U$.  As the residues $\{a_j(c)\}_{c \in \pfr}$ are simultaneously diagonalizable and as $\widetilde{K}_c(z) = F_{c^\dagger}(z)^{\dagger, -1}K(x_0)F_c(z)^{-1}$ is single valued on $\hfr_{reg}$ and holomorphic in $c$, it follows from the form of $\pi^*F_c(z)$ above that the matrix entries of $\pi^*\widetilde{K}_c(z)$ are linear combinations of functions of the form $f_c(z)|z_1|^{g_1(c)}\cdots |z_l|^{g_l(c)}$ for some linear functions $g_1, ..., g_l : \pfr \rightarrow \cplx$ and function $f_c(z)$ holomorphic in both $c \in U$ and also $z$ in a neighborhood of $d$ in $Y$.  It is a standard fact that for $\lambda \in \cplx$ the function $|x|^\lambda$ is locally integrable on $\real$ when the real part of $\lambda$ satisfies $\text{Re}(\lambda) > -1$ and therefore defines a homogeneous distribution for such $\lambda$, and this distribution $|x|^\lambda$ has a meromorphic continuation to all $\lambda \in \cplx$ with (simple) poles at the negative odd integers.  As $f_c(z)$ is holomorphic in $c \in U$ and holomorphic, in particular smooth, in $z$, it follows that the function $\pi^*K_c(x) = \pi^*\widetilde{K}_c(x)$ on $Y_{\real, reg}$ has an extension, meromorphic in $c \in U$, to a distribution on an open neighborhood of $d$ in $Y_\real$.  As the point $d \in Y_\real$ and the bounded open set $U \subset \pfr$ were arbitrary, it follows that the function $\pi^*K_c(x)$ on $Y_{\real, reg}$ has an extension to a distribution on $Y_{\real, reg}$ meromorphic in $c \in \pfr$.  Taking the pushforward of this distribution along $\pi$, one obtains an extension of the function $K_c$ to a distribution on $\hfr_{\real}$ that is meromorphic in $c$ and coincides with $K_c$ as a distribution when $|c_s|$ is small for all $s \in S$.  Denote this family of distributions also by $\mathscr{K}_c$

It only remains to show that $\mathscr{K}_c$ is in fact holomorphic, not merely meromorphic, in $c \in \pfr$.  This follows immediately from the finiteness of the Hermite coefficients of $K_c$, which are given by the Gaussian pairing $\gamma_{c, \lambda}$ by analyticity, as this is so for $|c_s|$ small: $$\int_{\hfr_\real} Q(x)^\dagger\mathscr{K}_c(z)P(x)e^{-|x|^2/2}dx = \gamma_{c,\lambda}(P, Q) \in \cplx \ \ \ \ \text{ for all } P, Q \in \cplx[\hfr] \otimes \lambda \text{ and } c \in \pfr.$$\end{proof}

\section{Signatures and the KZ Functor}\label{signature-comparision-section}

In this section we will apply the Dunkl weight function and methods from semiclassical analysis to prove the following comparison theorem for signatures of irreducible representations of rational Cherednik algebras and Hecke algebras, generalizing Corollary \ref{unitarity-corollary}:

\begin{theorem}\label{KZ-preserves-signatures-theorem} Let $\lambda \in \irr(W)$ be an irreducible representation of the finite Coxeter group $W$, and let $c \in \pfr_\real$ be a real parameter.  If $KZ(L_c(\lambda))$ is nonzero then the asymptotic signature $a_{c, \lambda}$ of $L_c(\lambda)$ is given by the formula
\begin{equation}\label{eq:asym-sig}
a_{c, \lambda} = \frac{p - q}{\dim KZ(L_c(\lambda))}
\end{equation}
where $p - q$ is the signature, up to sign, of a $B_W$-invariant Hermitian form on $KZ(L_c(\lambda))$.\end{theorem}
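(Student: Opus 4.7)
The plan is to reduce the asymptotic signature computation to a semiclassical / Weyl-type trace asymptotic applied to the integral representation of $\gamma_{c, \lambda}$ from Theorem \ref{integral-rep-theorem}, with the pointwise signature of the Dunkl weight function playing the role of a ``classical symbol.'' To begin, I would recall that since $\sign(\beta_{c, \lambda}^{\leq n}) = \sign(\gamma_{c, \lambda}^{\leq n})$ for all $n$, the asymptotic signature $a_{c, \lambda}$ can equally well be computed using $\gamma_{c, \lambda}$. Moreover, $KZ(L_c(\lambda)) \neq 0$ forces $L_c(\lambda)$ to have full support in $\hfr$, so by Theorem \ref{integral-rep-theorem} there exists $N \geq 0$ such that $\delta^{2N} K_{c, \lambda}$ is represented by a locally integrable $\End_\cplx(\lambda)$-valued function on $\hfr_\real$ and $\gamma_{c, \lambda}$ on $\delta^N \Delta_c(\lambda)$ is given by integration against it. Since $L_c(\lambda)/\delta^{2N} L_c(\lambda)$ is supported on the reflection hyperplanes, the subspace $\delta^N L_c(\lambda)^{\leq n}$ has the same leading-order Hilbert polynomial as $L_c(\lambda)^{\leq n}$, so $a_{c, \lambda}$ may be computed as $\lim_{n \to \infty} \sign(\gamma_{c, \lambda}|_{\delta^N L_c(\lambda)^{\leq n}})/\dim L_c(\lambda)^{\leq n}$.

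Next, I would introduce the ambient Gaussian $L^2$ Hilbert space $\mathcal{H} := L^2(\hfr_\real, e^{-|x|^2/2}\, dx) \otimes \lambda$, in which the standard module $\cplx[\hfr] \otimes \lambda$ embeds densely. On $\delta^N \Delta_c(\lambda)$, the Gaussian form is realized as $\gamma_{c, \lambda}(P, Q) = \langle M_{\delta^{2N} K_{c, \lambda}} P, Q\rangle_{\mathcal{H}}$, where $M_{\delta^{2N} K_{c, \lambda}}$ is multiplication by the matrix-valued function $\delta^{2N} K_{c, \lambda}$. Consequently, the signature of $\gamma_{c, \lambda}$ restricted to $\delta^N L_c(\lambda)^{\leq n}$ equals the signature of the compression $A_n := \pi_n M_{\delta^{2N} K_{c, \lambda}} \pi_n$, where $\pi_n$ is the $\mathcal{H}$-orthogonal projection onto the image of $\delta^N L_c(\lambda)^{\leq n}$. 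Rescaling $x \mapsto \sqrt{n}\, x$ transforms the degree-$n$ cutoff of polynomial$\,\times\,$Gaussian into a semiclassical cutoff with parameter $h = 1/\sqrt{n}$, and the homogeneity of $K_{c, \lambda}$ of degree $-2\chi_\lambda(\sum_s c_s s)/\dim \lambda$ established in Proposition \ref{dunkl-weight-props} produces, after normalization, a fixed matrix-valued symbol on a compact region of phase space.

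The core of the proof is then the invocation of the matrix-valued semiclassical Weyl-type result (the ``crucial Lemma \ref{l:matrix-case-lemma}'' attributed to Dyatlov in the acknowledgments) to obtain an asymptotic of the form
\[
\lim_{n \to \infty} \frac{\sign(A_n)}{\dim L_c(\lambda)^{\leq n}} \;=\; \frac{\int \sign(K_{c, \lambda}(x))\, d\mu(x)}{\int \dim \lambda \, d\mu(x)},
\]
for an appropriate asymptotic (semiclassical) measure $\mu$ supported where $\delta^{2N} K_{c, \lambda}$ is nondegenerate. The final step is to identify the pointwise signature of $K_{c, \lambda}(x)$. By the proof of Corollary \ref{unitarity-corollary}, for every $x \in \hfr_{\real, reg}$ the radical of $K_{c, \lambda}(x) \in \End_\cplx(\lambda)$ coincides with $KZ_x(N_c(\lambda))$, and the induced nondegenerate $B_W$-invariant Hermitian form on $KZ_x(L_c(\lambda))$ has signature $\pm(p - q)$ independently of $x$, the overall sign being locally constant on $\hfr_{\real, reg}$ by the analytic dependence on $x$ of $K_{c, \lambda}$ and by connectedness of $\hfr_{\real, reg}$ modulo the action of $W$ combined with $W$-equivariance. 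Therefore $\sign(K_{c, \lambda}(x))/\dim KZ(L_c(\lambda))$ is the global constant $(p - q)/\dim KZ(L_c(\lambda))$ (up to a uniform sign), yielding the formula (\ref{eq:asym-sig}).

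The main obstacle will be the semiclassical step: rigorously recasting the polynomial-degree filtration together with the Gaussian weight as a bona fide semiclassical cutoff, and pushing Dyatlov's trace/signature asymptotic for matrix-valued symbols through despite the mild singularities of $K_{c, \lambda}$ along the reflection hyperplanes. The latter is controlled by the factor $\delta^{2N}$, whose vanishing locus is a codimension-one subset of $\hfr_\real$ that contributes asymptotically negligibly both to the dimension count and to the Weyl integral. Handling the uniformity of the semiclassical estimates up to this boundary and confirming that the ``effective rank'' appearing in the denominator of the normalized signature is indeed $\dim KZ(L_c(\lambda))$ rather than $\dim \lambda$ is where the bulk of the technical work resides.
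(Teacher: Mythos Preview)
Your proposal is correct and follows essentially the same route as the paper: reduce to $\gamma_{c,\lambda}$ on $\delta^N\Delta_c(\lambda)$, rescale to identify the degree cutoff with the harmonic-oscillator spectral cutoff $V_h(1)$, and apply the matrix-valued semiclassical Lemma~\ref{l:matrix-case-lemma} to the symbol $\delta^{2N}K_{c,\lambda}$, using that the pointwise signature of $K_{c,\lambda}(x)$ is the constant $p-q$ on $\hfr_{\real,reg}$. The only points where the paper is more explicit are that $N$ must be taken large enough to make $\delta^{2N}K_{c,\lambda}$ genuinely $C^k$ (not merely locally integrable) so that the semiclassical lemma applies, and that Lemma~\ref{l:matrix-case-lemma} only yields an upper bound on the count of nonpositive eigenvalues, so one applies it to both $\delta^{2N}K_{c,\lambda}$ and $-\delta^{2N}K_{c,\lambda}$ and pins down the kernel dimension separately via the Hilbert series of $N_c(\lambda)$.
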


In the case that $W$ is a symmetric group, Theorem \ref{KZ-preserves-signatures-theorem} was proved by Venkateswaran \cite[Theorem 1.4]{Ve} by providing exact formulas for the left and right sides of equation (\ref{eq:asym-sig}).  The generalization above to arbitrary finite Coxeter groups was expected by Venkateswaran \cite{Ve} and Etingof.

We will begin with a reminder on semiclassical analysis in Section \ref{semiclassical-reminder-section}, prove a key analytic lemma in Section \ref{key-analysis-lemma-section}, and finally prove Theorem \ref{KZ-preserves-signatures-theorem} in Section \ref{proof-of-comparison-theorem-section}.

\subsection{Reminder on Semiclassical Analysis}\label{semiclassical-reminder-section} We briefly review notation and results from semiclassical analysis,
referring the reader to \cite{Z} for details.

Denote by $\Opw$ the Weyl quantization on $\real^n$,
mapping a symbol $a(x,\xi)\in C_c^\infty(\real^{2n})$
to an $h$-dependent family of operators
$\Opw(a):L^2(\real^n)\to L^2(\real^n)$ defined as follows:
\begin{equation}
  \label{e:Op-h-w}
\Opw(a)f(x)=\int_{\real^{2n}}e^{{\frac{i}{h}}\langle x-y,\xi\rangle}a\Big(\frac{x + y}{2},\xi\Big)
f(y)\,dyd\xi.
\end{equation}
More generally one can quantize $h$-dependent symbols
$a(x,\xi;h)$ which satisfy the following
derivative bounds for all multiindices $\alpha$ on $\real^{2n}$:
\begin{equation}
  \label{e:S-m}
|\partial^\alpha_{(x,\xi)}a(x,\xi;h)|\leq C_\alpha m(x,\xi),\quad
(x,\xi)\in\real^{2n},\quad
0<h\leq 1,
\end{equation}
where $C_\alpha$ is an $h$-independent constant depending only on the multiindex $\alpha$ and where $m(x,\xi)$ is an \emph{order function} as defined in~\cite[\S4.4.1]{Z}.
The order functions we will use here are $m\equiv 1$ and
\begin{equation}
  \label{e:m-1}
m_1(x,\xi) := 1+|x|^2+|\xi|^2.
\end{equation}
As in \cite[Definition 4.4.2]{Z}, for any order function $m$ we denote by $S(m)$ the class of (possibly $h$-dependent) symbols $a(x, \xi; h)$ such that $a(x,\xi; h) \in C^\infty(\real^{2n})$ for all $h \in (0, 1]$ and that satisfy the derivative bounds \eqref{e:S-m}.
The Weyl quantization $\Opw$ is defined for symbols $a \in S(m)$ and has the following properties:
%%%%%%%%%%%%%%%%%%%%%%%%%%%%%%%%%%%%%%%%%%%%%%%%%%%%%%%%%%%%%%%%%%%%%%%%%%%%%%%%
\begin{enumerate}
\item for $a\in S(m)$, the operator
$\Opw(a)$ acts continuously on the space of Schwartz functions $\mathscr S(\real^n)$ and also on the space of tempered distributions and $\mathscr S'(\real^n)$ \cite[Theorem~4.16]{Z};
\item if $a\in S(1)$, then by \cite[Theorem~4.23]{Z}
\begin{equation}
  \label{e:L2-bdd}
\sup_{0<h\leq 1} \|\Opw(a)\|_{L^2\to L^2}<\infty;
\end{equation}
\item if $a\in S(m)$, $b\in S(\widetilde m)$, then there exists
$a\#b$ such that by \cite[Theorem~4.18]{Z}
\begin{equation}
  \label{e:product}
\Opw(a)\Opw(b)=\Opw(a\# b),\quad
a\# b\in S(m\widetilde m),\quad
a\# b = ab+\mathcal O(h)_{S(m\widetilde m)}.
\end{equation}
\item if $a\in S(m)$, then by \cite[(4.1.13)]{Z}
\begin{equation}
  \label{e:adjoint}
\Opw(a)^*=\Opw(\bar{a});
\end{equation}
\item if $a\in S(1)$ is real-valued and satisfies
$a\geq c>0$ for some constant $c$ and all sufficiently small $h$,
then there exists $h_0>0$ such that by \cite[Theorem~4.30]{Z}
\begin{equation}
  \label{e:garding}
\langle \Opw(a) f,f\rangle_{L^2}\geq {\frac{c}{2}}\|f\|_{L^2}^2\quad\text{for all }
f\in L^2(\real^n),\quad
0<h\leq h_0.
\end{equation}
\end{enumerate}

\begin{remark}\label{Ck-remark} In fact, as will soon be important for the proof of Theorem \ref{KZ-preserves-signatures-theorem}, one may also consider, for any sufficiently large positive integer $k$, $h$-dependent symbols that are only $C^k$.  Specifically, for any integer $k \geq 0$ and order function $m$ one may also consider the symbol class $S^{(k)}(m)$ of symbols $a(x, \xi; h)$ such that $a(x, \xi; h) \in C^k(\real^{2n})$ and for which the derivative bounds (\ref{e:S-m}) hold for all mutiindices $\alpha$ with $|\alpha| \leq k$.  The construction of the Weyl quantization makes sense for such symbols (in fact for distributions as well, see \cite[Theorem 4.2]{Z}), and the proofs of the properties recalled above are based on only a fixed finite number of terms in the semiclassical expansion, and therefore on only a fixed finite number of derivatives of the symbol.  In particular, for $k$ sufficiently large, the properties of the Weyl quantization listed above are valid for symbols from the class $S^{(k)}(m)$.\end{remark}

\subsubsection{The quantum harmonic oscillator}

Consider the quantum harmonic oscillator in $\real^n$~\cite[\S6.1]{Z},
$$
P := -h^2\Delta+|x|^2.
$$
It is a nonnegative self-adjoint operator on $L^2(\real^n)$.
We have $P=\Opw(p)$ where (recalling~\eqref{e:m-1})
$$
p(x,\xi)=|\xi|^2+|x|^2\in S(m_1).
$$
We will also use the shifted operators determined by $(y,\eta)\in\real^{2n}$
\begin{equation}
  \label{e:P-shifted}
\begin{aligned}
P_{(y,\eta)}&=\sum_{j=1}^n (-ih\partial_{x_j}-\eta_j)^2
+|x-y|^2;\\
P_{(y,\eta)}&=\Opw(p_{(y,\eta)}),\quad
p_{(y,\eta)}(x,\xi)=|\xi-\eta|^2+|x-y|^2\in S(m_1).
\end{aligned}
\end{equation}
Note that $P_{(y,\eta)}$ is conjugate to $P$:
\begin{equation}
  \label{e:P-conjugate}
P_{(y,\eta)}=T_{(y,\eta)}PT_{(y,\eta)}^{-1},\quad
T_{(y,\eta)}f(x)=e^{{\frac{i}{h}}\langle x,\eta\rangle}f(x-y).
\end{equation}
We use the following consequence of functional calculus
of pseudodifferential operators,
see~\cite[Theorem~14.9]{Z} and~\cite[\S8]{DS}:

\begin{lemma}
  \label{l:funcal}
Assume that $\varphi\in C_c^\infty(\real)$ is $h$-independent
and fix $(y,\eta)\in\real^{2n}$.
Then $\varphi(P_{(y,\eta)})=\Opw(a)$ for some $a$ satisfying
for all $N$
$$
a\in S(m_1^{-N}),\quad
a=\varphi\circ p_{(y,\eta)}+\mathcal O(h)_{S(m_1^{-N})}.
$$
\end{lemma}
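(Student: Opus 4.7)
The plan is to first reduce to the unshifted case $(y,\eta)=(0,0)$ via the conjugation identity \eqref{e:P-conjugate}, and then prove the result for $P$ itself by a standard Helffer–Sj\"ostrand argument adapted to the order function $m_1$. For the reduction, observe that $T_{(y,\eta)}$ is the composition of a translation by $y$ and a modulation by $\eta/h$; a direct change of variables in \eqref{e:Op-h-w} shows that for any symbol $a$ one has $T_{(y,\eta)}\Opw(a)T_{(y,\eta)}^{-1}=\Opw(a_{(y,\eta)})$ with $a_{(y,\eta)}(x,\xi)=a(x-y,\xi-\eta)$. Since $p_{(y,\eta)}(x,\xi)=p(x-y,\xi-\eta)$ and $m_1$ is equivalent to $m_1(\cdot-y,\cdot-\eta)$ up to a multiplicative constant depending on $(y,\eta)$, it will suffice to prove that $\varphi(P)=\Opw(a_0)$ with $a_0\in S(m_1^{-N})$ and $a_0=\varphi\circ p+\mathcal{O}(h)_{S(m_1^{-N})}$ for every $N$.

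Next I would invoke the Helffer–Sj\"ostrand formula
\begin{equation*}
\varphi(P)=-\frac{1}{\pi}\int_{\cplx}\bar{\partial}\tilde\varphi(z)(z-P)^{-1}\,dL(z),
\end{equation*}
where $\tilde\varphi\in C_c^\infty(\cplx)$ is an almost-analytic extension of $\varphi$ whose $\bar{\partial}\tilde\varphi$ vanishes to infinite order on $\real$. The key input is then to show that for $z$ in a neighborhood of $\supp\tilde\varphi\setminus\real$ the resolvent $(z-P)^{-1}$ is a semiclassical pseudodifferential operator whose symbol admits, to any order, an expansion
\begin{equation*}
b(z,x,\xi;h)=\frac{1}{z-p(x,\xi)}+\sum_{j=1}^{N-1}h^j b_j(z,x,\xi)+h^N r_N(z,x,\xi;h),
\end{equation*}
with $b_j$ a rational function in $z$ whose poles lie at $z=p(x,\xi)$ and with bounds of the form $|\partial^\alpha_{(x,\xi)}b_j(z,x,\xi)|\leq C_{\alpha,j}|\mathrm{Im}\,z|^{-M_{\alpha,j}}m_1(x,\xi)^{-1-j}$, and similarly for the remainder $r_N$. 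Such an expansion is constructed by the standard parametrix method, using that $p\in S(m_1)$ is (semiclassically) elliptic in the sense that $z-p$ is invertible with $(z-p)^{-1}\in S(m_1^{-1})$ for $z$ off the real axis, together with the composition formula \eqref{e:product}.

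Inserting this expansion back into the Helffer–Sj\"ostrand integral and using the rapid vanishing of $\bar{\partial}\tilde\varphi$ on $\real$ to absorb the powers of $|\mathrm{Im}\,z|^{-1}$ produced by the poles, one obtains $\varphi(P)=\Opw(a_0)$ with $a_0(x,\xi;h)=\varphi(p(x,\xi))+\sum_{j\geq 1}h^j c_j(x,\xi)$ where each $c_j$ is a finite sum of products of derivatives of $\varphi$ evaluated at $p$ multiplied by smooth functions of $(x,\xi)$ supported in $\{p\in\supp\varphi\}$. Since $p(x,\xi)=|x|^2+|\xi|^2$ is proper and $\varphi$ has compact support, the set $\{p\in\supp\varphi\}$ is compact, so each term $c_j$ and the leading term $\varphi\circ p$ are compactly supported in $(x,\xi)$ and thus lie in $S(m_1^{-N})$ for every $N$; the remainder is controlled by the analogous estimate on $r_N$ combined with the vanishing of $\bar{\partial}\tilde\varphi$.

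The main obstacle I would expect is the careful tracking of the $|\mathrm{Im}\,z|^{-M}$ growth of the symbols $b_j$ and the remainder $r_N$ against the vanishing order of $\bar{\partial}\tilde\varphi$ on the real axis, and verifying that the resulting symbol bounds hold uniformly in $h\in(0,1]$ and in $(x,\xi)$ with the decay rate $m_1^{-N}$; this is purely a matter of bookkeeping in the parametrix construction and is routine but tedious, and essentially already done in the references \cite[Theorem 14.9]{Z} and \cite[\S8]{DS} cited in the statement. The only extra point here, beyond citing those references, is to note that for the application to the shifted operator $P_{(y,\eta)}$ the symbol estimates in $S(m_1^{-N})$ must be uniform in $(x,\xi)$ translated by $(y,\eta)$, which is automatic once the unshifted case is established and the conjugation argument of the first paragraph is applied.
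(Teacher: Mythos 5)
Your proposal is correct and follows exactly the route the paper relies on: the paper gives no proof of this lemma but cites \cite[Theorem~14.9]{Z} and \cite[\S8]{DS}, whose argument is precisely the Helffer--Sj\"ostrand formula plus a resolvent parametrix in the class $S(m_1^{-1})$, with the shifted case handled by the exact covariance $T_{(y,\eta)}\Opw(a)T_{(y,\eta)}^{-1}=\Opw(a(\cdot-y,\cdot-\eta))$ and the equivalence of $m_1$ with its translate. The bookkeeping you flag (powers of $|\mathrm{Im}\,z|^{-1}$ against the vanishing of $\bar\partial\tilde\varphi$, and compact support of the expansion terms since $p$ is proper) is exactly what those references carry out.
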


\subsection{A Key Lemma}\label{key-analysis-lemma-section}

Before proving the main lemma, we will need the following elementary measure theoretic result:

\begin{lemma} \label{l:covering-lemma} For any Lebesgue measurable set $X \subset \real^n$ let $\vol(X)$ denote its volume.  For any compact set $K \subset \real^n$ and every $\epsilon, r > 0$, there exist finitely many points $x_1, ..., x_N \in \real^n$ and positive real numbers $r_i \in (0, r)$ such that

(a) $K \subset \bigcup_{i = 1}^N B_{r_i}(x_i)$

(b) $\sum_{i = 1}^N \vol(B_{r_i}(x_i)) < \vol(K) + \epsilon.$\end{lemma}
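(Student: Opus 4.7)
The plan is to combine outer regularity of Lebesgue measure with the Vitali covering theorem, iterated once to bridge the gap between a countable Vitali decomposition and a finite cover. The key background fact I will use is the following standard consequence of the Vitali theorem: for any open set $\Omega \subset \real^n$ and any $\rho > 0$, there exists a countable disjoint family $\{\overline{B_{\rho_i}(y_i)}\}_{i \geq 1}$ of closed balls contained in $\Omega$ with each $\rho_i < \rho$ and $\vol(\Omega \setminus \bigcup_i \overline{B_{\rho_i}(y_i)}) = 0$. Disjointness and containment in $\Omega$ yield $\sum_i \vol(B_{\rho_i}(y_i)) = \vol(\Omega)$.

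First I would use outer regularity to choose an open $U \supset K$ with $\vol(U) < \vol(K) + \epsilon/2$, and then apply the Vitali statement to $U$ with the prescribed bound $r$. Writing the resulting disjoint family as $\{B_{r_i}(x_i)\}_{i \geq 1}$, I get $\sum_i \vol(B_{r_i}(x_i)) \leq \vol(U) < \vol(K) + \epsilon/2$, so absolute convergence lets me pick a finite $N$ with $\sum_{i > N} \vol(B_{r_i}(x_i)) < \epsilon/4$. The leftover $K_0 := K \setminus \bigcup_{i \leq N} B_{r_i}(x_i)$ is a compact subset of $(\bigcup_{i > N} B_{r_i}(x_i)) \cup (U \setminus \bigcup_i \overline{B_{r_i}(x_i)})$, and hence has Lebesgue measure less than $\epsilon/4$.

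To finish, I would run the same outer-regularity-plus-Vitali procedure on $K_0$: enclose $K_0$ in an open $V$ with $\vol(V) < \epsilon/4$, Vitali-pack $V$ by disjoint balls $\{B_{r'_j}(x'_j)\}$ of radius $< r$ with total volume bounded by $\vol(V) < \epsilon/4$, and cover the remainder $K_0 \setminus \bigcup_j B_{r'_j}(x'_j)$ — a compact set of Lebesgue measure zero, since $V \setminus \bigcup_j B_{r'_j}(x'_j)$ has measure zero — by finitely many balls of radius $< r$ of total volume less than $\epsilon/8$, which is immediate from the definition of outer measure combined with compactness. A final application of compactness of $K_0$ extracts a finite subfamily of $\{B_{r'_j}(x'_j)\}$ which, augmented by the finite covering of the measure-zero piece, covers all of $K_0$. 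Summing across the two stages gives a finite cover of $K$ by open balls of radius $< r$ whose total volume is at most $\vol(K) + \epsilon/2 + \epsilon/4 + \epsilon/8 < \vol(K) + \epsilon$.

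The only real subtlety, and the ``main obstacle'' if one must name one, is that a Vitali decomposition is intrinsically countable rather than finite, so a single truncation leaves behind a compact set of possibly positive measure. This forces the second application of the Vitali construction on a small open neighborhood of that leftover; after the second stage the leftover has Lebesgue measure zero, and covering a compact measure-zero set by finitely many small balls of negligible total volume is essentially immediate.
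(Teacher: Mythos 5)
Your argument is correct, but it follows a genuinely different route from the paper's. The paper works directly with the elementary fact that Lebesgue outer measure can be computed using coverings by rectangles or by balls: it covers $K$ by countably many open rectangles of total volume $< \vol(K) + \epsilon/2$, subdivides them so each is small enough to fit inside a ball of radius $< r$, covers each rectangle by balls with excess volume $< \epsilon/2^{i+1}$, and extracts a single finite subcover by compactness. You instead invoke outer regularity together with the Vitali packing theorem (disjoint closed balls filling an open neighborhood $U \supset K$ up to measure zero), truncate to a finite subfamily, and then iterate the construction on the compact leftover $K_0$ of measure $< \epsilon/4$. Your approach leans on a heavier standard tool (Vitali) and requires a second pass because a Vitali decomposition is countable and disjointness forces you to re-cover the boundary spheres and the residual set, whereas the paper's rectangle-to-ball argument needs only one compactness extraction; on the other hand, your disjoint packing gives the volume bound $\sum_i \vol(B_{r_i}(x_i)) \leq \vol(U)$ for free, with no bookkeeping of per-rectangle excesses. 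Two small points to tighten: your stated inclusion for $K_0$ omits the spheres $\partial B_{r_i}(x_i)$, $i \leq N$, which lie in neither listed set (they are null, so the measure estimate $\vol(K_0) < \epsilon/4$ survives); and in the final covering of the compact null set you should note that demanding total volume $< \min\{\epsilon/8, \vol(B_r(0))\}$ automatically forces every ball in that cover to have radius $< r$.
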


\begin{proof} Fix the compact set $K \subset \real^n$ and positive numbers $\epsilon, r > 0$.  Recall that the outer measures defined by coverings by rectangles or balls coincide, and that each gives rise to Lebesgue measure.  In particular, there are countably many open rectangles $R_i$ such that $K \subset \bigcup_{i = 1}^\infty R_i$ and $\sum_{i = 1}^\infty \vol(R_i) < \vol(K) + \epsilon/2.$  Subdividing and slightly enlarging the rectangles if necessary, we may further assume that $\vol(R_i) < \vol(B_{r/2}(0)).$  Similarly, there are countably many open balls $\{B_{r_{ij}}(x_{ij})\}_{j = 1}^\infty$ such that $R_i \subset \cup_{j = 1}^\infty B_{r_{ij}}(x_{ij})$ and $\sum_{j = 1}^\infty \vol(B_{r_{ij}}(x_{ij})) < \vol(R_{ij}) + \min\{\epsilon/2^{i + 1}, \vol(B_{r/2}(0))\}.$  In particular, for each $i, j$ we have $\vol(B_{r_{ij}}(x_{ij})) < 2\vol(B_{r/2}(0)) \leq \vol(B_r(0))$ so $r_{ij} < r$.  Also, the countably many open balls $\{B_{r_{ij}}(x_{ij})\}_{i, j = 1}^\infty$ cover the compact set $K$, and we may extract a finite subcover $\{B_{r_i}(x_i)\}_{i = 1}^N$ of $K$.  We have $\sum_{i = 1}^N \vol(B_{r_i}(x_i)) < \sum_{i, j = 1}^\infty \vol(B_{r_{ij}}(x_{ij})) < \vol(K) + \sum_{i = 1}^\infty \epsilon/2^i = \vol(K) + \epsilon$, as needed.\end{proof}

For $\beta\geq 0$,
denote by $V_h(\beta)$ the range of the spectral projection $\mathbf 1_{[0,\beta]}(P)$:
$$
V_h(\beta)=\spn\{u\in L^2(\real^n)\mid Pu=\lambda u\text{ for some }\lambda\in [0,\beta]\}\subset
L^2(\real^n).
$$
By Weyl's Law~\cite[Theorem~6.3]{Z} we have for any fixed $\beta$
\begin{equation}
  \label{e:V-h-dim}
\dim V_h(\beta)=c_n\beta^nh^{-n}+o(h^{-n})\quad\text{as }h\to 0,\quad
c_n:=\frac{1}{2^n\cdot n!}.
\end{equation}
Denote by
$$
\Pi_h:L^2(\real^n)\to L^2(\real^n)
$$
the orthogonal projector onto $V_h(1)$.  It is a direct consequence of \cite[Theorem 6.2]{Z} on the eigenfunctions of $P$ that we have
\begin{equation}\label{eq:Vh-vs-polys}
V_h(1) = \cplx[\real^n]^{\leq \frac{1}{2}(\frac{1}{h} - n)}e^{-|x|^2/2h},
\end{equation}
where $\cplx[\real^n]^{\leq \frac{1}{2}(\frac{1}{h} - n)}$ denotes the space of complex valued polynomials on $\real^n$ with degree at most $\frac{1}{2}(\frac{1}{h} - n)$.

We will denote by $\text{Mat}_{m \times m}(\cplx)$ and $\text{Herm}_{m \times m}(\cplx)$ the spaces of $m \times m$ complex matrices and $m \times m$ complex Hermitian matrices, respectively.

\begin{lemma}\label{l:matrix-case-lemma} Let $m, n$ be positive integers, let $k >> n$, and let $q : \real^n \rightarrow \text{Herm}_{m \times m}(\cplx)$ be a $C^k$ function taking values in Hermitian $m \times m$ matrices and with matrix entries $q_{ij}(x)$ satisfying, for some fixed nonnegative integer $N_0$, the derivative bounds:
\begin{equation}
  \label{e:matrix-Q-bound}
|\partial_\alpha q_{ij}(x)| \leq C_\alpha (1 + |x|)^{N_0}
\end{equation} for all multiindices $\alpha$ with $|\alpha| \leq k$.  Consider the family of operators, for $h > 0$, $$Q = Q(h) : V_h(1) \otimes \cplx^m \rightarrow V_h(1) \otimes \cplx^m, \ \ Q(f) = (\Pi_h \otimes \id)(qf).$$  Let $\spec(q(x))$ denote the set of eigenvalues of $q(x)$ with multiplicity, and put (here $B_r(x, \xi)$ denotes the open ball with radius $r$ and center $(x, \xi)$ in $\real^{2n}$) $$\gamma := \frac{\int_{B_1(0)} \#\{\lambda \in \spec(q(x)) : \lambda \leq 0\}dxd\xi}{\vol(B_1(0))}.$$  Then, for every $\epsilon > 0$ there exists $h_0 > 0$ such that for $0 < h < h_0$ the number of non-positive eigenvalues of $Q(h)$, counted with multiplicity, is bounded above by $(\gamma + \epsilon)c_nh^{-n}.$ \end{lemma}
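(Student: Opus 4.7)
The strategy is to apply the min-max principle.  The number of non-positive eigenvalues of $Q(h)$ equals $\dim(V_h(1)\otimes\cplx^m)$ minus the maximal dimension of a subspace of $V_h(1)\otimes\cplx^m$ on which $Q(h)$ is strictly positive.  Since $\dim(V_h(1)\otimes\cplx^m)=mc_nh^{-n}+o(h^{-n})$ by \eqref{e:V-h-dim}, it suffices to construct a subspace $W\subset V_h(1)\otimes\cplx^m$ of dimension at least $(m-\gamma-\epsilon)c_nh^{-n}+o(h^{-n})$ on which $\langle Q(h)f,f\rangle>0$.

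The construction proceeds by phase-space localization against a covering of the classical region.  Fix small auxiliary parameters $\delta,r_0>0$.  Using the uniform continuity of $q$ on a neighborhood of $\bar B_1(0)\subset\real^n$ together with Lemma~\ref{l:covering-lemma}, cover a slight enlargement of $\bar B_1(0)\subset\real^{2n}$ by finitely many balls $B_{r_j}(z_j)$, with $z_j=(y_j,\eta_j)$ and $r_j<r_0$, such that $\sum_j\vol(B_{r_j}(z_j))\leq\vol(B_1(0))+\epsilon'$ and $\|q(y)-q(y_j)\|<\delta$ for $(y,\eta)\in B_{r_j}(z_j)$.  Let $\Pi_j^+\in\End(\cplx^m)$ be the orthogonal projection onto the span of eigenvectors of $q(y_j)$ with eigenvalues strictly greater than $2\delta$; by choosing the centers $z_j$ to avoid the measure-zero set where $q$ has an eigenvalue exactly at $2\delta$ and taking $\delta$ small, one arranges $\sum_j(m-\dim\Pi_j^+)\vol(B_{r_j}(z_j))\leq(\gamma+\epsilon'')\vol(B_1(0))$.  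The basic local estimate, by ordinary Hermitian perturbation theory, is $\langle q(y)v,v\rangle\geq\delta\|v\|^2$ for $v$ in the range of $\Pi_j^+$ and $(y,\eta)\in B_{r_j}(z_j)$.

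For each $j$, take $\varphi_j\in C_c^\infty(\real)$ with $\varphi_j\equiv1$ on $[0,r_j^2]$ and supported in a slight enlargement, and set $X_j:=\varphi_j(P_{z_j})$.  By Lemma~\ref{l:funcal} combined with the conjugation relation \eqref{e:P-conjugate}, $X_j=\Opw(a_j)$ with symbol $a_j=\varphi_j\circ p_{z_j}+O(h)$ supported near $B_{r_j}(z_j)$.  The subspace $W$ is obtained by assembling the images of the operators $(\id\otimes\Pi_j^+)X_j$ restricted to $V_h(1)\otimes\cplx^m$, organized via a smooth phase-space partition of unity $\{\chi_j^2\}$ subordinate to $\{B_{r_j}(z_j)\}$ so that the symbolic product rule \eqref{e:product} forces cross-terms between different $j$ to be $O(h)$.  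Positivity $\langle Q(h)f,f\rangle>0$ for $f\in W$ then follows by applying the sharp G\r{a}rding inequality \eqref{e:garding} to the matrix-valued symbol $\chi_j^2(y,\eta)\Pi_j^+(q(y)-\tfrac{\delta}{2}\id)\Pi_j^+$, using \eqref{e:product} and \eqref{e:adjoint}, together with the perturbation estimate above.  The dimension of $W$ is computed from the trace identity $\tr(X_j(\id\otimes\Pi_j^+))=\dim(\Pi_j^+)\vol(B_{r_j}(z_j))/(2\pi h)^n+o(h^{-n})$, summed over $j$ and combined with $\vol(B_1(0))/(2\pi)^n=c_n$.

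The main obstacle is the control of overlaps among the operators $X_j(\id\otimes\Pi_j^+)$: naively summing images overcounts dimensions since the pieces are neither disjoint nor orthogonal.  This is resolved by quantizing a genuine smooth partition of unity subordinate to $\{B_{r_j}(z_j)\}$ rather than working directly with the rough cover, so that the pseudodifferential calculus \eqref{e:product} renders cross-terms negligible at the leading order $h^{-n}$.  Two secondary issues require care: the fact that $V_h(1)$ is only semiclassically (not sharply) localized to $B_1(0)$, whose fuzzy boundary contributes only $o(h^{-n})$ to all counts by Weyl's law \eqref{e:V-h-dim}, and the fact that $q$ is merely $C^k$, which forces the symbolic calculus to be truncated at order $k$ as described in Remark~\ref{Ck-remark} and explains the hypothesis $k\gg n$.
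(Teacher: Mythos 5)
Your reduction via min--max is sound in principle, and it is genuinely dual to what the paper does: the paper adds a finite-rank nonnegative perturbation $Q_2=\sum_j\chi_j(P_j)v_jv_j^T$ so that $Q+Q_2\geq c_0/2$ on $V_h(1)\otimes\cplx^m$, and then bounds the non-positive count by $\rank Q_2$, which is controlled by Weyl's law for the shifted oscillators. The reason that route is clean is that rank is \emph{subadditive}: overlaps among the balls $B_j$ can only make $\rank Q_2$ smaller, so the inequality goes the right way for an upper bound. Your route instead requires a \emph{lower} bound on the dimension of a span $W=\sum_j W_j$ of localized ranges, and dimension of a span is not superadditive; this is exactly where your argument has a gap. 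The trace identity $\tr\bigl(X_j(\id\otimes\Pi_j^+)\bigr)\approx\dim(\Pi_j^+)\vol(B_j)/(2\pi h)^n$ computes a sum of traces, not the dimension of the span of the ranges: when the balls overlap (and they must, since they cover $\bar B_1(0)$), the ranges of $(\id\otimes\Pi_j^+)X_j$ overlap at order $h^{-n}$, and nothing you have written prevents the span from being much smaller than the sum. Making this work would require a genuine almost-orthogonality argument (a Gram-matrix or Cotlar--Stein estimate for the family $\{X_j(\id\otimes\Pi_j^+)\}$), which is substantially more machinery than the paper needs and which you have not supplied.

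The second gap is the positivity claim. Even granting $\langle Qf_j,f_j\rangle\geq\tfrac{\delta}{2}\|f_j\|^2$ for $f_j\in W_j$, positivity of $Q$ on each $W_j$ does not imply positivity on $\sum_j W_j$ (a rank-one positive direction in each of two subspaces can span a plane on which the form is indefinite). You would need to control the cross terms $\langle Qf_i,f_j\rangle$, and for \emph{overlapping} balls these are $O(1)\|f_i\|\,\|f_j\|$, not $O(h)$: the principal symbol of $\Opw(a_i)\Opw(q)\Opw(a_j)$ is $a_iqa_j$, which does not vanish on $B_i\cap B_j$. The partition-of-unity device you invoke resolves this for the standard localization identity $\langle Qf,f\rangle=\sum_j\langle Q\Opw(\chi_j)f,\Opw(\chi_j)f\rangle+O(h)\|f\|^2$, which decomposes an arbitrary vector, but it does not by itself manufacture a subspace of the required dimension on which all cross terms are negligible. (A smaller issue of the same flavor: the range of $(\id\otimes\Pi_j^+)X_j$ is not contained in $V_h(1)\otimes\cplx^m$, so you must insert $\Pi_h\otimes\id$, and the resulting boundary errors interact with the dimension count.) If you want a correct argument along dual lines, the efficient fix is the one the paper itself uses later: apply the upper-bound lemma to $-q$ to bound the number of non-negative eigenvalues from above, rather than trying to exhibit a large positive subspace directly.
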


\begin{proof}  We follow the idea of the proof of \cite[Theorem~6.8]{Z}.

First, note that the operator $Q(h)$ is well-defined; it follows from (\ref{eq:Vh-vs-polys}) and the polynomial growth (\ref{e:matrix-Q-bound}) of the matrix entries $q_{ij}$ that multiplication by $q$ defines a map $$q : V_h(1) \otimes \cplx^m \rightarrow L^2(\real^n) \otimes \cplx^m,$$ so $Q(h)(f) = (\Pi_h \otimes \id)(qf)$ does define a linear operator on the finite-dimensional space $V_h(1)$.

Given $q$ and $\epsilon > 0$, we may choose finitely many open balls $$
B_j:=B_{r_j}((y_j,\eta_j))\subset\real^{2n},\quad
(y_j,\eta_j)\in \real^{2n},\quad
r_j>0;\quad
j=1,\dots,N,
$$
vectors $v_1, ..., v_N \in \cplx^m$, and constants $c_0, C_1, ..., C_N > 0$ all such that
\begin{equation}\label{e:matrix-kl-1}
q(x) + \sum_{j = 1}^N C_j\mathbbm{1}_{B_{r_j}((y_j, \eta_j))}(x,\xi)v_jv_j^T \geq 3c_0 \text{ for all } (x, \xi) \in B_1(0)
\end{equation}
\begin{equation}\label{e:matrix-kl-2}
\sum_j \vol(B_j)\leq \Big(\gamma+{\frac{\varepsilon}{3}}\Big)\vol(B(0,1)),
\end{equation}
where the inequality in the first condition indicates that all eigenvalues are at least $3c_0$ and where $\mathbbm{1}_{B_{r_j}((y_j, \eta_j))}$ denotes the indicator function of $B_{r_j}((y_j, \eta_j))$.  To see that this is possible, we use Lemma \ref{l:covering-lemma} and descending induction on $M := \max_{(x,\xi) \in B_1(0)} \#\{\lambda \in \spec(q(x)) : \lambda \leq 0\}.$  If $M = 0$, there is nothing to prove.  Otherwise, consider the compact set $$K_M := \{(x, \xi) \in B_1(0) : \#\{\lambda \in \spec(q(x)) : \lambda \leq 0\} = M\}.$$  For each point $(x, \xi) \in K_M$, let $v_{(x, \xi)} \in \cplx^m$ be an eigenvector of $q(x)$ with eigenvalue $\lambda_{(x, \xi)} \leq 0$, and let $C_{(x, \xi)} = 1 - \lambda_{(x, \xi)}$.  Then for all $(x, \xi) \in K_M$ the matrix $q(x) + C_{(x, \xi)}v_{(x, \xi)}v_{(x, \xi)}^T$ has at most $M - 1$ nonpositive eigenvalues counted with multiplicity, and it follows that there exists a number $r_{(x, \xi)} > 0$ such that for all $(y, \eta) \in B_{r_{(x, \xi)}}((x, \xi))$ the matrix $q(y) + C_{(x, \xi)}v_{(x, \xi)}v_{(x, \xi)}^T$ has at most $M - 1$ nonpositive eigenvalues counted with multiplicity.  As $K_M$ is compact, the open covering $\{B_{r_{(x, \xi)}}((x, \xi))\}_{(x, \xi) \in K_M}$ of $K_M$ admits a Lebesgue number $r > 0$.  Applying Lemma \ref{l:covering-lemma} to $K_M$ for this $r$ and replacing $q_1$ by a function defined similarly to the expression appearing in (\ref{e:matrix-kl-1}), we may reduce to the case that $\max_{(x,\xi) \in B_1(0)} \#\{\lambda \in \spec(q_1(x, \xi)) : \lambda \leq 0\} < M$ and continue in this manner by induction, defining $K_{M - 1}$ to be the closure of $\{(x, \xi) \in B_1(0) : \#\{\lambda \in \spec(q(x)) : \lambda \leq 0\} = M - 1\}$, etc.  The ordered eigenvalues of the resulting function in (\ref{e:matrix-kl-1}) are then positive and clearly lower-semicontinuous so are bounded away from 0 on the compact set $B_1(0)$, so the eigenvalues are bounded below by $3c_0$ for some $c_0 > 0$.

As above, for $(y, \eta) \in \real^{2n}$ let $p_{(y, \eta)}$ be defined by $p_{(y, \eta)}(x, \xi) = |x - y|^2 + |\xi - \eta|^2$, and let $p_j = p_{(y_j, \eta_j)}$.  It is clear that there exist real numbers $\tilde{r}_i > r_i$ and functions $\chi, \chi_1, ..., \chi_N \in C^\infty_c(\real^{2n})$ such that
\begin{gather}
  \label{e:matrix-kl-3}
\widetilde q:=(\chi\circ p)^2q+(1-\chi\circ p)\id+\sum_{j=1}^N (\chi_j\circ p_j)v_jv_j^T\geq 2c_0,\\
  \label{e:matrix-kl-4}
[0, 1] \cap \supp(1-\chi) = \emptyset,\quad
\supp(\chi_j)\subset (-\infty,\tilde r_j^2],\\
  \label{e:matrix-kl-5}
\sum_j \tilde r_j^{2n}\leq \gamma+{\frac{2\varepsilon}{3}},
\end{gather}
where $\id \in \text{Mat}_{m \times m}(\cplx)$ denotes the identity matrix.

By Remark \ref{Ck-remark}, we may take $k$ sufficiently large so that the properties of the Weyl quantization recalled in Section \ref{semiclassical-reminder-section} hold for the symbol classes $S^{(k)}(m)$.  Furthermore, let $\Opw$ act element-wise on matrices of such symbols.  Define the operator
$$
Q_1:=\chi(P)\Opw(q)\chi(P)+(I-\chi(P))\id+\sum_{j=1}^N \chi_j(P_j)v_jv_j^T.
$$ 
Here $\Opw(q)$ is the multiplication operator by $q$;
by~\eqref{e:matrix-Q-bound} we have $$q \in S^{(k)}(m_1^M) \otimes \text{Herm}_{m \times m}(\cplx)$$ for some $M$.  Using Lemma~\ref{l:funcal} in each matrix entry and 
the product formula~\eqref{e:product} we see that
$$
Q_1=\Opw(a)\quad\text{for some }a\in S^{(k)}(1) \otimes \text{Herm}_{m \times m}(\cplx),$$
$$
a=\widetilde q+\mathcal O(h)_{S^{(k)}(1) \otimes \text{Herm}_{m \times m}(\cplx)}.
$$  That we may take $a \in S^{(k)}(1) \otimes \text{Herm}_{m \times m}(\cplx)$ rather than simply $S^{(k)}(1) \otimes \text{Mat}_{m \times m}(\cplx)$ follows from \cite[(4.1.12)]{Z}.  In particular, we see that $Q_1$ is a bounded self-adjoint operator on $L^2(\real^n) \otimes \cplx^m$ by \cite[Theorem 4.23]{Z}.

As $\widetilde{q} \geq 2c_0$ it follows that there exists $h_0 > 0$ such that $a \geq \frac{3}{2}c_0$ for $0 < h < h_0$.  Recall that the square root of a positive definite matrix depends smoothly (in fact, analytically) on the matrix entries.  In particular, the square root $b := \sqrt{a - c_0}$ is defined for $0 < h < h_0$ and is an element of the symbol class $S^{(k)}(1) \otimes \text{Herm}_{m \times m}(\cplx)$.  As the operator $\Opw(b)$ is self-adjoint, it follows from product formula (\ref{e:product}) that we have $$\Opw(a - c_0) = \Opw(b)^*\Opw(b) + \mathcal{O}(h)_{L^2 \otimes \cplx^m}.$$  As $\Opw(b)^*\Opw(b)$ is manifestly a nonnegative operator, shrinking $h_0$ if necessary there is a constant $C > 0$ such that $\Opw(a - c_0) \geq -Ch$ for $0 < h < h_0.$  In particular, $\Opw(a) \geq c_0 - Ch$ for $0 < h < h_0$.  Taking $h_0$ smaller again if necessary, we have
\begin{equation}
\Opw(a) \geq \frac{c_0}{2} \ \ \ \ \text{ for } 0 < h < h_0,
\end{equation}
i.e.
\begin{equation}
\langle Q_1f, f \rangle = \langle \Opw(a)f, f \rangle \geq \frac{c_0}{2}||f||^2_{L^2 \otimes \cplx^m} \text{ for all } f \in L^2(\real^n) \otimes \cplx^m, \ \ \ 0 < h < h_0.
\end{equation}

In particular this applies to all $f\in V_h(1) \otimes \cplx^m$. However,
for such $f$ we have $f=\chi(P)f$, therefore
\begin{equation}
  \label{e:matrix-cling-2}
\langle Qf , f\rangle
+\langle Q_2 f,f\rangle = \langle Q_1f, f \rangle \geq \frac{c_0}{2}\|f\|^2\quad\text{for all }f\in V_h(1) \otimes \cplx^m
\end{equation}
where
\begin{equation}
Q_2:=\sum_{j=1}^N\chi_j(P_j)v_jv_j^T.
\end{equation}
and where the inner products and norms above are in $L^2(\real^n) \otimes \cplx^m$.
By~\eqref{e:P-conjugate}, \eqref{e:V-h-dim}, and~\eqref{e:matrix-kl-5} we estimate the rank of $Q_2$ 
for small $h$ by
\begin{equation}
  \label{e:matrix-cling-3}
\rank Q_2\leq \sum_{j=1}^N\dim V_h(\tilde r_j^2)
= c_nh^{-n}\sum_{j=1}^N \tilde r_j^{2n}+o(h^{-n})
\leq (\gamma+\varepsilon)c_nh^{-n}.
\end{equation}
Together~\eqref{e:matrix-cling-2} and~\eqref{e:matrix-cling-3} give the required estimate.\end{proof}

\subsection{Proof of the Signature Comparison Theorem}\label{proof-of-comparison-theorem-section}

\begin{proof}[Proof of Theorem \ref{KZ-preserves-signatures-theorem}]  Let $k$ be a positive integer sufficiently large so that Lemma \ref{l:matrix-case-lemma} holds for $C^k$ $\text{Herm}(\lambda)$-valued functions on $\hfr_\real$ satisfying the required derivative bounds appearing in that lemma.  Let $c \in \pfr_\real$ be a real parameter, and suppose $\supp L_c(\lambda) = \hfr$.  Let $$K_c : \hfr_{\real, reg} \rightarrow \text{Herm}(\lambda)$$ be the function appearing in Theorem \ref{integral-rep-theorem}, i.e. $K_c$ is the restriction of the Dunkl weight function at parameter $c$ to $\hfr_{\real, reg}$.  By Theorem \ref{integral-rep-theorem} there is a positive integer $N > 0$ such that $$\gamma_c(\delta^NP_1, \delta^NP_2) = \int_{\hfr_\real} P_2(x)^\dagger\delta^{2N}(x)K_c(x)P_1(x)e^{-|x|^2/2}dx \ \ \ \ \text{ for all } P_1, P_2 \in \cplx[\hfr] \otimes \lambda.$$  By Lemma \ref{homogeneous-L1-lemma} $K_c$ is a homogeneous function and we may take $N$ large enough so that $\delta^{2N}K_c$ extends continuously to all of $\hfr_{\real}$ with value $0$ on the union $\hfr_\real\backslash\hfr_{\real, reg}$ of the reflection hyperplanes.  By the homogeneity of $K_c$, it follows that we may take $N$ large enough so that $\delta^{2N}K_c : \hfr_\real \rightarrow \text{Herm}(\lambda)$ is in fact $C^k$ and satisfies the hypotheses of the function $q$ appearing in Lemma \ref{l:matrix-case-lemma}.

Let $d$ be the degree of homogeneity of the function $\delta^{2N}K_c$, let $h > 0$ be arbitrary, and let $Q(h) : V_h(1) \otimes \lambda \rightarrow V_h(1) \otimes \lambda$, $V_h(1) \subset L^2(\hfr_\real)$, be the operator $$Q(h)(f) := (\Pi_h \otimes \id)((\delta^{2N}K_c)f)$$ considered in Lemma \ref{l:matrix-case-lemma}.  Recall from (\ref{eq:Vh-vs-polys}) that$$V_h(1) = \cplx[\hfr]^{\leq \frac{1}{2}(\frac{1}{h} - l)}e^{-|x|^2/2h}$$ where $l = \dim \hfr$.  For any $P_1, P_2 \in \cplx[\hfr]^{\leq \frac{1}{2}(\frac{1}{h} - l)} \otimes \lambda$ we have $$\gamma_{c, \lambda}(\delta^NP_1, \delta^NP_2) = \int_{\hfr_\real} P_2(x)^\dagger (\delta^{2N}K_c)(x)P_1(x)e^{-|x|^2/2}dx$$ $$= \int_{\hfr_\real} (P_2(x)e^{-|x|^2/4})^\dagger (\delta^{2N}K_c)(x)(P_1(x)e^{-|x|^2/4})dx$$ $$= (2/h)^{l/2}\int_{\hfr_\real} (P_2(\sqrt{2/h}x)e^{-|x|^2/2h})^\dagger (\delta^{2N}K_c)(\sqrt{2/h}x)(P_1(\sqrt{2/h}x)e^{-|x|^2/2h})dx$$ $$= (2/h)^{(l + d)/2}\int_{\hfr_\real} (P_2(\sqrt{2/h}x)e^{-|x|^2/2h})^\dagger (\delta^{2N}K_c)(x)(P_1(\sqrt{2/h}x)e^{-|x|^2/2h})dx$$ $$= (2/h)^{(l + d)/2}\left\langle Q(h)\left(P_1(\sqrt{2/h}x)e^{-|x|^2/2h}\right), P_2(\sqrt{2/h}x)e^{-|x|^2/2h}\right\rangle_{L^2(\hfr_\real) \otimes \lambda}$$ where $\langle \cdot, \cdot \rangle_{L^2(\hfr_\real) \otimes \lambda}$ denotes the inner product on $L^2(\hfr_\real) \otimes \lambda$ induced from the standard inner product on $L^2(\hfr_\real)$ and the inner product $(\cdot, \cdot)_\lambda$ on $\lambda$.  In particular, for all $h > 0$,
\begin{equation}\label{eq:pos-def-comparison}
\begin{array}{c}
\max \{\dim U : U \subset \cplx[\hfr]^{\leq \frac{1}{2}(\frac{1}{h} - l)} \otimes \lambda, \gamma|_{\delta^NU} \text{ is positive definite}\} \\ = \#\{\mu \in \spec(Q(h)) : \mu > 0\},
\end{array}
\end{equation}
\begin{equation}\label{eq:neg-def-comparison}
\begin{array}{c}
\max \{\dim U : U \subset \cplx[\hfr]^{\leq \frac{1}{2}(\frac{1}{h} - l)} \otimes \lambda, \gamma|_{\delta^NU} \text{ is negative definite}\} \\ = \#\{\mu \in \spec(Q(h)) : \mu < 0\},
\end{array}
\end{equation}
and
\begin{equation}\label{eq:kernel-comparison}
\dim \rad\left(\gamma|_{\delta^N\cplx[\hfr]^{\leq \frac{1}{2}(\frac{1}{h} - l)} \otimes \lambda}\right) = \dim \ker Q(h).
\end{equation}

For a point $x \in \hfr_{\real, reg}$, let $p$ be the dimension of a maximal positive-definite subspace of $\lambda$ with respect to the Hermitian form $K_c(x)$, let $q$ be the dimension of a maximal negative-definite subspace of $\lambda$ with respect to $K_c(x)$, and let $r = \dim \rad(K_c(x)) = \dim \lambda - p - q$ be the dimension of the radical of $K_c(x)$.  Recall that the Hermitian forms $K_c(x)$ and $K_c(x')$ are equivalent for all $x, x' \in \hfr_{\real, reg}$, so the integers $p, q, r$ do not depend on the choice of the point $x \in \hfr_{\real, reg}$.  As $K_c(x)$ descends to a $B_W$-invariant non-degenerate Hermitian form on $KZ_x(L_c(\lambda))$, the quantity $p - q$ is as in the statement of Theorem \ref{KZ-preserves-signatures-theorem}.

As usual, let $N_c(\lambda)$ denote the maximal proper submodule of $\Delta_c(\lambda)$, and recall that $N_c(\lambda) = \rad(\gamma_{c, \lambda})$.  Recall also from the proof of Corollary \ref{unitarity-corollary} that for any $x \in \hfr_{\real, reg}$ we have $KZ_x(N_c(\lambda)) = \rad(K_c(x))$, where we make the usual identification of $KZ_x(\Delta_c(\lambda))$ with $\lambda$ as a vector space and the identification of $KZ_x(N_c(\lambda))$ as a subspace of $KZ_x(\Delta_c(\lambda))$.  In particular, $N_c(\lambda)$ is a graded $\cplx[\hfr]$-module whose restriction to $\hfr_{reg}$ is an algebraic vector bundle of rank $r = \dim \rad(K_c(x))$.  It follows from standard results on Hilbert series and equation (\ref{eq:kernel-comparison}) that we have
\begin{equation}\label{eq:kernel-limit}
\lim_{h \rightarrow 0} \frac{\dim \ker Q(h)}{\dim V_h(1)} = r.
\end{equation}
In the case of the function $\delta^{2N}K_c$, the constant $\gamma$ appearing in Lemma \ref{l:matrix-case-lemma} is given by $\gamma = q + r$.  In particular, it follows from that lemma and equation (\ref{eq:kernel-limit}) that we have
\begin{equation}\label{eq:neg-lim-sup}
\limsup_{h \rightarrow 0} \frac{\#\{\mu \in \spec(Q(h)) : \mu < 0\}}{\dim V_h(1)} = q.
\end{equation}
Similarly, applying Lemma \ref{l:matrix-case-lemma} to the function $-\delta^{2N}K_c$ we have 
\begin{equation}\label{eq:pos-lim-sup}
\limsup_{h \rightarrow 0} \frac{\#\{\mu \in \spec(Q(h)) : \mu > 0\}}{\dim V_h(1)} = p.
\end{equation}
As $$\frac{\#\{\mu \in \spec(Q(h)) : \mu > 0\}}{\dim V_h(1)} + \frac{\#\{\mu \in \spec(Q(h)) : \mu < 0\}}{\dim V_h(1)} + \frac{\dim \ker Q(h)}{\dim V_h(1)} = \dim \lambda$$ we see that equations (\ref{eq:neg-lim-sup}) and (\ref{eq:pos-lim-sup}) hold with limits replacing lim sups.  We therefore have, by equations (\ref{eq:pos-def-comparison}) and (\ref{eq:neg-def-comparison}), $$\lim_{n \rightarrow \infty} \frac{sign(\gamma_{c, \lambda}|_{(\delta^{N}L_c(\lambda))^{\leq n}})}{\dim (\delta^{N}L_c(\lambda))^{\leq n}}$$ $$= \lim_{h \rightarrow 0} \frac{\#\{\mu \in \spec(Q(h)) : \mu > 0\} - \#\{\mu \in \spec(Q(h)) : \mu < 0\}}{\#\{\mu \in \spec(Q(h)) : \mu > 0\} + \#\{\mu \in \spec(Q(h)) : \mu < 0\}}$$ $$= \frac{p - q}{p + q}$$ $$= \frac{p - q}{\dim KZ(L_c(\lambda))}.$$

As $\supp L_c(\lambda) = \hfr$ we have, as explained in the proof of Corollary \ref{unitarity-corollary}, $$\lim_{n \rightarrow \infty} \frac{\dim (\delta^NL_c(\lambda))^{\leq n}}{\dim L_c(\lambda)^{\leq n}} = 1.$$  In particular, the asymptotic signature of $L_c(\lambda)$ is given by $$a_{c, \lambda} = \lim_{n \rightarrow \infty} \frac{sign(\gamma_{c, \lambda}|_{L_c(\lambda)^{\leq n}})}{\dim L_c(\lambda)^{\leq n}} = \lim_{n \rightarrow \infty} \frac{sign(\gamma_{c, \lambda}|_{(\delta^{N}L_c(\lambda))^{\leq n}})}{\dim (\delta^{N}L_c(\lambda))^{\leq n}} = \frac{p - q}{\dim KZ(L_c(\lambda))},$$ as claimed.\end{proof}

\section{Conjectures and Further Directions}\label{conj-section}

In this section we will discuss various possible extensions of the results of this paper.

\subsection{Complex Reflection Groups}\label{complex-conj-section}  The Dunkl weight function $K_{c, \lambda}$ constructed in this paper is, at present, a phenomenon for finite real reflection groups.  There are at least two relevant objects that are absent for finite complex reflection groups.  First, the weight function $K_{c, \lambda}$ is a distribution on the real reflection representation $\hfr_\real$, but the complex reflection representation of a finite complex reflection group in general does not arise as the complexification of a real reflection representation.  Second, the Gaussian inner product $\gamma_{c, \lambda}$, for which $K_{c, \lambda}$ gives an integral formula, is itself absent for complex reflection groups.  While the contravariant form $\beta_{c, \lambda}$ is defined for an arbitrary finite complex reflection group $W$ and an irreducible representation $\lambda$, the definition of $\gamma_{c, \lambda}$ relies on the element $\mathbf{f}$ from the canonical $\mathfrak{sl}_2$-triple $\mathbf{e}, \mathbf{f}, \mathbf{h} \in H_c(W, \hfr)$ - but for complex $W$ there is no such $\mathfrak{sl}_2$-triple.

Nevertheless, from the proof of Theorem \ref{existence-theorem}, when $|c_s|$ is small for all $s \in S$ the Dunkl weight function $K_{c, \lambda}$ is given by integration against an analytic function $K : \hfr_{\real, reg} \rightarrow \End_\cplx(\lambda)$ that has a single-valued extension $\widetilde{K}(z)$ to $\hfr_{reg}$ of the form
\begin{equation}\label{K-tilde-eq}
\widetilde{K}(z) = F_{c^\dagger}(z)^{\dagger, -1}\widetilde{K}(x_0)F_c(z)^{-1},
\end{equation}
where $x_0 \in \hfr_{\real, reg}$, $\widetilde{K}(x_0) \in \End_\cplx(\lambda)$ defines a $B_W$-invariant sesquilinear pairing $$KZ_{x_0}(\Delta_c(\lambda)) \times KZ_{x_0}(\Delta_{c^\dagger}(\lambda)) \rightarrow \cplx,$$ and $F_c(z)$ is the monodromy of the modified KZ connection $$\nabla_{KZ}' = d - \sum_{s \in S} c_s\frac{d\alpha_s}{\alpha_s}s$$ from $x_0$ to $z$.  All of these ingredients are available for finite complex reflection groups as well.  By Remark \ref{complex-herm-form-remark}, when $W$ is a finite complex reflection group, for any $x_0 \in \hfr_{reg}$ there is an operator $A_c \in \End_\cplx(\lambda)$ defining a $B_W$-invariant sesquilinear pairing $$KZ_{x_0}(\Delta_c(\lambda)) \times KZ_{x_0}(\Delta_{c^\dagger}(\lambda)) \rightarrow \cplx,$$ unique up to scalar multiple for generic $c$.  The modified KZ connection is naturally generalized as $$\nabla_{KZ}' = d - \sum_{s \in S} 2c_s\frac{d\alpha_s}{(1 - \lambda_s)\alpha_s}s,$$ where $\lambda_s$ is the nontrivial eigenvalue of the complex reflection $s \in S$.  Taking $\widetilde{K}(x_0) = A_c$ and defining $\widetilde{K}(z)$ for $z \in \hfr_{reg}$ as in equation (\ref{K-tilde-eq}) defines a single-valued function $\widetilde{K} : \hfr_{reg} \rightarrow \End_\cplx(\lambda)$.  For generic $c$, such a function is uniquely determined up to a global scalar multiple.  It would be interesting to understand the meaning of these functions for finite complex reflection groups and to investigate their relationship to the contravariant form $\beta_{c, \lambda}$.

\subsection{Preservation of Jantzen Filtrations, Epsilon Factors, and Signatures}  In Section \ref{jantzen-section}, we introduced Jantzen filtrations for standard modules $\Delta_c(\lambda)$, depending on a base parameter $c_0 \in \pfr_\real$ and a deformation direction $c_1 \in \pfr_\real$ and arising from the analytic (in fact, polynomial) family of forms $\beta_{c, \lambda}$.  Given a point $x_0 \in \hfr_{\real, reg}$, the Dunkl weight function provides a natural family $K_{c, \lambda}(x_0)$ of $B_W$-invariant Hermitian forms on $KZ_{x_0}(\Delta_c(\lambda)),$ holomorphic in $c \in \pfr$.  Given a base parameter $c_0 \in \pfr_\real$ and deformation direction $c_1 \in \pfr$, one therefore obtains a Jantzen filtration on $KZ_{x_0}(\Delta_c(\lambda))$ by $H_q(W)$-submodules.  The proof of Corollary \ref{unitarity-corollary} shows that $KZ_{x_0}(\rad(\beta_{c, \lambda})) = \rad(K_c(x_0))$, i.e. $KZ_{x_0}$ intertwines the first two terms of the Jantzen filtration on $\Delta_c(\lambda)$ with the first two terms of the Jantzen filtration on $KZ_{x_0}(\Delta_c(\lambda))$.  It is natural to expect that this is true for all terms in the Jantzen filtration:

\begin{conjecture}\label{jantzen-intertwining-conjecture} For any base parameter $c_0 \in \pfr_\real$ and deformation direction $c_1 \in \pfr_\real$, the functor $KZ_{x_0}$ sends the Jantzen filtration on $\Delta_c(\lambda)$ to the Jantzen filtration on $KZ_{x_0}(\Delta_c(\lambda))$.\end{conjecture}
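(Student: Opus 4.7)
The plan is to use the integral representation of the Gaussian pairing $\gamma_{c, \lambda}$ from Theorem \ref{main-existence-theorem} as the bridge between the two Jantzen filtrations. A preliminary step is to show that the Jantzen filtration on $\Delta_c(\lambda)$ defined in Section \ref{jantzen-section} via the contravariant form $\beta_{c, \lambda}$ coincides with the analogous filtration defined via the Gaussian pairing $\gamma_{c, \lambda}$: since $\gamma_{c, \lambda}(v, v') = \beta_{c, \lambda}(\exp(\mathbf{f}_c) v, \exp(\mathbf{f}_c) v')$ and since every $H_c(W, \hfr)$-submodule of $\Delta_c(\lambda)$ is preserved by left multiplication by $\mathbf{f}_c \in H_c(W, \hfr)$ (and hence by $\exp(\mathbf{f}_c)$ via local nilpotence), matching deformations on the two sides yields equality of the two submodule filtrations. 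This reduction is convenient because the Dunkl weight function is the integral kernel for $\gamma_{c, \lambda}$ rather than $\beta_{c, \lambda}$.

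For the containment $KZ_{x_0}(\Delta_c(\lambda)^{\geq k}) \subseteq KZ_{x_0}(\Delta_c(\lambda))^{\geq k}$: given $v \in \Delta(\lambda)^{\geq k}$ with analytic deformation $f_v(t) \in \cplx[\hfr] \otimes \lambda$ satisfying $\gamma_{c(t), \lambda}(f_v(t), v') = O(t^k)$ for every $v'$, set $g(t) := f_v(t)(x_0) \in \lambda$, an analytic family with $g(0) = KZ_{x_0}(v)$. For each $w \in \lambda$ and each scalar polynomial $q \in \cplx[\hfr]$, the integral formula identifies $\gamma_{c(t), \lambda}(f_v(t), q w)$ with $\int_{\hfr_\real} \bar{q(x)} w^\dagger K_{c(t), \lambda}(x) f_v(t)(x) e^{-|x|^2/2} dx$. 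The left side is $O(t^k)$, so by density of $\cplx[\hfr] e^{-|x|^2/2}$ in $\mathscr{S}(\hfr_\real)$ the Taylor coefficients in $t$ of the scalar-valued tempered distribution $w^\dagger K_{c(t), \lambda}(x) f_v(t)(x)$ vanish through order $k - 1$; restricting to $x_0 \in \hfr_{\real, reg}$, where $K_{c, \lambda}$ is smooth, gives $w^\dagger K_{c(t), \lambda}(x_0) g(t) = O(t^k)$, which is exactly the defining condition for $g(0) \in KZ_{x_0}(\Delta_c(\lambda))^{\geq k}$.

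For the reverse containment, I would pursue a dimension-counting strategy based on the wall-crossing formulas of Lemma \ref{wall-crossing-lemma} together with a subquotient-level refinement of Theorem \ref{KZ-preserves-signatures-theorem}. The signature character jump of $\gamma_{c(t), \lambda}$ across $t = 0$ is controlled by the signatures of the forms on the Jantzen subquotients $\Delta_{c_0}(\lambda)^{(k)}$, while the analogous signature jump of $K_{c(t), \lambda}(x_0)$ on the Hecke side is controlled by the signatures of the Jantzen subquotients of $KZ_{x_0}(\Delta_{c_0}(\lambda))$. Applying the semiclassical-analytic framework of Section \ref{key-analysis-lemma-section} to the analytic family of weight functions $K_{c(t), \lambda}(x)$, rather than to a single $K_{c_0, \lambda}(x)$, should yield an asymptotic signature comparison subquotient by subquotient, forcing the dimensions of corresponding Jantzen subquotients on the two sides to match and, combined with the first containment, yielding the desired equality.

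The main obstacle is this subquotient-level refinement of Theorem \ref{KZ-preserves-signatures-theorem}. That theorem concerns the asymptotic signature of the irreducible module $L_c(\lambda)$ and crucially uses that $L_c(\lambda)$ has full support in $\hfr$, a hypothesis that need not hold for general Jantzen subquotients. Moreover, tracking the radical of $K_{c(t), \lambda}(x_0)$ as $t$ approaches $0$ in a manner sufficient to detect each $\Delta_{c_0}(\lambda)^{(k)}$, rather than just the first nontrivial stratum, requires a finer spectral analysis of the associated family of semiclassical operators than the covering argument of Section \ref{key-analysis-lemma-section} provides --- most likely a perturbative eigenvalue-tracking analysis along the deformation parameter $t$ combined with a stratification by supports.
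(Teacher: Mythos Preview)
The statement you are addressing is a \emph{conjecture} in the paper (Conjecture~\ref{jantzen-intertwining-conjecture}), not a theorem; the paper gives no proof, only two suggested lines of attack in the paragraph following the statement: (1) adapt the argument from the proof of Corollary~\ref{unitarity-corollary} that $KZ_{x_0}(\rad\beta_{c,\lambda}) = \rad K_c(x_0)$, and (2) Losev's suggestion to work in $\cplx[[t]]$-linear deformed categories and interpret the Hermitian forms as morphisms from standard to (conjugate) costandard objects.

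Your forward containment $KZ_{x_0}(\Delta_c(\lambda)^{\geq k}) \subseteq KZ_{x_0}(\Delta_c(\lambda))^{\geq k}$ is a correct implementation of approach~(1) and is essentially complete. The $\beta$-to-$\gamma$ reduction is right but deserves one more line: for an analytic family of invertible operators $A_t$ on a finite-dimensional space, the Jantzen filtrations of $\beta_t$ and of $\beta_t(A_t\,\cdot\,,A_t\,\cdot\,)$ differ by the action of $A_0$, and here $A_0 = \exp(\mathbf f_{c_0})$ preserves each $H_{c_0}(W,\hfr)$-submodule, hence each Jantzen layer, by Lemma~\ref{Jantzen-filtration-for-standards}. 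The distributional step (Hermite coefficients $O(t^k)$ $\Rightarrow$ Taylor coefficients of the tempered distribution vanish to order $k-1$ $\Rightarrow$ pointwise vanishing on $\hfr_{\real,\mathrm{reg}}$) is sound because restriction of distributions to an open set is continuous and $K_{c,\lambda}$ is analytic there by Theorem~\ref{integral-rep-theorem}.

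For the reverse containment, your signature/dimension-counting strategy is not the route either of the paper's hints points toward, and you have correctly identified why it stalls: a subquotient-level refinement of Theorem~\ref{KZ-preserves-signatures-theorem} is exactly the content of Conjecture~\ref{epsilon-factor-intertwining-conjecture}, which the paper explicitly places \emph{after} Conjecture~\ref{jantzen-intertwining-conjecture} and regards as harder, not as a tool for it. The full-support hypothesis is genuinely obstructive here. A more promising route, closer to the paper's hints, is to phrase both Jantzen filtrations uniformly as $V^{\geq k} = \mathrm{image}\bigl(\{v \in V\otimes\cplx[[t]] : \beta_t(v, V\otimes\cplx[[t]]) \subset t^k\cplx[[t]]\} \to V\bigr)$ and then use exactness of $KZ_{x_0}$ over $\cplx[[t]]$ together with your forward argument run at the level of the $\cplx[[t]]$-modules; this is the shape of Losev's categorical suggestion, and it avoids signatures and supports entirely.
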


In fact, it may be possible to adapt the argument that $KZ_{x_0}(\rad(\beta{c, \lambda})) = \rad(K_c(x_0))$ appearing in the proof of Corollary \ref{unitarity-corollary} directly to prove Conjecture \ref{jantzen-intertwining-conjecture}.  Alternatively, Ivan Losev has suggested that Conjecture \ref{jantzen-intertwining-conjecture} may be able to be proved by a purely categorical argument by considering appropriate $\cplx[[t]]$-linear categories deforming $\oscr_c(W, \hfr)$ and $H_q(W)\mhyphen\text{mod}_{f.d.}$ and interpreting the relevant Hermitian forms as maps from standard objects to (complex conjugates of) costandard objects.

Additionally, by Theorem \ref{KZ-preserves-signatures-theorem}, the asymptotic signature of the graded Hermitian form on the first subquotient of the Jantzen filtration of $\Delta_c(\lambda)$ equals the normalized signature of the Hermitian form on the first subquotient of the Jantzen filtration of $KZ_{x_0}(\Delta_c(\lambda))$.  It is natural to expect that this too holds for all of the subquotients, although this should be significantly more difficult to show than Conjecture \ref{jantzen-intertwining-conjecture}.  In fact, this will follow from Lemmas \ref{wall-crossing-lemma} and \ref{K0-lemma}, and their analogues at the level of Hecke algebra representations, if the \emph{epsilon factors} $\epsilon_i \in \{\pm 1\}$ appearing in Lemma \ref{K0-lemma} are also compatible with $KZ_{x_0}$:

\begin{conjecture}\label{epsilon-factor-intertwining-conjecture} For any base parameter $c_0 \in \pfr_\real$ and deformation direction $c_1 \in \pfr_\real$, Conjecture \ref{jantzen-intertwining-conjecture} holds, and for each $k \geq 0$, the epsilon factors attached to each full-support irreducible constituent of the $k^{th}$ subquotient of the Jantzen filtration on $\Delta_c(\lambda)$ determine epsilon factors for the irreducible constituents of the $k^{th}$ subquotient of the Jantzen filtration on $KZ_{x_0}(\Delta_c(\lambda))$.  In particular, the asymptotic signature of the induced form on the $k^{th}$ subquotient of the Jantzen filtration of $\Delta_c(\lambda)$ equals the normalized signature, in the sense of Theorem \ref{KZ-preserves-signatures-theorem}, of the induced form on the $k^{th}$ subquotient of the Jantzen filtration on $KZ_{x_0}(\Delta_c(\lambda))$.\end{conjecture}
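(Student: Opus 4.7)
The plan is to first prove Conjecture~\ref{jantzen-intertwining-conjecture}, via one of the strategies indicated in the preceding discussion, and then to transfer the inductive signature decomposition of Lemma~\ref{K0-lemma} term by term along $KZ_{x_0}$, using Theorem~\ref{KZ-preserves-signatures-theorem} as the key step-by-step input. As a preparatory step I would establish a Hecke-algebra analogue of Lemma~\ref{K0-lemma}: for any finite-dimensional $H_q(W)$-module $N$ with $|q_s| = 1$ carrying a nondegenerate $B_W$-invariant Hermitian form $\beta'$, one has $\sch(N, \beta') = \sum_j \epsilon_j' \sch(N_j)$ for some $\epsilon_j' \in \{\pm 1\}$ and simple constituents $N_j$. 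The proof of this variant is identical to that of Lemma~\ref{K0-lemma}, using the Hermitian dual of Hecke modules and the uniqueness up to $\real^\times$-scaling of $B_W$-invariant Hermitian forms on simples (Corollary~\ref{unitarity-corollary}(a)) to supply the Hecke analogue of Lemma~\ref{herm-dual-iso-lemma}.

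Assuming Conjecture~\ref{jantzen-intertwining-conjecture}, the functor $KZ_{x_0}$ identifies $(\Delta_c(\lambda)^{(k)}, \beta_{c_0, \lambda}^{(k)})$ with the corresponding Jantzen subquotient of $KZ_{x_0}(\Delta_c(\lambda))$ together with the form induced from $K_{c_0, \lambda}(x_0)$. I would then run the length induction of Lemma~\ref{K0-lemma} on the RCA side and apply $KZ_{x_0}$ at each step, using: exactness of $KZ_{x_0}$ to preserve the submodule/quotient structure; the vanishing of $KZ_{x_0}(L_{c_0}(\mu))$ for simples $L_{c_0}(\mu)$ of proper support, so that non-full-support contributions trivialize on the Hecke side; and the transfer of contravariant forms to $B_W$-invariant forms via $K_{c, \lambda}$ (Theorem~\ref{integral-rep-theorem}). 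The central point is that Theorem~\ref{KZ-preserves-signatures-theorem} ensures sign preservation: if the ambient form restricted to a simple full-support submodule $L_{c_0}(\mu) \subset M$ is positively (resp.\ negatively) proportional to $\beta_{c_0, \mu}$, then the induced form on $KZ_{x_0}(L_{c_0}(\mu))$ is positively (resp.\ negatively) proportional to the canonical $B_W$-invariant form. Matching the resulting decomposition with the Hecke-side version gives the epsilon-factor compatibility, and the asymptotic-signature identity follows by dividing by the corresponding character, taking the limit at $t = 1$ via Lemmas~\ref{asymptotic-sig-lemma} and~\ref{coef-ratio-lemma}, and applying Theorem~\ref{KZ-preserves-signatures-theorem} once more to each full-support $a_{c_0, \mu_i}$.

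The main obstacle is that the epsilon factors produced by Lemma~\ref{K0-lemma} are not canonical: they depend on the order of choice of simple submodules, and in the degenerate-restriction case the lemma replaces a single copy of a simple $L$ by the cancelling pair $(L, L^h)$. To make the step-by-step transfer rigorous, one must either arrange the induction so that non-full-support simples are peeled off first (reducing to a setting in which every remaining simple is seen faithfully by $KZ_{x_0}$), or reformulate the epsilon factors as data of a signed class in an enhanced Grothendieck group of pairs (module, nondegenerate invariant form considered up to positive scaling) and show that $KZ_{x_0}$ descends to a homomorphism of these signed groups. The second formulation seems cleanest and may dovetail with the categorical deformation framework mentioned in connection with Conjecture~\ref{jantzen-intertwining-conjecture}, but verifying exactness and well-definedness in the signed setting appears to be the real work.
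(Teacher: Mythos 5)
This statement is left as an open conjecture in the paper; no proof is given, only the observation (which your proposal essentially reproduces) that the assertion would follow from Lemma \ref{wall-crossing-lemma} and Lemma \ref{K0-lemma} together with their Hecke-algebra analogues \emph{provided} the epsilon factors are compatible with $KZ_{x_0}$, plus the suggestion that one might attack this via a signed Kazhdan--Lusztig formalism in the spirit of Yee. Your proposal is therefore a plan rather than a proof, and its central step is circular: the claim that ``Theorem \ref{KZ-preserves-signatures-theorem} ensures sign preservation,'' i.e.\ that positive (resp.\ negative) proportionality of the restricted contravariant form on a full-support simple constituent $L_{c_0}(\mu)\subset M$ transfers to positive (resp.\ negative) proportionality of the induced $B_W$-invariant form on $KZ_{x_0}(L_{c_0}(\mu))$, is precisely the epsilon-factor compatibility being conjectured. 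Theorem \ref{KZ-preserves-signatures-theorem} only identifies the asymptotic signature of $L_c(\lambda)$ with the normalized signature of an invariant Hermitian form on $KZ(L_c(\lambda))$ \emph{up to sign}, and only for the form arising from the Dunkl weight function of $\Delta_c(\lambda)$ itself; it says nothing about how the restriction of an ambient contravariant form on $M$ to a simple constituent relates, after applying $KZ_{x_0}$, to the restriction of the corresponding invariant form on $KZ_{x_0}(M)$. Establishing that relation is the actual content of the conjecture, not an input to it.

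Two further gaps: first, the whole argument presupposes Conjecture \ref{jantzen-intertwining-conjecture}, which is itself unproven, and even granting it, that conjecture only asserts that the \emph{filtrations} correspond under $KZ_{x_0}$ --- you would additionally need that the derived forms $\beta_{c_0,\lambda}^{(k)}$ on the subquotients correspond to the derived forms obtained from the family $K_{c,\lambda}(x_0)$, which requires an argument about how the Dunkl weight function degenerates along the deformation $c(t)=c_0+tc_1$ and is not supplied by the filtration statement alone. Second, you correctly identify that the epsilon factors of Lemma \ref{K0-lemma} are non-canonical (they depend on the choice of simple submodule at each inductive step and on the cancelling pair $(L,L^h)$ in the degenerate case), and you propose two ways to repair this, but you do not carry either out; the ``enhanced Grothendieck group of pairs'' you sketch is exactly the kind of structure whose well-definedness and compatibility with $KZ_{x_0}$ constitutes the real work here. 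As written, the proposal reduces the conjecture to itself plus Conjecture \ref{jantzen-intertwining-conjecture}, and should not be regarded as a proof.
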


Furthermore, after making an appropriate definition of Jantzen filtrations on $KZ_{x_0}(\Delta_c(\lambda))$ in the complex reflection group case, perhaps following the ideas of Section \ref{complex-conj-section}, one may similarly formulate Conjecture \ref{epsilon-factor-intertwining-conjecture} in the complex reflection group case.  One possible approach to prove such a conjecture could be to develop an analogue for rational Cherednik algebras of the signed Kazhdan-Lusztig polynomials introduced by Yee \cite{Y}.  This would give an approach for providing an entirely algebraic proof of Theorem \ref{KZ-preserves-signatures-theorem} that is also valid for complex reflection groups.  

\subsection{Local Description of $K_{c, \lambda}$ and Modules of Proper Support} Let $c \in \pfr_\real$ be a real parameter.  Theorem \ref{KZ-preserves-signatures-theorem} shows that, when $L_c(\lambda)$ has full support, its asymptotic signature $a_{c, \lambda}$ can be described in a simple way in terms of the local nature of the distribution $K_{c,\lambda}$ near a generic point of its support in $\hfr_\real$.  It is natural to expect a similar statement to hold for arbitrary $L_c(\lambda)$.

From the proof of Theorem \ref{existence-theorem}, the distribution $K_{c, \lambda}$, locally near any $b \in \hfr_{\real, reg}$, is given by integration against an analytic function of the form $$B_{c}(x)^{\dagger, -1}K_{c, \lambda}(b)B_c(x)^{-1},$$ where $K_{c, \lambda}(b) \in \text{Herm}(\lambda)$ and $B_c(x)$ is an analytic $GL(\lambda)$-valued function of $x$ defined near $x_0$ and satisfying $B_c(b) = \id$.  Rearranging, we have
\begin{equation}\label{local-form-generic-equation}
B_c(x)^\dagger K_{c, \lambda}(x)B_c(x) = K_{c, \lambda}(b)
\end{equation}
for $x \in \hfr_{\real, reg}$ near $b$.

Now consider an arbitrary point $b \in \hfr_\real$ be an arbitrary point.  Let $W' = \stab_W(b)$, let $\hfr_{W'}$ be the unique $W'$-stable complement to $\hfr^{W'}$ in $\hfr$, and let $\hfr_{\real, W'} = \hfr_\real \cap \hfr_{W'}$.  The action of $W'$ on $\hfr_{\real, W'}$ realizes $W'$ as a finite real reflection group, generated by the reflections $S' := S \cap W'$.  In particular, we have the rational Cherednik algebra $H_{c'}(W', \hfr_{W'})$ and its category $\oscr_{c'}(W', \hfr_{W'})$ with parameter $c' := c|_{S'}$.    In a sense, the local structure of a module $M \in \oscr_c(W, \hfr)$ near the point $b \in \hfr$ respects the decomposition $\hfr = \hfr^{W'} \oplus \hfr_{W'}$, with the structure in the $\hfr_{W'}$ component described by the image $\res_{b} M \in \oscr_{c'}(W', \hfr_{W'})$ of $M$ under the Bezrukavnikov-Etingof parabolic restriction functor $\res_{b}$ \cite[Section 3.5]{BE} and with the structure in the $\hfr^{W'}$ component described by an associated local system \cite[Section 3.7, Proposition 3.20]{BE}.  The following conjecture describing the structure of $K_{c, \lambda}$ near $b$ is a natural analogue:

\begin{conjecture}\label{local-form-conjecture}  Let $c \in \pfr_\real$ be a real parameter, let $b \in \hfr_\real$ be an arbitrary point, let $W' = \stab_W(b)$, and let $x = (x', x'')$ denote the decomposition of a vector $x \in \hfr_\real$ with respect to the direct sum decomposition $\hfr_\real =  \hfr_{\real, W'} \oplus \hfr_\real^{W'}$.  Then there exists a $W'$-equivariant $\End_\cplx(\lambda)$-valued analytic function $B(x)$ defined in a neighborhood of $b$ such that $B(b) = \id$ and
\begin{equation}\label{local-form-equation}
B(x)^\dagger K_{c, \lambda}(x) B(x) = \sum_{\mu \in \irr(W')} K_{c, \mu}(x') \otimes h_\mu,
\end{equation}
where, for each $\mu \in \irr(W')$, $h_\mu$ is a Hermitian form on $\hom_{W'}(\mu, \lambda)$.\end{conjecture}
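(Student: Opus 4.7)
The plan is to establish the identity on the regular locus $\hfr_{\real, reg}$ via a local analysis of the two-sided KZ system satisfied by $K_{c, \lambda}$, and then extend it to a neighborhood of $b$ in $\hfr_\real$ as distributions using the wonderful-model techniques from Section~\ref{extension-to-hyperplanes-section}. First I would decompose $S = (S \cap W') \sqcup (S \setminus W')$. For $s \in S \cap W'$, since $s$ acts trivially on $\hfr^{W'}$, the root $\alpha_s$ lies in $\hfr_{W'}^*$ and depends only on $x'$. For $s \in S \setminus W'$, we have $\alpha_s(b) \neq 0$, so $\alpha_s$ and $\alpha_s^{-1}$ are holomorphic near $b$. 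Accordingly, the modified KZ connection $\nabla_{KZ}' = d - \sum_{s \in S} c_s\frac{d\alpha_s}{\alpha_s} s$ used in the proof of Theorem~\ref{existence-theorem} splits near $b$ as a singular part $-\sum_{s \in S \cap W'} c_s\frac{d\alpha_s}{\alpha_s} s$ (with coefficients pulled back along the projection $\hfr \to \hfr_{W'}$) plus a holomorphic part.

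The key construction is a $W'$-equivariant holomorphic function $B : U \to GL(\lambda)$ on a simply connected neighborhood $U$ of $b$ in $\hfr$, with $B(b) = \id$, such that on $U \cap \hfr_{reg}$ any multivalued fundamental solution of $\nabla_{KZ}'$ factors as $N(z) = B(z)\,F^{W'}(z')$, where $F^{W'}(z')$ is a fundamental solution of the reduced modified KZ connection for $(W', \hfr_{W'})$ acting on $\lambda|_{W'}$, pulled back along $\hfr \to \hfr_{W'}$. This extends Lemma~\ref{P-fn-lemma} from codimension-one strata. I would prove existence by pulling $\nabla_{KZ}'$ back to the De Concini--Procesi wonderful model, applying the higher-dimensional theory of Lemma~\ref{nD-DE-lemma} to obtain a factored local solution at a preimage of $b$, matching the resulting factors with a product that recovers $F^{W'}$ times a holomorphic piece, and then averaging over $W'$ using the fact that $W'$ permutes the relevant components of the exceptional divisor compatibly with its action on $U$. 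Since $c = c^\dagger$, the same $B$ serves for both parameters, and the argument from the proof of Theorem~\ref{existence-theorem} then yields the local representation
\[
K_{c, \lambda}(x) \;=\; B(x)^{\dagger, -1}\,\widetilde{K}(x')\,B(x)^{-1}, \qquad x \in U \cap \hfr_{\real, reg},
\]
where $\widetilde{K}(x')$ is an analytic $\End_\cplx(\lambda)$-valued function depending only on $x'$ and satisfying the two-sided KZ system for $(W', \hfr_{W'})$ with $\lambda$ regarded as a $W'$-module.

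The remainder is essentially formal. Decomposing $\lambda|_{W'} = \bigoplus_{\mu \in \irr(W')} \mu \otimes \hom_{W'}(\mu, \lambda)$, the two-sided KZ system for $(W', \hfr_{W'})$ on $\End_\cplx(\lambda)$ preserves the induced $W' \times W'$-isotypic decomposition, so $\widetilde{K} = \sum_\mu \widetilde{K}^\mu$ with each $\widetilde{K}^\mu$ valued in $\End_\cplx(\mu) \otimes \End_\cplx(\hom_{W'}(\mu, \lambda))$ and satisfying the two-sided KZ system for $(W', \hfr_{W'})$ at representation $\mu$ in the first tensor factor. The uniqueness assertion of Theorem~\ref{main-existence-theorem}, applied to $(W', \hfr_{W'})$, then forces $\widetilde{K}^\mu(x') = K_{c, \mu}(x') \otimes h_\mu$ for some $h_\mu \in \End_\cplx(\hom_{W'}(\mu, \lambda))$; that $h_\mu$ is Hermitian follows from $c = c^\dagger$ together with the corresponding property of $K_{c, \lambda}$. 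Rearranging yields (\ref{local-form-equation}) on $U \cap \hfr_{\real, reg}$, and equality as distributions on a full neighborhood of $b$ in $\hfr_\real$ follows by applying the meromorphic-continuation argument of Section~\ref{extension-to-hyperplanes-section} to both sides.

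The main obstacle will be the $W'$-equivariant construction of $B$. Matching the factored local solution produced by the wonderful model with a genuine product structure labeled by the $S \cap W'$ reflections is combinatorially subtle when $b$ lies on the intersection of many reflection hyperplanes, and arranging exact $W'$-equivariance on top of this requires care. A cleaner alternative may be to invoke the analytic version of the Bezrukavnikov--Etingof parabolic restriction functor \cite{BE}, which identifies a suitable localization of $\Delta_c(\lambda)$ near $b$ with a corresponding localization of $\bigoplus_\mu \Delta_{c'}(\mu) \otimes \hom_{W'}(\mu, \lambda)$ near $0 \in \hfr_{W'}$; transporting the Gaussian pairing through this identification would yield the claim directly, without the need to write down an explicit gauge.
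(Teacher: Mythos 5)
You should first be aware that the statement you are proving is Conjecture~\ref{local-form-conjecture}: the paper does not prove it, and only verifies it in the three cases $W'=1$ (equation~(\ref{local-form-generic-equation})), $W'=W$ (where one takes $B=\id$), and $W'=\langle s\rangle$ (via Lemma~\ref{P-fn-lemma}). The decisive gap in your proposal is exactly the step you flag as the ``main obstacle,'' namely the existence of the $W'$-equivariant holomorphic gauge $B$ with $N(z)=B(z)F^{W'}(z')$. This is a local normal-form statement for a flat connection with logarithmic poles along the $W'$-arrangement, which near $b$ is \emph{not} a normal crossings divisor once $\mathrm{codim}\,\hfr^{W'}\geq 2$. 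Lemma~\ref{nD-DE-lemma}, applied on the wonderful model, produces a factored solution $Q_1z_1^{a_1}\cdots Q_nz_n^{a_n}$ in coordinates adapted to the exceptional divisor; identifying such a product with $(\text{holomorphic invertible})\cdot(\text{pullback of a fundamental solution of the }W'\text{-KZ connection on }\hfr_{W'})$ is not automatic and is essentially equivalent to the conjecture itself. Moreover, even the codimension-one normalization of Lemma~\ref{P-fn-lemma} requires $c_i\notin\frac{1}{2}\ints\setminus\{0\}$; for the arbitrary real $c$ allowed in the conjecture, the residues along the strata through $b$ can be resonant (eigenvalues differing by nonzero integers), in which case a holomorphic invertible $B$ with your factorization property need not exist (logarithmic terms appear in fundamental solutions), and one must instead argue with the particular single-valued solution $K_{c,\lambda}$ rather than with a general fundamental solution.

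The ``essentially formal'' endgame also has a gap. Writing $\lambda|_{W'}=\bigoplus_\mu \mu\otimes M_\mu$ with $M_\mu=\hom_{W'}(\mu,\lambda)$, the two-sided $W'$-KZ system preserves the full decomposition $\End_\cplx(\lambda)=\bigoplus_{\mu,\nu}\hom_\cplx(\mu\otimes M_\mu,\nu\otimes M_\nu)$, not merely the diagonal blocks; the vanishing of the off-diagonal components of $\widetilde K$ is the higher-codimension analogue of asymptotic $W$-invariance (Definition~\ref{asym-inv-def}) and must be deduced from the fact that $K_{c,\lambda}$ represents $\gamma_{c,\lambda}$ (equivalently, from $B_W$-invariance of $K_{c,\lambda}(x)$ as in Theorem~\ref{existence-theorem}), which you do not do. Within a diagonal block, the solution space of the two-sided system is $(\text{solutions in }\End_\cplx(\mu))\otimes\End_\cplx(M_\mu)$, which is not spanned by pure tensors with first factor $K_{c,\mu}$; invoking the uniqueness clause of Theorem~\ref{main-existence-theorem} for $(W',\hfr_{W'})$ presupposes that $\widetilde K^\mu$, contracted against elements of $\End_\cplx(M_\mu)^*$, satisfies the characterizing conditions of Proposition~\ref{dunkl-weight-characterization-proposition} for $(W',\hfr_{W'})$ --- which again requires the asymptotic invariance you have not established. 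Your closing suggestion to transport the Gaussian pairing through an analytic version of Bezrukavnikov--Etingof parabolic restriction is, I believe, the right strategy and the one the paper implicitly points toward, but as written it remains a proposal rather than a proof.
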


A similar statement should hold for all $c \in \pfr$.  For arbitrary parameters $c \in \pfr$, the lefthand side of equation (\ref{local-form-equation}) should be replaced by $B_{c^\dagger}(x)^\dagger K_{c, \lambda}(x)B_c(x)$, and the $h_\mu$ should be elements of $\End_\cplx(\hom_{W'}(\mu, \lambda))$, not necessarily Hermitian.  Generalizing the case $b \in \hfr_{\real, reg}$ in which $h_\lambda$ gives a $B_W$-invariant Hermitian form on the $H_q(W)$-representation $\lambda \cong_\cplx KZ_b(\Delta_c(\lambda))$, it is reasonable to expect that for all $\mu \in \irr(W')$ the space $\hom_{W'}(\mu, \lambda)$ is a representation of a generalized Hecke algebra, in the sense of \cite[Definition 3.25]{LS}, and that the operator $h_\mu$ is invariant under the action of the fundamental group $\pi_1(\hfr^{W'}_{reg}/I)$ appearing in \cite[Definition 3.25]{LS}.

We can already see that Conjecture \ref{local-form-conjecture} holds in several cases.  When $b \in \hfr_{\real, reg}$, we have $W' = 1$, and Conjecture \ref{local-form-conjecture} holds by equation (\ref{local-form-generic-equation}).  When $b = 0$, we have $W' = W$, and Conjecture \ref{local-form-conjecture} is trivial - simply take $B(x) = \id$.  When $b$ is a generic point on a reflection hyperplane, i.e. $W' = \langle s \rangle$ for some $s \in S$, that Conjecture \ref{local-form-conjecture} holds follows from the proof of Theorem \ref{existence-theorem} - the function $P_{i, c}(x)$ gives the required function $B(x)$ and the operators $K_i^{1, 1}, K_i^{-1, -1} \in \End_\cplx(\lambda)$, viewed as Hermitian forms on $\hom_{W'}(\triv, \lambda)$ and on $\hom_{W'}(\sgn, \lambda)$, respectively, give the required forms $h_\triv$ and $h_\sgn$, where $\triv$ and $\sgn$ denote the trivial and sign representations of $W'$, respectively.

In view of Conjecture \ref{local-form-conjecture}, a natural extension of Theorem \ref{KZ-preserves-signatures-theorem} to irreducible representations $L_c(\lambda)$ of arbitrary support is the following:

\begin{conjecture}\label{arbitrary-support-conjecture} Let $c \in \pfr_\real$ be a real parameter, let $\lambda \in \irr(W)$ be an irreducible representation of $W$, let $b \in \hfr_\real$ be a generic point in $\supp(L_c(\lambda)) \cap \hfr_\real = \supp K_{c, \lambda}$.  Then, using the notation from Corollary \ref{local-form-conjecture}, the asymptotic signature $a_{c, \lambda}$ of $L_{c, \lambda}$ is given by
\begin{equation}\label{arbitrary-support-equation}
a_{c, \lambda} = \frac{\sum_{\mu \in \irr(W'), \dim L_c(\mu) < \infty} \dim L_c(\mu)a_{c, \mu}\sign(h_\mu)}{\dim \res_b L_c(\lambda)}.
\end{equation}  \end{conjecture}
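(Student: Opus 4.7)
The plan is to combine Conjecture \ref{local-form-conjecture} with a stratified refinement of the semiclassical argument used for Theorem \ref{KZ-preserves-signatures-theorem}. First, by Lemma \ref{asymptotic-sig-lemma} the asymptotic signature $a_{c,\lambda}$ equals the limit of $\sign(\gamma_{c,\lambda}|_{(\delta^N L_c(\lambda))^{\leq n}})/\dim(\delta^N L_c(\lambda))^{\leq n}$, where $N$ is chosen as in Theorem \ref{integral-rep-theorem} so that $\gamma_{c,\lambda}$ on $\delta^N\cplx[\hfr]\otimes\lambda$ has an integral representation against $\delta^{2N}K_{c,\lambda}$. The support $\supp K_{c,\lambda}=\supp L_c(\lambda)\cap\hfr_\real$ is a union of closures of strata indexed by conjugacy classes of parabolic subgroups; let $W'\subset W$ be the stabilizer of a generic point $b$ of the generic stratum, so that locally near $b$ the stratum is $\hfr_\real^{W'}$ and the transverse slice is $\hfr_{\real,W'}$.

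By Conjecture \ref{local-form-conjecture}, a smooth $\aut(\lambda)$-valued change of frame $B(x)$ near $b$ brings $K_{c,\lambda}$ to the form $\sum_{\mu\in\irr(W')}K_{c,\mu}(x')\otimes h_\mu$, a congruence that preserves the signature of every matrix entry of the distribution. Since $\supp K_{c,\lambda}$ meets a small transverse disc to the generic stratum only at the origin, only those $\mu$ with $\dim L_c(\mu)<\infty$ appear in the sum. Transverse integration against $e^{-|x'|^2/2}$ identifies the $\mu$-contribution with the Gaussian pairing $\gamma_{c,\mu}$ on $L_c(\mu)$, whose signature equals $a_{c,\mu}\dim L_c(\mu)$ by the finite-dimensional case of Lemma \ref{asymptotic-sig-lemma}; tensoring with $h_\mu$ on the multiplicity space $\hom_{W'}(\mu,\lambda)$ then multiplies the signature by $\sign(h_\mu)$.

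With this local structure in hand, the next step is to adapt the Weyl quantization framework of Section \ref{semiclassical-reminder-section}. Introduce a $W$-equivariant partition of unity subordinate to the stratification; lower-dimensional strata contribute asymptotically vanishing fractions of the signed eigenvalue count by a stratified Weyl law, so the generic stratum dominates in the semiclassical limit. On a chart near $b$, decompose the harmonic oscillator on $\hfr_\real$ as $P=P'+P''$ along $\hfr_{\real,W'}\oplus\hfr_\real^{W'}$, perform the transverse integration using the previous paragraph, and apply a stratum-level analogue of Lemma \ref{l:matrix-case-lemma} to the resulting smooth matrix-valued symbol on $\hfr_\real^{W'}$. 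The leading $h^{-\dim\hfr_\real^{W'}}$ volume factors in both the signed eigenvalue count and the graded dimension of $\delta^N L_c(\lambda)^{\leq n}$ cancel in the ratio, leaving $\sum_\mu a_{c,\mu}\dim L_c(\mu)\sign(h_\mu)$ divided by $\sum_\mu \dim L_c(\mu)\dim\hom_{W'}(\mu,\lambda)$, which equals the claimed formula once the denominator is identified with $\dim\res_b L_c(\lambda)$ via Bezrukavnikov--Etingof parabolic restriction.

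The hard part is twofold. First, Conjecture \ref{local-form-conjecture} itself must be established; the natural route is via the Bezrukavnikov--Etingof isomorphism between the completion of $H_c(W,\hfr)$ at the $W$-orbit of $b$ and an induced matrix algebra over the completion of $H_{c'}(W',\hfr_{W'})$, combined with the uniqueness characterization of the Dunkl weight function given by Proposition \ref{dunkl-weight-characterization-proposition} applied after transfer of structure along this isomorphism. Second, Lemma \ref{l:matrix-case-lemma} must be extended to matrix-valued symbols whose distributional part is supported transversally on a submanifold; this should follow from regularization by Schwartz-class mollifiers together with uniform bounds on the transverse Hermite expansion of the $K_{c,\mu}$ and a stratified Weyl-law asymptotic along the lines of the analysis in Section \ref{key-analysis-lemma-section}, but the limit interchange with the Weyl quantization in the presence of the singular transverse support is the main technical point to secure.
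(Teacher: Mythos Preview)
The statement you are addressing is Conjecture~\ref{arbitrary-support-conjecture}, and the paper does \emph{not} prove it. The paper presents it as an open conjecture in Section~\ref{conj-section} and only verifies the two extreme cases: when $b=0$ (where $W'=W$ and the statement is a tautology) and when $b\in\hfr_{\real,reg}$ (where $W'=1$ and the statement reduces to Theorem~\ref{KZ-preserves-signatures-theorem}). There is therefore no ``paper's own proof'' to compare against.

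Your proposal is not a proof but a strategy, and you are candid about this: you explicitly flag that Conjecture~\ref{local-form-conjecture} is an unproven input and that the singular-support extension of Lemma~\ref{l:matrix-case-lemma} is a genuine technical obstacle. Both of these are real gaps, not merely details. The local normal form of Conjecture~\ref{local-form-conjecture} is only established in the paper for $W'$ trivial, $W'=W$, or $W'$ of rank one, and your suggested route via the Bezrukavnikov--Etingof completion isomorphism is plausible but not carried out anywhere. The second gap is more serious: Lemma~\ref{l:matrix-case-lemma} applies to $C^k$ matrix-valued symbols with polynomial growth, whereas here the weight $\delta^{2N}K_{c,\lambda}$ is a distribution supported on a lower-dimensional stratum, so the entire semiclassical machinery of Section~\ref{semiclassical-reminder-section} (boundedness on $L^2$, the G\aa rding-type positivity, the product formula) does not apply as stated. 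Your appeal to ``regularization by Schwartz-class mollifiers'' and a ``stratified Weyl law'' names the right ingredients but does not supply the uniform estimates needed to justify the limit interchange, which is precisely where the argument could fail.

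In short: your outline is a reasonable heuristic for why the conjecture should hold and is in the spirit of the paper's own discussion, but it remains a program rather than a proof, and the paper itself offers nothing stronger.
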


Recall that $\res_b L_c(\lambda)$ is nonzero and finite-dimensional if and only if $b$ is a generic point in $\supp (L_c(\lambda))$ \cite[Proposition 3.23]{BE}, so the righthand side of equation (\ref{arbitrary-support-equation}) is well-defined.  When $b = 0$, we have that $W' = W$, $h_\mu = 0$ unless $\mu = \lambda$, $h_\lambda$ is the standard form on $\hom_{W'}(\lambda, \lambda) = \cplx$, and $\res_b L_c(\lambda) = L_c(\lambda)$, so Conjecture \ref{arbitrary-support-conjecture} holds trivially.  When $b \in \hfr_{\real, reg}$, Conjecture \ref{arbitrary-support-conjecture} is precisely Theorem \ref{KZ-preserves-signatures-theorem}, as in that case we have $W' = 1$, $L_c(\mu) = L_c(\triv) = \cplx$ (there are no other irreducible representations in the sum), $a_{c, \mu} = 1$, and $\res_b L_c(\lambda) = KZ (L_c(\lambda))$ as vector spaces \cite[Remark 3.16]{BE}.

\end{document}